\DeclarePairedDelimiter\floor{\lfloor}{\rfloor}
\newcommand{\oN}{{\mathbb N}}
\newcommand{\oR}{{\mathbb R}}
\newcommand{\ignore}[1]{}
\newcommand\bovermat[2]{%
  \makebox[0pt][l]{$\smash{\overbrace{\phantom{%
    \begin{matrix}#2\end{matrix}}}^{\text{#1}}}$}#2}    
\providecommand{\subclass}[1]{\textbf{AMS subject classification} #1}
\title{Exactness of Parrilo's conic approximations for copositive matrices and associated low order  bounds for the stability number of a graph }
\author {Monique Laurent
\thanks{Centrum Wiskunde \& Informatica (CWI), Amsterdam, and Tilburg University. \url{monique.laurent@cwi.nl} }
\And 
Luis Felipe Vargas
\thanks{Centrum Wiskunde \& Informatica (CWI), Amsterdam. \url{luis.vargas@cwi.nl}
\newline
This work is supported by the European Union's Framework Programme for Research and Innovation Horizon
2020 under the Marie Skłodowska-Curie Actions Grant Agreement No. 813211  (POEMA).
} 
}
\newtheorem{theorem}{Theorem}[section]
\newtheorem{coro}[theorem]{Corollary}
\newtheorem{lemma}[theorem]{Lemma}
\newtheorem{conjecture}[theorem]{Conjecture}
\newtheorem{question}[theorem]{Question}
\newtheorem{proposition}[theorem]{Proposition}
\newtheorem{definition}[theorem]{Definition}
\newtheorem{remark}[theorem]{Remark}
\newtheorem{example}[theorem]{Example}
\newcommand{\kzero}{\mathcal{K}_n^{(0)}}
\newcommand{\kzeroc}{\MK^{(0)}\text{-certificate}}
\newcommand{\konec}{\MK^{(1)}\text{-certificate}}
\def\tagform@#1{\maketag@@@{$($#1$)$\@@italiccorr}}
\numberwithin{equation}{section}
\renewcommand*\env@matrix[1][*\c@MaxMatrixCols c]{%
	\hskip -\arraycolsep
	\let\@ifnextchar\new@ifnextchar
	\array{#1}}
\newcommand{\supp}{\text{\rm Supp}}
\newcommand{\COP}{{\text{\rm COP}}}
\newcommand{\MK}{{\mathcal K}}
\newcommand{\MS}{{\mathcal S}}
\newcommand{\rrank}{\vartheta\text{\rm -rank}}
\newcommand{\MoL}{\textcolor{red}}
\begin{document}
\maketitle

\begin{abstract}
\ignore{De Klerk and Pasechnik (2002) proposed a hierarchy of bounds $\vartheta^{(r)}(G)$ ($r\in \oN$) for  the stability number $\alpha(G)$ of a graph $G$ and conjectured  it is exact at order $\alpha(G)-1$, i.e., $\vartheta^{(\alpha(G)-1)}(G)=\alpha(G)$.  This hierarchy relies on the cones $\MK_n^{(r)}$ proposed by Parrilo (2000) for approximating the copositive cone $\COP_n$.
A difficulty in understanding the convergence behaviour of  $\vartheta^{(r)}$ is that the cones $\MK_n^{(r)}$ are not well-behaved under simple operations: we show that adding a zero row/column to a matrix in $ \COP_n\setminus \MK^{(0)}_n$ gives a matrix in
$\COP_{n+1}\setminus \bigcup_{r\ge 0}\MK^{(r)}_{n+1}$, thereby 
disproving a conjecture of Dickinson et al. (2013)  claiming that every copositive matrix with 0-1 diagonal entries belongs to some  $\MK_n^{(r)}$. This also shows that the union of the cones $\MK^{(r)}_n$ is a strict subset of $\COP_n$ for any $n\ge 6$. 
We investigate the graphs for which the  bounds $\vartheta^{(0)}(G)$ and $\vartheta^{(1)}(G)$ are exact.
We give an algorithmic procedure that reduces the task of testing whether $\vartheta^{(0)}$ is exact to the class of acritical graphs, and we characterize the critical graphs with this property.
We also exhibit  a class of graphs for which  exactness of the bound $\vartheta^{(1)}$ is not preserved under the operation of adding an isolated node, thereby disproving a conjecture by Gvozdenovic and Laurent (2007) which, if true, would have directly implied the above conjecture by de Klerk and Pasechnik. 
}
De Klerk and Pasechnik~(2002) introduced the bounds $\vartheta^{(r)}(G)$ ($r\in \oN$) for  the stability number $\alpha(G)$ of a graph $G$ and conjectured exactness at order $\alpha(G)-1$:  $\vartheta^{(\alpha(G)-1)}(G)=\alpha(G)$.  These bounds rely on the conic approximations  $\MK_n^{(r)}$  by Parrilo~(2000) for the copositive cone $\COP_n$.
A difficulty in the convergence analysis of  $\vartheta^{(r)}$ is the bad behaviour of the cones $\MK_n^{(r)}$ under adding a zero row/column: when applied to a matrix not in $\MK^{(0)}_n$ this gives a matrix not in any $\MK^{(r)}_{n+1}$, thereby showing strict inclusion $\bigcup_{r\ge 0}\MK^{(r)}_n\subset \COP_n$  for $n\ge 6$.
We investigate the graphs with  $\vartheta^{(r)}(G)=\alpha(G)$ for $r=0,1$: we  algorithmically reduce testing exactness of $\vartheta^{(0)}$  to  acritical graphs, we characterize  critical graphs with $\vartheta^{(0)}$ exact, and we  exhibit   graphs for which  exactness of  $\vartheta^{(1)}$ is not preserved under  adding  an isolated node. This disproves a conjecture by Gvozdenovi\'c and Laurent~(2007) which, if true, would have implied   the above conjecture by de Klerk and Pasechnik.
\end{abstract}

\keywords{stable set problem \and $\alpha$-critical graph \and sum-of-squares polynomial \and copositive matrix \and semidefinite programming \and Shor relaxation}
\subclass{ 05Cxx; 90C22; 90C26; 90C27; 90C30; 11E25}

\section{Introduction}
The problem of computing the {\em stability number} $\alpha(G)$ of a graph $G=(V=[n],E)$, defined as the maximum cardinality of a stable set in $G$, is a central problem in combinatorial optimization with a wide range of applications (e.g., to scheduling, social networks analysis, genetics and chemistry, see \cite{Bomze-survey}, \cite{WuHao}, \cite{Hossain} and references therein).
This problem is well-known to be NP-hard \cite{Karp}, which motivates the study of tractable approximations obtained by means of linear or semidefinite relaxations.  In this paper we investigate some semidefinite bounds $\vartheta^{(r)}(G)$ ($r\in \oN$) that were introduced in \cite{dKP2002}, with a special focus on the question of understanding for which graphs the bounds are exact, especially for low order $r=0$ and $r=1$.  Exactness of the bounds is closely related to the question whether certain associated graph matrices $M_G$ admit copositivity certificates of semidefinite type or, equivalently, 
whether certain associated graph polynomials $F_G$ admit nonnegativity  certificates in terms of sums of squares.

The starting point to define these notions is the following copositive reformulation from \cite{dKP2002} for the stability number:
\begin{equation}\label{copoalpha}
\alpha(G)=\min\{t: t(I+A_G)-J \in \COP_n\}.
\end{equation}
Here, $A_G$, $I$ and $J$ denote, respectively, the adjacency matrix of $G$,  the identity matrix and  the all-ones matrix, and $\COP_n$ is the  cone of copositive matrices defined as
$$\COP_n=\{M\in \mathcal S^n: (x^{\circ 2})^TMx^{\circ 2}\ge 0\ \text{ for all } x\in \oR^n\},
$$ setting 
$x^{\circ 2}=(x_1^2,\ldots,x_n^2)$.
Since the minimum is attained in program (\ref{copoalpha})   the following {\em graph matrix }
\begin{equation}\label{graph-matrix}
M_G:=\alpha(G)(A_G+I)-J
\end{equation}
is copositive or, 
 equivalently,  the following {\em graph polynomial }
\begin{equation}\label{graph-pol}
F_G(x):=(x^{\circ2})^TM_Gx^{\circ2}
\end{equation}
is nonnegative on $ \mathbb{R}^n$. 
A natural question is whether there exist certificates for copositivity of $M_G$ based on semidefinite programming and whether there exist certificates of nonnegativity for $F_G$ based on sums of squares of polynomials. 
Such certificates can be designed using the hierarchy of inner approximations for the copositive cone $\COP_n$ proposed by Parrilo \cite{Parrilo-thesis-2000}, and defined by
\begin{equation}\label{coneKr}
\MK_n^{(r)}=\{M \in \mathcal S^n:  \big(\sum_{i=1}^nx_i^2\big)^r (x^{\circ2})^TMx^{\circ 2} \in \Sigma\}\quad \text {for } r\in \oN,
\end{equation}
 where $\Sigma$ denotes the cone of sums of squares of polynomials.
These cones satisfy 
$\MK_n^{(r)}\subseteq \MK_n^{(r+1)}\subseteq \COP_n$ and they cover the interior of the copositive cone:
\begin{equation}\label{inclusion}
\text{int}(\COP_n) \subseteq \bigcup_{r\geq0}\MK_n^{(r)}\subseteq \COP_n.
\end{equation}
Starting from the copositive formulation (\ref{copoalpha})  and using the cones $\mathcal K^{(r)}_n$, de Klerk and Pasechnik \cite{dKP2002} introduced the following hierarchy of approximations for $\alpha(G)$:
\begin{equation}\label{eqthetar}
\vartheta^{(r)}(G)=\min\{t : t(A_G+I)-J \in \MK_n^{(r)}\},
\end{equation} 
which satisfy  $\alpha(G)\leq \vartheta^{(r+1)}(G) \leq \vartheta^{(r)}(G)$ for all $r\in \mathbb{N}$ and $\lim_{r\to \infty}\vartheta^{(r)}(G)=\alpha(G)$. Note  the minimum is indeed attained in program (\ref{eqthetar}). As sums of squares of polynomials can be modelled using semidefinite programming each bound $\vartheta^{(r)}(G)$ is defined via a semidefinite program. The bound is said to be {\em exact at order} $r$ if $\vartheta^{(r)}(G)=\alpha(G)$.

Yet another useful notion is the parameter $\rrank(G)$, called the {\em $\vartheta$-rank} of $G$, which is defined in \cite{LV2021} as the smallest integer $r$ for which $\vartheta^{(r)}(G)=\alpha(G)$, setting $\rrank(G)=\infty$ if no such $r$ exists.

For clarity let us  summarize  the following links 
between the above notions: for any integer $r\in \oN$ we have
\begin{equation}\label{equiv}
M_G\in \mathcal K^{(r)}_n \Longleftrightarrow \big(\sum_{i=1} ^n x_i^2\big)^r F_G \in \Sigma\Longleftrightarrow \vartheta^{(r)}(G)=\alpha(G)\Longleftrightarrow \rrank(G)\le r.
\end{equation}
De Klerk and Pasechnik \cite{dKP2002} conjectured that the hierarchy $\vartheta^{(r)}(G)$ converges to $\alpha(G)$ in at most $\alpha(G)-1$ steps, which would show that this continuous copositive-based hierarchy has the same convergence behaviour as the Lasserre hierarchy based on discrete formulations of $\alpha(G)$ \cite{Lasserre2001b,Laurent2003}.
 In view of (\ref{equiv}) this can be reformulated as follows.

\begin{conjecture}[\cite{dKP2002}] \label{conj1}
For a graph  $G$, any of the following equivalent claims holds: (i) $M_G\in \MK_n^{(\alpha(G)-1)}$, \\ (ii) $(\sum_{i=1}^n x_i^2)^{\alpha(G)-1}F_G\in \Sigma,$\quad  (iii)  $\vartheta^{(\alpha(G)-1)}(G)=\alpha(G)$,\quad (iv) $\rrank(G)\leq \alpha(G)-1$.
\end{conjecture}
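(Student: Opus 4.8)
By the chain of equivalences in~(\ref{equiv}) it suffices to prove any one of (i)--(iv); I would target the sum-of-squares form~(ii), as it is the most concrete to manipulate. Set $\alpha:=\alpha(G)$ and $S:=\sum_{i=1}^n x_i^2$. Expanding $F_G=(x^{\circ2})^TM_Gx^{\circ2}$ with $M_G=\alpha(A_G+I)-J$ and using $(x^{\circ2})^TJx^{\circ2}=S^2$ yields the convenient reformulation
\begin{equation*}
F_G=(\alpha-1)\,S^2-2\alpha\,Q_G,\qquad Q_G:=\sum_{\{i,j\}\notin E}x_i^2x_j^2 ,
\end{equation*}
the sum being over the non-edges of $G$ (the edges of $\overline G$). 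Claim~(ii) then reads $(\alpha-1)S^{\alpha+1}-2\alpha\,S^{\alpha-1}Q_G\in\Sigma$, a sum-of-squares sharpening of the Motzkin--Straus clique inequality for $\overline G$ that already underlies the nonnegativity of $F_G$.

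I would argue by induction on $\alpha$. When $\alpha=1$ the graph is complete, so $Q_G=0$, $F_G=0$, and $\rrank(G)=0=\alpha-1$ trivially. For $\alpha\ge 2$, fix a vertex $v$ and consider $G_v:=G-N_G[v]$, the graph obtained by deleting $v$ together with all its neighbours. Since every stable set of $G_v$ extends by $v$ to a stable set of $G$, we have $\alpha(G_v)\le\alpha-1$; the induction hypothesis supplies $S_v^{\,\alpha(G_v)-1}F_{G_v}\in\Sigma$ in the variables indexed by $V(G_v)$, where $S_v:=\sum_{j\in V(G_v)}x_j^2$, and since $S_v\in\Sigma$ we may multiply to raise this to $S_v^{\,\alpha-2}F_{G_v}\in\Sigma$. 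The crux would then be a polynomial identity expressing $S^{\alpha-1}F_G$ as a nonnegative combination of the lifted subgraph certificates $x_v^2\,S_v^{\,\alpha-2}F_{G_v}$, summed over $v\in V$, plus an explicit sum-of-squares remainder accounting for the edges incident to the deleted neighbourhoods.

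The main obstacle is exactly the pathology highlighted earlier in the paper: the cones $\MK_n^{(r)}$ are not stable under the vertex operations one wants to use, so there can be no clean implication, at the level of cone membership, turning a certificate for $G_v$ into one for $G$, and the whole argument must be conducted with explicit SOS decompositions rather than with membership statements. Two specific difficulties arise. First, each subgraph certificate is homogeneous in its own $S_v$ rather than in the global $S$; replacing $S_v$ by $S$ adds the squares $x_j^2$ for $j\in N_G[v]$ and produces cross terms that must be absorbed into the remainder, which is delicate precisely because those cross terms involve the very edges of $G$ that were discarded. Second, any attempt to run the induction one vertex at a time---incrementing the SOS multiplier by a single power of $S$ per added isolated vertex---is ruled out by the failure of the Gvozdenovi\'c--Laurent preservation property established later in the paper, which is why the reduction must act on an entire neighbourhood at once and exploit the global structure of the maximum stable sets of $G$. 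Showing that the remainder is a genuine sum of squares after multiplication by $S^{\alpha-1}$ is where I expect the real work to lie, and a purely inductive scheme will most likely have to be supplemented by an explicit combinatorial construction of the multipliers tied to those maximum stable sets.
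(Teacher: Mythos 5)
You are attempting to prove what is, both in this paper and in the literature, an \emph{open conjecture}: the paper states Conjecture \ref{conj1} without proof, records that it is known only for $\alpha(G)\le 8$ (Theorem \ref{theoGLstrong}, from \cite{GL2007}), and devotes much of its content to explaining why the natural strategies for it break down. So there is no proof in the paper to compare against, and your proposal must stand on its own --- which it does not: you explicitly leave the decisive step (``showing that the remainder is a genuine sum of squares'') unproven. What precedes that point is correct but not new: the reformulation $F_G=(\alpha-1)S^2-2\alpha Q_G$ checks out, the base case $\alpha=1$ is trivial, and the neighbourhood recursion you set up is precisely the known Gvozdenovi\'c--Laurent scheme (Proposition \ref{propGL} of the paper), rediscovered rather than extended.

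The gap is not a technicality to be absorbed into a remainder term; it is exactly the obstruction this paper establishes. Your induction hypothesis yields a certificate for $G_v=G\setminus v^\perp$ of order at most $\alpha-2$ \emph{in the variables indexed by} $V(G_v)$. To use it inside an identity for $S^{\alpha-1}F_G$ you must convert it into a certificate of the same order involving the deleted variables as well; at the matrix level this amounts to membership in $\MK^{(\alpha-2)}$ of a matrix obtained by appending zero rows/columns, equivalently to the bound $\rrank\big((G\setminus v^\perp)\oplus v\big)\le\alpha-2$. The paper proves that precisely this kind of lifting fails: Theorem \ref{theo-cop} shows that adding a zero row/column to a matrix outside $\MK^{(0)}_n$ produces a matrix outside \emph{every} cone $\MK^{(r)}_{n+1}$, and Corollary \ref{cor-cycle-isolated} shows concretely that adding an isolated node can raise the $\rrank$ ($C_5\oplus\overline{K_8}$ has $\rrank$ 1 while $C_5\oplus\overline{K_9}$ has $\rrank$ 2), which is the disproof of Conjecture \ref{conjisolated}. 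Without that (now disproved) preservation property, the recursion closes only to $\rrank(G)\le 1+\max_{i}\rrank\big((G\setminus i^\perp)\oplus i\big)\le 1+(\alpha-1)=\alpha$, one worse than the conjectured bound, and for graphs that already contain isolated nodes the recursion gives nothing at all (since $(G\setminus i_0^\perp)\oplus i_0=G$ for an isolated node $i_0$). You acknowledge this obstacle in your closing paragraph, but acknowledging it is not overcoming it: as written, your proposal is a restatement of a known reduction together with the observation that its missing step is hard, and that missing step is the entire content of the conjecture.
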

The weaker conjecture asking  whether finite convergence holds at {\em some} order $r\in \oN$ is also open.
\begin{conjecture}[\cite{LV2021}]\label{conj2}
For a graph  $G$, any of the following equivalent claims holds: (i) $M_G\in\bigcup_{r\in \oN}\mathcal K^{(r)}_n$,
\\ (ii)  $(\sum_{i=1}^n x_i^2)^{r}F_G\in \Sigma$ for some $r\in\oN$,\quad
(iii) $\vartheta^{(r)}(G)=\alpha(G)$ for some $r\in\oN$,\quad (iv)   $\rrank(G)< \infty$.
  \end{conjecture}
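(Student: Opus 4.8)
The plan is to prove the conjecture in its analytic form (ii): for every graph $G$ there is some $r\in\oN$ with $\big(\sum_{i=1}^n x_i^2\big)^r F_G\in\Sigma$, which by (\ref{equiv}) is then equivalent to all four claims holding. The essential difficulty — the reason this is not covered by the generic inclusion (\ref{inclusion}) — is that the minimum in (\ref{copoalpha}) is attained, so $M_G$ sits on the \emph{boundary} of $\COP_n$ and $F_G$ has genuine real zeros. Consequently Reznick's theorem (which supplies a finite multiplier $r$ for a strictly \emph{positive} form) cannot be applied directly, and the entire obstruction is concentrated on the real zero set of $F_G$ on the unit sphere $S^{n-1}$. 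My plan is therefore to (a) describe this zero set precisely, (b) prove a local second-order nondegeneracy property at each zero, and (c) feed this into a local-global sum-of-squares principle that manufactures a single finite multiplier $\big(\sum_i x_i^2\big)^r$.

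First I would determine the zero set of $F_G$ on $S^{n-1}$. Writing $y=x^{\circ 2}\ge 0$, a zero corresponds to a minimizer $y$ of the standard quadratic program $\min\{y^TM_Gy: y\ge 0,\ \sum_i y_i=1\}$, whose optimal value is $0$. One checks directly that for every maximum stable set $S$ the scaled indicator vector $\mathbf 1_S/\alpha(G)$ is such a minimizer, yielding the zeros $x$ with $x_i^2=1/\alpha(G)$ for $i\in S$ and $x_i=0$ otherwise, together with all their sign flips $x_i\mapsto -x_i$. The real content of this step is to describe the \emph{full} minimizer set: in favourable cases it consists of exactly these finitely many isolated points, but when several maximum stable sets share vertices it may contain convex combinations or positive-dimensional pieces, and pinning this down is already part of the difficulty.

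The second step is the local analysis. At each zero I would compute the Hessian of $F_G$ restricted to the tangent space of $S^{n-1}$, after quotienting out the sign symmetries, and show it is positive definite — a ``boundary Hessian''/nondegeneracy condition. Concretely this reduces to reading off, from the block structure of $M_G$ indexed by $S$, the neighbourhood of $S$, and the remaining vertices, that $F_G$ grows quadratically transverse to the zero directions. Granting nondegeneracy at every zero, I would then invoke a local-global principle of Scheiderer type: a nonnegative form whose finitely many projective zeros each satisfy the nondegeneracy condition lies in the Pólya–Reznick multiplier closure, so $\big(\sum_i x_i^2\big)^r F_G\in\Sigma$ for some finite $r$, the certificate being assembled by patching local sum-of-squares models near each zero with a positive-definite (Reznick) certificate on the complementary region where $F_G$ is bounded below. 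This would give $\rrank(G)<\infty$ and prove the conjecture.

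The hard part will be the nondegeneracy condition, and I expect it to be exactly where a general proof resists. When a graph has many overlapping maximum stable sets, the minimizer set can acquire higher-dimensional components or the transverse Hessian can become singular, and then the clean local-global mechanism fails; this is consistent with the delicate behaviour of the cones $\MK^{(r)}_n$ under elementary operations established elsewhere in the paper (e.g.\ the jump in $\vartheta$-rank under adding an isolated node). A realistic programme is thus to first prove finite convergence under a structural hypothesis forcing nondegenerate zeros — for instance graphs whose maximum stable sets are ``sufficiently disjoint'', with the perfect and vertex-transitive cases as sanity checks — and then to confront the degenerate configurations either by a desingularization argument at the bad zeros or by an inductive graph-reduction scheme that peels off the offending structure while controlling the rank. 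One should keep in view that the nondegeneracy could genuinely fail for some graph, which would instead point toward the conjecture being false.
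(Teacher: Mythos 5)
You have not proved the statement, and you could not have by comparison with the paper: the statement is Conjecture~\ref{conj2}, which the paper explicitly leaves open (``The weaker conjecture asking whether finite convergence holds at \emph{some} order $r\in\oN$ is also open''). The paper contains no proof of it, so the only question is whether your argument closes the gap, and it does not --- your text is a conditional programme whose two decisive steps, (b) nondegeneracy and (c) the local-global principle, are exactly the points you yourself concede may fail. Concretely, step (b) fails for precisely the graphs that matter. By Theorem~\ref{minimizers-LV}, the zero set of $F_G$ (equivalently, the minimizer set of the Motzkin--Straus program) is finite if and only if $G$ is acritical, and in the presence of a critical edge it contains positive-dimensional pieces: if $j\notin S$ has a unique neighbor $u$ in a maximum stable set $S$ (equivalently, $\{j,u\}$ is critical), then $F_G$ vanishes to fourth order in the direction $x_j$, so every transverse-Hessian hypothesis is violated --- already for $C_5$ (cf.\ the one-parameter families of zeros such as $(1,1,0,2,0)$ in Example~\ref{example-5-cycle}). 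Meanwhile, Lemma~\ref{del_acritical} reduces Conjecture~\ref{conj2} to critical graphs, and the acritical case --- the only one where your finitely-many-nondegenerate-zeros mechanism could engage --- is already settled in \cite{LV2021}. So the unconditional content of your programme reproduces what is known and is silent on the essential case.

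Worse, the paper's own Section~\ref{parrilo-cones} shows that the zero-set geometry you propose to exploit cannot, by itself, drive any such argument for copositive forms: Theorem~\ref{theo-cop} and the example following it exhibit copositive matrices $M$ for which $x^TMx$ has a \emph{unique} zero in the simplex and yet $M\notin\bigcup_{r}\MK^{(r)}_n$. The obstruction there (Lemma~\ref{least-degree}) is that the lowest-degree homogeneous localization of the form at the zero is itself not a sum of squares --- a fourth-order degeneracy of exactly the kind that appears at the critical-edge zeros of $F_G$. This tells you that once nondegeneracy fails, your fallback of ``desingularization at the bad zeros'' cannot succeed on general grounds; any proof must use structure specific to graph matrices, not generic local models. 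The paper's disproof of Conjecture~\ref{conjisolated} (adding an isolated node to $C_5\oplus\overline{K_8}$, which has $\rrank$ 1, yields $\rrank\ge 2$; Corollary~\ref{cor-cycle-isolated}) further illustrates that $\rrank$ is not controlled by the local geometry of the zero set in the way a patching argument would require. In short: step (a) of your plan is already known (Theorem~\ref{minimizers-LV}), step (b) provably fails for all critical graphs, and step (c) is inapplicable without (b); the conjecture remains open.
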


Let us recap some of the main known results about these conjectures. In \cite{GL2007} Conjecture 1 was shown to hold for all graphs with $\alpha(G)\leq 8$ (see also  \cite{PVZ2007} for the case $\alpha(G)\le 6$).
In \cite{LV2021} it was observed that it suffices to prove both Conjectures~1 and  2 for the class of critical graphs, i.e., for the graphs $G$ satisfying $\alpha(G\setminus e)=\alpha(G)+1$ for all edges $e$ of $G$.
 In addition, it is shown in \cite{LV2021}  that Conjecture 2 holds for acritical graphs, i.e., for the graphs $G$ satisfying  $\alpha(G\setminus e)=\alpha(G)$ for all edges. 

\paragraph*{Some possible directions for resolving Conjectures \ref{conj1} and \ref{conj2}.}
In what follows we mention some possible strategies that could be followed to attack the above two conjectures along with their pitfalls.

A first idea is to investigate whether one can exploit the fact that any graph matrix $M_G$ has its diagonal entries that all take the same value (equal to $\alpha(G)-1$). 
Indeed  it is conjectured in \cite{DDGH} that any copositive matrix with diagonal entries 0 or 1 belongs to some cone $\MK_n^{(r)}$ and it is shown that this is true for matrix size $n=5$ (with $r=1$ in that case).
Hence a positive answer to this conjecture would immediately imply that $M_G $ belongs to some cone $\mathcal K^{(r)}_n$ and thus settle Conjecture~\ref{conj2}.  However, we will disprove the above conjecture from \cite{DDGH} for  matrix size $n\geq 6$ (see Section~\ref{parrilo-cones}).  In particular, this shows that the inclusion $\bigcup_{r\geq0}\MK_n^{(r)}\subseteq \COP_n$ in (\ref{inclusion}) is strict for any $n\geq 6$.

A second possible strategy is to consider the impact of adding an isolated node. 
Let $G\oplus i_0$ denote the graph obtained by adding $i_0$ as an isolated node to $G$.
Consider the following two conjectures. 

\begin{conjecture}[\cite{GL2007}]\label{conjisolated}
For any graph $G$, we have $\rrank(G\oplus i_0)\le \rrank (G)$.
\end{conjecture}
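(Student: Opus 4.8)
The plan is to reduce the conjecture to a single inequality and then attempt to lift a sum-of-squares certificate. Since $\alpha(G\oplus i_0)=\alpha(G)+1$, and since $\vartheta^{(r)}(G\oplus i_0)\ge\alpha(G\oplus i_0)$ always holds, the conjecture follows as soon as one proves the upper bound
\[
\vartheta^{(r)}(G\oplus i_0)\le \vartheta^{(r)}(G)+1\qquad\text{at } r=\rrank(G).
\]
Indeed, at $r=\rrank(G)$ we have $\vartheta^{(r)}(G)=\alpha(G)$, so this bound forces $\vartheta^{(r)}(G\oplus i_0)=\alpha(G)+1=\alpha(G\oplus i_0)$, i.e.\ $\rrank(G\oplus i_0)\le r=\rrank(G)$. (In fact this is the upper-bound half of the cleaner additivity statement $\vartheta^{(r)}(G\oplus i_0)=\vartheta^{(r)}(G)+1$.) By the min-formulation (\ref{eqthetar}) and the equivalences (\ref{equiv}), writing $t:=\vartheta^{(r)}(G)$ and $N:=t(A_G+I)-J_n$, the task becomes the certificate-lifting implication: $N\in\MK_n^{(r)}\Rightarrow \widetilde N:=(t+1)(A_{G\oplus i_0}+I)-J_{n+1}\in\MK_{n+1}^{(r)}$.

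First I would record the block structure. Letting $x_{n+1}$ be the variable of the new node and $s:=\sum_{i=1}^n x_i^2$, a direct computation gives
\[
\widetilde N=\begin{pmatrix} N+(A_G+I) & -\mathbf 1\\ -\mathbf 1^T & t\end{pmatrix}.
\]
Setting $g_G:=(x^{\circ2})^T(A_G+I)x^{\circ2}=\sum_{i=1}^n x_i^4+2\sum_{ij\in E}x_i^2x_j^2\in\Sigma$ and $F_N:=(x^{\circ2})^TNx^{\circ2}=t\,g_G-s^2$, the associated polynomial is
\[
\widetilde F:=(x^{\circ2},x_{n+1}^2)^T\,\widetilde N\,(x^{\circ2},x_{n+1}^2)=F_N+g_G-2x_{n+1}^2 s+t\,x_{n+1}^4 .
\]
Thus by (\ref{equiv}) the hypothesis reads $s^r F_N\in\Sigma$, and the goal reads $(s+x_{n+1}^2)^r\widetilde F\in\Sigma$.

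Next I would try to build the lifted certificate from the given one. The natural first move is to complete squares in $x_{n+1}$: since $\sum_i(x_i^2-x_{n+1}^2)^2=\sum_i x_i^4-2x_{n+1}^2 s+n\,x_{n+1}^4$, the ``new'' part of $\widetilde F$ becomes
\[
g_G-2x_{n+1}^2 s+t\,x_{n+1}^4=\sum_{i=1}^n(x_i^2-x_{n+1}^2)^2+2\sum_{ij\in E}x_i^2x_j^2-(n-t)\,x_{n+1}^4,
\]
an explicit sum of squares minus the deficit $(n-t)x_{n+1}^4$. For $r=0$ this is the whole story and the deficit can be absorbed by a global argument: $\vartheta^{(0)}$ is a theta-type SDP bound and is additive under disjoint union, so the base case $\vartheta^{(0)}(G\oplus i_0)=\vartheta^{(0)}(G)+1$ holds (equivalently, one exhibits the required $P\succeq0$, $N'\ge0$ decomposition of $\widetilde N$, e.g.\ peeling off the rank-one block $ww^T$ with $w=(\mathbf 1;-1)$). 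For $r\ge1$ one would expand $(s+x_{n+1}^2)^r=\sum_{k=0}^r\binom{r}{k}s^{r-k}x_{n+1}^{2k}$, certify the leading term $s^rF_N\in\Sigma$ from the hypothesis, and hope the multiplier supplies enough $x_{n+1}$-mass to turn the remaining square-completed expression, deficit included, into an SOS.

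The hard part — and, I expect, the genuine obstruction — is exactly this last step. The expansion produces mixed terms $\binom{r}{k}\,x_{n+1}^{2k}\,\big(s^{r-k}F_N\big)$ for $k\ge1$, and to certify these as SOS one would need $s^{r-k}F_N\in\Sigma$, i.e.\ $N\in\MK_n^{(r-k)}$. But the cones are nested the wrong way ($\MK_n^{(r-k)}\subseteq\MK_n^{(r)}$, with strict inclusion in general), so membership at level $r$ yields no certificate at the lower level $r-k$: the hypothesis simply does not decompose these pieces, and nothing forces the negative deficit $-(n-t)x_{n+1}^4$ to cancel. This is precisely the ``bad behaviour of $\MK_n^{(r)}$ under adding a variable'' flagged in the introduction, and I would expect the argument to stall here for $r\ge1$. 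Since the paper announces a disproof via non-preservation of $\vartheta^{(1)}$-exactness, I anticipate that this is not an artefact of the chosen certificate but a real failure of additivity: for a suitable $G$ with $\vartheta^{(1)}(G)=\alpha(G)$ one should have $\vartheta^{(1)}(G\oplus i_0)>\alpha(G)+1$, so that no lifting of the above kind can exist.
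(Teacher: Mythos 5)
Your proposal does not (and could not) prove the statement: this is a conjecture of Gvozdenovi\'c and Laurent which the paper \emph{disproves}. You correctly sensed this---your closing paragraph anticipates exactly the failure mode the paper establishes---so the issue is not that you pursued a false statement blindly, but that your write-up stops at the point where the actual mathematical work begins. Your reduction of the conjecture to the inequality $\vartheta^{(r)}(G\oplus i_0)\le\vartheta^{(r)}(G)+1$ at $r=\rrank(G)$ is sound, your block computation of $\widetilde N$ and $\widetilde F$ is correct, and your diagnosis of the obstruction (the expansion of $(s+x_{n+1}^2)^r\widetilde F$ produces terms $x_{n+1}^{2k}\,s^{r-k}F_N$ that would require certificates at the \emph{lower} levels $r-k$, which membership at level $r$ does not supply, the cones being nested the other way) is a fair account of why naive certificate lifting fails for $r\ge1$. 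But the failure of one lifting strategy proves nothing in either direction; to settle the statement one must exhibit a concrete graph $G$ with $\rrank(G\oplus i_0)>\rrank(G)$, and this your proposal defers entirely to the paper's own announcement.

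The paper's actual treatment (Section \ref{sec-k1}) is a genuine disproof, and its machinery is absent from your proposal. The key steps are: (1) by the Dobre--Vera symmetrization (Proposition \ref{k_1_sym}) one may assume a $\MK^{(1)}$-certificate for $M_{H\oplus\overline{K_{\alpha-k}}}$ is invariant under permutations of the isolated nodes; (2) the kernel conditions of Lemma \ref{kernel-k1}, applied to the many maximum stable sets produced by the isolated nodes together with the critical edges of $H$, force the certificate matrices $P(i)$ into an essentially explicit form (Lemmas \ref{P[W]}, \ref{struc-1}, \ref{P_v}); (3) positive semidefiniteness of the resulting submatrix $P(v)[W\cup\{v\}]$ is then equivalent to the numerical inequality $\alpha\le k+4+4/(k-1)$ (Lemma \ref{matrix_P_v-psd}, Theorem \ref{theo-ineq}); and (4) a matching explicit construction (Theorem \ref{adding_iso}) shows this bound is attained. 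For $H=C_5$ (so $k=2$) this gives $\rrank(C_5\oplus\overline{K_8})=1$ while $\rrank(C_5\oplus\overline{K_9})\ge2$ (Corollary \ref{cor-cycle-isolated}), which is the counterexample refuting the conjecture---indeed in precisely the form you predicted, $\vartheta^{(1)}(G)=\alpha(G)$ but $\vartheta^{(1)}(G\oplus i_0)>\alpha(G)+1$. So the verdict: your negative intuition is right and your obstruction analysis is correct as far as it goes, but the proposal contains no disproof; the certificate-rigidity argument (symmetry plus kernel constraints pinning down the $P(i)$, followed by a PSD feasibility bound) is the missing content.
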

\begin{conjecture}\label{conjfinite}
For any graph $G$, $\rrank(G)<\infty$  implies  $\rrank(G\oplus i_0)<\infty$.
\end{conjecture}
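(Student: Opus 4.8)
The plan is to reduce the statement to a sum-of-squares claim and then transfer a certificate from $G$ to $G\oplus i_0$. Write $a=\alpha(G)$, so that $\alpha(G\oplus i_0)=a+1$, and let the variable attached to the isolated node $i_0$ be $y$, with $N:=\sum_{i=1}^n x_i^2$ and $q:=N-ay^2$. A direct computation with $M_{G\oplus i_0}=(a+1)(A_{G\oplus i_0}+I)-J$ yields
\begin{equation}\label{eq:isoid}
F_{G\oplus i_0}(x,y)=\tfrac{a+1}{a}\,F_G(x)+\tfrac1a\,q^2,
\end{equation}
or, at the matrix level, $M_{G\oplus i_0}=\tfrac{a+1}{a}\,(M_G\oplus 0)+\tfrac1a\,vv^T$ with $v=(\mathbf 1_n,-a)^T$. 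By the equivalences \eqref{equiv}, the hypothesis $\rrank(G)<\infty$ says that $N^rF_G\in\Sigma$ for some $r$, and the goal is to find $s$ with $(N+y^2)^sF_{G\oplus i_0}\in\Sigma$, that is, $M_{G\oplus i_0}\in\MK_{n+1}^{(s)}$.

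The difficulty is already visible in the matrix form of \eqref{eq:isoid}: the summand $M_G\oplus 0$ is exactly $M_G$ bordered by a zero row and column, the operation shown in Section~\ref{parrilo-cones} to destroy membership in every cone $\MK^{(r)}$ once $M_G\notin\MK^{(0)}$. Thus \eqref{eq:isoid} expresses $M_{G\oplus i_0}$ as a positive semidefinite rank-one matrix plus a matrix lying in no Parrilo cone, and the two summands cannot be certified separately. The same obstruction reappears on the polynomial side: the top-$y$-degree part of a sum of squares is again a sum of squares, and the top-$y$-degree part of $(N+y^2)^sF_G$ is $y^{2s}F_G$; since $y^{2s}F_G\in\Sigma$ forces $F_G\in\Sigma$ (divide by $y^2$ after setting $y=0$), the multiplier $(N+y^2)^s$ can never turn $F_G$ alone into a sum of squares unless $\rrank(G)=0$. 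Hence the perfect-square term $\tfrac1a q^2$ must genuinely interact with the $F_G$-part of the certificate.

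To build this coupled certificate I would lift the certificate of $G$ along the common zero locus of $F_G$ and $q$. Expanding $(N+y^2)^s=\sum_k\binom sk N^{s-k}y^{2k}$ splits $(N+y^2)^sF_G$ into the terms with $s-k\ge r$, each a square $y^{2k}$ times the sum of squares $N^{s-k-r}\!\cdot\!N^rF_G$, plus a remainder gathering the terms with $s-k<r$. Substituting $y^2=\tfrac1a(N-q)$ rewrites every offending monomial $y^{2k}N^{s-k}F_G$ as $\tfrac1{a^k}N^sF_G$ plus a combination of terms $N^{s-j}q^jF_G$ with $j\ge1$. The base term $N^sF_G$ is a sum of squares for $s\ge r$; the remaining $N^{s-j}q^jF_G$, together with the explicit summand $\tfrac1a(N+y^2)^sq^2$, form a remainder $R_s$, and the plan is to show $R_s\in\Sigma$ for $s$ large. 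The key structural input is that the common zero locus of $F_G$ and $q$ is exactly the set of (scaled) characteristic vectors of the maximum stable sets of $G\oplus i_0$, namely the inherited zeros of $F_G$ together with the isolated node; away from this locus $q^2>0$, while on it $F_G$ vanishes to the order already certified by $N^rF_G\in\Sigma$.

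The main obstacle is precisely the quantitative control of $R_s$. Because $F_{G\oplus i_0}$ has real zeros, Reznick's and P\'olya's theorems do not apply and no uniform power of $N+y^2$ is a priori guaranteed to yield a sum of squares; moreover the sign-indefinite odd powers of $q$ produced in the rewriting must be dominated near the zeros, not merely on a compact set bounded away from them. Showing that, for $s$ large, the weight the multiplier places on $q^2$ dominates these cross terms while the inherited certificate of $G$ supplies the vanishing of $F_G$ at each maximum stable set --- equivalently, upgrading the certificate of $G$ to one matching the degenerate behaviour of $F_{G\oplus i_0}$ at every maximum stable set of $G\oplus i_0$ --- is the crux of the argument, and it is of the same nature as the finite-convergence question of Conjecture~\ref{conj2} itself.
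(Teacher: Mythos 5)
Your proposal is not a proof, and you correctly diagnose why: the final step (showing the remainder $R_s\in\Sigma$ for large $s$) is missing, and no argument of the kind you sketch can supply it cheaply. The statement you were asked to prove is in fact stated in the paper as an \emph{open conjecture} (Conjecture~\ref{conjfinite}); the paper offers no proof of it. What the paper does prove (Proposition~\ref{adding-iso-rk-f}) is that this conjecture is \emph{equivalent} to Conjecture~\ref{conj2}: the ``only if'' direction is trivial, and the ``if'' direction follows from a minimal-counterexample argument combined with Proposition~\ref{propGL}. So your closing admission --- that the crux of your argument ``is of the same nature as the finite-convergence question of Conjecture~\ref{conj2} itself'' --- is not merely a heuristic concern; it is a theorem of the paper that any completion of your argument would resolve Conjecture~\ref{conj2} in full. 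The gap is therefore genuine and, given current knowledge, unfillable by the route you propose (or any other known route).

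That said, the partial content of your proposal is sound and matches the paper's own analysis. Your identity $M_{G\oplus i_0}=\tfrac{a+1}{a}(M_G\oplus 0)+\tfrac1a vv^T$ is exactly the paper's decomposition (\ref{M_G-isolated0}); your observation that the two summands cannot be certified separately, because bordering $M_G\notin\MK^{(0)}_n$ by a zero row/column produces a matrix outside every cone $\MK^{(r)}_{n+1}$, is precisely Theorem~\ref{theo-cop}, and your top-degree-in-$y$ argument is the same device as the paper's Lemma~\ref{least-degree} (extraction of an extreme homogeneous part of a sum of squares). Your further observations --- that P\'olya/Reznick-type uniform denominators are unavailable because $F_{G\oplus i_0}$ has nontrivial real zeros, and that any certificate must couple the square $\tfrac1a q^2$ with the $F_G$-part along the common zero locus --- are accurate and explain why the problem is hard. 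In short: you rediscovered the content of Proposition~\ref{adding-iso-rk-f} rather than a proof of Conjecture~\ref{conjfinite}, which is the best that can honestly be done here; but the proposal should be assessed as an incomplete reduction, not as a proof.
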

Conjecture \ref{conjisolated}  is  in fact posed in \cite{GL2007} in a more general form (see \cite[Conjecture 4]{GL2007}.
 In addition,  it is shown in \cite{GL2007}  that Conjecture \ref{conjisolated} implies Conjecture \ref{conj1}.
We will show that Conjecture \ref{conjfinite} is in fact equivalent to  Conjecture \ref{conj2} (see Proposition \ref{adding-iso-rk-f}). 
In an attempt to relate  $\rrank(G\oplus i_0)$ and $\rrank(G)$ let us consider the following decomposition of the graph matrices,   proposed  in \cite{GL2007}, where we set  $\alpha:=\alpha(G)$ so that $\alpha(G\oplus i_0)=\alpha+1$:
\begin{equation}\label{M_G-isolated0}
M_{G\oplus i_0}
=\left(\begin{matrix} \alpha  & -1 \cr -1 & (\alpha+1) (I+A_{G})-J\end{matrix}\right)
= 
\left(\begin{matrix} \alpha  & -1 \cr -1 & {1\over \alpha}J\end{matrix}\right)
+
{\alpha+1\over \alpha}
\left(\begin{matrix} 0 & 0 \cr 0 & \alpha(I+A_{G})-J\end{matrix}\right).
\end{equation}
If the operation of adding a zero row/column  preserves  membership in the cones $\MK^{(r)}$ then, in view of  (\ref{M_G-isolated0}), it would immediately follow that 
$M_G\in \mathcal K^{(r)}_n$ implies $M_{G\oplus i_0}\in \mathcal K^{(r)}_{n+1}$, which would show Conjecture \ref{conjisolated} (and thus also Conjecture \ref{conj1}). In addition, if adding a zero row/column  preserves  membership in the union $\bigcup_{r\in\oN}\mathcal K^{(r)}$, then again in view of (\ref{M_G-isolated0}), Conjecture \ref{conjfinite} would be true and thus Conjecture \ref{conj2} too.
However, adding
 a zero row/column does {\em not} in general preserve membership in the cones $ \mathcal K^{(r)}$ for a given order $r\ge 1$ (while this is clearly true for order $r=0$); this was observed (numerically) for order $r=1$ using the graph matrix $M_{C_5}\in \mathcal K^{(1)}_5$ of the $5$-cycle (see \cite{dKP2002}).
We will show that also the second property fails:  adding a zero row/column to a matrix $M\in \bigcup_{r\in \oN}\mathcal K^{(r)}_n\setminus  \MK_n^{(0)}$ produces a matrix that does not belong to the union $\bigcup_{r\in \oN} \mathcal K^{(r)}_{n+1}$ (see Theorem \ref{theo-cop}).

Motivated by the above observations, our focus in this paper is to  investigate the following topics: the impact of adding a zero row/column to a matrix in $\bigcup_{r\in\oN}\mathcal K^{(r)}_n\setminus \mathcal K^{(0)}_n$ (in Section \ref{parrilo-cones}), the behaviour of the $\rrank$ under  some simple graph operations in relation to Conjectures \ref{conj1} and \ref{conj2} (in Section \ref{pre-iso-critical}),  structural properties of the graphs with  $\rrank(G)=0$ (in Section \ref{sec-rank-0}), and   the impact of adding an isolated node to a graph $G$ with $\rrank (G)=1$ (in Section~\ref{sec-k1}). We now give some more details about the last two topics. 

\paragraph*{Graphs with small $\rrank$ 0 or 1.}
In order to investigate the graphs with small $\rrank (G)=0$ or 1 we will use the explicit characterizations of the cones $\mathcal K^{(0)}_n$ and $\mathcal K^{(1)}_n$ provided by Parrilo \cite{Parrilo-thesis-2000}.
There it is shown that a matrix $M\in \mathcal S^n$ belongs to $\MK^{(0)}_n$ if and only if $M$ admits a decomposition $M=P+N$ with  $P\succeq 0$, $N\geq 0$ and $N_{ii}=0$ for all $i\in [n]$; we call such matrix $P$ a {\em $\kzeroc$ for $M$}.
This in particular permits to show that the bound $\vartheta^{(0)}(G)$ coincides with the bound $\vartheta'(G)$, which is the Lovasz' theta number strenghtened by adding a nonnegativity constraint (see \cite{dKP2002}).   Parrilo \cite{Parrilo-thesis-2000} also showed that $M\in \MK_n^{(1)}$ if and only if there exist positive semidefinite matrices $P(1), P(2), \dots, P(n)$ satisfying certain linear constraints (see Lemma \ref{lemK1}); we say that such matrices form a {\em $\konec$ for $M$}. We exploit the structure of the zeros of the quadratic form $x^TMx$ to obtain information about the kernels of  $\MK^{(0)}$- and $\konec$s for $M$. This information plays a crucial  role 
in our study of the graphs with $\rrank(G)=0$ or 1, i.e, for which $M_G$ belongs to $\MK_n^{(0)}$ or $\MK_n^{(1)}$. In some cases it permits to show uniqueness of the certificates, a useful property for the study of the $\rrank$. As an example,  the graph matrix $M_{C_5}$ of the 5-cycle has a unique $\mathcal K^{(1)}$-certificate and this uniqueness  property  permits to characterize the diagonal scalings of $M_{C_5}$ that belong to $\mathcal K^{(1)}_5$ (see Section~\ref{secscalingC5}).

Our main results are as follows. We characterize the critical graphs with $\rrank$ 0 as the disjoint unions of cliques,  and we reduce the problem of deciding whether a graph has $\rrank$ 0 to the same problem for the class of acritical graphs (see Section~\ref{sec-rank-0}). This reduction can be done in polynomial time for the class of graphs $G$ with fixed value of $\alpha(G)$. In addition we show  that adding an isolated node to a graph with $\rrank$ 1 may produce a graph with $\rrank$ at least 2, thus disproving Conjecture \ref{conjisolated} above. 
We also  characterize the maximum number of isolated nodes that can be added to some graphs with $\rrank$ 1 (such as odd cycles and their complements) while preserving the $\rrank$ 1 property (see Section~\ref{sec-k1}). For example, for the graph $C_5$  this maximum number of nodes is shown to be equal to 8.
Here too we will exploit uniqueness properties of some of the matrices arising in $\mathcal K^{(1)}$-certificates.

The study of the graphs with  $\rrank$ 0 is also relevant to the question of understanding when the basic semidefinite relaxation (also known as the Shor relaxation) of  a quadratic (or, more generally, polynomial) optimization problem is exact. This question has received increased attention in the recent years. We refer, e.g., to the works \cite{BY2020,GY2021,WK2021} (and references therein), which investigate this question for various classes of quadratic problems, such as random instances in \cite{BY2020} and standard quadratic programs in \cite{GY2021}.
In fact, thanks to a reformulation of $\alpha(G)$  as the optimum value of a suitable  polynomial optimization problem (involving degree $2r+2$ forms), it turns out that the parameter $\vartheta^{(r)}(G)$ can also be viewed as the optimum value of the Shor relaxation of this polynomial optimization problem (see \cite[Section 6.3]{GL2007}). Hence, also Conjectures \ref{conj1} and  \ref{conj2} can be seen in the light of understanding exactness of  Shor relaxations.
 
Yet another  motivation for the study of the graphs with $\rrank$ 0 comes from its relevance to fundamental questions in complexity theory.
Deciding whether a graph $G$ has $\rrank(G)=0$  indeed amounts to deciding whether the polynomial $F_G(x)=(x^{\circ2})^TM_Gx^{\circ2}$ is a sum of squares, i.e, whether  an associated  semidefinite program is feasible. Equivalently, as mentioned above, $\rrank(G)=0$ if and only if  there exists a positive semidefinite matrix $P\in \mathcal S^n$ satisfying the linear constraints: $P_{ii}=\alpha(G)-1$ for $i\in V$ 
and $P_{i,j}\leq -1$ for $\{i,j\}\notin E$, which thus again asks about the feasibility of a semidefinite program. Recall that  the complexity status of deciding feasibility of a semidefinite program is still unknown. On the positive side it was shown in \cite{PorkolabKhachiyan} that one can test feasibility of a semidefinite program involving matrices of size $n$ and with $m$ linear constraints  in polynomial time when $n$ or $m$ is fixed.
In addition,   it was shown in \cite{Ramana} that this problem  belongs to the class NP if and only if it belongs to co-NP.  Understanding the complexity status for the class of semidefinite programs related to the question of testing whether $\rrank(G)=0$ offers a rich playground to be explored later. 
 
\ignore{
\noindent  Given a graph $G=(V,E)$, its {\em{stability number}}, denoted by $\alpha(G)$ is the largest cardinality of a stable set in $G$. A starting point to define hierarchies of approximations for the stability number is the following formulation by Motzkin and Straus \cite{motzkin}, which expresses $\alpha(G)$ via quadratic optimization over the standard simplex $\Delta_n=\{x\in \mathbb{R}^n \text{ : } x\geq0, \sum_{i=1}^{n}x_i=1\}$:
\begin{align*}\label{motzkin-form}\tag{M-S}
    \frac{1}{\alpha(G)}=\min \{x^T(A_G+I)x: x\in\Delta_n\},
\end{align*}
where $A_G$ is the adjacency matrix of $G$.

Based on (\ref{motzkin-form}),  de Klerk and Pasechnik \cite{dKP2002} proposed the following reformulation:
\begin{align}
    \alpha(G)=  \min  \{t: x^T(t(I+A_G)-J)x \geq 0 \hspace{0.2cm} \text{ for all } x\in \mathbb{R}^n_+\},
\end{align}
which boils down to linear optimization over the copositive cone
$$
\COP_n:=\{M\in \mathcal S^n: x^TMx\ge 0 \ \forall x\in \oR^n_+\}.$$

For approximating $\COP_n$ ,Parrilo \cite{Parrilo-thesis-2000} introduced the  cones 
 \begin{equation}\label{eqCKnr}
 \MK^{(r)}_n=\Big\{M\in \MS^n: \Big(\sum_{i=1}^nx_i^2\Big)^r P_M(x) \in\Sigma_{}\Big\},
 \end{equation}
where $P_M(x)=(x^{\circ2})^TMx^{\circ2}$. De Klerk and Pasechnik \cite{dKP2002} used these cones to define the following parameters for approximating $\alpha(G)$: 
\begin{equation}\label{eqzetathetar}
\vartheta^{(r)}(G)= \min \{ t:  t(I+A_G)-J\in \MK^{(r)}_n\}.
\end{equation}
Moreover, they conjectured convergence to $\alpha(G)$ in $r=\alpha(G)-1$ steps. Define the $\rrank(G)$ as the smallest $r\in\mathbb{N}$ such that $\vartheta^{(r)}(G)=\alpha(G)$.
\begin{conjecture}[\cite{dKP2002}]\label{conj1}
Let $G$ be a graph then $\vartheta^{(r)}(G)=\alpha(G)$ for any $r\geq\alpha(G)-1$. That is, $\rrank(G)\leq \alpha(G)-1$.
\end{conjecture}
It is not know whether finite convergence holds.
\begin{conjecture}\label{conj2}
Let $G$ be a graph then there exists $r\in \mathbb{N}$ such that $\vartheta^{(r)}(G)=\alpha(G)$. That is, $\rrank(G)<\infty$.
\end{conjecture}
Let us recap some known results about the above conjectures. De Klerk and Pasechnik proved in \cite{dKP2002} that $\vartheta^{(r)}(G)<\alpha(G)+1$ if $r\geq \alpha(G)^{2}$. That is, it is possible to recover $\alpha(G)$, by rounding, after $\alpha(G)^2$ steps. Conjecture \ref{conj1} claims that it is not necessary to round and moreover that $\alpha(G)-1$ steps are enough.  Gvozdenovi\'c et. al \cite{GL2007} proved that Conjecture \ref{conj1} holds for every graph with $\alpha(G)\leq 8$. 

\MoL{Recall the sandwich inequality}

\subsubsection*{Critical edges}
An edge $e$ of a graph $G$ is critical if $\alpha(G\setminus e)=\alpha(G)+1$. We say that a graph is critical if all its edges are critical and acritical if it does not have critical edges. In a recent work it was shown that criticality plays a crucial role in the analysis of the conjectures. First, it suffices to prove Conjecture \ref{conj1} and \ref{conj2} for the class of critical graphs. On the other hand, we proved Conjecture \ref{conj2} for the class of acritical graphs. 
}

\paragraph*{Organization of the paper.}
The paper is organized as follows. In Section \ref{preliminaries} we group some preliminary results. In particular, we recall the characterization of the cones $\MK_n^{(0)}$ and $\MK_n^{(1)}$ from \cite{Parrilo-thesis-2000} and we give some structural properties of the matrices  arising in $\mathcal K^{(0)}$- and $\mathcal K^{(1)}$-certificates for  membership in these cones. 
We also  recall a characterization for the minimizers of the Motzkin-Straus formulation  (\ref{motzkin-form}) for $\alpha(G)$.
In Section \ref{parrilo-cones}, we  provide explicit constructions showing that adding a zero row/column to a matrix in $\bigcup_{r\ge 0}\MK^{(r)}_n\setminus \mathcal K^{(0)}_n$ may produce a matrix in $\COP_{n+1}\setminus \bigcup_{r\ge 0} \MK^{(r)}_{n+1}$, thereby showing strict  inclusion $\bigcup_{r\ge 0}\MK^{(r)}_n \subset \COP_n$ for any $n\ge 6$.
We also construct copositive matrices with an all-ones diagonal that do not belong to any cone $\MK^{(r)}_n$ for $n\ge 7$, thereby disproving a conjecture from \cite{DDGH}. Exploiting the fact that the graph matrix $M_{C_5}$ admits a unique $\mathcal K^{(1)}$-certificate, we can characterize the diagonal scalings of  $M_{C_5}$ that still belong to $\mathcal K^{(1)}_5$.
 In Section \ref{pre-iso-critical} we present some known and new results dealing with the behavior on the $\rrank$ under  simple graph operations like adding an isolated node and deleting an acritical edge, and we investigate their relevance for Conjectures \ref{conj1} and  \ref{conj2}.
 In Section \ref{sec-rank-0} we discuss the role of critical edges in the study of the graphs with $\rrank$ 0.
  In particular, we characterize the critical graphs with $\rrank$ 0 and we give an algorithmic procedure that reduces the problem of deciding whether a graph has $\rrank$ 0 to the same problem restricted to graphs with no critical edges. In Section \ref{sec-k1} we develop some tools using criticality (as well as symmetry and kernel properties) to study the impact of adding isolated nodes to graphs with $\rrank$ 1. As an application we can characterize how many isolated nodes can be added to an odd cycle (or its complement) while preserving the $\rrank$ 1 property. As a byproduct, we show that adding an isolated node can increase the $\rrank$, thereby refuting Conjecture~\ref{conjisolated}. 

\paragraph*{Notation.}
Given a graph $G=(V,E)$, a set $S\subseteq V$ is  {stable} (aka {independent}) if $S$ does not contain any  edge of $G$. Then, $\alpha(G)$ denotes the maximum cardinality of a stable set, called  the {\em stability number} of $G$. For a subset $U\subseteq V$,  $G[U]$ denotes the induced subgraph of $G$, with vertex set $U$ and edge set $\{\{i,j\}\in E: i,j\in U\}$ and, given an edge $e\in E$, $G\setminus e=(V,E\setminus \{e\})$ is the subgraph obtained by deleting the edge $e$.
An edge $e\in E$ is {\em critical} if $\alpha(G\setminus e)=\alpha(G)+1$  and $e$ is called  {\em acritical} otherwise.
We say that $G$ is {\em critical} if all its edges are critical and that $G$ is {\em acritical} if it has no critical edges. A set $C\subseteq V$ is a clique if $\{i, j\}\in E$ for all $i\ne j\in C$ and the maximum cardinality of a clique is  $\omega(G)=\alpha(\overline G)$. Then $\chi(G)$ (resp., $\chi(\overline G)$) denotes the minimum number of stable sets (resp., cliques) whose  union is $V$. For convenience we also set $\overline \chi(G)=\chi(\overline G)$. Clearly one has $\omega( G)\le \chi(G)$ and $\alpha(G)\le \chi(\overline G)$. 
Recall that a graph $G$ is called {\em perfect} if $\chi(H)=\omega(H)$ for every induced subgraph $H$ of $G$. The celebrated {\em strong perfect graph theorem} of Chudnovsky et al. \cite{SPGT} shows that $G$ is perfect if and only if $G$ does not contain an odd cycle $C_{2n+1}$ or its complement $\overline{C_{2n+1}}$ ($n\ge 2$) as an induced subgraph. For a node $i\in V$,   $N(i)$ denotes the set of nodes $j\in V$ that are adjacent to $i$ and   $i^\perp:= \{i\}\cup N(i)$ is the closed neighborhood of $i$; then $i$ is called an isolated node if $N(i)=\emptyset$. For a subset $S\subseteq V$  set $N_S(i)=N(i)\cap S$. For a graph $G$ and a node $i_0\not\in V$, $G\oplus i_0=(V\cup\{i_0\},E)$ denotes the graph obtained by adding the isolated node $i_0$ to $G$. In general, given two graphs $G$ and $H$,  the graph $G\oplus H=(V(G)\cup V(H), E(G)\cup E(H))$ denotes the disjoint union of  $G$ and $H$.  

We let $\mathcal S^n$ denote the set of $n\times n$ symmetric matrices. For a matrix $M\in \mathcal{S}^n$, we write $M\succeq 0$ if it is positive semidefinite (i.e., $x^TMx\ge 0$ for all $x\in\oR^n$) and $M\geq 0$ if all its entries are nonnegative. For a set $S\subseteq [n]$,  $M[S]$ denotes the principal submatrix of $M$ whose rows and columns are indexed by $S$. Throughout  $J_n$, $I_n$ denote the all-ones matrix and the identity matrix of size $n$ and we may omit the subscript $n$ when the size is not important or clear from the context. For integers $m,n\ge 1$,  $J_{m,n}$ denotes the $m\times n$ all-ones matrix. Throughout  $e$ denotes the all-ones vector (of appropriate size).
For a vector $x\in \mathbb{R}^n$, $\supp(x)=\{i\in [n] : x_i\ne 0\}$ denotes its support.  
The adjacency matrix $A_G\in \mathcal S^n$ of a graph $G=(V=[n],E)$ 
has entries $(A_G)_{ij}=1$ if $\{i,j\}\in E$ and zero otherwise.

Throughout $\oR[x]=\oR[x_1,\ldots,x_n]$ denotes the set of $n$-variate polynomials and $\Sigma$ is the set of sums of squares of polynomials, i.e.,  of the form $p_1^2+\ldots+p_m^2$ for some $m\in \oN$ and $p_1,\ldots,p_m\in \oR[x]$.
The degree of a polynomial $f\in \oR[x]$ is the largest degree $d$ of its terms and $f$ is said to be homogeneous of degree $d$ if all its terms have degree $d$.

\section{Preliminaries on the cones $\MK_n^{(r)}$}\label{preliminaries}


Recall that the cone $\MK_n^{(r)}$ consists of the matrices  $M\in \MS^n$ for which the polynomial $(\sum_{i=1}^{n}x_i^2)^r((x^{\circ2})^TMx^{\circ2})$ is a sum of squares of polynomials. A useful characterization for  matrices in $\MK_n^{(r)}$ is given by the following general result.  

\begin{theorem}[Pe\~{n}a et al. \cite{PVZ2007}]\label{theo-squares-vera}
Let $q\in\oR[x]$ be a homogeneous polynomial of degree $d$ and define the degree $2d$ polynomial $Q(x):=q(x^{\circ 2})=q(x_1^2,\ldots,x_n^2)$. Then, $Q\in \Sigma$  if and only if $q$ can be decomposed as
\begin{align}
    q(x)=\sum_{\substack{I\subseteq[n] \\
    |I|\leq d, |I|\equiv d \ (\text{\rm mod } 2)}}\sigma_I(x)\prod_{i\in I}x_i,
\end{align}
where $\sigma_I$ is a homogeneous polynomial with  degree  $d-|I|$ and $\sigma_I\in \Sigma$.
\end{theorem}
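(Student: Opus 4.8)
The plan is to prove the two implications separately, with essentially all of the work concentrated in the forward direction. The reverse implication is immediate: assuming the stated decomposition $q=\sum_I\sigma_I\prod_{i\in I}x_i$ with $\sigma_I\in\Sigma$ homogeneous of degree $d-|I|$, I would substitute $x_i\mapsto x_i^2$ to obtain $Q(x)=\sum_I\sigma_I(x^{\circ2})\prod_{i\in I}x_i^2$. Since $\sigma_I\in\Sigma$, the substituted polynomial $\sigma_I(x^{\circ2})$ is again a sum of squares (an SOS stays one under any polynomial substitution), while $\prod_{i\in I}x_i^2=\big(\prod_{i\in I}x_i\big)^2$ is a square; as $\Sigma$ is closed under sums and products, $Q\in\Sigma$ follows.

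For the forward implication the guiding observation is that $Q(x)=q(x^{\circ2})$ is invariant under every sign flip $x\mapsto(\epsilon_1x_1,\dots,\epsilon_nx_n)$ with $\epsilon\in\{\pm1\}^n$, i.e. the group $(\mathbb Z/2\mathbb Z)^n$ fixes $Q$, and I would use this symmetry to pry an arbitrary sum-of-squares certificate into the required shape. Starting from $Q=\sum_k h_k^2$, I would first reduce to a homogeneous certificate, showing each $h_k$ may be taken homogeneous of degree $d$; this follows by inspecting the highest- and lowest-degree homogeneous parts of $\sum_k h_k^2$ and using that a sum of squares of real polynomials vanishes only when each summand does. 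Next I would split each $h_k$ according to parity, grouping its monomials by the set $I$ of variables occurring to an odd power, so that $h_k=\sum_I\big(\prod_{i\in I}x_i\big)r_{k,I}(x^{\circ2})$; homogeneity of $h_k$ then forces each nonzero $r_{k,I}(x^{\circ2})$ to be homogeneous of degree $d-|I|$, yielding exactly the constraints $|I|\le d$ and $|I|\equiv d\pmod 2$.

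The crux is then to average over the sign group. Applying $\tfrac1{2^n}\sum_{\epsilon\in\{\pm1\}^n}$ to the identity $Q=\sum_k h_k^2$ and using $Q(\epsilon x)=Q(x)$ together with the orthogonality of the characters $\chi_I(\epsilon)=\prod_{i\in I}\epsilon_i$ of $(\mathbb Z/2\mathbb Z)^n$, all cross terms with $I\ne J$ cancel and only the diagonal survives, giving $Q=\sum_I\big(\prod_{i\in I}x_i^2\big)\sum_k r_{k,I}(x^{\circ2})^2$. Finally I would substitute $y=x^{\circ2}$: since the nonnegative orthant is Zariski dense, the resulting identity among polynomials in $x^{\circ2}$ lifts to a genuine polynomial identity $q(y)=\sum_I\sigma_I(y)\prod_{i\in I}y_i$ with $\sigma_I:=\sum_k r_{k,I}^2\in\Sigma$ homogeneous of degree $d-|I|$, which is the claim. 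I expect the main obstacle to be the homogeneity reduction and its attendant degree bookkeeping: it is what guarantees that the parity blocks land in exactly degree $d-|I|$ and satisfy the parity constraint, rather than leaving lower-degree debris, whereas the sign-averaging itself, though the conceptual heart, is routine once character orthogonality is invoked.
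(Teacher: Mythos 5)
Your proof is correct. The paper itself states this theorem without proof, citing Pe\~na et al.\ \cite{PVZ2007}, and your argument --- homogenization of the certificate, splitting each square by the parity pattern $I$ of odd-degree variables, and averaging over the sign group $\{\pm 1\}^n$ via character orthogonality to kill the cross terms, then identifying polynomials on the nonnegative orthant --- is essentially the standard proof given in that reference, so there is nothing to add.
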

As an application, $M\in \MK_n^{(0)}$ if and only if there exist a matrix $P\succeq 0$ and scalars $c_{ij}\geq 0$ for $1\leq i < j \leq n$ such that 
\begin{equation}\label{eq-k0}
x^TMx = x^TPx + \sum_{0\leq i< j \leq n}c_{ij}x_ix_j.
\end{equation}
This corresponds to the characterization of the cone $\MK_n^{(0)}$ given by Parrilo in \cite{Parrilo-thesis-2000}, which reads
\begin{equation}\label{kzero-p+n}
\MK_n^{(0)}=\{P+N \text{ : } P\succeq 0, N\geq 0\}.
\end{equation}
Note that in (\ref{kzero-p+n}) we can indeed assume, without loss of generality, that $N_{ii}=0$ for all $i\in[n]$. We say that $P$ is a {\em $\kzeroc$}  for $M$ if  $P\succeq 0$, $P\leq M$ and $P_{ii}=M_{ii}$ for all $i\in[n]$. In other words, $P$ is a $\MK^{(0)}$-certificate for $M$ if there exist scalars $c_{ij}\ge 0$ for $1\leq i<j\leq n$ for which Eq. (\ref{eq-k0}) holds.

Similarly, using  Theorem \ref{theo-squares-vera}, $M\in \MK_n^{(1)}$ if and only if there exist matrices $P(i)\succeq 0$ for $i\in [n]$ and scalars $c_{ijk}\geq 0$ for distinct $i,j,k\in [n]$  such that 
\begin{equation}\label{eqpM}
\Big(\sum_{i=1}^{n} x_i\Big)x^TMx= \sum_{i=1}^{n} x_i x^TP(i) x +\sum_{1\leq i<j<k\leq n} c_{ijk} x_ix_jx_k.
\end{equation}
From this, we get  the characterization of the cone $\MK^{(1)}_n$ from Parrilo \cite{Parrilo-thesis-2000} (see also \cite{dKP2002}).

\begin{lemma}\label{lemK1}
A matrix $M$ belongs to the cone $\MK^{(1)}_n$ if and only if there exist matrices $P(i)\succeq 0$ for $i\in [n]$ and scalars $c_{ijk}\geq 0 $ for $1\leq i< j< k\leq n$ satisfying Equation (\ref{eqpM}). Equivalently, there exist matrices $P(i)\in \MS^n$ for $i\in [n]$ satisfying the following conditions:
\begin{description}
\item[(i)] $P(i)\succeq 0$ \  for all $i\in [n]$,
\item[(ii)] $P(i)_{ii}=M_{ii}$  \  for all $i\in [n]$,
\item[(iii)] $2P(i)_{ij}+P(j)_{ii}= 2M_{ij}+M_{ii}$  \ for all $i\ne j\in [n]$,
\item[(iv)] $P(i)_{jk}+P(j)_{ik}+P(k)_{ij} \le M_{ij}+M_{ik}+M_{jk}$ \  for all distinct $i,j,k\in [n]$.
\end{description}
\end{lemma}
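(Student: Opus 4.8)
The statement bundles two equivalences, and I would treat them in turn. The first, that $M\in\MK_n^{(1)}$ if and only if there exist $P(i)\succeq 0$ and scalars $c_{ijk}\ge 0$ satisfying~(\ref{eqpM}), is already contained in the paragraph preceding the lemma: it is Theorem~\ref{theo-squares-vera} applied with $d=3$ to the homogeneous cubic $q(x)=\big(\sum_i x_i\big)\,x^TMx$. Indeed $q(x^{\circ2})=\big(\sum_i x_i^2\big)(x^{\circ2})^TMx^{\circ2}$ is a sum of squares exactly when $M\in\MK_n^{(1)}$, and Theorem~\ref{theo-squares-vera} then produces a decomposition of $q$ indexed by subsets $I\subseteq[n]$ with $|I|\le 3$ and $|I|$ odd, hence $|I|\in\{1,3\}$; the blocks with $|I|=1$ are SOS homogeneous quadratics $x^TP(i)x$ with $P(i)\succeq 0$, and those with $|I|=3$ are SOS degree-$0$ forms, i.e.\ nonnegative constants $c_{ijk}$. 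So for this part I would simply invoke that derivation and devote the rest of the proof to the equivalence with conditions (i)--(iv).

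To relate ``(\ref{eqpM}) holds for some $P(i)\succeq 0$ and $c_{ijk}\ge 0$'' to conditions (i)--(iv), the plan is to compare the two sides of~(\ref{eqpM}) coefficient by coefficient. Both sides are homogeneous of degree $3$, so it suffices to match the coefficients of the three types of degree-$3$ monomial: $x_a^3$, $x_a^2x_b$ with $a\ne b$, and $x_ax_bx_c$ with $a,b,c$ distinct. Writing $x^TMx=\sum_k M_{kk}x_k^2+2\sum_{k<l}M_{kl}x_kx_l$, and likewise for each $x^TP(i)x$, a direct expansion gives: matching the coefficient of $x_a^3$ yields $P(a)_{aa}=M_{aa}$, which is (ii); matching $x_a^2x_b$ yields $2P(a)_{ab}+P(b)_{aa}=2M_{ab}+M_{aa}$, which is (iii); and matching $x_ax_bx_c$ yields
\begin{equation*}
2\big(P(a)_{bc}+P(b)_{ac}+P(c)_{ab}\big)+c_{abc}=2\big(M_{ab}+M_{ac}+M_{bc}\big),
\end{equation*}
so that the constraint $c_{abc}\ge 0$ is equivalent to $P(a)_{bc}+P(b)_{ac}+P(c)_{ab}\le M_{ab}+M_{ac}+M_{bc}$, which is (iv).

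With these identities the two directions follow at once. If $P(i)\succeq 0$ and $c_{ijk}\ge 0$ satisfy~(\ref{eqpM}), then reading off the matched coefficients gives exactly (i)--(iv), with (i) being the assumed positive semidefiniteness. Conversely, given symmetric matrices $P(i)$ satisfying (i)--(iv), I would set $c_{abc}:=2\big(M_{ab}+M_{ac}+M_{bc}-P(a)_{bc}-P(b)_{ac}-P(c)_{ab}\big)$ for $a<b<c$, which is nonnegative by (iv); then the coefficients of all three monomial types agree by construction, so~(\ref{eqpM}) holds. I expect no real obstacle here: the only point needing care is the bookkeeping of the factor $2$ carried by off-diagonal entries in the quadratic forms $x^TMx$ and $x^TP(i)x$, which is precisely what makes the diagonal condition (ii), the mixed condition (iii), and the fully-distinct condition (iv) take their different shapes.
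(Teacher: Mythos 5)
Your proposal is correct and follows essentially the same route as the paper: invoke Theorem~\ref{theo-squares-vera} (with $d=3$) for the first equivalence, then derive (ii)--(iv) by matching coefficients of $x_a^3$, $x_a^2x_b$, and $x_ax_bx_c$ on both sides of~(\ref{eqpM}), with the converse obtained by defining the $c_{ijk}$ from the slack in (iv). In fact your bookkeeping is slightly more careful than the paper's displayed expansions~(\ref{M_i})--(\ref{P_i}), which drop a factor $2$ on the triple-product terms (harmlessly, since it only rescales $c_{ijk}$).
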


\begin{proof}
As observed above, $M\in \MK_n^{(1)}$ if and only if there exist matrices $P(i)\succeq 0$ for $i\in[n]$ and scalars $c_{ijk}\geq 0$ satisfying Eq.(\ref{eqpM}). We now obtain the  conditions (ii)-(iv) by comparing coefficients at both sides of (\ref{eqpM}).  We give the details since they will be useful later. First, we start with the left hand side in (\ref{eqpM}): 
\begin{equation}\label{M_i}
\Big(\sum_{i=1}^{n} x_i\Big)x^TMx=\sum_{i=1}^{n}M_{ii}x_i^3 + \sum_{i\neq j\in[n]}x_i^2x_j(M_{ii}+2M_{ij})+\sum_{1\leq i<i<j<k\leq n}x_ix_jx_k(M_{ij} + M_{jk} + M_{ik}).
\end{equation}
Now we expand the right hand side in (\ref{eqpM}):
 \begin{equation}\label{P_i}
\begin{array}{l}
 \sum_{i=1}^{n}x_ix^TP(i)x +  \sum_{1\leq i<j<k\leq n}c_{ijk}x_ix_jx_k \\
 = \sum_{i=1}^{n} x_i^3 P(i)_{ii} + \sum_{i\neq j\in [n]} x_i^2x_j(P(j)_{ii}+2P(i)_{ij}) + \sum_{1\leq i<j<k\leq n}x_ix_jx_k(P(i)_{jk}+P(j)_{ik}+P(k)_{ij} + c_{ijk}).
\end{array} \end{equation}
 Comparing coefficients at both sides we obtain the desired result.
 \end{proof}
 
 \begin{remark}\label{remK0}
 Observe that Lemma \ref{lemK1} remains valid if in (i) we replace the condition $P(i)\succeq 0$ by the weaker condition $P(i)\in \mathcal K^{(0)}_n$. 
Indeed, as  $\MK_n^{(0)}=\mathcal {S}^n_+ +\oR^{n\times n}_+$, the `only if' part is clear  since $\mathcal{S}^n_+\subseteq \MK_n^{(0)}$, 
 and the `if part' follows easily from the fact that  $(x^{\circ 2})^TNx^{\circ 2}\in \Sigma$ for any $N\in\oR^{n\times n}_+$.
 \end{remark}
 
 We say that the matrices $P(1), P(2),\dots P(n)$ are a {\em $\konec$}  for $M$ if they satisfy the conditions (i)-(iv) of Lemma \ref{lemK1}.  In other words, the matrices $P(1), \dots, P(n)$ are a $\konec$ of $M$ if they are positive semidefinite 
 and there exist scalars $c_{ijk}\geq 0$ for $1\leq i < j < k\leq n$ satisfying Eq. (\ref{eqpM}).

Now we give some 
easy, but crucial properties of $\MK^{(0)}$- and $\MK^{(1)}$-certificates, involving their kernel, that will be repeated used in the paper.

\begin{lemma}\label{kernel-k0}
Let $M\in \MK_n^{(0)}$ and let $P$ be a $\kzeroc$ of $M$. If $x\in \mathbb{R}^n_+$ and $x^TMx=0$, then $Px=0$ and $P[S]=M[S]$, where $S=\{i\in[n] \text{ : } x_i>0\}$ is the support of $x$.
\end{lemma}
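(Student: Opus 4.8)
The plan is to write $M=P+N$ with $N\defeq M-P$ and to exploit the sign structure guaranteed by the definition of a $\kzeroc$. Since $P$ is a $\kzeroc$ of $M$, we have $P\succeq 0$, $P\le M$ entrywise, and $P_{ii}=M_{ii}$ for all $i\in[n]$; hence $N\ge 0$ with zero diagonal. The crucial observation is that, for a \emph{nonnegative} vector $x$, the quadratic form splits into two separately nonnegative pieces:
$$0 = x^TMx = x^TPx + x^TNx,$$
where $x^TPx\ge 0$ because $P\succeq 0$, and $x^TNx = \sum_{i,j}N_{ij}x_ix_j\ge 0$ because $N\ge 0$ and $x\ge 0$. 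A sum of two nonnegative quantities that equals zero forces each to vanish, so $x^TPx=0$ and $x^TNx=0$.

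First I would handle the term $x^TPx=0$. Using that a positive semidefinite matrix factors as $P=L^TL$, we get $\|Lx\|^2 = x^TPx = 0$, hence $Lx=0$ and therefore $Px = L^TLx = 0$. This yields the first conclusion $Px=0$.

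Next I would handle $x^TNx=0$. Since each term $N_{ij}x_ix_j$ is nonnegative and they sum to zero, every term vanishes. For $i,j\in S$ we have $x_i,x_j>0$ by the definition of the support, so $N_{ij}=0$ for all $i,j\in S$; that is, $N[S]=0$. Consequently $P[S]=M[S]-N[S]=M[S]$, the second conclusion.

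The argument is essentially free of obstacles; the only points that require (minor) care are the use of $x\ge 0$, which is exactly what makes $x^TNx$ nonnegative (the statement fails for arbitrary $x$, for which $N$ could contribute negatively), and the standard identity $x^TPx=0\Rightarrow Px=0$ for $P\succeq 0$. I therefore expect the whole proof to be short and elementary.
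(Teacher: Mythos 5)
Your proof is correct and follows essentially the same route as the paper: decompose $M=P+N$ with $N\ge 0$, observe both terms of $0=x^TPx+x^TNx$ are nonnegative (using $x\ge 0$ for the $N$-term), conclude $Px=0$ from $P\succeq 0$, and $N[S]=0$ hence $P[S]=M[S]$ from the vanishing of each term $N_{ij}x_ix_j$. The extra details you supply (the factorization $P=L^TL$ and the explicit caveat about $x\ge 0$) are fine but not needed beyond what the paper already states.
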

\begin{proof}
Since $P$ is a $\kzeroc$  there exists a matrix $N\geq 0$ such that $M=P+N$. Hence, $0=x^TMx=x^TPx +x^TNx$. Then  $x^TPx=0=x^TNx$ as $P\succeq 0$ and $N\geq 0$. This implies $Px=0$ since $P\succeq 0$. On the other hand, since $x^TNx=0$ and $N\geq 0$, we get $N_{ij}=0$ for $i,j\in S$. Hence, $M[S]=P[S]$, as $M=P+N$. 
\end{proof}

\begin{lemma}\label{kernel-k1}
Let $M\in\MK_n^{(1)}$ and let $P(1), \dots, P(n)$ be a $\konec$ of $M$. Let $x\in \mathbb{R}^{n}_+$ such that $x^TMx=0$. Then the following holds:
\begin{description}
\item[(i)] If $x_i>0$ then $P(i)x=0$.
\item[(ii)] If $x_i,x_j,x_k>0$ then $M_{ij}+M_{jk}+M_{ik}=P(i)_{ij}+P(j)_{ik} +P(k)_{ij}$.
\end{description}
\end{lemma}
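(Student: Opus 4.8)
The plan is to read off both statements from the single polynomial identity (\ref{eqpM}) by evaluating it at the prescribed nonnegative vector. Write $a=x\in\oR^n_+$ for the given vector with $a^TMa=0$, keeping $x$ for the formal variables, and recall that (\ref{eqpM}) is an identity of polynomials in $x$, hence valid in particular at $x=a$. Substituting $x=a$ makes the left-hand side $\big(\sum_i a_i\big)a^TMa$ vanish, so I obtain
\[
0=\sum_{i=1}^n a_i\,a^TP(i)a+\sum_{1\le i<j<k\le n}c_{ijk}\,a_ia_ja_k.
\]

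Next I would observe that the right-hand side is a sum of nonnegative terms: $a_i\ge 0$ since $a\in\oR^n_+$, each $a^TP(i)a\ge 0$ since $P(i)\succeq 0$, and $c_{ijk}\ge 0$ with $a_ia_ja_k\ge 0$. A sum of nonnegative reals that equals zero must have every summand equal to zero. This is the heart of the argument; everything after it is bookkeeping.

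From the vanishing of the individual summands I extract (i) and (ii). For (i): if $a_i>0$, then $a_i\,a^TP(i)a=0$ forces $a^TP(i)a=0$, and together with $P(i)\succeq 0$ this gives $P(i)a=0$, using the standard fact that a positive semidefinite quadratic form vanishes at $a$ exactly when $a$ lies in its kernel (writing $P(i)=\sum_\ell v_\ell v_\ell^T$, the equation $0=a^TP(i)a=\sum_\ell(v_\ell^Ta)^2$ forces each $v_\ell^Ta=0$, whence $P(i)a=0$). For (ii): if $a_i,a_j,a_k>0$, then $c_{ijk}\,a_ia_ja_k=0$ forces $c_{ijk}=0$. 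Substituting $c_{ijk}=0$ into the coefficient identity for the monomial $x_ix_jx_k$ obtained in the proof of Lemma \ref{lemK1} by comparing (\ref{M_i}) and (\ref{P_i}), namely $P(i)_{jk}+P(j)_{ik}+P(k)_{ij}+c_{ijk}=M_{ij}+M_{ik}+M_{jk}$, yields the claimed equality; equivalently, (ii) is exactly the equality case of condition (iv) of Lemma \ref{lemK1}, attained precisely when $c_{ijk}=0$.

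I expect no genuinely hard step here, since the lemma is a direct specialization of (\ref{eqpM}) to a single point. The two points that need care are: treating (\ref{eqpM}) as a true polynomial identity, so that it may be evaluated at $a$ rather than only compared coefficient-by-coefficient; and the elementary implication $a^TP(i)a=0,\ P(i)\succeq 0\Rightarrow P(i)a=0$. I would also keep the index conventions aligned with condition (iv) of Lemma \ref{lemK1}, so that in (ii) the off-diagonal entry of each $P(\ell)$ is indexed by the two indices of $\{i,j,k\}$ other than $\ell$.
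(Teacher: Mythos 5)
Your proposal is correct and follows exactly the paper's own argument: evaluate the polynomial identity (\ref{eqpM}) at the given nonnegative zero, conclude that every (nonnegative) term on the right-hand side vanishes, then derive (i) from the positive semidefinite kernel fact and (ii) from $c_{ijk}=0$ combined with the coefficient comparison of (\ref{M_i}) and (\ref{P_i}). No gaps; nothing further needed.
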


\begin{proof}
By 
evaluating Eq. (\ref{eqpM}) at $x$, we get that the left hand side is zero while all terms in the right hand side are nonnegative, so all of them  vanish. Hence, if $x_i>0$ then $x^TP(i)x=0$, which implies $P(i)x=0$ as $P(i)\succeq 0$. On the other hand, if $x_ix_jx_k>0$ then $c_{ijk}=0$, which implies the desired identity (see Eq. (\ref{M_i}) and Eq. (\ref{P_i})).
\end{proof}

\begin{example}\label{example-5-cycle}
Consider the $5$-cycle $C_5$ shown in Fig. \ref{FigC51}
and the associated graph matrix $M_{C_5}=2(A_{C_5}+I)-J$,  also known as the Horn matrix and denoted by $H$.
\\
\begin{minipage}[t]{0.5\linewidth}
 \begin{figure}[H]
  \centering
\definecolor{uuuuuu}{rgb}{0.26666666666666666,0.26666666666666666,0.26666666666666666}
\definecolor{uququq}{rgb}{0.25098039215686274,0.25098039215686274,0.25098039215686274}
\begin{tikzpicture}[line cap=round,line join=round,>=triangle 45,x=.3cm,y=.3cm]
\draw [line width=1pt] (-13.578509963461387,8.326106157908189)-- (-11.19999999999867,10.054194799648329);
\draw [line width=1pt] (-11.19999999999867,10.054194799648329)-- (-8.821490036535954,8.326106157908187);
\draw [line width=1pt] (-8.821490036535954,8.326106157908187)-- (-9.72999999999844,5.53);
\draw [line width=1pt] (-12.6699999999989,5.53)-- (-9.72999999999844,5.53);
\draw [line width=1pt] (-12.6699999999989,5.53)-- (-13.578509963461387,8.326106157908189);
\begin{scriptsize}
\draw [fill=uququq] (-12.6699999999989,5.53) circle (2.5pt);
\draw[color=uququq] (-13.66, 5.52) node {$4$};
\draw [fill=uququq] (-9.72999999999844,5.53) circle (2.5pt);
\draw[color=uququq] (-8.92, 5.52) node {$3$};
\draw [fill=uuuuuu] (-8.821490036535954,8.326106157908187) circle (2.5pt);
\draw[color=uququq] (-8.0, 8.52) node {$2$};
\draw [fill=uuuuuu] (-11.19999999999867,10.054194799648329) circle (2.5pt);
\draw[color=uququq] (-12, 10.5) node {$1$};
\draw [fill=uuuuuu] (-13.578509963461387,8.326106157908189) circle (2.5pt);
\draw[color=uququq] (-14.3, 8.9) node {$5$};
\end{scriptsize}
\end{tikzpicture}
 \caption{Graph $C_5$}
 \label{FigC51}
  \end{figure}
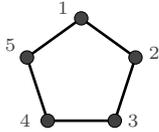
\end{minipage}
\begin{minipage}[t]{0.4\linewidth}

$$H= M_{C_5}=
\begin{pmatrix}
1 & 1 & -1 & -1 & 1 \\
1 & 1 & 1 & -1 & -1  \\
-1 & 1 & 1 & 1 & -1  \\
-1 & -1 & 1 & 1 & 1  \\
1 & -1 & -1 & 1 & 1  \\
\end{pmatrix}$$
 \hspace{2cm}The Horn matrix
\end{minipage}

 The Horn matrix $H$ is known to belong to $\MK_n^{(1)}$ \cite{Parrilo-thesis-2000}.  As we now show, it admits a unique $\konec$, where the matrices  $P(1),\ldots,P(5)$ are of the form shown below:
\begin{equation}\label{eqPiC5}
P(1)=
\begin{pmatrix}
1 & 1 & -1 & -1 & 1 \\
1 & 1 & -1 & -1 & 1  \\
-1 & -1 & 1 & 1 & -1  \\
-1 & -1 & 1 & 1 & -1  \\
1 & 1 & -1 & -1 & 1  \\
\end{pmatrix}, \quad P(i)=
\begin{pmatrix}
\bovermat{$i^{\perp}$}{1 & 1 & 1 &} \bovermat{$V\setminus i^{\perp}$}{-1 & -1} \\
1 & 1 & 1 & -1 & -1  \\
1 & 1 & 1 & -1 & -1  \\
-1 & -1 & -1 & 1 & 1  \\
-1 & -1 & -1 & 1 & 1  \\
\end{pmatrix} \ \ \text{ for } i\in [5].
\end{equation}
Up to symmetry it suffices to show that $P(1)$ 
has the above  shape. Let $C_1, C_2, C_3, C_4, C_5$ denote its  columns. Since the vectors $(1,0,1,0,0), (1,0,0,1,0), (1,1,0,2,0), (1,0,2,0,1)$ are zeros of the form  $x^THx$, by Lemma \ref{kernel-k1} (i), we obtain $C_1=-C_3$, $C_1=-C_4$, $C_1+C_2+2C_4=0$ and $C_1+C_5+2C_3=0$. Hence, $C_1=C_2=C_5=-C_3=-C_4$.  Since $P(1)_{11}=1$ the above conditions determine the first row and column and therefore the rest of the matrix $P(1)$, which thus has the desired shape. 
\end{example}

As shown in the previous lemmas, the zeros of the quadratic form $x^TMx$ give us information about the kernel of  $\MK^{(0)}$- and $\MK^{(1)}$-certificates for $M$. For the case of the graph matrices $M_G=\alpha(G)(A_G+I)-J$ there is a full characterization of the zeros of this quadratic form  in  $\Delta_n$ (and thus in $\oR^n_+$). First, observe that, for $x\in \Delta_n$, we have $x^TM_Gx=0$ if and only if $x$ is an optimal solution of the following program
\begin{equation}\label{motzkin-form}
{1\over \alpha(G)}=\min\{x^T (I+A_G)x: x\in \Delta_n\}.
\end{equation}
 Indeed we have
\begin{equation}\label{eq-mini}
x^TM_Gx=0 \Longleftrightarrow \alpha(G)x^T(A_G+I)x -x^TJx=0 \Longleftrightarrow x^T(A_G+I)x =\frac{1}{\alpha(G)}.
\end{equation}
The  formulation of $\alpha(G)$ in (\ref{motzkin-form}) is due to Motzkin and Straus \cite{motzkin} and underlies its copositive formulation in (\ref{copoalpha}).

We conclude with recalling the characterization of the minimizers of problem (\ref{motzkin-form}), following \cite[Corollary 4.4]{LV2021} (see also \cite{GHPR}).

\begin{theorem}\label{minimizers-LV}
Let $x\in \Delta_n$ 
with support $S=\{i\in [n]: x_i>0\}$, and let $V_1, V_2, \dots, V_k$ denote the connected components of the graph $G[S]$. Then $x$ is an optimal solution of (M-S) if and only if $k=\alpha(G)$, $V_i$ is a clique and $\sum_{j\in V_i}x_j={1\over \alpha(G)}$ for all $i\in [k]$. In that case all edges in $G[S]$ are critical edges of $G$.
\end{theorem}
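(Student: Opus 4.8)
The plan is to prove the two implications separately. The reverse implication is a direct computation: if $k=\alpha(G)$, each $V_i$ is a clique, and $\sum_{j\in V_i}x_j=1/\alpha(G)$, then, since $G$ has no edges between distinct components of $G[S]$ and each $V_i$ is complete, the form splits as $x^T(I+A_G)x=\sum_{i=1}^k\big(\sum_{j\in V_i}x_j\big)^2=k/\alpha(G)^2=1/\alpha(G)$, so $x$ attains the optimal value of (M-S) and is a minimizer.

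For the forward implication, let $x$ be optimal, so that $x^T(I+A_G)x=1/\alpha(G)$. First I would extract the first-order conditions: for $i,j\in S$ the point $x+t(e_i-e_j)$ stays in $\Delta_n$ for $|t|$ small, so $t\mapsto (x+t(e_i-e_j))^T(I+A_G)(x+t(e_i-e_j))$ has a critical point at $t=0$, which gives $[(I+A_G)x]_i=[(I+A_G)x]_j$. Calling this common value $c$, one has $x^T(I+A_G)x=\sum_{i\in S}x_i[(I+A_G)x]_i=c$, hence $c=1/\alpha(G)$ and $\sum_{\ell\in i^\perp}x_\ell=1/\alpha(G)$ for every $i\in S$. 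Summing this identity over a maximum stable set $T$ of $G[S]$ and rewriting $\sum_{i\in T}\sum_{\ell\in i^\perp}x_\ell=\sum_{\ell\in S}x_\ell\,|T\cap \ell^\perp|$, the left-hand side equals $|T|/\alpha(G)$ while, using $|T\cap\ell^\perp|\ge 1$ for all $\ell\in S$ (it is $1$ on $T$ and at least $1$ off $T$ by maximality of $T$), the right-hand side is at least $\sum_{\ell\in S}x_\ell=1$. This forces $\alpha(G[S])=|T|\ge\alpha(G)$, hence equality $\alpha(G[S])=\alpha(G)$, and moreover $|T\cap\ell^\perp|=1$ for every $\ell\in S$ and every maximum stable set $T$ of $G[S]$.

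The heart of the argument is then the graph-theoretic lemma that a connected graph $H$ with $\alpha(H)\ge 2$ admits some maximum stable set $T$ together with a vertex having at least two neighbours in $T$. Applying this to each component $V_\mu$ of $G[S]$ (which is connected), the relation $|T\cap\ell^\perp|=1$ rules out $\alpha(G[V_\mu])\ge 2$, so each $V_\mu$ is a clique; then $k=\sum_\mu\alpha(G[V_\mu])=\alpha(G[S])=\alpha(G)$, and the first-order identity together with $i^\perp\cap S=V_\mu$ for $i\in V_\mu$ yields $\sum_{j\in V_\mu}x_j=1/\alpha(G)$. I would prove the lemma by a minimal-distance exchange argument: among all pairs of distinct vertices contained in a common maximum stable set choose $p,q\in T$ at minimum graph distance $d\ge 2$; if $d=2$ a common neighbour of $p,q$ has two neighbours in $T$, while if $d\ge 3$, taking a shortest path $p=u_0,\dots,u_d=q$ and replacing $p$ by $u_1$ gives another maximum stable set $T'=(T\setminus\{p\})\cup\{u_1\}$ (minimality of $d$ forces $u_1$ to have no neighbour in $T$ other than $p$) in which $u_1,q$ lie at distance $d-1<d$, contradicting the choice of $d$; hence $d=2$. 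This exchange step, ensuring $T'$ is again stable while strictly decreasing the distance, is the main obstacle.

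For the last assertion, any edge $e=\{i,j\}$ of $G[S]$ lies inside one clique $V_\mu$; deleting it turns $\{i,j\}$ into a non-edge, so $\{i,j\}$ together with one vertex chosen from each of the other $k-1$ cliques forms a stable set of $G\setminus e$ of size $k+1=\alpha(G)+1$, showing that $e$ is critical.
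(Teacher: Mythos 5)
Your proof is correct. There is, however, nothing in the paper to compare it against step by step: Theorem \ref{minimizers-LV} is stated without proof, being recalled from \cite[Corollary 4.4]{LV2021} (see also \cite{GHPR}), so your argument is necessarily an independent, self-contained one. Concretely: sufficiency is the expected direct computation $x^T(I+A_G)x=\sum_{\mu}\big(\sum_{j\in V_\mu}x_j\big)^2$, valid because the $V_\mu$ are cliques with no edges of $G$ between them; for necessity you use the first-order conditions at an optimal $x$ to get $\sum_{\ell\in i^\perp}x_\ell=1/\alpha(G)$ for every $i\in S$, then a double count over an arbitrary maximum stable set $T$ of $G[S]$ to force $\alpha(G[S])=\alpha(G)$ together with the rigidity $|T\cap\ell^\perp|=1$ for all $\ell\in S$, and finally your exchange lemma (a connected graph $H$ with $\alpha(H)\ge 2$ has a maximum stable set and a vertex with two neighbours in it) to conclude that each component of $G[S]$ is a clique. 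The minimal-distance proof of that lemma is sound: if $d\ge 3$, a second $T$-neighbour $w$ of $u_1$ would put the pair $\{p,w\}\subseteq T$ at distance $2<d$, so the swap $T'=(T\setminus\{p\})\cup\{u_1\}$ is legitimate and contradicts minimality via the pair $\{u_1,q\}$. Two steps are left implicit but are routine and worth writing out: (i) the lemma produces a maximum stable set $T_\mu$ of the component $G[V_\mu]$, and before invoking the rigidity relation you must extend it to a maximum stable set of $G[S]$ by adjoining maximum stable sets of the other components (possible since stability numbers add over components); (ii) for criticality, exhibiting a stable set of size $\alpha(G)+1$ in $G\setminus e$ suffices only because the trivial bound $\alpha(G\setminus e)\le\alpha(G)+1$ always holds. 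With those two sentences added, the argument is complete.
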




\ignore{We  finish with the following result, which shows that duplicating rows/columns preserves the cones $\MK_n^{(r)}$.
\begin{lemma}[\cite{GL2007}]\label{twin-col}
Consider the matrices
$$M = \begin{pmatrix}
A & b   \\
b^T & c 
\end{pmatrix}\in \MS^n \quad \text{ and } \quad 
M'=\begin{pmatrix}
A & b & b  \\
b^T & c & c\\
b^T & c & c
\end{pmatrix}\in \MS^{n+1},$$
where $A\in \mathcal S^{n-1}$, $b\in \oR^{n-1}$ and $c\in \oR$. Then, $M\in \MK_n^{(r)}$ if and only if $M'\in \MK_{n+1}^{(r)}$.
\end{lemma}
}

\section{On the exactness of the approximation of $\COP_n$ by the Parrilo cones $\MK^{(r)}_n$}\label{parrilo-cones}

\ignore{Many combinatorial and quadratic programs can be formulated as a copositive program. That is, a conic linear programming over the cone of copositive matrices $\COP_n=\{M\in \mathcal{S}^n \text{ : } x^TMx\geq 0 \text{ for all } x\geq 0 \}$. Optimizing over $\COP_n$ is difficult in general since many hard combinatorial problems such as the Stable Set Problem  can be formulated as an instance of it. Moreover, determining whether a matrix is copositive is hard \cite{ }.  In \cite{Parrilo-thesis-2000}, Parrilo introduced a hierarchy of approximations for  $\COP_n$ based on sum of squares of polynomials. 
$$\MK^{(r)}_n=\Big\{M\in \MS^n: \Big(\sum_{i=1}^nx_i^2\Big)^r P_M(x) \in\Sigma_{}\Big\},$$
where $P_M(x)=(x^{\circ 2})^TMx^{\circ 2}$ and $\Sigma$ is the cone of sum of squares of polynomials. It is easy to observe that $\MK_n^{(0)}\subseteq \MK_n^{(1)}\subseteq \dots \subseteq \COP_n$. Moreover, By Reznick \cite{ }, $cl\Big(\bigcup\limits_{r\in \mathbb{N}}\MK_n^{(r)}\Big)=\COP_n$. 
}

In this section we  investigate 
 the cones $\MK_n^{(r)}$, which were introduced by Parrilo \cite{Parrilo-thesis-2000} as inner approximations of  the coositive cone $\COP_n$ and satisfy
$$\text{int}(\COP_n)\subseteq \bigcup_{r\ge 0} \MK^{(r)}_n\subseteq \COP_n.$$
 As pointed out  in \cite{ dKP2002,GL2007}, one difficulty for the understanding of the cones $\MK_n^{(r)}$ is that they are not closed under adding a zero row/column when $r\geq 1$. In addition, while $\COP_4=\MK^{(0)}_4$,  it is shown in \cite{DDGH} that for any $n\ge 5$ the copositive cone $\COP_n$ is not contained in  a single cone  $\MK^{(r)}_n$ for any $r\in \mathbb{N}$.  Here we prove that the situation is even worse: for $n\ge 6$, the cone $\COP_n$ is not even contained in the union of the cones $\MK^{(r)}_n$. For this, we show that if a copositive matrix does not belong to the cone $ \MK_n^{(0)}$ then after adding to it a zero row/column the resulting matrix  does not belong to any of the cones $\MK_{n+1}^{(r)}$ ($r\ge 0$). The question of whether the union of the cones $\MK^{(r)}_5$ covers the full copositive cone $\COP_5$ remains open. 
Motivated by this question one may ask whether any diagonal scaling of the Horn matrix $H=M_{C_5}$ lies in some cone $\MK^{(r)}_5$. We will characterize the diagonal scalings of  $H$ that belong to the cone $\MK^{(1)}_5$, which crucially relies on the fact  that $H$  admits a unique $\MK^{(1)}$-certificate.

\subsection{Constructing copositive matrices not belonging to any Parrilo cone }

Dickinson et al.  \cite{DDGH}  conjectured that for any integer $n\geq 1$ there exists an integer $r\ge 0$ such that any copositive matrix of size $n$ with $0,1$-valued diagonal entries lies in the cone $\MK^{(r)}_n$. The conjecture holds  for $n\leq 4$ with $r=0$ since 
$\COP_4=\MK_4^{(0)}$.
For $n=5$ it is shown in \cite{DDGH} that the conjecture  holds with $r=1$. 
Here we will show that this conjecture does not hold for $n\ge 6$. Even more we give an example of copositive matrix with an all-ones diagonal that does not belong to any of the cones $\MK^{(r)}_n$.
For this, we consider the following construction. Given two copositive matrices $M_1\in \COP_n$ and $M_2\in \COP_m$, we consider their direct sum
\begin{equation}
M_1\oplus M_2 := \left(\begin{array}{c|c}
M_1 & 0\\
\hline
0 & M_2
\end{array}\right),
\end{equation}
which is clearly copositive.
We will show below that, under some conditions on $M_1,M_2$, the matrix $M_1\oplus M_2$ does not belong to any of the cones $\MK_{n+m}^{(r)}$. We start with a preliminary result on sums of squares of polynomials. 

\begin{lemma}\label{least-degree} Let $f$ be a polynomial of degree $2d$ in $n$ variables. Write  $f=f_r+f_{r+1}+\dots f_{2d}$ where $f_r\ne 0$ and, for $r\le j\le 2d$, each $f_j$ is a homogeneous polynomial with degree $j$.  If $f$ is a sum of squares then $f_r$ is a sum of squares.
\end{lemma}

\begin{proof}
Since $f$ is a sum of squares we have $f=\sum_{i=1}^{m}q_i^2$ for some $q_i\in \mathbb{R}[x]$ wtih $\deg(q_i)\leq d$ for all $i\in [m]$. Then each $q_i$ has the form 
$q_i=\sum_{j=0}^{d}a_i^{(j)}$, where each nonzero $a_i^{(j)}$ is a homogeneous polynomial of degree $j$. For $i\in [m]$ set $L_i=\min\{ j: a_i^{(j)} \neq 0\} $ and set $L=\min\{L_i:  i\in [m]\}$. Notice that there is no monomial with degree less that $2L$  in $\sum_iq_i^2=f$ and 
$f_{2L} = \sum_{i=1}^{m}(a_i^{(L)})^2 \neq 0 $. Hence it follows that $f_r=f_{2L}$ is a sum of squares. 
\end{proof}

\begin{theorem}\label{theo-cop}
Let $M_1\in \COP_n$ and $M_2\in \COP_m$ be two copositive matrices. Assume that $M_1\notin \MK_n^{(0)}$ and that there exists $0\neq z\in \mathbb{R}^m_+$ such that $z^TM_2z=0$. Then  we have
\begin{equation}
\left(
\begin{array}{c|c}
M_1 & 0\\
\hline
0 & M_2
\end{array}
\right) \in \COP_{n+m} \setminus \bigcup\limits_{r\in \mathbb{N}}\MK_{n+m}^{(r)}.
\end{equation}
\end{theorem}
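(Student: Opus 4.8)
The plan is to argue by contradiction, since copositivity of the direct sum is already granted (a block-diagonal matrix with copositive blocks is copositive). Write the coordinates as $x\in\oR^n$ and $y\in\oR^m$, and set $F_1(x)=(x^{\circ2})^TM_1x^{\circ2}$ and $F_2(y)=(y^{\circ2})^TM_2y^{\circ2}$, both homogeneous of degree $4$. If $M_1\oplus M_2\in\MK_{n+m}^{(r)}$ for some $r$, then by definition the polynomial
$$
P(x,y)=\Big(\sum_{i=1}^n x_i^2+\sum_{j=1}^m y_j^2\Big)^{r}\big(F_1(x)+F_2(y)\big)
$$
is a sum of squares. My goal is to extract from this the statement $F_1\in\Sigma$, which contradicts $M_1\notin\MK_n^{(0)}$ (recall $M_1\in\MK_n^{(0)}$ is exactly $F_1\in\Sigma$).

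The key step, and the one requiring the right idea, is a clever specialization of the $y$-variables. I would use the given zero $z\in\oR^m_+$ of $M_2$ by passing to its componentwise square root: set $w_j=\sqrt{z_j}$, so that $w^{\circ2}=z$ and hence $F_2(w)=z^TM_2z=0$. Substituting the \emph{constant} vector $y=w$ into $P$ preserves membership in $\Sigma$ (one simply evaluates each squared summand at $y=w$), and since $F_2(w)=0$ and $\sum_j w_j^2=\sum_j z_j=:c$ we obtain
$$
P(x,w)=\Big(\sum_{i=1}^n x_i^2+c\Big)^{r}F_1(x)\in\Sigma,
$$
now a sum of squares in the variables $x$ alone. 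Here $c>0$ because $z\ge 0$ and $z\neq 0$. The hard part is precisely recognizing that one should plug in the square-root $w$ as a constant vector rather than, say, sending the last $m$ variables to zero: this choice simultaneously annihilates $F_2$ and collapses the degree-$2r$ multiplier into a constant-plus-higher-order expression, which is exactly what isolates $F_1$ below.

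Finally I would invoke Lemma \ref{least-degree}. Expanding $(\sum_i x_i^2+c)^r=\sum_{k=0}^r\binom{r}{k}c^{r-k}(\sum_i x_i^2)^k$, the product $P(x,w)$ is a sum of homogeneous pieces of degrees $4,6,\dots,4+2r$, with no cross-degree cancellation; its lowest-degree homogeneous part is the $k=0$ term $c^{r}F_1(x)$, of degree $4$. This part is nonzero because $M_1\notin\MK_n^{(0)}$ forces $F_1\neq 0$. Since $P(x,w)\in\Sigma$, Lemma \ref{least-degree} gives $c^{r}F_1\in\Sigma$, and dividing by $c^{r}>0$ yields $F_1\in\Sigma$, i.e.\ $M_1\in\MK_n^{(0)}$ --- the desired contradiction. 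As $r$ was arbitrary, $M_1\oplus M_2$ lies in no cone $\MK_{n+m}^{(r)}$, completing the proof.
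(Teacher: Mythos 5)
Your proposal is correct and follows essentially the same route as the paper's proof: substitute the componentwise square root of $z$ for the $y$-variables to kill $p_{M_2}$, reduce to $\big(\sum_i x_i^2+c\big)^r F_1(x)\in\Sigma$, and apply Lemma \ref{least-degree} to the lowest-degree homogeneous part to conclude $M_1\in\MK_n^{(0)}$, a contradiction. Your computation of the lowest-degree part as $c^{r}F_1$ (and the explicit check that it is nonzero) is in fact slightly more careful than the paper's write-up, which records it as $cp_{M_1}$.
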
 
\begin{proof}
Assume for contradiction $M_1\oplus M_2\in\MK^{(r)}_n$, i.e., the polynomial $(p_{M_1}(x) + p_{M_2}(y))(\sum_{i=1}^{n}x_i^2 + \sum_{j=1}^{m}y_j^2)^r$ is a sum of squares. Here, for convenience, we denote the $n+m$ variables as $x_i\ (i\in [n])$ and $y_j\ (j\in [m])$ and we set $p_{M_1}(x)= (x^{\circ 2})^T M_1x^{\circ 2}$ and $p_{M_2}(y)=(y^{\circ 2})^T M_2y^{\circ 2}$.
Write $z=y^{\circ2}$ for some $y\in \oR^m$, so that $p_{M_2}(y)=0$, and  $c:=\sum_{j=1}^m y_j^2>0$.
 Then  the polynomial $f(x):=p_{M_1}(x)(\sum_{i=1}^{n}x_i^2 + c)^r$ is a sum of squares.
 By decomposing $f$ as a sum of  homogeneous polynomials we see that its least degree homogeneous part is the polynomial $cp_{M_1}(x)$, with  degree 4. By Lemma \ref{least-degree} we obtain that $cp_{M_1}(x)$ is a sum of squares, i.e, $M_1\in \MK_n^{(0)}$, yielding a contradiction.
\end{proof}

 \ignore{
\begin{theorem}
Let $M$ be a matrix and let $M'$ be the matrtix obtained by adding a zero row-column. If $M' \in \MK_{n+1}^{(r)}$ then $M\in \MK_n^{(0)}$. In particular, if $M_1\notin \MK_n^{(0)}$ then $M' \notin \MK_{n+1}^{(r)}$ for all $r\in \mathbb{N}$
\end{theorem}
\begin{proof}
By construction $P_M(x)=P_{M'}(x)$. Assume that $M' \in \MK_{n+1}^{(r)}$, then the poynomial $p(x_1, x_2, \dots, x_n, x_{n+1})=P_M(x)(\sum_{i=1}^{n+1}x_i^2)^r$ is a sum of squares, then $q:=p(x_1, \dots, x_n, 1)= P_M(x)(\sum_{i=1}^{n}x_i^2 + 1)^r$ is also a sum of squares. By decomposing $q$ in homogeneous polynomials we obtain that the least degree homogeneous part is of degree 4 and is precisely $P_M(x)$. By Lemma \ref{least-degree} we obtain that $P_M(x)$ is a sum of squares, i.e, $M\in \MK_n^{(0)}$.
\end{proof}
}

We now use Theorem \ref{theo-cop} to give some classes of copositive matrices that do not belong to $\MK^{(r)}$ for any $r\in \mathbb{N}$.  As a first application we obtain 
\begin{equation}\label{zero-col}
M\in \COP_n\setminus \MK_n^{(0)} \ \Longrightarrow \ \left(\begin{array}{c|c}
M & 0\\
\hline
0 & 0
\end{array}
\right) \in \COP_{n+1}\setminus \bigcup\limits_{r\in \mathbb{N}} \MK_{n+1}^{(r)}.
\end{equation}
Since the inclusion $\MK^{(0)}_5\subset \COP_5$ is strict this shows that also the inclusion $ \bigcup_{r\in\mathbb{N}}\MK_n^{(r)}\subset \COP_n$ is strict for any $n\ge 6$. Hence the cone  $\bigcup_{r\in\mathbb{N}}\MK_n^{(r)}$ is not a closed set for $n\ge 6$. On the other hand,  we have $\COP_n=\MK_n^{(0)}$ for $n\leq 4$ \cite{Diananda}. The situation for the case of $5\times 5$ matrices remains open.

\begin{question}
Does equality $\COP_5 = \bigcup\limits_{r\ge 0}\MK_5^{(r)}$ hold?
\end{question}

Dickinson et al. \cite{DDGH} proved that any $5\times 5$ copostive matrix with 0,1-valued  diagonal entries belongs to $\MK_5^{(1)}$. They  conjectured that for any integer $n\ge 6$ there exists an integer $r\ge 0$ such that any $n\times n$ copositive matrix with $0,1$-valued  diagonal entries  belongs to $\MK_n^{(r)}$ (see \cite[Conjecture 1]{DDGH}).  Using Theorem~\ref{theo-cop} we can disprove this conjecture.

\begin{example}
Let  $M_1:=M_{C_5}=H$ be the Horn matrix, known   to be copositive with $H\notin \MK_n^{(0)}$. For the matrix $M_2$ we first consider the $1\times 1$ matrix $M_2=0$ and as a second example we consider $ M_2=\begin{pmatrix} 1 & -1 \\ -1 & 1 \end{pmatrix}\in \COP_2$. Then,  as an application of Theorem \ref{theo-cop},  we obtain 
\begin{equation}\label{eqexmat}
\left(
\begin{array}{c|c}
H & 0\\
\hline
0 & 0
\end{array}
\right) \in \COP_6 \setminus \bigcup\limits_{r\in\mathbb{N}}\MK_{6}^{(r)}, \quad  \left(
\begin{array}{c|c}
H & 0\\
\hline
0 & {\begin{array}{cc}1 & -1 \\ -1 & 1
\end{array}}
\end{array}
\right)\in \COP_7\setminus \bigcup\limits_{r\in \mathbb{N}}\MK_7^{(r)}.
 \end{equation}
\ignore{$$
M'=
\begin{pmatrix}
1 & -1 & 1 & 1 & -1 & 0\\
-1 & 1 & -1 & 1 & 1 & 0 \\
1 & -1 & 1 & -1 & 1 & 0 \\
1 & 1 & -1 & 1 & -1 & 0 \\
-1 & 1 & 1 & -1 & 1 & 0 \\
0 & 0 & 0& 0 & 0 & 0
\end{pmatrix} \quad M'' = \begin{pmatrix}
1 & -1 & 1 & 1 & -1 & 0 & 0\\
-1 & 1 & -1 & 1 & 1 & 0  & 0\\
1 & -1 & 1 & -1 & 1 & 0 & 0\\
1 & 1 & -1 & 1 & -1 & 0 & 0\\
-1 & 1 & 1 & -1 & 1 & 0 & 0\\
0 & 0 & 0& 0 & 0 & 1 & -1 \\
0 & 0 & 0 & 0 & 0 & -1 & 1 
\end{pmatrix} $$}

The left most matrix in (\ref{eqexmat}) is  copositive, has all its diagonal entries equal to $0,1$ and does not belong to any of the cones $\MK^{(r)}_6$; selecting for $M_2$ the zero matrix of  size $m\ge 1$ gives  a matrix in $\COP_n\setminus \bigcup_{r\ge 0}\MK^{(r)}_n$ for any size $n\ge 6$.
The right most matrix in (\ref{eqexmat}) is  copositive,  has all its diagonal entries equal to 1 and  does not lie in any of the cones $\MK_7^{(r)}$.
More generally, if we  select the matrix $M_2= {1\over m-1}(mI_m-J_m)$, which is positive semidefinite with $e^TM_2e=0$, then we obtain a matrix in $\COP_n\setminus \bigcup_{r\ge 0} \MK^{(r)}_n$ with diagonal entries equal to 1, for any size $n\ge 7$.
In contrast, as mentioned above, Dickinson et al. \cite{DDGH} proved that any copositive $5\times 5$ matrix with an all-ones diagonal  belongs to $\MK_5^{(1)}$. 
The situation for the case of $6\times 6$ copositive matrices remains open.

\end{example}
\begin{question}
Is it true that any $6\times 6$ copositive matrix with an all-ones diagonal belongs to $\MK^{(r)}_6$ for some $r\in \oN$?
\end{question}
 
 We conclude with an observation on the number of zeros in the simplex $\Delta_n$ of the quadratic form $x^TMx$ when $M$ is a copositive matrix.
For the class of copositive matrices arising from the graph matrices  $M_G =\alpha(G)(A_{G}+I)-J$ it is proved in   \cite{LV2021} that the number of such zeros is finite if and only if the graph $G$ is acritical, in which case the matrix $M_G$ belongs to some cone $\MK^{(r)}$. 
We now show that the property of having finitely many zeros  in the simplex for the quadratic form $x^TMx$  is in general not sufficient to ensure membership of $M$ in some cone $\MK^{(r)}$.  
 Specifically, we give a class  of copositive matrices $M\not\in \cup_{r} \MK^{(r)}$ for which the quadratic form $x^TMx$ has a unique zero in $\Delta_n$.

\ignore{
\begin{example}
Let $M$ be an $n\times n$ strictly copositive matrix such that $M\notin \MK_n^{(0)}$. Consider the matrix $M_{G_9} = 4(A_{G_9}+I)-J$, where $G_9$ is the acritical graph shown in Figure \ref{graph_G_9}. Then we have 
\begin{equation}
A:=\left(
\begin{array}{c|c}
M & 0\\
\hline
0 & M_{G_9}
\end{array}
\right) \in \COP\setminus \bigcup_{r\ge 0}\MK_{n+9}^{(r)}.
\end{equation}
Now we prove that the quadratic form $x^TAx$ has finitely many zeros over the simplex. Let $x\in \Delta_{n+9}$ such that $x^TAx=0$. Since $M$ is strictly copositive then $x_i=0$ for $i=1, \dots , n$ and $(x_{n+1}, x_{n+2}, \dots, x_{n+9})$ correspond to  a zero of the quadratic form $x^TM_{G_9}x$ over the simplex. Since $G_9$ is acritical then the quadratic form $x^TM_{G_9}x$ has finitely many zeros over the simplex (corresponding to the $\alpha$-stable sets of $G_9$, see Theorem \ref{minimizers-LV}). This completes the proof.
\end{example}
}

\begin{example}
Let  $M_1\in \COP_n$  be a strictly copositive matrix such that $M_1\not\in \MK_n^{(0)}$. For instance, one can  take $M_1=t(I+A_G)-J$, where $G$ is a graph with $\rrank(G)\ge 1$ and $\alpha(G)<t< \vartheta^{(0)}(G)$.
By Theorem \ref{theo-cop} we have 
\begin{equation}
M:=\left(
\begin{array}{c|c}
M_1 & 0\\
\hline
0 & \begin{array}{cc}
1 & -1 \\
-1 & 1
\end{array}
\end{array}
\right) \in \COP_{n+2}\setminus \bigcup_{r\ge 0}\MK_{n+2}^{(r)}.
\end{equation}
Now we prove that the quadratic form $x^TMx$ has a unique zero in the simplex. For this let $x\in \Delta_{n+2}$ such that $x^TMx=0$. Since $M_1$ is strictly copositive and $y:=(x_1,\ldots,x_n)$ is a zero of the quadratic form $y^TM_1y$ it follows that $x_1=\ldots=x_n=0$. Hence  $(x_{n+1}, x_{n+2})$ is  a zero of the quadratic form $x_{n+1}^2 - 2x_{n+1}x_{n+2}+x_{n+2}^2$ in the simplex $\Delta_2$ and thus $x_{n+1}=x_{n+2}=1/2$. This shows that  the only zero of the quadratic form $x^TMx$ in the simplex is $(0,0,\dots, 0, \frac{1}{2},\frac{1}{2})$, as desired.
\end{example}
 
\subsection{Characterizing the diagonal scalings of the Horn matrix in $\MK^{(1)}$}\label{secscalingC5}
As mentioned above, it is not known whether the union of the cones $\MK^{(r)}_5$ covers the full cone $\COP_5$, but  any  matrix  in $\COP_5$ with $0,1$-valued diagonal entries  lies in the cone $\MK^{(1)}_5$ \cite{DDGH}. One of the key ingredients for this result is the complete characterization of the extreme rays of the cone $\COP_5$ by Hildebrand \cite{Hildebrand}. In particular the Horn matrix $H$ and its positive  diagonal scalings define a class of extreme rays of $\COP_5$, so the question arises whether all of them lie in some cone $\MK^{(r)}_5$.  Here, a positive diagonal scaling of a matrix $M$ is a matrix of the form $DMD$, where $D=\text{\rm diag}(d_1,\ldots,d_5)$ with $d_1,\ldots,d_5>0$.

 \begin{question} Is it true that  every positive diagonal scaling  of the Horn matrix belongs to  $ \MK_5^{(r)}$ for some $r$?
  \end{question} 
 
 As a first partial step we   characterize the diagonal scalings of the Horn matrix that lie in $\MK^{(1)}_5$.
A key ingredient for this is the fact that the Horn matrix admits a unique $\MK^{(1)}$-certificate, as was observed in Example \ref{example-5-cycle}.
 
 \ignore{
\begin{example}\label{c_5-unique}
Then we can prove that the matrices $P(i)$ certifying that $M_{C_5}\in \MK^{(1)}$ are unique. Notice that the vectors $(1,0,1,0,0, (1,0,0,1,0), (1,1,0,2,0), (1,0,2,0,1)$ are zeros of $x^TMx=0$. Let $C_1, C_2, C_3, C_4, C_5$ be the columns of $P(1)$.   By Lemma \ref{kernel-k1} we get $C_1=-C_3$, $C_1=-C_4$, $C_1+C_2+2C_4=0$ and $C_1+C_5+2C_3=0$. Hence, $C_1=C_2=C_5=-C_3=-C_4$.  Since $P(1)_{11}=1$, then the first row and column are determined and therefore the rest of the matrix. The other $P(i)$ are equivalently determined. 
\end{example} 
 }
\ignore{
 \begin{lemma}
Let $D=\text{\em diag}(d_1,\ldots,d_5)$ where $d_1,\ldots,d_5\ge 0$.  Assume that $M:=DHD\in \MK_5^{(1)}$ and let $Q(i)$, for $i\in [5]$, be matrices satisfying conditions of Lemma \ref{lemK1}. Then, $Q(i)=DP(i)D$ where $P(i)$ are the matrices of Example \ref{c_5-unique}. For example,
 $$
Q(1)=
\begin{pmatrix}
d_1^2 & d_1d_2 & d_1d_5 & -d_1d_3 & -d_1d_4 \\
d_1d_2 & d_2^2 & d_2d_5 & -d_2d_3 & -d_2d_4  \\
d_1d_5 & d_2d_5 & d_5^2 & -d_5d_3 & -d_5d_4  \\
-d_1d_3 & -d_2d_3 & -d_5d_3 & d_3^2 & d_3d_4  \\
-d_1d_4 & -d_2d_4 & -d_5d_4 & d_3d_4 & d_4^2  \\
\end{pmatrix},
$$ \end{lemma}
 \begin{proof}
Let $x$ be a zero of $x^THx$, then $y:=D^{-1}x$ is a zero of $y^TMy$ therefore, by Lemma \ref{kernel-k1} $Q(i)y=0$ whenever $y_i>0$. Consider the vectors $z_1=(1,0,1,0,0)$, $z_2=(1,0,0,1,0)$, $z_3=(1,1,0,2,0)$, $z_4=(1,0,2,0,1)$ which are zeros of $x^THx$ and the vectors $w_i=D^{-1}z_i$ for $i=1,2,3,4$ which are zeros iof $x^TMx$. Let $C_1, \dots, C_5$ be the columns of $Q(1)$, then  we have 
\begin{align*}
\frac{C_1}{d_1}+\frac{C_3}{d_3}&=0 \\
\frac{C_1}{d_1}+\frac{C_4}{d_4}&= 0 \\
\frac{C_1}{d_1}+\frac{C_2}{d_2} + 2\frac{C_4}{d_4} &=0 \\
\frac{C_1}{d_1} +2\frac{C_3}{d_3} +\frac{C_5}{d_5} &=0.
\end{align*}
Hence $\frac{C_1}{d_1}=\frac{C_2}{d_2}=\frac{C_5}{d_5}=-\frac{C_3}{d_3}=-\frac{C_4}{d_4}$. Since $Q(1)_{11}=d_1^2$ then $Q(1)$ takes the form.
 \end{proof}
}

\begin{theorem}
Let $D=\text{\rm diag}(d_1,d_2,d_3,d_4,d_5)$ with $d_1,\ldots,d_5> 0$ and let $H$ be the Horn matrix. Then, $DHD$ belongs to $ \MK_5^{(1)}$ if and only if $d_1,\ldots,d_5$ satisfy the following inequalities
\begin{align}\label{eqd}
d_{i-1}d_i+d_id_{i+1}\ge d_{i-1}d_{i+1}\ \text{ for } i\in [5]\ (\text{indices taken modulo 5}).
\end{align}
\end{theorem}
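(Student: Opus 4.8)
The plan is to exploit the uniqueness of the $\MK^{(1)}$-certificate of the Horn matrix $H$ (established in Example \ref{example-5-cycle}) to show that any $\MK^{(1)}$-certificate of $M:=DHD$ is \emph{forced} to be the scaled family $Q(i)=DP(i)D$ ($i\in[5]$), and then to read off the constraints on $d$ from condition (iv) of Lemma \ref{lemK1}. Conditions (i)--(iii) will turn out to hold unconditionally, so everything reduces to condition (iv).

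First I would establish the forcing. Suppose $M=DHD\in\MK_5^{(1)}$ and let $Q(1),\dots,Q(5)$ be a $\MK^{(1)}$-certificate. Since $D$ is a positive diagonal matrix, a vector $z\in\oR^5_+$ is a zero of $x^THx$ if and only if $w:=D^{-1}z\in\oR^5_+$ is a zero of $y^TMy$, because $w^TMw=z^THz$. The vectors $z_1=(1,0,1,0,0)$, $z_2=(1,0,0,1,0)$, $z_3=(1,1,0,2,0)$, $z_4=(1,0,2,0,1)$ from Example \ref{example-5-cycle} are zeros of $x^THx$ with positive first coordinate, so $w_j=D^{-1}z_j$ are zeros of $y^TMy$ with $(w_j)_1>0$; hence Lemma \ref{kernel-k1}(i) gives $Q(1)w_j=0$ for $j=1,2,3,4$. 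Writing $C_1,\dots,C_5$ for the columns of $Q(1)$, these relations read
$$\frac{C_1}{d_1}+\frac{C_3}{d_3}=0,\quad \frac{C_1}{d_1}+\frac{C_4}{d_4}=0,\quad \frac{C_1}{d_1}+\frac{C_2}{d_2}+\frac{2C_4}{d_4}=0,\quad \frac{C_1}{d_1}+\frac{2C_3}{d_3}+\frac{C_5}{d_5}=0,$$
which yield $\tfrac{C_1}{d_1}=\tfrac{C_2}{d_2}=\tfrac{C_5}{d_5}=-\tfrac{C_3}{d_3}=-\tfrac{C_4}{d_4}$. Thus $Q(1)=\mu\,(Dv)(Dv)^T$ for $v=(1,1,-1,-1,1)^T$ (the first row of $H$) and some scalar $\mu$, the symmetry of $Q(1)$ pinning down the rank-one shape; then condition (ii) of Lemma \ref{lemK1}, namely $Q(1)_{11}=M_{11}=d_1^2$, forces $\mu=1$. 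Hence $Q(1)=DP(1)D$, and the cyclic symmetry of $C_5$ gives $Q(i)=DP(i)D$ for all $i\in[5]$.

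Next I would check that $Q(i)=DP(i)D$ satisfies (i)--(iii) of Lemma \ref{lemK1} automatically. Since each $P(i)=w_iw_i^T$ with $w_i$ the $i$-th row of $H$, we have $Q(i)=(Dw_i)(Dw_i)^T\succeq0$ (condition (i)); condition (ii) holds as $Q(i)_{ii}=d_i^2(w_i)_i^2=d_i^2=M_{ii}$; and condition (iii) follows from $P(i)_{ij}=H_{ij}$, $P(j)_{ii}=1=H_{ii}$ together with $M_{ij}=d_id_jH_{ij}$. Therefore $M\in\MK_5^{(1)}$ if and only if $Q(i)=DP(i)D$ additionally satisfies condition (iv). Using $Q(i)_{jk}=d_jd_kH_{ij}H_{ik}$ and $M_{ij}=d_id_jH_{ij}$, I would run over the triples of $C_5$, of which there are only two types since $C_5$ has no triangle and no stable triple: a path triple $\{i-1,i,i+1\}$ (two edges, non-edge $\{i-1,i+1\}$) reduces, after cancellation, to $d_{i-1}d_i+d_id_{i+1}\ge d_{i-1}d_{i+1}$, which is exactly (\ref{eqd}) for the middle index $i$, while a single-edge triple collapses to the trivial $0\le 0$. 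As the five path triples range over all middle vertices, condition (iv) is equivalent to the full system (\ref{eqd}), giving both directions: if (\ref{eqd}) holds then $DP(i)D$ is a valid certificate, and conversely any certificate of $DHD$ equals $DP(i)D$ and hence forces (\ref{eqd}).

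I expect the forcing step to be the main obstacle: this is where the uniqueness of the Horn certificate is genuinely used, and care is needed to carry the diagonal scaling correctly through the kernel relations of Lemma \ref{kernel-k1} and to extract the symmetric rank-one form with the right normalization. Once the certificate shape is pinned down, the verification of (i)--(iii) and the triple enumeration behind the reduction of (iv) to (\ref{eqd}) are routine.
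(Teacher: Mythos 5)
Your proposal is correct and follows essentially the same route as the paper's proof: it forces any $\MK^{(1)}$-certificate of $DHD$ to be $Q(i)=DP(i)D$ via Lemma \ref{kernel-k1} applied to the same four zeros $z_1,\dots,z_4$ (scaled by $D^{-1}$), verifies conditions (i)--(iii) of Lemma \ref{lemK1} unconditionally, and reduces condition (iv) to the inequalities (\ref{eqd}) by the same two-pattern triple analysis (one-edge triples give equality, path triples give (\ref{eqd})). The only cosmetic differences are the order of the two directions and your explicit use of the rank-one form $P(i)=w_iw_i^T$, which streamlines but does not change the computation.
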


\begin{proof}
Set $M:=DHD$. First we show the `if part'. Assume $d_1,\ldots,d_5$ satisfy  conditions (\ref{eqd}); we show $M\in \MK^{(1)}_5$. For this consider the matrices $Q(i):=DP(i)D$, where the matrices $P(i)$ are the $\MK^{(1)}$-certificate for $H$ from (\ref{eqPiC5}); we show that the matrices $Q(i)$ form a $\MK^{(1)}$-certificate for $M$, i.e., satisfy the conditions (i)-(iv) from Lemma \ref{lemK1}. Clearly $Q(i)\succeq 0$ and $Q(i)_{ii}=d_i^2$ for all $i\in [5]$, so (i), (ii) hold.
Also, $2Q(i)_{ij}+Q(j)_{ii}=2d_id_j P(i)_{ij}+d_i^2 P(j)_{ii}= 2M_{ij}+M_{ii}$ since $P(i)_{ij}=H_{ij}$ and $P(j)_{ii}=H_{ii}$, so (iii) holds.
We now check (iv), i.e., $Q(i)_{jk}+Q(j)_{ik}+Q(k)_{ij}\le M_{ij}+M_{jk}+M_{ik}$ for any distinct $i,j,k\in [5]$. There are two possible patterns (up to symmetry): $(i,j,k)= (1,2,4)$ and  $(i,j,k)= (5,1,2)$. For the first pattern we get
$$Q(1)_{24}+Q(2)_{14}+Q(4)_{12}= d_2d_4 P(1)_{24}+ d_1d_4P(2)_{14} + d_1d_2 P(4)_{12}= M_{24}+M_{14}+M_{12}.$$ For the second pattern we get
$$\begin{array}{ll}
M_{12}+M_{25}+M_{15}-(Q(5)_{12}+Q(1)_{25}+Q(2)_{15}) \\
 = d_1d_2-d_2d_5+d_1d_5 -( d_1d_2 P(5)_{12}+d_2d_5 P(1)_{25}+d_1d_5 P(2)_{15}) \\
  = d_1d_2-d_2d_5+d_1d_5 - ( -d_1d_2+d_2d_5 -d_1d_5)\\
=  2(d_1d_2-d_2d_5+d_1d_5),
\end{array}
$$
which is nonnegative if and only if (\ref{eqd}) holds. Hence the conditions (\ref{eqd}) indeed imply that the condition (iii) of Lemma \ref{lemK1} holds for the matrices $Q(i)$ and thus they form a $\MK^{(1)}$-certificate for $M$, as desired.

Conversely, assume $M=DHD\in \MK^{(1)}_5$ and let $Q(i)$ ($i\in [5]$) be a $\MK^{(1)}$-certificate for $M$; we show $Q(i)=DP(i)D$ for $i\in [5]$, where the matrices $P(i)$ are the unique $\MK^{(1)}$-certificate for $H$ from (\ref{eqPiC5}). In view of the above this implies  that the $d_i$'s satisfy the conditions (\ref{eqd}), as desired. 
Up to symmetry it suffices  to show $Q(1)=DP(1)D$. For this  note that if 
$z^THz=0$ for $z\in \oR^n_+$, then $y^TMy=0$ for $y:=D^{-1}z\in \oR^n_+$ 
 and thus,   by Lemma \ref{kernel-k1},  $Q(i)y=0$ whenever $y_i>0$. Consider the vectors $z_1=(1,0,1,0,0)$, $z_2=(1,0,0,1,0)$, $z_3=(1,1,0,2,0)$, $z_4=(1,0,2,0,1)$, which are zeros of $x^THx$, and the corresponding vectors $y_i=D^{-1}z_i$ for $i=1,2,3,4$, which are zeros of $x^TMx$. Let $C_1, \dots, C_5$ denote the columns of $Q(1)$. Then, using the zeros $y_1,\ldots,y_5$ of $x^TMx$   we obtain the relations
$$\frac{C_1}{d_1}+\frac{C_3}{d_3}=0, \quad
\frac{C_1}{d_1}+\frac{C_4}{d_4}= 0, \quad
\frac{C_1}{d_1}+\frac{C_2}{d_2} + 2\frac{C_4}{d_4} =0, \quad
\frac{C_1}{d_1} +2\frac{C_3}{d_3} +\frac{C_5}{d_5} =0,$$
which imply  $\frac{C_1}{d_1}=\frac{C_2}{d_2}=\frac{C_5}{d_5}=-\frac{C_3}{d_3}=-\frac{C_4}{d_4}$. As $Q(1)_{11}=d_1^2$ one  easily deduces   $Q(1)=DP(1)D$, as desired.
\end{proof}


\section{Behavior of the $\rrank$ under simple graph operations}\label{pre-iso-critical}

Recall that the $\rrank$ of $G$ is the minimum integer $r$ such that $\vartheta^{(r)}(G)=\alpha(G)$. In this section, we present some useful ideas  for bounding the $\rrank$ based on simple graph operations.
 Namely, we investigate the role of isolated nodes and of critical edges, and their impact on Conjectures \ref{conj1} and \ref{conj2}. In particular, we will show that it suffices to show Conjectures \ref{conj1} and \ref{conj2}  for the class of critical graphs and that Conjecture \ref{conj2} holds if the $\rrank$ remains finite under the operation of adding isolated nodes.
 \\
 
 We start with a lemma  relating  the $\rrank$ of a graph and that of its  induced subgraphs with the same stability number, which we will use later on.
 
\begin{lemma}\label{indu-same-alpha}
Let $G=(V,E)$ be a graph and let $H$ be an induced subgraph of $G$ such that $\alpha(G)=\alpha(H)$. Then, $\rrank(H)\leq \rrank(G)$. 
\end{lemma}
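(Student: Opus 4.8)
The plan is to reduce the statement to an elementary monotonicity property of the Parrilo cones under taking principal submatrices. Write $\alpha := \alpha(G) = \alpha(H)$, let $U \subseteq V$ be the vertex set of $H$, and set $n := |V|$, $m := |U|$; after relabelling we may assume $U = \{1,\dots,m\}$. If $\rrank(G) = \infty$ there is nothing to prove, so I would assume $r := \rrank(G) < \infty$, which by the equivalences in (\ref{equiv}) means precisely $M_G \in \MK_n^{(r)}$. The goal is then to show $M_H \in \MK_m^{(r)}$, i.e. $\rrank(H) \le r$.

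The first key observation is that $M_H$ is exactly the principal submatrix $M_G[U]$. Since $H$ is an \emph{induced} subgraph we have $(A_H)_{ij} = (A_G)_{ij}$ for all $i,j \in U$, and since $\alpha(H) = \alpha(G) = \alpha$ the scaling factor multiplying $A+I$ is the same in both matrices; hence for $i,j \in U$,
\[
(M_G)_{ij} = \alpha\big((A_G)_{ij} + \delta_{ij}\big) - 1 = \alpha\big((A_H)_{ij} + \delta_{ij}\big) - 1 = (M_H)_{ij}.
\]
It is worth stressing that this is the only point where the hypothesis $\alpha(H)=\alpha(G)$ is used: without it the two matrices would carry different multiples of $A+I$ on the common index set $U$ and the identity $M_H = M_G[U]$ would fail.

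The second step is a substitution argument that handles all orders $r$ at once. By definition $M_G \in \MK_n^{(r)}$ says that the polynomial $(\sum_{i=1}^n x_i^2)^r (x^{\circ 2})^T M_G x^{\circ 2}$ lies in $\Sigma$. I would substitute $x_{m+1} = \dots = x_n = 0$; since evaluating a sum of squares at fixed values of some of the variables again yields a sum of squares, the resulting polynomial
\[
\Big(\sum_{i=1}^m x_i^2\Big)^r \big((x')^{\circ 2}\big)^T M_G[U]\, (x')^{\circ 2}, \qquad x' = (x_1,\dots,x_m),
\]
is still in $\Sigma$. By the first step $M_G[U] = M_H$, so this is exactly the statement $M_H \in \MK_m^{(r)}$, giving $\rrank(H) \le r = \rrank(G)$.

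I do not expect a genuine obstacle here: the content is just the stability of the cones $\MK^{(r)}$ under passing to principal submatrices, together with the matching of diagonal scaling factors guaranteed by $\alpha(H)=\alpha(G)$. The only things to double-check are the substitution claim (immediate, as setting variables to zero preserves being a sum of squares) and the submatrix identity above. An alternative, certificate-based route would instead restrict a $\kzeroc$ or $\konec$ of $M_G$ to the rows and columns indexed by $U$ (using that $P[U] \succeq 0$ when $P \succeq 0$ and $N[U] \ge 0$ when $N \ge 0$), but this would have to be carried out order by order, whereas the substitution argument dispatches every $r$ simultaneously.
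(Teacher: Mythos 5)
Your proof is correct and follows essentially the same route as the paper: both arguments use $\alpha(G)=\alpha(H)$ to identify $M_H$ with the principal submatrix $M_G[U]$ and then invoke closure of the cones $\MK^{(r)}$ under passing to principal submatrices. The only difference is that you spell out (via the set-variables-to-zero substitution) the submatrix-closure step that the paper states without proof.
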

\begin{proof}
As $\alpha(G)=\alpha(H)=:\alpha$ we have $M_G=\alpha(A_G+I)-J$ and $M_H=\alpha(A_H+I)-J$. As $H$ is an induced subgraph of $G$,  $M_H$ is a principal submatrix of $M_G$ and thus  $M_G\in \MK^{(r)}$ implies $M_H\in \MK^{(r)}$.
\end{proof}

\begin{remark}
Let $G$ be the graph obtained by adding a pendant edge to $C_5$ (see the left most graph in Fig. \ref{fig-GammaGGc}), so that $\alpha(G)=3=\alpha(C_5)+1$. Then, $G$ has $\rrank$ 0 as it can be covered by $\alpha(G)=3$ cliques. However, $C_5$ is an induced subgraph of $G$ and has $\rrank$ 1 (see Example \ref{example-5-cycle}). 
This shows that the condition of having the same stability number in Lemma \ref{indu-same-alpha} cannot be dropped.  
\end{remark}

\subsection{Role of isolated nodes}
\ignore{
 Two nodes $u$ and $v$ in a graph $G$ are called \textit{twin} if they   have the same closed neighborhood:  $u^\perp=v^\perp$ (which implies they are adjacent). 
The next result  follows from the fact that deleting one copy out of two identical rows/columns preserves the cone $\mathcal K^{(r)}$. 

\begin{lemma}[\cite{GL2007}] \label{twin_n}\footnote{This lemma goes only in the arXiv version}
Assume $u,v\in V$ are  two twin nodes in a graph $G$ and let $G\setminus v$ be the graph obtained by deleting node $v$ in $G$.
Then $M_G\in \MK_n^{(r)}$ if and only if $M_{G\setminus v}\in\MK_{n-1}^{(r)}$. That is, $\rrank(G)=\rrank(G\setminus v)$.
\end{lemma}
}

\ignore{Given a graph $G=(V=[n],E)$ and a node $i\in V$, let $i^\perp=\{i\}\cup N_G(i)$ denote the closed neighborhood of $i$, and let
\begin{equation}\label{eqPi}
G_i= G\setminus i^\perp \oplus K_{i^\perp}
\end{equation}
be the graph obtained from $G$ by taking the disjoint sum of $G\setminus i^\perp$ and the complete graph on $i^\perp$.}

\ignore{Hence we have $\rrank(G\oplus K_m)=\rrank(G\oplus i_0)$ for any $m\ge 1$. From this we obtain the following result of \cite{GL2007}, which permits to bound the $\rrank$. \MoL{We give a sketch of proof, which we believe provides useful insight on the behavior of the $\rrank.$ }
}

We recall a result  from \cite{GL2007}, which is useful  for bounding the $\rrank$ of a graph in terms of the $\rrank$ of certain subgraphs with an added isolated node.
\begin{proposition}[\cite{GL2007}]\label{propGL}
For any graph  $G=(V,E)$ we have:
\begin{equation}
\rrank(G)\leq 1 + \max_{i\in V} \rrank((G\setminus i^\perp) \oplus i).
\end{equation}
\end{proposition}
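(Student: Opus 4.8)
The plan is to relate the matrix $M_G$ to a semidefinite combination built from the graph matrices of the subgraphs $(G\setminus i^\perp)\oplus i$, exploiting the $\MK^{(1)}$-type certificate structure recorded in Lemma~\ref{lemK1}. Let $\alpha=\alpha(G)$ and set $r_0=\max_{i\in V}\rrank((G\setminus i^\perp)\oplus i)$; my goal is to produce a $\MK^{(r_0+1)}$-certificate for $M_G$, which by the equivalence~(\ref{equiv}) gives $\rrank(G)\le r_0+1$. The key observation is that a node $i$ together with its neighborhood $i^\perp$ contributes a clique-like block, and the value $\alpha((G\setminus i^\perp)\oplus i)$ equals $\alpha(G\setminus i^\perp)+1$; the point of adding back the isolated node $i$ is precisely to keep the stability number aligned with $\alpha(G)$ after removing the closed neighborhood.

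\textbf{Main steps.} First I would write out the defining identity that $M_G\in\MK^{(r_0+1)}_n$ is equivalent to $(\sum_i x_i^2)^{r_0+1}(x^{\circ2})^TM_Gx^{\circ2}\in\Sigma$, and try to reduce the exponent $r_0+1$ to $r_0$ applied to smaller matrices by extracting one factor of $\sum_i x_i^2$. Concretely, I expect to use the multiplier $(\sum_i x_i^2)$ to split the form according to which variable block is ``active,'' in the spirit of the decomposition in Lemma~\ref{lemK1}: there $(\sum_i x_i)x^TMx$ is rewritten as $\sum_i x_i\, x^TP(i)x$ plus a nonnegative combination of monomials $x_ix_jx_k$. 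The natural guess is that each matrix $P(i)$ in such a certificate should be taken to encode the graph matrix of $(G\setminus i^\perp)\oplus i$ (suitably embedded into $\mathcal S^n$ with the entries on $i^\perp$ reflecting the clique structure), so that the hypothesis $\rrank((G\setminus i^\perp)\oplus i)\le r_0$ supplies, after multiplying by $(\sum_i x_i^2)^{r_0}$, the needed sum-of-squares certificate for each block. I would then assemble these block certificates and verify that the cross terms (the off-block monomials) can be absorbed into the nonnegative coefficients $c_{ijk}$, using that entries of $M_G$ between distinct closed neighborhoods are $-1$ or controlled by the adjacency structure.

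\textbf{The main obstacle.} The hard part will be the bookkeeping that shows the off-diagonal and cross-neighborhood contributions combine into something certifiably nonnegative, rather than just the ``diagonal'' $i$-indexed pieces. In particular, each $P(i)$ as above certifies nonnegativity only for the subgraph $(G\setminus i^\perp)\oplus i$, whose matrix differs from the corresponding principal region of $M_G$ on the block indexed by $i^\perp$ (where $M_G$ records the actual edges among neighbors of $i$, whereas in $(G\setminus i^\perp)\oplus i$ those neighbors are deleted). I would need to check that this discrepancy is always of the right sign to be compatible with the $\MK$-certificate inequalities, most plausibly by arguing that on the closed neighborhood $i^\perp$ the relevant quadratic form is dominated by a positive semidefinite rank-one (all-ones clique) contribution. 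A secondary technical point is the precise power of the multiplier: I would confirm that one extra factor of $\sum_i x_i^2$ suffices to homogenize and glue the $n$ separate degree-$(2r_0+2)$ block certificates into a single degree-$(2r_0+4)$ sum of squares, which is exactly what the ``$1+\max_i$'' bound predicts.
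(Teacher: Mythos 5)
Your outline follows the same route as the proof the paper relies on (from \cite{GL2007}): peel off one factor of $\sum_i x_i^2$, use the expansion (\ref{eqpM}) behind Lemma \ref{lemK1} with matrices $P(i)$ attached to the graphs $(G\setminus i^\perp)\oplus i$, let the rank hypothesis make each term $x_i^2\,\big(\sum_j x_j^2\big)^{r_0}(x^{\circ 2})^TP(i)\,x^{\circ 2}$ a sum of squares, and absorb the remaining cubic cross terms into nonnegative coefficients $c_{ijk}$. However, the two points you leave open are where the actual content sits, and as stated they are genuine gaps. The embedding step is not routine: the hypothesis $\rrank((G\setminus i^\perp)\oplus i)\le r_0$ lives on a graph with $|V|-|i^\perp|+1$ nodes, so by itself it only gives a certificate in fewer variables. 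The correct choice is $P(i)=\alpha(G)(I+A_{G_i})-J$ with $G_i=(G\setminus i^\perp)\oplus K_{i^\perp}$, i.e., the isolated node blown up into a clique of mutual twins, and $P(i)\in\MK^{(r_0)}_n$ then rests on two facts you never supply: (a) the lemma of \cite{GL2007} that duplicating rows/columns preserves the cones $\MK^{(r)}$ --- not an automatic fact, given that this very paper proves (Theorem \ref{theo-cop} and (\ref{zero-col})) that the other natural embedding, appending a zero row/column, destroys membership in \emph{all} cones $\MK^{(r)}$; and (b) a rescaling: the literal graph matrix of $(G\setminus i^\perp)\oplus i$ has diagonal entries $\alpha(G\setminus i^\perp)$, which can be strictly smaller than $\alpha(G)-1$ and would then violate condition (ii) of Lemma \ref{lemK1}; one must add the nonnegative matrix $(\alpha(G)-\alpha(G_i))(I+A_{G_i})$, which fortunately keeps the sum inside $\MK^{(r_0)}_n$.

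Second, your proposed mechanism for the discrepancy on the block $i^\perp$ --- domination by a positive semidefinite rank-one all-ones contribution --- is not what makes the certificate work, and entrywise domination fails outright: for non-adjacent $j,k\in N(i)$ one has $P(i)_{jk}=\alpha(G)-1$ while $(M_G)_{jk}=-1$. The actual mechanism is that condition (iv) of Lemma \ref{lemK1} is an inequality on \emph{triples}, so the two terms coming from the edges at $i$ compensate: for such a triple,
\begin{equation*}
P(i)_{jk}+P(j)_{ik}+P(k)_{ij}=(\alpha(G)-1)-1-1=\alpha(G)-3\ \le\ 2\alpha(G)-3=(M_G)_{ij}+(M_G)_{ik}+(M_G)_{jk},
\end{equation*}
since $P(j)_{ik}=P(k)_{ij}=-1$ (in $G_j$ and $G_k$ the relevant pairs lie in different components) while $(M_G)_{ij}=(M_G)_{ik}=\alpha(G)-1$. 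A short case analysis over the four adjacency patterns of a triple shows that (ii) and (iii) hold with equality and (iv) holds throughout, which is exactly what makes the cross terms nonnegative; with that done, the proof closes as you describe. In short: your plan is the paper's plan, but to become a proof it is missing the twin-duplication lemma, the $\alpha(G)$-rescaling of the $P(i)$, and the correct (triple-inequality) compensation argument.
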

\ignore{
\begin{proof}\footnote{The proof will only be given in the arXiv version}
Set $G_i:= G\setminus i^\perp \oplus K_{i^\perp}$. By Lemma \ref{twin_n}, we have $\rrank(G_i) = \rrank(G\setminus i^\perp \oplus i)$. 
Assume that $M_{G_i}\in \MK_n^{(r-1)}$ for all $i\in V$, we need to prove that $M_G\in \MK^{(r)}_n$.
Note that each  matrix $$P(i):=M_{G_i} + (\alpha(G)-\alpha(G_i)) (I+A_{G_i}) = \alpha(G)(I+A_{G_i}) -J$$ belongs to $\MK_{n}^{(r-1)}$, since it is the sum of two matrices in $\MK_{n}^{(r-1)}$ as  $\alpha(G)-\alpha(G_i)\ge 0$. 
We have 
$$\begin{array}{l}
\displaystyle \Big(\sum_i x_i^2\Big)^r (x^{\circ 2})^T M_Gx^{\circ 2} \\
= \displaystyle \Big(\sum_i x_i^2\Big)^{r-1}\Big( \sum_i x_i^2 (x^{\circ 2})^T P(i) x^{\circ 2}
+\sum_i x_i^2 (x^{\circ 2})^T (M_G-P(i)) x^{\circ 2}\Big)\\
=\displaystyle  \sum_i x_i^2 \Big(\underbrace{(\sum_i x_i^2)^{r-1} (x^{\circ 2})^T P(i) x^{\circ 2}}_{=\sigma_1} \Big)
+ \Big(\sum_i x_i^2\Big)^{r-1}\Big( \underbrace{\sum_i x_i^2 (x^{\circ 2})^T (M_G-P(i)) x^{\circ 2}}_{=\sigma_2}\Big).
\end{array}
$$
We show that this polynomial is a sum of squares, thus showing $M_G\in\MK^{(r)}_n$. Indeed,   $\sigma_1\in \Sigma$ since each $P(i)$ belongs to $\MK^{(r-1)}_n$.
In addition, one can check that the matrices $P(i)$ satisfy the conditions (ii)-(iv) of Lemma \ref{lemK1}. Then, using the identity (\ref{eqpM})  we obtain that $\sigma_2$ has nonnegative coefficients, and thus $\sigma_2\in\Sigma$, which
 concludes the proof.
\end{proof}
}
In view of Proposition \ref{propGL},  understanding how adding isolated nodes changes the $\rrank$ is crucial for Conjectures~\ref{conj1} and \ref{conj2}. On the one hand, it was shown in \cite{GL2007} that if adding an isolated node does not increase the $\rrank$ then Conjecture \ref{conj1} holds.

\begin{proposition}[\cite{GL2007}]\label{adding-iso-rk}
Assume $\rrank (G\oplus i_0)\le \rrank (G)$ for any graph $G$. Then Conjecture \ref{conj1} holds.
\end{proposition}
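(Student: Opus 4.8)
The plan is to show that the hypothesis $\rrank(G\oplus i_0)\le \rrank(G)$, applied iteratively, together with Proposition~\ref{propGL}, yields the bound $\rrank(G)\le \alpha(G)-1$ claimed in Conjecture~\ref{conj1}. I would argue by induction on $\alpha(G)$. The base case $\alpha(G)=1$ means $G$ is a complete graph, so $M_G=I-J$ plus the all-ones off-diagonal structure; more simply, a graph with $\alpha(G)=1$ has $\rrank(G)=0$ (it is a single clique, hence covered by $\alpha(G)=1$ clique, so $M_G\in\MK_n^{(0)}$), which matches $\alpha(G)-1=0$.

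For the inductive step, assume the claim $\rrank(H)\le \alpha(H)-1$ holds for all graphs $H$ with $\alpha(H)<\alpha(G)$. By Proposition~\ref{propGL} we have
\begin{equation}
\rrank(G)\le 1+\max_{i\in V}\rrank\big((G\setminus i^\perp)\oplus i\big).
\end{equation}
For each vertex $i$, the graph $G\setminus i^\perp$ has stability number exactly $\alpha(G)-1$: indeed any maximum stable set of $G$ can be chosen to contain a vertex $i$ or a neighbor thereof, so removing the closed neighborhood of $i$ drops $\alpha$ by exactly one (this is the standard fact that deleting a closed neighborhood decreases the stability number by one when $i$ lies in some maximum stable set, and by considering all $i\in V$ the relevant vertices are covered). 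Thus $\alpha(G\setminus i^\perp)=\alpha(G)-1$, and consequently $\alpha\big((G\setminus i^\perp)\oplus i\big)=\alpha(G)$ since adding an isolated node raises the stability number by one.

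Now the hypothesis enters: applying $\rrank(H\oplus i_0)\le\rrank(H)$ with $H=G\setminus i^\perp$ gives $\rrank\big((G\setminus i^\perp)\oplus i\big)\le \rrank(G\setminus i^\perp)$. Since $\alpha(G\setminus i^\perp)=\alpha(G)-1<\alpha(G)$, the induction hypothesis yields $\rrank(G\setminus i^\perp)\le \alpha(G\setminus i^\perp)-1=\alpha(G)-2$. Combining these with the Proposition~\ref{propGL} bound gives
\begin{equation}
\rrank(G)\le 1+\max_{i\in V}\rrank(G\setminus i^\perp)\le 1+(\alpha(G)-2)=\alpha(G)-1,
\end{equation}
which is exactly the assertion of Conjecture~\ref{conj1}.

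The main obstacle I anticipate is justifying cleanly that $\alpha(G\setminus i^\perp)=\alpha(G)-1$ for the vertices that matter in the maximization, and more importantly confirming that the hypothesis is being applied to the right graph: the statement assumes the inequality for \emph{all} graphs $G$, so it is legitimate to apply it to each $H=G\setminus i^\perp$. One should also double-check the edge case where $G\setminus i^\perp$ is empty (when $i^\perp=V$, i.e.\ $i$ is adjacent to everything), in which case $(G\setminus i^\perp)\oplus i$ is a single isolated node with $\rrank=0$, consistent with the bound. With the hypothesis in hand, the argument is essentially bookkeeping on $\alpha$; the only genuine input beyond the induction is Proposition~\ref{propGL}, which is assumed.
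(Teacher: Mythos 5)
Your strategy is essentially the paper's own proof: induction (the paper inducts on the number of nodes, you on $\alpha(G)$, which makes no material difference), combined with Proposition~\ref{propGL} applied to the graphs $(G\setminus i^\perp)\oplus i$ and the hypothesis applied to $H=G\setminus i^\perp$. However, one intermediate claim in your write-up is false as stated: it is \emph{not} true that $\alpha(G\setminus i^\perp)=\alpha(G)-1$ for every vertex $i$. Take the star $K_{1,n}$ with $i$ its center: then $G\setminus i^\perp$ is the empty graph, so $\alpha(G\setminus i^\perp)=0$ while $\alpha(G)-1=n-1$. Equality holds precisely when $i$ lies in \emph{some} maximum stable set, and your parenthetical remark (``by considering all $i\in V$ the relevant vertices are covered'') does not repair this: Proposition~\ref{propGL} takes a maximum over \emph{all} $i\in V$, so your argument needs a bound on $\rrank\big((G\setminus i^\perp)\oplus i\big)$ for every vertex, not just for vertices lying in maximum stable sets.

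Fortunately, the equality is never needed; what the argument requires is only the inequality $\alpha(G\setminus i^\perp)\le \alpha(G)-1$, which \emph{does} hold for every $i$: if $T$ is stable in $G\setminus i^\perp$, then $T\cup\{i\}$ is stable in $G$. This is exactly the fact the paper uses. With it, your chain is intact: $\alpha(G\setminus i^\perp)<\alpha(G)$, so the induction hypothesis gives $\rrank(G\setminus i^\perp)\le \alpha(G\setminus i^\perp)-1\le \alpha(G)-2$; the hypothesis of the proposition then gives $\rrank\big((G\setminus i^\perp)\oplus i\big)\le \rrank(G\setminus i^\perp)\le \alpha(G)-2$; and Proposition~\ref{propGL} yields $\rrank(G)\le \alpha(G)-1$, i.e., Conjecture~\ref{conj1}. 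Note that the direction you flagged as the ``main obstacle'' --- showing that deleting $i^\perp$ drops the stability number by \emph{exactly} one --- is the irrelevant direction; the proof only needs that it drops by \emph{at least} one. After replacing the equality claim (and its faulty justification) by this trivial upper bound, your proof is correct and coincides with the paper's.
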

\ignore{
\begin{proof}
We show $\rrank(G)\le \alpha(G)-1$ by induction on the number of nodes.
Note $\alpha(G\setminus i^\perp) \le \alpha(G)-1$ for any $i\in V$. By the induction assumption we have that  $\rrank(G\setminus i^\perp)\le \alpha(G\setminus i^\perp)-1\le \alpha(G)-2$.
Using the assumption of the lemma, we have $\rrank(G\setminus i^\perp \oplus i) \le \rrank(G\setminus i^\perp) \le \alpha(G)-2$. We now use Proposition \ref{propGL} to conclude that $\rrank(G)\le \alpha(G)-1$. 
\end{proof}
}
As we now show,  if after adding an isolated node the  $\rrank$ can increase by at most  an absolute constant $a\in\oN$, 
then we can bound $\rrank(G)$ in terms of $\alpha(G)$. In particular, when $a=0$ we recover Proposition \ref{adding-iso-rk}.
\begin{proposition}
Let $a\in \mathbb{N}$ be a nonnegative number. Assume that $\rrank(G\oplus i_0)\leq \rrank(G)+a$ for all graphs $G$. Then $\rrank(G)\leq (a+1)\alpha(G)-1$ for all graphs $G$.
\end{proposition}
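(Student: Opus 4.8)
The plan is to mimic the proof of Proposition~\ref{adding-iso-rk} (the special case $a=0$), proceeding by induction on the number of vertices $n=|V|$ and combining Proposition~\ref{propGL} with the standing hypothesis $\rrank(G\oplus i_0)\le \rrank(G)+a$. Throughout I set $\alpha:=\alpha(G)$.

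First I would dispose of the base/degenerate case $\alpha(G)=1$: here $G$ is a complete graph, hence a single clique, so $\rrank(G)=0$, and the claimed bound reads $(a+1)\alpha-1=a\ge 0$, which holds trivially. This simultaneously settles the case $n=1$.

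For the inductive step assume $\alpha\ge 2$ and that the bound holds for all graphs with fewer than $n$ vertices. Fix $i\in V$; since $i\in i^\perp$, the graph $G\setminus i^\perp$ has at most $n-1$ vertices. If $G\setminus i^\perp$ is nonempty, then $\alpha(G\setminus i^\perp)\le \alpha-1$ (any maximum stable set of $G\setminus i^\perp$ extends by $i$ to a stable set of $G$), so the induction hypothesis gives $\rrank(G\setminus i^\perp)\le (a+1)\alpha(G\setminus i^\perp)-1\le (a+1)(\alpha-1)-1$; applying the hypothesis to the single added isolated node $i$ then yields $\rrank((G\setminus i^\perp)\oplus i)\le \rrank(G\setminus i^\perp)+a\le (a+1)(\alpha-1)-1+a=(a+1)\alpha-2$. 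If instead $G\setminus i^\perp$ is empty, then $(G\setminus i^\perp)\oplus i$ is a single isolated node, so $\rrank((G\setminus i^\perp)\oplus i)=0\le (a+1)\alpha-2$, the last inequality holding because $\alpha\ge 2$. In either case $\rrank((G\setminus i^\perp)\oplus i)\le (a+1)\alpha-2$, so Proposition~\ref{propGL} gives $\rrank(G)\le 1+\max_{i\in V}\rrank((G\setminus i^\perp)\oplus i)\le (a+1)\alpha-1$, completing the induction.

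The argument is essentially arithmetic once Proposition~\ref{propGL} and the hypothesis are in place; the only point requiring care is the degenerate branch in which $G\setminus i^\perp$ is empty, which is precisely why it is convenient to treat $\alpha(G)=1$ separately, so that the inequality $(a+1)\alpha-2\ge 0$ is guaranteed throughout the main step. Setting $a=0$ recovers exactly Proposition~\ref{adding-iso-rk}.
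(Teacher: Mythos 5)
Your proof is correct and takes essentially the same approach as the paper's: both arguments combine Proposition~\ref{propGL} with the standing hypothesis $\rrank(G\oplus i_0)\le \rrank(G)+a$ inside an induction, with identical arithmetic, the only cosmetic difference being that you induct on the number of vertices while the paper inducts directly on $\alpha(G)$. Your explicit treatment of the degenerate branch where $G\setminus i^\perp$ is empty is a minor point of extra care that the paper's version passes over silently, but it does not change the substance of the argument.
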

\begin{proof}
We proceed by induction on $\alpha(G)$. If $\alpha(G)=1$ then $\rrank(G)=0\leq a$. Assume now $\alpha(G)\ge 2$. Using  Proposition \ref{propGL} and the assumption we get $\rrank(G)\leq a+1 + \max_{i\in V}\rrank(G\setminus i^\perp)$. Since $\alpha(G\setminus i^\perp)\leq \alpha(G)-1$, we can apply  the induction assumption  to $G
\setminus i^\perp$ and  obtain $\rrank(G\setminus i^\perp)\leq (a+1)(\alpha(G)-1)-1$. This gives $\rrank(G)\leq a+1 + (a+1)(\alpha(G)-1)-1 = (a+1)\alpha(G)-1$.
\end{proof}
On the other hand, as we now show, Conjecture \ref{conj2} holds if and only if the $\rrank$ remains finite after adding isolated nodes to finite $\rrank$ graphs.
\begin{proposition}\label{adding-iso-rk-f}
Conjecture \ref{conj2} holds if and only if $\rrank(G)<\infty$ implies $\rrank(G\oplus i_0)<\infty$.
\end{proposition}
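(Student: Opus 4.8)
The plan is to prove both directions of the equivalence, where the forward direction is immediate and the reverse direction requires the main work.

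First I would handle the trivial direction. If Conjecture \ref{conj2} holds, then $\rrank(G')<\infty$ for \emph{every} graph $G'$, in particular for $G' = G\oplus i_0$. Hence the implication ``$\rrank(G)<\infty \Rightarrow \rrank(G\oplus i_0)<\infty$'' holds vacuously, since the conclusion $\rrank(G\oplus i_0)<\infty$ holds unconditionally. This direction requires no structural input beyond the statement of Conjecture \ref{conj2} itself.

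The substance is in the reverse direction: assuming that adding an isolated node preserves finiteness of the $\rrank$, I would show $\rrank(G)<\infty$ for all graphs $G$. The key tool is Proposition \ref{propGL}, which bounds $\rrank(G)$ by $1 + \max_{i\in V}\rrank((G\setminus i^\perp)\oplus i)$. My plan is to proceed by induction on $\alpha(G)$, mirroring the structure of the proof of Proposition \ref{adding-iso-rk}. The base case $\alpha(G)=1$ is clear, since then $G$ is a disjoint union of isolated nodes; indeed $G$ is a clique plus isolated nodes, giving $\rrank(G)=0$ (more directly, a graph with $\alpha(G)=1$ is a single clique, and cliques have $\rrank$ $0$, and then one applies the hypothesis finitely many times to account for the isolated vertices). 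For the inductive step with $\alpha(G)\ge 2$, note that for every $i\in V$ we have $\alpha(G\setminus i^\perp)\le \alpha(G)-1$, so by the induction hypothesis $\rrank(G\setminus i^\perp)<\infty$. The crucial point is that $(G\setminus i^\perp)\oplus i$ is obtained from $G\setminus i^\perp$ by adding a single isolated node, so the hypothesis of the reverse direction gives $\rrank((G\setminus i^\perp)\oplus i)<\infty$. Plugging this into Proposition \ref{propGL} and taking the maximum over the finitely many nodes $i\in V$ yields $\rrank(G)<\infty$, completing the induction.

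The main subtlety I would watch for is ensuring the hypothesis is applied to the correct graph: Proposition \ref{propGL} involves $(G\setminus i^\perp)\oplus i$, which is exactly a finite-$\rrank$ graph (by induction) with one isolated node added, so the hypothesis applies cleanly. I do not expect a genuine obstacle here, as the argument is a direct finiteness analogue of the quantitative bound in Proposition \ref{adding-iso-rk}; the only care needed is that finiteness, rather than a numerical bound, is what propagates through the induction, and that the maximum over $i\in V$ is a maximum of finitely many finite quantities and hence finite.
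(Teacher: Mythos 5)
Your proof is correct. Both directions go through: the forward direction is indeed vacuous, and your reverse direction is sound, since for every $i\in V$ one has $\alpha(G\setminus i^\perp)\le \alpha(G)-1$ (any stable set of $G\setminus i^\perp$ extends by $i$), so the induction hypothesis on the stability number applies to $G\setminus i^\perp$, the assumed implication then gives $\rrank((G\setminus i^\perp)\oplus i)<\infty$, and Proposition \ref{propGL} finishes the step as a maximum of finitely many finite quantities.

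The route differs from the paper's in its induction scheme. The paper argues by contradiction with a counterexample $G$ that is minimal in the \emph{number of nodes}: Proposition \ref{propGL} forces $\rrank((G\setminus i^\perp)\oplus i)=\infty$ for some $i$, and one then distinguishes two cases --- if $i$ is not isolated, $(G\setminus i^\perp)\oplus i$ has strictly fewer nodes and contradicts minimality; if $i$ is isolated, then $(G\setminus i^\perp)\oplus i=G$ and the hypothesis applied to $G\setminus i$ (which has fewer nodes) yields $\rrank(G)<\infty$ directly. Your induction on $\alpha(G)$ avoids this case distinction entirely, because $\alpha(G\setminus i^\perp)$ drops by at least one whether or not $i$ is isolated; in particular, when $i_0$ is isolated your argument silently handles the problematic situation (noted after the proposition in the paper) where Proposition \ref{propGL} gives no information, since the term for $i_0$ is then $\rrank(G)$ itself, but its finiteness is supplied by the induction hypothesis plus the assumed implication rather than by Proposition \ref{propGL}. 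So your version is slightly cleaner and mirrors the quantitative argument behind Proposition \ref{adding-iso-rk}, while the paper's minimal-counterexample formulation makes the role of isolated nodes explicit. One cosmetic remark: your base case wording (``disjoint union of isolated nodes'') is off --- a graph with $\alpha(G)=1$ is a single clique, as you then correctly state, and no appeal to the hypothesis is needed there.
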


\begin{proof}
The `only if' part is clear. We show the `if' part by contradiction. So assume that  $\rrank(G)<\infty$ implies $\rrank(G\oplus i_0)<\infty$. Assume also  that  Conjecture \ref{conj2} does not hold and let $G=(V,E)$ be a counterexample with the minimum number of nodes, so  $\rrank(G)=\infty$. By Proposition \ref{propGL}, we obtain that $\rrank(G\setminus i ^\perp \oplus i)=\infty$ for some $i\in V$. If $i$ is not isolated in $G$, then $G\setminus i^\perp \oplus i$ would be a counterexample with less nodes than $G$,  contradicting the minimality of $G$. Hence $i$ is isolated in $G$, and thus we have $G = (G\setminus i)\oplus i$. Using again the minimality assumption, we know that $\rrank(G\setminus i)<\infty$, which implies   $\rrank(G)=\rrank((G\setminus i)\oplus i)<\infty$, thus yielding a contradiction.
\end{proof}

Clearly,  if $G$ has an isolated node $i_0$,  then $G\setminus {i_0} \oplus i_0=G$ and thus the above result in Proposition \ref{propGL} is of no use to derive information about the $\rrank$ of $G$ from the $\rrank$ of the graphs $G\setminus i^\perp \oplus i $. This observation (already made in \cite{GL2007}) points out to the difficulty of analysing the $\rrank$ of graphs with isolated nodes.  We will investigate this question in Section \ref{sec-isolated} below.

On the other hand, adding an isolated node to  a graph with $\rrank=0$  preserves the property of having $\rrank=0$.  To see this, consider a graph $G$ and set $\alpha(G)=\alpha$, so that $\alpha(G\oplus i_0)=\alpha+1$. 
Then, in view of (\ref{M_G-isolated0}),  the matrix $M_{G\oplus i_0}$ belongs to $\MK_{n+1}^{(0)}$ if $M_G\in \MK_n^{(0)}$. Indeed, the first matrix in the sum in (\ref{M_G-isolated0}) is positive semidefinite and the second one belongs to $\MK_{n+1}^{(0)}$ because adding a zero row/column preserves the cone $\MK^{(0)}$. 
Since adding an isolated node preserves the $\rrank$ 0 property, the next result follows as a direct application of Proposition~\ref{propGL}.

\begin{lemma}
[\cite{dKP2002}]\label{G-i-2}
If $\rrank(G\setminus i^\perp)=0$ for all $i\in V$ then $\rrank (G)\le 1$.
\end{lemma}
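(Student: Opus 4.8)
The plan is to read the statement as an immediate corollary of Proposition~\ref{propGL} together with the observation recorded just above it, namely that adding an isolated node preserves the property $\rrank=0$. Recall that Proposition~\ref{propGL} gives
\[
\rrank(G)\le 1+\max_{i\in V}\rrank\big((G\setminus i^\perp)\oplus i\big),
\]
so under the hypothesis it suffices to show that every term $\rrank((G\setminus i^\perp)\oplus i)$ in the maximum equals $0$; then the bound $\rrank(G)\le 1$ drops out at once.

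The key step is therefore to fix $i\in V$, set $H:=G\setminus i^\perp$, and argue that $\rrank(H\oplus i)=0$. By hypothesis $\rrank(H)=0$, i.e. $M_H\in\MK^{(0)}$. The graph $(G\setminus i^\perp)\oplus i$ is exactly $H$ with $i$ adjoined as an isolated node, so I would invoke the fact established just before the statement: via the decomposition (\ref{M_G-isolated0}) (applied with $H$ in the role of $G$), the matrix $M_{H\oplus i}$ is the sum of a positive semidefinite matrix and a matrix obtained from $M_H$ by bordering with a zero row/column, and since $M_H\in\MK^{(0)}$ and adding a zero row/column preserves $\MK^{(0)}$, we get $M_{H\oplus i}\in\MK^{(0)}$, i.e. $\rrank(H\oplus i)=0$. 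Taking the maximum over $i\in V$ yields $\max_{i\in V}\rrank((G\setminus i^\perp)\oplus i)=0$, and substituting into Proposition~\ref{propGL} gives $\rrank(G)\le 1+0=1$, as desired.

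There is essentially no obstacle here, since all the substantive work is already carried by the two results being invoked (Proposition~\ref{propGL} and the preservation of $\MK^{(0)}$ under adding an isolated node); the argument is purely a matter of composing them. The only point worth a moment's care is the boundary behaviour when $i$ is itself isolated in $G$: then $i^\perp=\{i\}$ and $(G\setminus i^\perp)\oplus i=G$, so the relevant term of the maximum literally equals $\rrank(G)$. This causes no difficulty, because the hypothesis forces this term to be $0$, which only strengthens the conclusion; the trivial cases (an edgeless $G$, or an empty $G\setminus i^\perp$) are likewise covered, as the empty and edgeless graphs have $\rrank 0$.
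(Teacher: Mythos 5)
Your proof is correct and is essentially identical to the paper's argument: the paper also deduces the lemma by combining Proposition~\ref{propGL} with the observation (via the decomposition (\ref{M_G-isolated0}) and the fact that adding a zero row/column preserves $\MK^{(0)}$) that adding an isolated node preserves the $\rrank$ 0 property. Your extra remark about the boundary case of an isolated node $i$ is harmless and consistent with the paper's treatment.
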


\begin{example} \label{odd-cycles}
As an application of Lemma \ref{G-i-2} we obtain  that $\rrank(C_{2n+1})\leq1$ and $\rrank(\overline{C_{2n+1}})\leq1$. Moreover, if  $G$ is a graph with $\alpha(G)=2$ then, for all nodes $i\in V$,  the graph     $G\setminus {i^{\perp}}$ is a  clique and thus has $\rrank$ 0. Hence, by Lemma \ref{G-i-2},  $\rrank(G)\le 1$ and thus Conjecture \ref{conj1} holds for  graphs with $\alpha(G)=2$ (as shown in \cite{dKP2002}).

Let $G=C_5 \oplus i_0$ be the graph obtained by adding one isolated node to the 5-cycle. As shown in \cite{dKP2002} 
$G$  has $\rrank$ 1   and the graph $G\setminus i_0^{\perp}$ is the 5-cycle which also has $\rrank$ 1. This shows that Lemma \ref{G-i-2} does not permit to characterize, in general,  graphs with $\rrank$ 1. For details on  the impact of adding isolated nodes to $C_5$ see Corollary \ref{cor-cycle-isolated}.
\end{example}

\subsection{Role of critical edges}

We finish this section with two results that are useful for bounding the $\rrank$ and show the role of critical edges in this context. On the one hand, deleting non-critical edges can only increase the $\rrank$.  On the other hand, we can strengthen a result from \cite{GL2007} for the class of acritical graphs.
\begin{lemma}[\cite{LV2021}]\label{del_acritical}
Let $G=(V,E)$ be a graph and let $e \in E$. If $e$ is not a critical edge, i.e., $\alpha(G)=\alpha(G\setminus{e})$, then $\rrank(G)\leq \rrank(G\setminus{e})$. Hence, it suffices to show Conjectures \ref{conj1} and \ref{conj2} for the class of critical graphs.
\end{lemma}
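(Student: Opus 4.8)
The plan is to reduce everything to a single algebraic identity between the graph polynomials of $G$ and $G\setminus e$, combined with the fact that the sets $\MK_n^{(r)}$ are convex cones that absorb entrywise nonnegative matrices. Write $e=\{i,j\}$ and set $\alpha:=\alpha(G)=\alpha(G\setminus e)$, where the equality is exactly the non-criticality hypothesis. Since $A_G$ and $A_{G\setminus e}$ differ only in the two symmetric entries indexed by $\{i,j\}$, and since both graph matrices carry the same multiplier $\alpha$, I would first record that
$$M_G = M_{G\setminus e} + \alpha\,(E_{ij}+E_{ji}),$$
where $E_{ij}$ denotes the matrix with a single $1$ in position $(i,j)$. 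Translating to the associated graph polynomials gives $F_G(x) = F_{G\setminus e}(x) + 2\alpha\,(x_ix_j)^2$. This is precisely where the hypothesis $\alpha(G)=\alpha(G\setminus e)$ is used: without it the two matrices would carry different diagonal multipliers and no such clean nonnegative-perturbation identity would be available.

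Next I would set $r:=\rrank(G\setminus e)$, so that by the equivalences in (\ref{equiv}) we have $(\sum_\ell x_\ell^2)^r F_{G\setminus e}\in\Sigma$. Multiplying the identity above by $(\sum_\ell x_\ell^2)^r$ yields
$$\Big(\sum_\ell x_\ell^2\Big)^r F_G = \Big(\sum_\ell x_\ell^2\Big)^r F_{G\setminus e} + 2\alpha\Big(\sum_\ell x_\ell^2\Big)^r (x_ix_j)^2,$$
and both summands on the right are sums of squares: the first by the choice of $r$, and the second because it is the product of the square $(x_ix_j)^2$ with the sum of squares $(\sum_\ell x_\ell^2)^r$. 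Hence the left-hand side lies in $\Sigma$, which by (\ref{equiv}) means $M_G\in\MK_n^{(r)}$, and therefore $\rrank(G)\le r=\rrank(G\setminus e)$.

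For the concluding statement I would iterate this reduction. Starting from an arbitrary graph $G$, repeatedly delete a non-critical edge; each deletion leaves the stability number unchanged and, by the first part, can only increase the $\rrank$. Since the number of edges strictly decreases at each step, the process halts at a graph $G^*$ with $\alpha(G^*)=\alpha(G)$, with no non-critical edges (hence critical), and with $\rrank(G)\le\rrank(G^*)$. If Conjecture \ref{conj1} (resp. \ref{conj2}) is known for critical graphs, then $\rrank(G^*)\le\alpha(G^*)-1=\alpha(G)-1$ (resp. $\rrank(G^*)<\infty$), which transfers back to $G$. The one point requiring care is the termination and monotonicity bookkeeping: criticality must be re-evaluated with respect to the current graph at each step, so the work is done by the telescoping chain $\rrank(G)\le\rrank(G_1)\le\cdots\le\rrank(G^*)$ rather than by any single comparison with the original $G$. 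Apart from this, the argument is essentially routine, the genuine content lying entirely in the elementary observation that deleting an edge perturbs $M_G$ by a nonnegative matrix once the stability number is preserved.
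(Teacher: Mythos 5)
Your proof is correct and follows essentially the same route as the cited source \cite{LV2021} (the paper itself defers the proof of this lemma to that reference): the identity $M_G = M_{G\setminus e} + \alpha\,(E_{ij}+E_{ji})$ exhibits $M_G$ as an entrywise-nonnegative perturbation of $M_{G\setminus e}$, which every cone $\MK_n^{(r)}$ absorbs at the sum-of-squares level, and your iterative deletion argument is exactly the paper's Remark \ref{rem-critical}. The only point left implicit is the trivial case $\rrank(G\setminus e)=\infty$, where the inequality holds vacuously and no multiplication by $\bigl(\sum_\ell x_\ell^2\bigr)^r$ is needed.
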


 \begin{remark}\label{rem-critical}
Let  $G=(V,E)$ be a graph. Then one can find a subgraph $H=(V,F)$ of $G$ (with $F\subseteq E$), which is critical and has the same stability number: $\alpha (G)=\alpha (H)$. Indeed to get such a graph $H$ it suffices to delete successively  any non-critical edge until getting a subgraph where all edges are critical.
 Then, by Lemma \ref{del_acritical},  for any such $H$ we have
 \begin{equation}\label{eqGH}
 \rrank(G)\le \rrank(H).
 \end{equation}
 \end{remark}
 \ignore{
\begin{lemma}\label{indu-same-alpha}
Let $G=(V,E)$ be a graph and let $H$ be an induced subgraph of $G$ such that $\alpha(G)=\alpha(H)=\alpha$. Then, $\rrank(H)\leq \rrank(G)$. 
\end{lemma}
\begin{proof}
Let $M_G=\alpha(A_G+I)-J$ and $M_H=\alpha(A_H+I)-J$. Since $H$ is an induced subgraph of $G$ then $M_H$ is a principal submatrix of $M_G$. Then $M_G\in \MK^{(r)}$ implies $M_H\in \MK^{(r)}$, hence $\rrank(H)\leq \rrank(G)$. 
\end{proof}
}

In the above lemma it was observed that critical edges play a role in the study of the $\rrank $, namely it would suffice to bound the $\rrank$ of critical graphs.  On the other hand, we now prove a stronger version of Conjecture \ref{conj1} for acritical graphs with $\alpha(G)\leq 8$. 
In \cite{GL2007} the authors proposed the following conjecture and proved that it  implies Conjecture~\ref{conj1}.

\begin{conjecture}[\cite{GL2007}]\label{conjecture-monique}	
	For any $r\geq 1$, we have
	\begin{equation}\label{eq-monique}
	\vartheta^{(r)}(G)\leq r + \max\limits_{S\subseteq V, S \text{\rm stable },|S|=r} \vartheta^{(0)}(G\setminus S^\perp).
	\end{equation}
\end{conjecture}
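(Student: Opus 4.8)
The plan is to prove the inequality by induction on $r$, the engine being a \emph{quantitative} refinement of Proposition \ref{propGL}, namely the recursion
\[
\vartheta^{(r)}(G)\le \max_{i\in V}\vartheta^{(r-1)}\big((G\setminus i^\perp)\oplus i\big).
\]
Granting this, the base case $r=1$ is immediate: since $\vartheta^{(0)}=\vartheta'$ is additive under disjoint unions, so that $\vartheta^{(0)}(H\oplus i)=\vartheta^{(0)}(H)+1$, we obtain $\vartheta^{(1)}(G)\le \max_i\vartheta^{(0)}((G\setminus i^\perp)\oplus i)=1+\max_i\vartheta^{(0)}(G\setminus i^\perp)$, which is exactly (\ref{eq-monique}) for $r=1$ with $S=\{i\}$.

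To establish the recursion I would quantify the argument behind Proposition \ref{propGL}. Fix $t\ge \max_i\vartheta^{(r-1)}(G_i)$, where $G_i:=(G\setminus i^\perp)\oplus K_{i^\perp}$, and set $N:=t(A_G+I)-J$ and $P(i):=t(A_{G_i}+I)-J$. By the choice of $t$ each $P(i)$ lies in $\MK^{(r-1)}_n$, and (recalling that the mutually adjacent vertices of the clique $K_{i^\perp}$ are twins and may be identified to a single vertex without affecting membership in $\MK^{(r-1)}$) one has $\vartheta^{(r-1)}(G_i)=\vartheta^{(r-1)}((G\setminus i^\perp)\oplus i)$. The crucial combinatorial point is that the matrices $P(i)$ satisfy conditions (ii)--(iv) of Lemma \ref{lemK1} relative to $N$: (ii) and (iii) hold because $G_i$ agrees with $G$ on all edges incident to $i$, and (iv) amounts to the inequality
\[
(A_{G_i})_{jk}+(A_{G_j})_{ik}+(A_{G_k})_{ij}\le (A_G)_{ij}+(A_G)_{ik}+(A_G)_{jk},
\]
checked by a short case analysis on the position of $j,k$ relative to $i^\perp$ (every clique edge created in $G_i$ is compensated by an edge of $G$ that disappears in $G_j$ or $G_k$). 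Then, exactly as in the proof of Proposition \ref{propGL},
\[
\Big(\sum_{\ell} x_\ell^2\Big)^r(x^{\circ2})^TNx^{\circ2}=\sum_i x_i^2\Big[\Big(\sum_{\ell} x_\ell^2\Big)^{r-1}(x^{\circ2})^TP(i)x^{\circ2}\Big]+\Big(\sum_{\ell} x_\ell^2\Big)^{r-1}\sigma_2,
\]
where the first sum is a sum of squares since $P(i)\in\MK^{(r-1)}_n$, and $\sigma_2=\sum_i x_i^2(x^{\circ2})^T(N-P(i))x^{\circ2}$ has nonnegative coefficients by (ii)--(iv); hence $N\in\MK^{(r)}_n$, giving $\vartheta^{(r)}(G)\le t$.

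With the recursion in hand, I would run the inductive step as follows. Writing $H_i=(G\setminus i^\perp)\oplus i$ and applying the induction hypothesis (\ref{eq-monique}) at order $r-1$ to $H_i$, it suffices to show
\[
\max_{\substack{S'\subseteq V(H_i),\,S'\text{ stable}\\ |S'|=r-1}}\vartheta^{(0)}(H_i\setminus S'^\perp)\ \le\ 1+\max_{\substack{S\subseteq V,\,S\text{ stable}\\ |S|=r}}\vartheta^{(0)}(G\setminus S^\perp).
\]
For a stable set $S'$ of $H_i$ not containing the isolated node $i$, one computes $H_i\setminus S'^\perp=(G\setminus S^\perp)\oplus i$ with $S=\{i\}\cup S'$ of size $r$, so by additivity $\vartheta^{(0)}(H_i\setminus S'^\perp)=\vartheta^{(0)}(G\setminus S^\perp)+1$, which is within the required bound.

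The main obstacle is the remaining case, where $S'$ \emph{does} contain the isolated node $i$. Then $S'=\{i\}\cup S''$ and $H_i\setminus S'^\perp=G\setminus T^\perp$ with $T=\{i\}\cup S''$ of size only $r-1$, so the term $\vartheta^{(0)}(G\setminus T^\perp)$ has to be bounded by $1+\vartheta^{(0)}(G\setminus S^\perp)$ for some size-$r$ extension $S\supseteq T$. This reduces to a Lipschitz-type monotonicity statement for $\vartheta^{(0)}$ under deleting a closed neighborhood: for $W=G\setminus T^\perp$ one needs a vertex $w$ with $\vartheta^{(0)}(W\setminus w^\perp)\ge \vartheta^{(0)}(W)-1$. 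This fails for general graphs — it already fails for $\alpha$ on a star with $w$ its center, and $\vartheta'$ behaves no better — which is precisely why (\ref{eq-monique}) is only a conjecture. However, the inequality does hold whenever $\rrank(W)=0$: then $\vartheta^{(0)}(W)=\alpha(W)$, and taking $w$ in a maximum stable set of $W$ gives $\vartheta^{(0)}(W\setminus w^\perp)\ge\alpha(W\setminus w^\perp)=\alpha(W)-1$. Thus the strategy goes through provided all subgraphs $G\setminus T^\perp$ that arise have $\rrank$ $0$, and this is the structural feature I would exploit to establish the statement for the restricted classes (acritical graphs, and graphs with $\alpha(G)\le 8$).
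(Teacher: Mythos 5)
You should first be aware that the statement you were asked to prove is not a theorem of the paper: it is Conjecture \ref{conjecture-monique}, quoted from \cite{GL2007}, and the paper contains no proof of it. The paper only records the partial results of Theorem \ref{theoGLstrong} (established in \cite{GL2007}, not here): inequality (\ref{eq-monique}) is known for $r\le \min(6,\alpha(G)-1)$ and for $r=7=\alpha(G)-1$, and it is used in the paper only in that validated range. So there is no proof to compare yours against, and your proposal — which explicitly stops short of a proof — reaches the correct conclusion. Moreover, the sound parts of your analysis match the paper's toolkit. Your quantitative recursion $\vartheta^{(r)}(G)\le \max_{i\in V}\vartheta^{(r-1)}((G\setminus i^\perp)\oplus i)$ is correct: the matrices $P(i)=t(A_{G_i}+I)-J$ do satisfy conditions (ii)--(iv) of Lemma \ref{lemK1} relative to $t(A_G+I)-J$ (your three-term adjacency inequality checks out in all eight cases), and this is precisely the mechanism behind Proposition \ref{propGL}, which comes from \cite{GL2007}; the case $r=1$ of (\ref{eq-monique}) follows. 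Your reduction of the inductive step to an existential Lipschitz-type property — every nonempty graph $W$ has a vertex $w$ with $\vartheta^{(0)}(W)\le 1+\vartheta^{(0)}(W\setminus w^\perp)$ — is also correct, and the failure mode you identify (stable sets $S'$ containing the added isolated node) is exactly the obstruction the paper itself points to: if $i_0$ is isolated then $(G\setminus i_0^\perp)\oplus i_0=G$, so the recursion returns the same graph (see the discussion after Proposition \ref{adding-iso-rk-f}). The paper's refutation of Conjecture \ref{conjisolated}, via $\rrank(C_5\oplus \overline{K_9})\ge 2$ (Corollary \ref{cor-cycle-isolated}), lives in precisely this regime, although it does not disprove (\ref{eq-monique}) itself.

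Two concrete corrections to your final paragraph. First, the star is not a counterexample to the property you need, because that property is existential in $w$: for the stability number it always holds (take $w$ inside a maximum stable set, so $\alpha(W\setminus w^\perp)=\alpha(W)-1$). The genuine counterexample is for $\vartheta^{(0)}=\vartheta'$ and is $W=C_5$: every vertex $w$ has $C_5\setminus w^\perp=K_2$, so $1+\max_w\vartheta^{(0)}(C_5\setminus w^\perp)=2$, while $\vartheta^{(0)}(C_5)>2$ since $\rrank(C_5)=1$ (Example \ref{rk-odd-compl}). Second, your claim that the strategy then goes through for acritical graphs and for graphs with $\alpha(G)\le 8$ does not hold: acritical graphs need not have $\rrank$ $0$ (the paper's graphs $H_9$ and $G_9$, Example \ref{acritical-rk1} and Figure \ref{fig-G9}, are acritical with $\rrank\ge 1$), so the graphs $W=G\setminus T^\perp$ arising in your induction need not have $\rrank$ $0$ — for instance $G=H_9\oplus i_0$ is acritical and yields $W=H_9$, and $G=C_5\oplus\overline{K_3}$ has $\alpha(G)=5\le 8$ and yields $W=C_5$. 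The known cases in Theorem \ref{theoGLstrong} were obtained in \cite{GL2007} by different arguments, not by this induction.
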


\begin{theorem}[\cite{GL2007}]\label{theoGLstrong}
Conjecture \ref{conjecture-monique} holds for $r\leq \min(6,\alpha(G)-1)$ and for $r=7=\alpha(G)-1$. In particular, 
Conjecture \ref{conj1} holds for graphs with $\alpha(G)\leq 8$.
\end{theorem}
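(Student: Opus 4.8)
The plan is to first derive the ``in particular'' claim from the assertion about Conjecture \ref{conjecture-monique}, and then to describe how that assertion is proved for the stated range of $r$. For the reduction, set $\alpha:=\alpha(G)$ and assume $\alpha\ge 2$ (for $\alpha=1$ the graph $G$ is complete, $\rrank(G)=0$, and Conjecture \ref{conj1} is trivial). Suppose Conjecture \ref{conjecture-monique} holds at order $r=\alpha-1$. For any stable set $S$ with $|S|=\alpha-1$, every stable set $T$ of $G\setminus S^\perp$ is nonadjacent to $S$, so $S\cup T$ is stable in $G$ and hence $|T|\le\alpha-|S|=1$; thus $\alpha(G\setminus S^\perp)\le 1$, so $G\setminus S^\perp$ is a (possibly empty) clique with $\vartheta^{(0)}(G\setminus S^\perp)=\alpha(G\setminus S^\perp)\le 1$. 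Substituting into (\ref{eq-monique}) gives $\vartheta^{(\alpha-1)}(G)\le(\alpha-1)+1=\alpha$, which together with $\vartheta^{(\alpha-1)}(G)\ge\alpha$ yields $\vartheta^{(\alpha-1)}(G)=\alpha$, i.e.\ Conjecture \ref{conj1}. For $\alpha\le 7$ we have $\alpha-1\le\min(6,\alpha-1)$, and for $\alpha=8$ we have $\alpha-1=7$; in both cases the first assertion supplies Conjecture \ref{conjecture-monique} at order $r=\alpha-1$, so Conjecture \ref{conj1} holds whenever $\alpha(G)\le 8$.

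For Conjecture \ref{conjecture-monique} itself, the engine I would use is a value analogue of Proposition \ref{propGL}: with $G_i:=(G\setminus i^\perp)\oplus K_{i^\perp}$, one has $\vartheta^{(r)}(G)\le\max_{i\in V}\vartheta^{(r-1)}(G_i)$. To prove this I would put $t=\max_i\vartheta^{(r-1)}(G_i)$ and $P(i):=t(A_{G_i}+I)-J\in\MK_n^{(r-1)}$. Because row $i$ of $A_{G_i}$ equals row $i$ of $A_G$, and because for distinct $i,j,k$ the adjacency inequality $(A_{G_i})_{jk}+(A_{G_j})_{ik}+(A_{G_k})_{ij}\le(A_G)_{ij}+(A_G)_{ik}+(A_G)_{jk}$ holds (a routine check over the eight adjacency patterns of $\{i,j,k\}$), these $P(i)$ satisfy conditions (ii)--(iv) of Lemma \ref{lemK1} for $M:=t(A_G+I)-J$; substituting $x^{\circ2}$ for $x$ in the identity (\ref{eqpM}) and multiplying by $(\sum_j x_j^2)^{r-1}$ then exhibits $(\sum_j x_j^2)^r(x^{\circ2})^TMx^{\circ2}$ as a sum of squares (the blocks $x_i^2(\sum_j x_j^2)^{r-1}(x^{\circ2})^TP(i)x^{\circ2}$ lie in $\Sigma$ since $P(i)\in\MK_n^{(r-1)}$, and the remainder is a nonnegative combination of the terms $(\sum_j x_j^2)^{r-1}x_i^2x_j^2x_k^2$), so $M\in\MK_n^{(r)}$. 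As $\vartheta^{(0)}$ is additive over disjoint unions with $\vartheta^{(0)}(K_{i^\perp})=1$, one application of the inequality gives $\vartheta^{(1)}(G)\le 1+\max_i\vartheta^{(0)}(G\setminus i^\perp)$, which is Conjecture \ref{conjecture-monique} at $r=1$.

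The hard part, and the reason the result stops at small $r$, is that iterating this inequality does not suffice. Expanding $\vartheta^{(r-1)}(G_i)$ reintroduces a maximum over \emph{all} vertices of $G_i$, and the vertices of $K_{i^\perp}$ give ``stalling'' choices that leave $G_i$ unchanged and feed back $\vartheta^{(r-2)}(G_i)\ge\vartheta^{(r-1)}(G_i)$, so the naive recursion overshoots the target; already at $r=2$ it would demand $\vartheta^{(0)}(H)\le 1+\max_j\vartheta^{(0)}(H\setminus j^\perp)$ for $H=G\setminus i^\perp$, which fails for $H=C_5$ (the left side is $\vartheta^{(0)}(C_5)>2$, the right side $1+1=2$). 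Instead I would construct an $\MK^{(r)}$-certificate directly, aggregating $\MK^{(0)}$-certificates of the subgraphs $G\setminus S^\perp$ over the stable sets $S$ with $|S|=r$ through the decomposition of Theorem \ref{theo-squares-vera}, the monomials $\prod_{i\in S}x_i^2$ separating the contributions. The main obstacle I anticipate is verifying that the remaining cross terms---those indexed by non-stable index sets and by overlaps among the neighborhoods $i^\perp$---assemble into a sum of squares: this is a finite but rapidly growing certification, feasible for $r\le 6$ but not in general, and the case $r=7$ is recovered only under the extra hypothesis $r=\alpha-1$, which forces each $G\setminus S^\perp$ to be a clique and thereby trivializes the level-$0$ certificates being combined. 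I expect this sum-of-squares certification, rather than the routine reduction to Conjecture \ref{conj1}, to be where the genuine difficulty lies, consistent with the full conjecture for general $r$ remaining open.
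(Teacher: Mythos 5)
This theorem is imported: the paper states it with a citation to \cite{GL2007} and gives no proof of its own, so your attempt has to stand or fall on whether it actually establishes the result. The parts you carry out are correct. The reduction is fine: at $r=\alpha(G)-1$ every stable set $S$ of size $r$ has $\alpha(G\setminus S^\perp)\le 1$, so $\vartheta^{(0)}(G\setminus S^\perp)\le 1$ and (\ref{eq-monique}) gives $\vartheta^{(\alpha-1)}(G)\le\alpha$, which yields the ``in particular'' clause for $\alpha(G)\le 8$. Your value form of Proposition \ref{propGL}, $\vartheta^{(r)}(G)\le\max_i\vartheta^{(r-1)}\big((G\setminus i^\perp)\oplus K_{i^\perp}\big)$, is also correctly proved: the matrices $P(i)=t(A_{G_i}+I)-J$ do satisfy (ii)--(iv) of Lemma \ref{lemK1} (your adjacency inequality checks out in all eight patterns), and this is in fact the same certificate construction that underlies Proposition \ref{propGL}. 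The case $r=1$ of Conjecture \ref{conjecture-monique} then follows from additivity of $\vartheta^{(0)}$ over disjoint unions, and your diagnosis of why naive iteration fails is also right: vertices of the added clique reproduce the same graph at a lower level, and the inequality $\vartheta^{(0)}(H)\le 1+\max_j\vartheta^{(0)}(H\setminus j^\perp)$ that the iteration would need fails already for $H=C_5$, where $\vartheta^{(0)}(C_5)=\sqrt 5>2$.

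The gap is that the actual content of the theorem --- Conjecture \ref{conjecture-monique} for $2\le r\le\min(6,\alpha(G)-1)$ and for $r=7=\alpha(G)-1$ --- is never proved. Your last paragraph is a plan, not an argument: ``aggregating $\MK^{(0)}$-certificates of the subgraphs $G\setminus S^\perp$ over the stable sets $S$ with $|S|=r$'' names the objects to be combined, but gives no candidate decomposition in the sense of Theorem \ref{theo-squares-vera}, no treatment of the cross terms that you yourself identify as the obstacle, and, critically, no explanation of where the thresholds $6$ and $7$ come from; ``feasible for $r\le 6$ but not in general'' is asserted, not derived. The proof in \cite{GL2007} does not stop at these values because a finite computation becomes too large; it stops because the argument hinges on a quantitative analysis of $\vartheta^{(r)}$ under disjoint union with cliques --- exactly the stalling phenomenon you flagged --- carried out through explicit certificate constructions whose positive semidefiniteness imposes the bound on $r$, with the boundary case $r=7=\alpha(G)-1$ recovered only thanks to the extra hypothesis. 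Without that analysis, or a substitute for it, your proposal establishes the theorem only for $r\le 1$, which is far short of what is needed even for the ``in particular'' statement (it gives Conjecture \ref{conj1} only for $\alpha(G)\le 2$).
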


 In the case of acritical graphs we can show a stronger  bound on the $\rrank$ for graphs with $\alpha(G)\le 8$.

\begin{proposition}
Let $G$ be an acritical graph with $\alpha(G)\leq 8$. Then $\rrank(G)\leq \alpha(G)-2$.
\end{proposition}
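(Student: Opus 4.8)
The plan is to derive the bound by instantiating the strengthened inequality of Conjecture~\ref{conjecture-monique} at order $r=\alpha-2$, where $\alpha:=\alpha(G)$. By Theorem~\ref{theoGLstrong} that inequality is available whenever $r\le\min(6,\alpha-1)$, and for $3\le\alpha\le 8$ the choice $r=\alpha-2$ satisfies $1\le\alpha-2\le\min(6,\alpha-1)$ precisely because $\alpha\le 8$, giving
\begin{equation*}
\vartheta^{(\alpha-2)}(G)\le(\alpha-2)+\max_{\substack{S\subseteq V\text{ stable}\\ |S|=\alpha-2}}\vartheta^{(0)}(G\setminus S^\perp).
\end{equation*}
Since $\vartheta^{(\alpha-2)}(G)\ge\alpha$ always, everything reduces to the \emph{key claim} that $\vartheta^{(0)}(G\setminus S^\perp)\le 2$ for every stable set $S$ with $|S|=\alpha-2$: this forces the right-hand side to be at most $\alpha$, hence $\rrank(G)\le\alpha-2$. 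I would treat $\alpha=2$ separately (and $\alpha=1$ is degenerate, so assume $\alpha\ge 2$): there the key claim with $S=\emptyset$ gives $\vartheta^{(0)}(G)\le 2=\alpha$ directly, so $\rrank(G)=0=\alpha-2$ without invoking Conjecture~\ref{conjecture-monique}.

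The heart of the proof is the key claim, and this is where acriticality enters. Set $H:=G\setminus S^\perp$. First, $\alpha(H)\le 2$, since any stable set $T$ of $H$ lies outside $N(S)$, so $T\cup S$ is stable in $G$ and $\alpha\ge|T|+|S|=\alpha(H)+(\alpha-2)$. If $\alpha(H)\le 1$ then $H$ is a clique or empty and $\vartheta^{(0)}(H)=\alpha(H)\le 1$, so the only real case is $\alpha(H)=2$, where $\overline H$ is triangle-free. I would then argue that $\overline H$ must in fact be bipartite; granting this, $V(H)$ is covered by two cliques of $H$, so $\overline\chi(H)=2=\alpha(H)$ and hence $\vartheta^{(0)}(H)=2$ via the sandwich $\alpha(H)\le\vartheta^{(0)}(H)\le\overline\chi(H)$.

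To show $\overline H$ is bipartite, suppose not. A triangle-free non-bipartite graph contains a shortest odd cycle, which is necessarily chordless, so $\overline H$ has an induced $C_{2\ell+1}$ with $\ell\ge 2$; equivalently $G$ contains an induced $\overline{C_{2\ell+1}}$ on a vertex set $W\subseteq V\setminus S^\perp$. Choosing a vertex $a$ of this cycle with cycle-neighbors $b,c$, the length $\ge 5$ guarantees that $b,c$ are non-adjacent in $C_{2\ell+1}$, so $e:=\{b,c\}$ is an edge of $\overline{C_{2\ell+1}}$ (hence of $G$), while $a$ is non-adjacent in $G$ to both $b$ and $c$. Thus $\{a,b,c\}$ is stable in $G[W]\setminus e$, and since $W\cap S^\perp=\emptyset$ no vertex of $\{a,b,c\}$ is adjacent to $S$; therefore $\{a,b,c\}\cup S$ is a stable set of $G\setminus e$ of size $3+(\alpha-2)=\alpha+1$. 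This makes $e$ a critical edge of $G$, contradicting acriticality and proving the key claim.

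I expect the main obstacle to be exactly this last structural step: converting the failure $\vartheta^{(0)}(G\setminus S^\perp)>2$ into a forbidden induced $\overline{C_{2\ell+1}}$, and then using the disjoint stable set $S$ to manufacture a critical edge of $G$. Once the key claim is established, the remainder is routine bookkeeping with the already-available inequality of Theorem~\ref{theoGLstrong}, whose range $r\le 6$ is precisely what the hypothesis $\alpha(G)\le 8$ provides at order $r=\alpha-2$.
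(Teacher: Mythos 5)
Your proof is correct, and its skeleton coincides with the paper's: instantiate the inequality of Theorem~\ref{theoGLstrong} at $r=\alpha(G)-2$ (which is legitimate exactly because $\alpha(G)\le 8$), reduce everything to the key claim $\vartheta^{(0)}(G\setminus S^\perp)\le 2$ for stable sets $S$ of size $\alpha(G)-2$, and derive that claim from acriticality of $G$ by turning an odd antihole in $G\setminus S^\perp$ into a critical edge of $G$ via the disjoint stable set $S$. Where you genuinely diverge is in how the forbidden structure is located. The paper first transfers acriticality from $G$ to $G\setminus S^\perp$ (through the induced subgraph $(G\setminus S^\perp)\oplus S$), then invokes the Strong Perfect Graph Theorem: if $G\setminus S^\perp$ were imperfect it would contain an induced $C_5$ or $\overline{C_{2n+1}}$, which has critical edges yet would have to be acritical, a contradiction; perfection then gives $\overline\chi=\alpha=2$ and the sandwich inequality finishes. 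You avoid SPGT altogether: since $\alpha(G\setminus S^\perp)=2$ the complement is triangle-free, a triangle-free non-bipartite graph has an induced (shortest, hence chordless) odd cycle of length at least $5$, and from the resulting induced $\overline{C_{2\ell+1}}$ you manufacture the critical edge of $G$ directly; in the bipartite case the two color classes give the $2$-clique cover and hence $\vartheta^{(0)}\le\overline\chi=2$. Your route is more elementary and self-contained for this step (the deep SPGT black box is replaced by a parity argument), at the cost of a slightly longer case analysis; the paper's route is shorter given SPGT and is stylistically consistent with its other arguments (e.g., Lemma~\ref{lem-alpha-2} uses the same SPGT reduction). You also handle the boundary cases $\alpha(G)\in\{1,2\}$ explicitly, which the paper glosses over.
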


\begin{proof}
It suffices to show that $\vartheta^{(0)}(G\setminus S^\perp)\leq 2$ if $S$ is stable of size $\alpha(G)-2$ since then the result follows from Eq. (\ref{eq-monique}). Let $S=\{i_1, i_2, \dots, i_{\alpha(G)-2}\}$ be a stable set of size $\alpha(G)-2$ in $G$, so that $\alpha(G\setminus S^\perp)\leq 2$. If $\alpha(G\setminus S^\perp)=1$ then $\vartheta^{(0)}(G\setminus S^\perp)=1$ and we are done. 
So assume that $\alpha(G\setminus S^\perp)=2$. Then the graph $H:=(G\setminus S^\perp)\oplus S$ is an induced subgraph of $G$ with $\alpha(H)=\alpha(G)$. We claim that $H$ is acritical. This follows from the fact that any critical edge of $H$ should also be a critical edge of $G$. Indeed,  if $e$ is critical in $H$ then there exists a stable set  in $H\setminus e$ of size $\alpha(H)+1=\alpha(G)+1$, which is then also stable in $G\setminus e$ as $H$ is an induced subgraph of $G$, so that $e$ is critical in $G$. As $H$ is acritical also  the graph $G\setminus S^\perp$ is acritical. 
We claim that $G\setminus S^\perp$ is perfect. For if not then, by the strong perfect graph theorem (\cite{SPGT}),  $G\setminus S^\perp$   contains $C_{5}$ or $\overline{C_{2n+1}}$ ($n\ge 2$)  as an induced subgraph. Since these graphs have stability number equal to $\alpha(G\setminus S^\perp)=2$ they must be acritical graphs by the above argument. Thus we reach a contradiction since $C_5$ and $\overline{C_{2n+1}}$ have critical edges.
Hence  $G\setminus S^\perp$ is perfect and thus we have $\vartheta^{(0)}(G\setminus S^\perp)=\alpha(G\setminus S^\perp)=2$, which completes the proof.
\end{proof}

\section{Towards characterizing graphs with $\rrank$ 0} \label{sec-rank-0}

In this section we investigate the graphs $G$ with $\rrank$ 0, i.e,  such that $\vartheta^{(0)}(G)=\alpha(G)$ or, equivalently, $M_G\in \mathcal K^{(0)}_n$. Recall the well-known `sandwich inequality' from \cite{Lo79}:
\begin{equation}\label{eqsandwich} 
\alpha(G)\le \vartheta'(G)=\vartheta^{(0)}(G)\le \vartheta(G)\le \overline \chi(G).
\end{equation}
In view of (\ref{eqsandwich}), if $G$ can be covered by $\alpha(G)$ cliques then $G$ has $\rrank$ 0. In addition, if $\alpha(G)=\alpha$ and $V_1, V_2, \dots, V_\alpha$ are cliques partitioning $V$ then the matrix 

\begin{center}
    $P:=\begin{pmatrix}
(\alpha-1)J & -J & \cdots & -J \\
-J & (\alpha-1)J& \cdots & -J \\
\vdots  & \vdots  & \ddots & \vdots  \\
-J & -J & \cdots & (\alpha-1)J 
\end{pmatrix}, $
\end{center}
whose block-structure is  induced by the partition $V=V_1\cup \dots \cup V_\alpha$,
is a $\kzeroc$ for $M_G$.
 In this section we show that the reverse is true for critical graphs and for graphs with $\alpha(G)\le 2$. We also provide an algorithmic method that permits to reduce the characterization of $\rrank$ 0 graphs to the same property for the class of acritical graphs.
 
  Throughout we often set $\alpha:=\alpha(G)$ to simplify notation and we say that a set $S\subseteq V$ is an $\alpha$-stable set if it is a stable set of size $\alpha(G)$. 

\subsection{Characterizing critical graphs with $\rrank$ 0}

The following  result will be repeatedly used.

\begin{lemma}\label{lem-kernel}
Let $G$ be a graph  with $\alpha(G)=\alpha$ and let $S$ be an $\alpha$-stable set. Assume $M_G\in \kzero$ and let $P$ be a $\kzeroc$ for $M_G$. Then, $\chi^S \in \ker(P)$ and $P[S]=\alpha I_{\alpha}-J_{\alpha}$.
\end{lemma}
\begin{proof}
Directly from Lemma \ref{kernel-k0} since $(\chi^S)^T M_G \chi^S =0$ as $\chi^S/|S|$ is a global minimizer of (\ref{motzkin-form}) (recall  (\ref{eq-mini})).
\end{proof}

\begin{proposition}\label{lem-components_critical}
Let $G=(V,E)$ be a graph, let $E_c$ denote the set of critical edges of $G$ and let $G_c=(V,E_c)$ be the corresponding subgraph of $G$. If  $\rrank(G)=0$ then each  connected component of the graph $G_c$ is a  clique of $G$.
\end{proposition}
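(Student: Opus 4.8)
The plan is to fix a $\MK^{(0)}$-certificate $P$ for $M_G$, which exists since $\rrank(G)=0$ means $M_G\in\kzero$, and to extract from the critical edges strong rigidity constraints on the columns of $P$. The two facts I would lean on throughout are, first, that $P_{ii}=(M_G)_{ii}=\alpha-1$ and $P\le M_G$ entrywise, by definition of a $\MK^{(0)}$-certificate, and second, Lemma \ref{lem-kernel}, which gives $\chi^S\in\ker(P)$ for every $\alpha$-stable set $S$.

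The first step is to analyze a single critical edge $e=\{u,v\}$. By definition $\alpha(G\setminus e)=\alpha+1$, so there is a stable set $T$ of $G\setminus e$ with $|T|=\alpha+1$; since $|T|>\alpha$, the set $T$ cannot be stable in $G$, so it must contain $e$ as its only edge, whence both $S_u:=T\setminus\{u\}$ and $S_v:=T\setminus\{v\}$ are $\alpha$-stable sets of $G$. By Lemma \ref{lem-kernel} both $\chi^{S_u}$ and $\chi^{S_v}$ lie in $\ker(P)$, and since $\chi^{S_u}-\chi^{S_v}=\chi^{\{v\}}-\chi^{\{u\}}$, I obtain $P(\chi^{\{u\}}-\chi^{\{v\}})=0$. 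In other words, the $u$-th and $v$-th columns of $P$ are equal.

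Next I would propagate this equality along the critical-edge subgraph $G_c$. Fix a connected component $C$ of $G_c$. Any two of its nodes are joined by a path of critical edges, and column-equality across each edge of the path gives, by transitivity, that all columns $P\chi^{\{i\}}$ for $i\in C$ coincide. Reading off a single entry then finishes the argument: for distinct $u,w\in C$, column-equality gives $P_{uw}=P_{uu}=\alpha-1$. Combining this with $P_{uw}\le(M_G)_{uw}=\alpha(A_G)_{uw}-1$ yields $\alpha\le\alpha(A_G)_{uw}$, hence $(A_G)_{uw}=1$, i.e.\ $\{u,w\}\in E$. Thus every component $C$ of $G_c$ is a clique of $G$, as claimed, with single-vertex components being trivially cliques.

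I do not expect a serious obstacle; the argument reduces to a kernel/rigidity computation once the right objects are in place. The point that needs care is the combinatorial step producing the two $\alpha$-stable sets $S_u,S_v$ from a critical edge — in particular verifying that the size-$(\alpha+1)$ stable set of $G\setminus e$ contains exactly the edge $e$ — together with the correct entrywise use of $P\le M_G$ to read off adjacency. Everything else follows mechanically from Lemma \ref{lem-kernel}.
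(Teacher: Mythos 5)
Your proof is correct and follows essentially the same route as the paper: fix a $\MK^{(0)}$-certificate $P$, use Lemma \ref{lem-kernel} on the two $\alpha$-stable sets produced by each critical edge to force equality of the corresponding columns of $P$, propagate this along each component of $G_c$, and then read off $P_{uw}=\alpha-1\le (M_G)_{uw}$ to conclude adjacency. The only cosmetic differences are that you spell out the extraction of the two $\alpha$-stable sets from a critical edge (which the paper asserts directly) and skip the explicit block form of $P$ that the paper records for later reuse.
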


\begin{proof}
By assumption, $\rrank(G)=0$. Let $P$ be a $\kzeroc$ for $M_G$. Let $V_1, V_2, \dots, V_p$ be the connected components of the graph $G_c$.
We show that each component $V_i$  is a clique in $G$. For this pick two nodes $u\ne v\in V_i$ that are connected in $G_c$.  As the edge $\{u,v\}$ is  critical, there exists a set $I\subseteq V$ such that $I\cup\{u\}$ and $I\cup \{v\}$ are $\alpha$-stable in $G$. 
Then, by Lemma \ref{lem-kernel}, the characteristic vectors $\chi^{I\cup\{u\}}$ and $\chi^{I\cup\{v\}}$ both belong to the kernel of $P$ and thus
$\chi^{\{u\}}-\chi^{\{v\}}\in \ker P$. From this we deduce that the columns of $P$ indexed by the nodes in  $V_i$ are all equal. 
Combining this with  the fact that  the diagonal entries of $P$ are equal to $\alpha -1$ and that $P$ is symmetric we can conclude that, with respect to the partition $V=V_1\cup\ldots \cup V_p$, the matrix $P$ has the following block-form:
        \begin{equation}\label{eqP}
    P=
\begin{pmatrix}
(\alpha -1)J_{|V_1|} & a_{12}J_{|V_1|\times |V_2|} & \cdots & a_{1p}J_{|V_1|\times|V_p| } \\
a_{21}J_{|V_2|\times |V_1|} & (\alpha -1)J_{|V_2|}& \cdots & a_{2p}J_{|V_2|\times |V_p|} \\
\vdots  & \vdots  & \ddots & \vdots  \\
a_{p1}J_{|V_p|\times |V_1|} & a_{p2}J_{|V_p|\times |V_2|} & \cdots & (\alpha-1)J_{|V_p|} 
\end{pmatrix}
\end{equation}
 for some scalars $a_{ij}$ ($1\le i<j\le p$).
We can now show that each $V_i$ is a clique of $G$. For this pick two distinct nodes $u,v\in V_i$. Then we have $P_{uv}=\alpha -1\le (M_G)_{uv}$, which implies that $(M_G)_{uv}=\alpha-1$ and thus $\{u,v\}$ is an edge of $G$. Here we use the fact that the off-diagonal entries of $M_G$ are equal to $\alpha-1$ for positions corresponding to edges and to $-1$ for non-edges. Hence we have shown that  each component $V_i$ is a clique of $G$, which concludes the proof.
\end{proof}

\begin{coro}\label{cor-critical_rank_0}
Assume  $G=(V,E)$ is a critical graph, i.e., all its edges are critical.   Then we have 
 $\rrank(G)=0$ if and only if $G$ is  the disjoint union of $\alpha(G)$ cliques. In particular, $\rrank(G)=0$ if and only if $\overline \chi(G)=\alpha (G)$.
\end{coro}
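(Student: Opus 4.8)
The plan is to derive this corollary almost immediately from Proposition \ref{lem-components_critical}, whose proof already does the substantive work; the remaining argument is bookkeeping together with the sandwich inequality (\ref{eqsandwich}).

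First I would dispose of the easy (``if'') direction, which in fact holds for arbitrary graphs. Suppose $G$ is the disjoint union of $\alpha:=\alpha(G)$ cliques $V_1,\dots,V_\alpha$. These cliques partition $V$, so the explicit block matrix $P$ displayed at the start of this section is a $\kzeroc$ for $M_G$; hence $M_G\in\MK^{(0)}$ and $\rrank(G)=0$. (Equivalently, the partition gives $\overline\chi(G)\le\alpha$, which forces $\vartheta^{(0)}(G)=\alpha$ by (\ref{eqsandwich}).)

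For the (``only if'') direction I would invoke criticality. Since every edge of $G$ is critical we have $E_c=E$, so the graph $G_c$ appearing in Proposition \ref{lem-components_critical} coincides with $G$ itself. Assuming $\rrank(G)=0$, the proposition then asserts that every connected component of $G=G_c$ is a clique, i.e., $G$ is a disjoint union of cliques. It remains only to count them: a disjoint union of $p$ cliques has stability number exactly $p$, since one vertex may be chosen from each clique while no stable set can meet a clique in more than one vertex. Thus the number of components equals $\alpha(G)$, which proves the first equivalence.

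Finally, for the ``in particular'' statement I would chain the two directions through $\overline\chi$. If $\rrank(G)=0$, then by the above $G$ is a disjoint union of $\alpha$ cliques, whence $\overline\chi(G)\le\alpha(G)$; combined with the general bound $\alpha(G)\le\overline\chi(G)$ from (\ref{eqsandwich}) this yields $\overline\chi(G)=\alpha(G)$. Conversely, $\overline\chi(G)=\alpha(G)$ exhibits a partition of $V$ into $\alpha(G)$ cliques, so $\rrank(G)=0$ by the (``if'') direction. The only genuine content beyond Proposition \ref{lem-components_critical} is the observation that $E_c=E$ for critical graphs, so I anticipate no real obstacle here; all the difficulty has been absorbed into the proposition, and ultimately into the kernel Lemma \ref{kernel-k0} and the characterization of Motzkin--Straus minimizers in Theorem \ref{minimizers-LV}.
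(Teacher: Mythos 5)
Your proposal is correct and follows essentially the same route as the paper: the ``only if'' direction via Proposition \ref{lem-components_critical} (using that $G_c=G$ for critical graphs), and the ``if'' direction via the explicit block certificate / sandwich inequality (\ref{eqsandwich}), with the clique-counting and $\overline\chi$ bookkeeping filled in explicitly. The only cosmetic remark is that a cover of $V$ by $\alpha(G)$ cliques need not literally be a partition, but it can be refined to one (or one can invoke (\ref{eqsandwich}) directly), so this is not a gap.
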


\begin{proof} The `only if' part follows from  Proposition \ref{lem-components_critical} and the `if part' follows from Eq. (\ref{eqsandwich}). The last claim follows directly.
\end{proof}

\begin{example}\label{rk-odd-compl}
 Let $n\geq 2$. We  saw in Remark \ref{rem-critical} that $\rrank(C_{2n+1})\leq 1$ and $\rrank(\overline{C_{2n+1}})\leq 1$. Here we can show, as an application of Corollary \ref{cor-critical_rank_0},  that their $\rrank$ is equal 1.
\begin{description}
\item [(i)] The graph $C_{2n+1}$ is critical and connected (and not a clique), so by Corollary \ref{cor-critical_rank_0}, $\rrank(C_{2n+1})\geq 1$.
\item [(ii)] The critical edges of the graph $G=\overline{C_{2n+1}}$ are those of the form $\{i,i+2\}$ (for $i\in [2n+1]$, indices taken modulo $2n+1$). Hence the subgraph $G_c$ (of critical edges) is connected (and not a clique) and thus $\rrank(\overline{C_{2n+1}})~\geq~1$. 
\end{description}
\end{example}
 
Next we  give an example of an acritical graph with $\rrank$ 1.
 \begin{example}\label{acritical-rk1}
 Consider the graph $H_9$ from  Figure \ref{figH9}. Note that $\alpha(H_9)=4$ and that $C_9$ is a critical subgraph of $H_9$ with the same stability number. Hence, by Remark \ref{rem-critical}, $\rrank(H_9)\leq \rrank(C_9)=1$. 

 \begin{figure}[H]
 \definecolor{uuuuuu}{rgb}{0.26666666666666666,0.26666666666666666,0.26666666666666666}
\definecolor{uququq}{rgb}{0.25098039215686274,0.25098039215686274,0.25098039215686274}
\begin{center}
\begin{tikzpicture}[line cap=round,line join=round,>=triangle 45,x=.4cm,y=.4cm]
\clip(-5.53,-4.11) rectangle (3.99,5.05);
\draw [line width=1pt] (-2.845262289886178,2.754689839559646)-- (-0.59,3.5755381835412496);
\draw [line width=1pt] (-0.59,3.5755381835412496)-- (1.66526228988618,2.754689839559644);
\draw [line width=1pt] (1.66526228988618,2.754689839559644)-- (2.8652622898861786,0.6762288704769914);
\draw [line width=1pt] (2.8652622898861786,0.6762288704769914)-- (2.4485066634855457,-1.6873097367523064);
\draw [line width=1pt] (2.4485066634855457,-1.6873097367523064)-- (0.61,-3.23);
\draw [line width=1pt] (0.61,-3.23)-- (-1.79,-3.23);
\draw [line width=1pt] (-1.79,-3.23)-- (-3.628506663485547,-1.687309736752305);
\draw [line width=1pt] (-3.628506663485547,-1.687309736752305)-- (-4.045262289886179,0.6762288704769942);
\draw [line width=1pt] (-4.045262289886179,0.6762288704769942)-- (-2.845262289886178,2.754689839559646);
\draw [line width=1pt] (-1.79,-3.23)-- (2.8652622898861786,0.6762288704769914);
\draw [line width=1pt] (2.4485066634855457,-1.6873097367523064)-- (-0.59,3.5755381835412496);
\draw [line width=1pt] (-0.59,3.5755381835412496)-- (-3.628506663485547,-1.687309736752305);
\draw [line width=1pt] (0.61,-3.23)-- (1.66526228988618,2.754689839559644);
\draw [line width=1pt] (-2.845262289886178,2.754689839559646)-- (-1.79,-3.23);
\begin{scriptsize}
\draw [fill=uququq] (-1.79,-3.23) circle (2.5pt);
\draw[color=uququq] (-2.33,-3.22) node {$6$};
\draw [fill=uququq] (0.61,-3.23) circle (2.5pt);
\draw[color=uququq] (1.05,-3.58) node {$5$};
\draw [fill=uuuuuu] (2.4485066634855457,-1.6873097367523064) circle (2.5pt);
\draw[color=uuuuuu] (2.93,-1.46) node {$4$};
\draw [fill=uuuuuu] (2.8652622898861786,0.6762288704769914) circle (2.5pt);
\draw[color=uuuuuu] (3.03,1.3) node {$3$};
\draw [fill=uuuuuu] (1.66526228988618,2.754689839559644) circle (2.5pt);
\draw[color=uuuuuu] (2.03,3.18) node {$2$};
\draw [fill=uuuuuu] (-0.59,3.5755381835412496) circle (2.5pt);
\draw[color=uuuuuu] (-0.23,4) node {$1$};
\draw [fill=uuuuuu] (-2.845262289886178,2.754689839559646) circle (2.5pt);
\draw[color=uuuuuu] (-3.09,3.18) node {$9$};
\draw [fill=uuuuuu] (-4.045262289886179,0.6762288704769942) circle (2.5pt);
\draw[color=uuuuuu] (-4.13,1.24) node {$8$};
\draw [fill=uuuuuu] (-3.628506663485547,-1.687309736752305) circle (2.5pt);
\draw[color=uuuuuu] (-4.03,-1.32) node {$7$};
\end{scriptsize}
\end{tikzpicture}
\end{center}
\caption{Graph $H_9$, acritical}\label{figH9}
\end{figure}
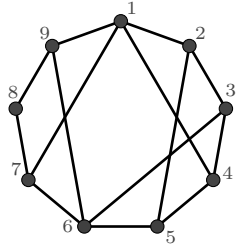

 Now, we show that $\rrank(H_9)\geq 1$. For this assume, for contradiction, that $P$ is a $\kzeroc$ for $M_{H_9}$ and let $C_1,C_2, \dots, C_9$ denote the columns of $P$. Since the sets $\{1,3,5,8\}, \{2,4,7,9\}, \{3,5,7,9\}$ and $\{2,4,6,8\}$ are stable sets of size 4 in $H_9$, by applying Lemma \ref{lem-kernel} we obtain
 \begin{align*}
 \textbf{(1) } \ C_1 + C_3 + C_5 +C_8 = 0, \quad\quad & \textbf{(2) } \ C_2 +C_4 +C_7 +C_9 =0, \\
 \textbf{(3) }\  C_3 +C_5 +C_7 +C_9 = 0 ,\quad\quad & \textbf{(4) }\ C_2 +C_4 +C_6 +C_8 =0 .\end{align*}

 By combining (2) and (4) we get that $C_7 +C_9 = C_6+C_8$. By combining (2) and (3) we get $C_2+C_4 =C_3+C_5$. Using these two identities and (2), we get $C_3 +C_5 +C_6 +C_8=0$. Finally, using (1) and the last identity we obtain  $C_6= C_1$. This implies $P_{16}=P_{11}=3>-1$, which yields a contradiction since 
 $P_{16}\le -1$ as $\{1,6\}$ is a non-edge.
 \end{example}
 
\subsection{Characterizing graphs with $\alpha(G)=2$ and $\rrank(G)=0$}\label{secalpha2}
Here we observe that the  result of Corollary \ref{cor-critical_rank_0}  holds   for all (not necessarily critical) graphs with  $\alpha(G)\le 2$.
In Section \ref{sec-large-alpha} we will show that this 
also  holds  for  acritical graphs with 
 $\alpha(G)\ge |V| -4$
(see Proposition \ref{prop-alpha-large}).

\begin{lemma}\label{lem-alpha-2}
Let $G$ be a graph with $\alpha(G)\le 2$. Then,
$\rrank(G)=0$ if and only if  $\overline \chi(G)=\alpha(G)$.
\end{lemma}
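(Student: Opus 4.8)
The plan is to prove both implications separately, with the forward direction being an immediate consequence of the sandwich inequality and the reverse direction requiring a reduction to the complements of odd cycles.

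For the ``if'' direction I would simply invoke the sandwich inequality~(\ref{eqsandwich}): if $\overline\chi(G)=\alpha(G)$, then all the quantities $\alpha(G)\le \vartheta^{(0)}(G)\le \vartheta(G)\le \overline\chi(G)$ coincide, so in particular $\vartheta^{(0)}(G)=\alpha(G)$, i.e., $\rrank(G)=0$. This argument in fact works for every graph and does not use $\alpha(G)\le 2$.

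For the ``only if'' direction, assume $\rrank(G)=0$. The case $\alpha(G)=1$ is trivial, since then $G$ is a clique and $\overline\chi(G)=1=\alpha(G)$. So suppose $\alpha(G)=2$, which is equivalent to saying that the complement $\overline G$ is triangle-free (a triangle in $\overline G$ being a stable set of size $3$ in $G$) with at least one edge. Since $\overline\chi(G)=\chi(\overline G)\ge\alpha(G)=2$ always holds, it remains to rule out $\chi(\overline G)\ge 3$. I would argue by contradiction: if $\overline G$ is not bipartite then it contains an odd cycle, and a shortest odd cycle $C$ in a triangle-free graph is necessarily chordless (a chord would split $C$ into two cycles summing in length to $|C|+2$; triangle-freeness forces both to have length $\ge 4$, so the odd one among them would be a strictly shorter odd cycle, a contradiction) and of length $2k+1\ge 5$, hence $k\ge 2$. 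Thus $\overline G$ contains an induced $C_{2k+1}$, which means $G$ contains the induced subgraph $H:=\overline{C_{2k+1}}$ (since induced subgraphs commute with complementation). As $\alpha(H)=2=\alpha(G)$, Lemma~\ref{indu-same-alpha} yields $\rrank(H)\le\rrank(G)=0$, whereas Example~\ref{rk-odd-compl} gives $\rrank(\overline{C_{2k+1}})=1$, a contradiction.

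I therefore conclude that $\overline G$ is bipartite, so $\overline\chi(G)=\chi(\overline G)=2=\alpha(G)$, completing the proof. The main obstacle is the purely graph-theoretic step that a non-bipartite triangle-free graph contains an \emph{induced} odd cycle of length at least $5$; everything else is a direct application of the sandwich inequality together with the monotonicity of the $\vartheta$-rank under induced subgraphs of equal stability number (Lemma~\ref{indu-same-alpha}) and the known value $\rrank(\overline{C_{2k+1}})=1$ (Example~\ref{rk-odd-compl}).
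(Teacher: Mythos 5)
Your proof is correct, but it takes a genuinely different route from the paper's at the key step. The paper, too, disposes of the ``if'' direction via the sandwich inequality (\ref{eqsandwich}) and proves the ``only if'' direction by contradiction using exactly your two ingredients, Lemma \ref{indu-same-alpha} and Example \ref{rk-odd-compl}; however, it obtains the forbidden induced subgraph by invoking the strong perfect graph theorem: if $G$ (with $\alpha(G)=2$ and $\rrank(G)=0$) were not perfect, it would contain an induced $C_5$ or $\overline{C_{2n+1}}$ ($n\ge 2$), each with stability number $2$ and $\vartheta$-rank $1$, a contradiction; perfection of $G$ then yields $\overline\chi(G)=\omega(\overline G)=\alpha(G)$. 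You instead pass to the complement and prove by hand that a non-bipartite triangle-free graph contains an \emph{induced} odd cycle of length at least $5$ (a shortest odd cycle is chordless, since a chord would produce a strictly shorter odd cycle), which gives an induced $\overline{C_{2k+1}}$, $k\ge 2$, in $G$ and the same contradiction; bipartiteness of $\overline G$ then gives $\overline\chi(G)=\chi(\overline G)=2$ directly. The trade-off: your argument is elementary and self-contained, replacing a citation of a very deep theorem by a short folklore lemma --- arguably fitting, since only the $\alpha=2$ instance of perfection is needed; the paper's argument is shorter on the page and frames the lemma as an instance of the perfection phenomenon, reusing the same SPGT template that appears elsewhere in the paper (e.g.\ in Proposition \ref{prop-alpha-large}). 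Both proofs are complete and correct.
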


\begin{proof}

It suffices to show the `only if' part. The case $\alpha(G)=1$ is trivial. So assume $\alpha(G)=2$ and $\rrank(G)=0$.  We show that  $G$ is perfect.  For if not then, by the strong perfect graph theorem, $G$ contains  $C_5$ or $\overline{C_{2n+1}}$ ($n\geq 2$) as an induced subgraph. Both of these graphs have $\rrank$ 1 (see Example \ref{rk-odd-compl}). This contradicts Lemma \ref{indu-same-alpha} which claims that for every induced subgraph $H$ with $\alpha(H)=\alpha(G)$ we must have $\rrank(H)\leq \rrank(G)$.
\ignore{ and let $P$ be a $\kzeroc$ for $M_G$, i.e.,  $P\succeq 0$, $M_G\ge P$ and
$P_{ii}=\alpha(G)-1=1$ for all $i\in V$. As $P\succeq 0$ with diagonal entries equal to 1 it follows that $-1 \le P_{ij}\le 1$ for all $i,j\in V$. On the other hand, $P\le M_G$ implies $P_{ij}\le -1$ for all positions corresponding to non-edges. Therefore we have $P_{ij}=-1$ for every non-edge $\{i,j\}$.

As $P\succeq 0$ we may assume that $P$ is the Gram matrix of unit vectors $v_1,\ldots,v_n\in\oR^n$, i.e., $P=(v_i^Tv_j)_{i,j\in V}$.  Then,  for any two non-adjacent vertices $i,j$, we have  $v_i^Tv_j=-1$ and thus  $v_i=-v_j$.
Pick a unit vector $r\in\oR^n$ such that $r^Tv_i\ne 0$ for all $i\in V$ (such a vector exists since the kernel of $P$ is nontrivial by Lemma \ref{lem-kernel}).
 Define the sets $V_1=\{i\in V: r^Tv_i>0\}$ and $V_2=\{i\in V: r^Tv_i<0\}$. Then $V_1$ and $V_2$ are two cliques of $G$ that cover $V$.
 }
\end{proof}

\begin{example}\label{ex-Petersen}
We  give some examples  showing that   the characterization in Corollary \ref{cor-critical_rank_0} and Lemma \ref{lem-alpha-2} of rank 0 graphs as those with $\overline \chi(G)=\alpha(G)$ does not hold if $\alpha(G)\ge 3$ and $G$ has some non-critical edges.

Let $G$ be the Petersen graph. 
Then $G$ has rank 0, since $\vartheta(G)=\vartheta^{(0)}(G)=\alpha(G)\ (=4)$,  but $\overline \chi(G)= 5>\alpha(G)=4$ (see  \cite{Lo79}). 
Note  that the Petersen graph is in fact acritical. The graph $G=\overline{G_{13}}$ considered  in \cite{oddities} provides another  example with  $3=\alpha(G)=\vartheta(G) <\overline \chi(G)=4$ and $\rrank(G)=0$.

A class of counterexamples is provided by the Kneser graphs $G_{n,k}$ when $n\ge 2k+1$ and $k$ does not divide $n$. Recall $G_{n,k}$ has as  vertex set  the collection of all $k$-subsets of $[n]$, where two vertices are adjacent if the corresponding subsets are disjoint.  Note that $G_{5,2}$ is the Petersen graph.
 It has been shown by Lov\'asz \cite{Lo79,LovszKnesersCC} that 
$$\vartheta(G_{n,k})=\alpha(G_{n,k})=\binom{n-1}{k-1} \quad \text{ and } \quad \omega(G_{n,k}) \ (=\alpha(\overline{G_{n,k}})) \ =\floor{\frac{n}{k}}.$$
Therefore $\rrank(G_{n,k})=0$. However,  $\overline \chi(G_{n,k}) \ge {n\choose k}/ \floor {n/k} > {n-1\choose k-1}=\alpha(G_{n,k})$ if $k$ does not divide $n$. 

Note that $G_{n,k}$ is acritical for any $n>2k$. To see this one can use a result of Erd\"os et al. \cite{Erdos} who proved that for $n>2k$ the maximum stable sets of the Kneser graph $G_{n,k}$  are of the form $\mathcal{A}_j:=\{  S\subseteq[n]: j\in S, |S|=k\}$ for  $j\in [n]$. 
To see that $G_{n,k}$ is acritical  assume for contradiction that $\{A, B\}$ is a critical edge.  Then there exists a collection  $\mathcal{I}$ of $k$-subsets of $[n]$ such that $\mathcal{I}\cup\{A\}=\mathcal{A}_i$ and $\mathcal{I}\cup \{B\} = \mathcal{A}_j$ for $i\neq j\in[n]$. Hence, every element of $\mathcal{I}$ contains both $i$ and $j$, so that $|\mathcal{I}|\leq \binom{n-2}{k-2}$. This gives a contradiction as $|\mathcal{I}|+1=|\mathcal{A}_j|=\binom{n-1}{k-1}$.
\end{example}

\ignore{
\definecolor{qqqqcc}{rgb}{0.,0.,0.8}
\definecolor{qqqqff}{rgb}{0.,0.,1.}
\definecolor{ffffff}{rgb}{1.,1.,1.}
\definecolor{uuuuuu}{rgb}{0.26666666666666666,0.26666666666666666,0.26666666666666666}
\definecolor{ffqqqq}{rgb}{1.,0.,0.}
\begin{figure}[H]
\centering
\begin{tikzpicture}[line cap=round,line join=round,>=triangle 45,x=.5cm,y=0.5cm]
\clip(-2.08,-0.8) rectangle (5.94,5.9);
\fill[line width=2.pt,color=ffffff,fill=ffffff,fill opacity=0.10000000149011612] (4.42,2.84) -- (2.08,4.46) -- (-0.18381132323552496,2.7351352827567563) -- (0.7570763348880447,0.04911026150495457) -- (3.6023882104392273,0.11392022098192567) -- cycle;
\draw [line width=2.pt] (2.0892231646282484,3.1597174637921617)-- (2.76,1.2);
\draw [line width=2.pt] (2.76,1.2)-- (1.0654371168741648,2.3911720009702977);
\draw [line width=2.pt] (1.0654371168741648,2.3911720009702977)-- (3.13652062247403,2.4235326807452946);
\draw [line width=2.pt] (3.13652062247403,2.4235326807452946)-- (1.48,1.18);
\draw [line width=2.pt] (1.48,1.18)-- (2.0892231646282484,3.1597174637921617);
\draw [line width=2.pt,color=ffffff] (4.42,2.84)-- (2.08,4.46);
\draw [line width=2.pt,color=ffffff] (2.08,4.46)-- (-0.18381132323552496,2.7351352827567563);
\draw [line width=2.pt,color=ffffff] (-0.18381132323552496,2.7351352827567563)-- (0.7570763348880447,0.04911026150495457);
\draw [line width=2.pt,color=ffffff] (0.7570763348880447,0.04911026150495457)-- (3.6023882104392273,0.11392022098192567);
\draw [line width=2.pt,color=ffffff] (3.6023882104392273,0.11392022098192567)-- (4.42,2.84);
\draw [line width=2.pt,color=qqqqff] (2.08,4.46)-- (2.0892231646282484,3.1597174637921617);
\draw [line width=2.pt,color=qqqqcc] (-0.18381132323552496,2.7351352827567563)-- (1.0654371168741648,2.3911720009702977);
\draw [line width=2.pt,color=qqqqff] (0.7570763348880447,0.04911026150495457)-- (1.48,1.18);
\draw [line width=2.pt,color=qqqqff] (3.6023882104392273,0.11392022098192567)-- (2.76,1.2);
\draw [line width=2.pt,color=qqqqff] (4.42,2.84)-- (3.13652062247403,2.4235326807452946);
\draw [line width=2.pt] (-0.18381132323552496,2.7351352827567563)-- (2.08,4.46);
\draw [line width=2.pt] (2.08,4.46)-- (4.42,2.84);
\draw [line width=2.pt] (4.42,2.84)-- (3.6023882104392273,0.11392022098192567);
\draw [line width=2.pt] (3.6023882104392273,0.11392022098192567)-- (0.7570763348880447,0.04911026150495457);
\draw [line width=2.pt] (0.7570763348880447,0.04911026150495457)-- (-0.18381132323552496,2.7351352827567563);
\begin{scriptsize}
\draw [fill=ffqqqq] (1.48,1.18) circle (2.5pt);
\draw [fill=ffqqqq] (2.76,1.2) circle (2.5pt);
\draw [fill=uuuuuu] (3.13652062247403,2.4235326807452946) circle (2.5pt);
\draw [fill=uuuuuu] (2.0892231646282484,3.1597174637921617) circle (2.5pt);
\draw [fill=uuuuuu] (1.0654371168741648,2.3911720009702977) circle (2.5pt);
\draw [fill=ffqqqq] (4.42,2.84) circle (2.5pt);
\draw [fill=ffqqqq] (2.08,4.46) circle (2.5pt);
\draw [fill=uuuuuu] (-0.18381132323552496,2.7351352827567563) circle (2.5pt);
\draw [fill=uuuuuu] (0.7570763348880447,0.04911026150495457) circle (2.5pt);
\draw [fill=uuuuuu] (3.6023882104392273,0.11392022098192567) circle (2.5pt);
\end{scriptsize}
\end{tikzpicture}
\caption{The Petersen graph}\label{fig-Petersen}
\end{figure}
}

\subsection{Reduction of $\rrank$ 0 graphs to the class of acritical graphs}

Here we further investigate  the structure of graphs with $\rrank$ 0. We introduce a reduction procedure, which we use  to reduce the task of checking the $\rrank$ 0 property to the same property for the class of acritical graphs. This procedure relies on  the following graph construction, which is motivated by Lemma \ref{lem-components_critical}.

\begin{definition}\label{def-Gamma}
Let $G=(V,E)$ be a graph and 
let $G_c=(V,E_c)$ be the subgraph of $G$, where $E_c$ is the set of critical edges of $G$. Let  $V_1,\dots, V_p$ denote the connected components of $G_c$.
Assume that  each of $V_1,\ldots,V_p$ is a clique in $G$. We  define the graph $\Gamma(G)$ with vertex set $\{1,2,\dots, p\}$,
 where a pair $\{i,j\}\subseteq [p]$ is an edge of $\Gamma(G)$ if $V_i\cup V_j$ is a clique of $G$. 
\end{definition}

We show that  this graph construction preserves the $\rrank$ 0 property and the stability number.

\begin{lemma}\label{lem-alpha-gamma}
Assume  $G$ is a graph with $\rrank(G)=0$ and let $\Gamma(G)$ be the graph as in Definition \ref{def-Gamma}.  Then we have:
$\rrank(\Gamma(G))=0$ and $\alpha(\Gamma(G))=\alpha(G)$.
\end{lemma}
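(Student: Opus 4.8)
The plan is to construct a $\kzeroc$ for $M_{\Gamma(G)}$ by ``compressing'' a certificate for $M_G$ along the partition $V=V_1\cup\cdots\cup V_p$, determining $\alpha(\Gamma(G))$ along the way. Write $\alpha:=\alpha(G)$. Since $\rrank(G)=0$, Proposition~\ref{lem-components_critical} applies: each $V_i$ is a clique and any $\kzeroc$ $P$ for $M_G$ has the block form (\ref{eqP}), with diagonal blocks $(\alpha-1)J$ and off-diagonal blocks $a_{ij}J$. I would define the compressed matrix $\tilde P\in\mathcal S^p$ by $\tilde P_{ii}=\alpha-1$ and $\tilde P_{ij}=a_{ij}$ for $i\ne j$. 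The first key observation is that for any $x\in\oR^n$, writing $s_i=\sum_{v\in V_i}x_v$, one has $x^TPx=\sum_{i,j}\tilde P_{ij}s_is_j=s^T\tilde Ps$; since the map $x\mapsto s$ is onto $\oR^p$, this gives $\tilde P\succeq 0$ from $P\succeq 0$.

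Next I would pin down $\alpha(\Gamma(G))$. For the upper bound, note that whenever $\{i,j\}$ is a non-edge of $\Gamma(G)$, the set $V_i\cup V_j$ is not a clique, so (as each $V_i$ is internally a clique) there is a non-edge $\{u,v\}$ with $u\in V_i$, $v\in V_j$, whence $a_{ij}=P_{uv}\le (M_G)_{uv}=-1$. Then for any stable set $S$ of $\Gamma(G)$, evaluating the form of $\tilde P$ at $\chi^S$ gives
\begin{equation*}
0\le (\chi^S)^T\tilde P\chi^S=|S|(\alpha-1)+\sum_{i\ne j\in S}a_{ij}\le |S|(\alpha-1)-|S|(|S|-1)=|S|(\alpha-|S|),
\end{equation*}
so $|S|\le\alpha$, i.e.\ $\alpha(\Gamma(G))\le\alpha$. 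For the matching lower bound I would take a maximum stable set $T$ of $G$; since each $V_i$ is a clique it meets $T$ in at most one vertex, so $T$ hits $\alpha$ distinct components, and for any two such components the two $T$-vertices form a non-edge of $G$, so their union is not a clique and the two indices are non-adjacent in $\Gamma(G)$. Hence these $\alpha$ indices form a stable set of $\Gamma(G)$, giving $\alpha(\Gamma(G))\ge\alpha$ and thus $\alpha(\Gamma(G))=\alpha$.

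Finally, knowing $\alpha(\Gamma(G))=\alpha$, I would check that $\tilde P$ is a $\kzeroc$ for $M_{\Gamma(G)}$: it is positive semidefinite, its diagonal entries equal $\alpha-1=\alpha(\Gamma(G))-1=(M_{\Gamma(G)})_{ii}$, and for $i\ne j$ we have $\tilde P_{ij}=a_{ij}\le (M_{\Gamma(G)})_{ij}$, since for non-edges this is the bound $a_{ij}\le-1$ above and for edges it follows from $a_{ij}\le\alpha-1$ (any off-diagonal entry of $M_G$ is at most $\alpha-1$). This yields $\rrank(\Gamma(G))=0$. The only genuinely delicate point is the upper bound $\alpha(\Gamma(G))\le\alpha$: picking one representative vertex per component need not produce a stable set of $G$, so a purely combinatorial argument fails, and it is the semidefinite certificate $\tilde P$ that forces the bound.
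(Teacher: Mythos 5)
Your proposal is correct and follows essentially the same route as the paper's proof: the paper also compresses the block-form certificate $P$ of (\ref{eqP}) to the $p\times p$ matrix with entries $\alpha-1$ and $a_{ij}$, obtains the lower bound $\alpha(\Gamma(G))\ge\alpha(G)$ combinatorially from a maximum stable set of $G$, derives $a_{ij}\le -1$ for non-edges of $\Gamma(G)$ to get $P'\le M_{\Gamma(G)}$, and proves the upper bound $\alpha(\Gamma(G))\le\alpha(G)$ by evaluating the quadratic form of the compressed matrix at the indicator of a stable set. The only cosmetic differences are the order of the steps and that you justify $\tilde P\succeq 0$ via the identity $x^TPx=s^T\tilde Ps$ with $x\mapsto s$ surjective, where the paper implicitly uses that $P'$ is a principal submatrix of $P$.
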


\begin{proof}
Set $\alpha=\alpha(G)$. First, we prove that $\alpha(\Gamma(G)) \geq \alpha$. For this let $S$ be an $\alpha$-stable set in $G$ and, for each $v\in S$,  let $V_v$ denote  the connected component of $G_c$ that contains $v$.  Since each $V_i$  is a clique of $G$ (by Lemma \ref{lem-components_critical}), we have $V_v\neq V_u$ for $u\neq v\in S$ and moreover $V_u\cup V_v$ is not a clique in $G$. Hence, by defininition of the graph $\Gamma(G)$, it follows that the set $\{V_v \text{ : } v\in S\}$ provides a stable set of size $\alpha$ in $\Gamma(G)$.

Next we show that  $\rrank(\Gamma(G))=0$.
By assumption,  $\rrank(G)=0$ and thus  $M_G=P+N$, where  $P\succeq 0$, $N\geq 0$ and $P_{ii}=\alpha-1$ for all $i\in V$. As shown in the proof of Lemma   \ref{lem-components_critical}, 
the matrix $P$ has the block-form (\ref{eqP}) with respect to the partition $V=V_1\cup\ldots\cup V_p$.
Then the following $p\times p$  matrix 
$$
P':=\begin{pmatrix}
\alpha -1 & a_{12} & \cdots & a_{1p} \\
a_{21}& \alpha -1& \cdots & a_{2p}\\
\vdots  & \vdots  & \ddots & \vdots  \\
a_{p1}& a_{p2} & \cdots & \alpha-1 
\end{pmatrix}
$$
is positive semidefinite. We show that $P'\leq M_{\Gamma(G)}$, thus proving that $\Gamma(G)$ has $\rrank$ 0. As $P'\succeq 0$, we have $|a_{ij}|\le \alpha -1 \leq \alpha(\Gamma(G))-1$ for all $i,j\in [p]$.  It suffices to check that $a_{ij}\le -1$ if $\{i,j\}$ is not an edge of $\Gamma(G)$. Indeed, in this case,  $V_i\cup V_j$ is not an clique in $G$ and thus  there exist vertices $u\in V_i$ and $v \in V_j$ such that $\{u,v\}$ is not an edge in $G$, which implies  $a_{ij}=P_{uv} \le (M_G)_{uv}=-1$. 
\ignore{Therefore, we have  $P' \leq \alpha(A_{\Gamma(G)}+I)-J \leq \alpha(\Gamma(G))(A_{\Gamma(G)}+I)-J =M_{\Gamma(G)}$, where we use the fact that $\alpha\le \alpha(\Gamma(G))$.}
 This concludes the proof.

\ignore{
\textcolor{blue}{is a $\kzeroc$ for $M_{\Gamma(G)}$. Indeed, it is positive semidefinite and $|a_{ij}|\le \alpha -1 \leq \alpha(\Gamma(G))-1$ for all $i,j\in [p]$. It suffices to check that $a_{ij}\le -1$ if $\{i,j\}$ is not an edge of $\Gamma(G)$.  In this case,  $V_i\cup V_j$ is not an clique in $G$ and thus  there exist vertices $u\in V_i$ and $v \in V_j$ such that $\{u,v\}$ is not an edge in $G$, which implies  $a_{ij}=P_{uv} \le (M_G)_{uv}=-1$. Therefore, $P'$ is a $\kzeroc$ for $M_{\Gamma(G)}$, which shows that $\Gamma(G)$ has $\rrank$ 0.}\ignore{Therefore, we have  $P' \leq \alpha(A_{\Gamma(G)}+I)-J \leq \alpha(\Gamma(G))(A_{\Gamma(G)}+I)-J =M_{\Gamma(G)}$, where we use the fact that $\alpha\le \alpha(\Gamma(G))$. This shows that $\Gamma(G)$ has $\rrank$ 0.}
}

Finally, we prove $\alpha(\Gamma(G))\leq \alpha$.
 For this let $I\subseteq [p]$ be an $\alpha(\Gamma(G))$-stable set. For any $i\ne j\in I$ the set  $V_i\cup V_j$ is not a clique in $G$ and thus  $a_{ij}\leq -1$ (as observed above). Consider the principal submatrix $P'[I]$ of $P'$ indexed by $I$.
 Then we have
 $$0\le e^TP'[I]e \leq (\alpha-1)|I|-|I|(|I|-1),$$
 which implies $|I|\le \alpha$ and thus $\alpha(\Gamma(G))\le \alpha$, concluding the proof.
 \end{proof}

\begin{lemma}\label{lem-gamma_cliques}
Assume $\rrank (G)=0$. Then we have $\overline{\chi}(\Gamma(G))\geq \overline{\chi}(G)$. In particular, if $\Gamma(G$) is covered by $\alpha(\Gamma(G))$ cliques, then $G$ is covered by $\alpha(G)$ cliques.
\end{lemma}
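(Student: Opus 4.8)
The plan is to prove the inequality $\overline{\chi}(G)\le \overline{\chi}(\Gamma(G))$ by lifting an optimal clique cover of $\Gamma(G)$ to a clique cover of $G$ of the same size. First I would fix a minimum clique partition $C_1,\dots,C_k$ of the vertex set $[p]$ of $\Gamma(G)$, where $k=\overline{\chi}(\Gamma(G))$ and each $C_\ell$ is a clique of $\Gamma(G)$. Recall from Lemma \ref{lem-components_critical} that each component $V_i$ is a clique of $G$, and from Definition \ref{def-Gamma} that $\{i,j\}$ is an edge of $\Gamma(G)$ exactly when $V_i\cup V_j$ is a clique of $G$.

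The key step is to show that, for each $\ell\in[k]$, the set $W_\ell:=\bigcup_{i\in C_\ell}V_i$ is a clique of $G$. Indeed, take two distinct vertices $u,v\in W_\ell$, say $u\in V_i$ and $v\in V_j$ with $i,j\in C_\ell$. If $i=j$ then $u,v$ are adjacent since $V_i$ is a clique of $G$; if $i\ne j$ then $\{i,j\}$ is an edge of $\Gamma(G)$ (because $C_\ell$ is a clique of $\Gamma(G)$), so $V_i\cup V_j$ is a clique of $G$ by definition of $\Gamma(G)$, and again $u,v$ are adjacent. Since the components $V_1,\dots,V_p$ partition $V$ and the sets $C_1,\dots,C_k$ partition $[p]$, the cliques $W_1,\dots,W_k$ partition $V$. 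This exhibits a clique cover of $G$ with $k$ cliques, yielding $\overline{\chi}(G)\le k=\overline{\chi}(\Gamma(G))$.

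For the ``in particular'' statement I would combine this with Lemma \ref{lem-alpha-gamma}. Assuming $\Gamma(G)$ is covered by $\alpha(\Gamma(G))$ cliques means $\overline{\chi}(\Gamma(G))\le \alpha(\Gamma(G))$, while Lemma \ref{lem-alpha-gamma} gives $\alpha(\Gamma(G))=\alpha(G)$. Chaining these with the inequality just proved gives $\overline{\chi}(G)\le \overline{\chi}(\Gamma(G))\le \alpha(G)$; together with the sandwich bound $\alpha(G)\le \overline{\chi}(G)$ from \eqref{eqsandwich} this forces $\overline{\chi}(G)=\alpha(G)$, i.e.\ $G$ is covered by $\alpha(G)$ cliques.

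The argument is short and its only real content is the lifting step in the second paragraph; I do not expect a genuine obstacle, since the definition of $\Gamma(G)$ was tailored precisely so that cliques of $\Gamma(G)$ correspond to unions of components that form cliques of $G$. The one point to handle with care is that one should use a clique \emph{partition} (not merely a cover) of $\Gamma(G)$, so that the lifted sets $W_\ell$ remain pairwise disjoint and genuinely exhaust $V$; this is harmless, since $\overline{\chi}$ is attained by a partition into cliques and the components $V_i$ are disjoint.
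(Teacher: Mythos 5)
Your proof is correct and follows essentially the same route as the paper: you lift each clique $C$ of $\Gamma(G)$ to the clique $\bigcup_{i\in C}V_i$ of $G$ (using that cliqueness is a pairwise condition), and then combine $\overline{\chi}(G)\le\overline{\chi}(\Gamma(G))$ with $\alpha(\Gamma(G))=\alpha(G)$ from Lemma~\ref{lem-alpha-gamma} for the final claim. The paper's proof is just a terser version of your lifting argument, so there is nothing to add.
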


\begin{proof}
If $C\subseteq [p]$ is a clique of $\Gamma(G)$, then $\bigcup_{i\in C} C_i$ is a clique in $G$. Therefore, if we can cover $V(\Gamma(G))=[p]$ by $k$ cliques of $\Gamma(G)$, then we can  cover $V(G)$ by $k$ cliques of $G$.
The last claim follows from the fact that $\alpha(\Gamma(G))=\alpha(G)$ (Lemma \ref{lem-alpha-gamma}).
\end{proof}

Now we  provide a partial converse to the result of Lemma \ref{lem-alpha-gamma}.

\begin{lemma}\label{lem-necessary}
Let $G=(V,E)$ be a graph and let $G_c=(V,E_c)$ be its subgraph of critical edges. Assume that the connected components  $V_1,\ldots,V_p$ of $G_c$ are cliques in  $G$ and let $\Gamma(G)$ be as in Definition \ref{def-Gamma}.  If   $\rrank(\Gamma(G))=0$ and $\alpha(\Gamma(G))\leq \alpha(G)$, then 
we have $\rrank(G)=0$.
\end{lemma}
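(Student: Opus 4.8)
The plan is to lift a $\kzeroc$ for $M_{\Gamma(G)}$ to a $\kzeroc$ for $M_G$ using the partition $V=V_1\cup\cdots\cup V_p$. Set $\alpha:=\alpha(G)$ and $\beta:=\alpha(\Gamma(G))$, so that $\beta\le\alpha$ by assumption. Since $\rrank(\Gamma(G))=0$, there is a $\kzeroc$ $P'$ for $M_{\Gamma(G)}$, i.e.\ $P'\succeq 0$, $P'_{ii}=\beta-1$ for all $i\in[p]$, and $P'_{ij}\le (M_{\Gamma(G)})_{ij}$, which equals $\beta-1$ on the edges of $\Gamma(G)$ and $-1$ on the non-edges. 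First I would fix the diagonal by setting $C:=P'+(\alpha-\beta)I_p$. Adding the nonnegative multiple $(\alpha-\beta)I_p$ keeps $C\succeq 0$, leaves every off-diagonal entry unchanged, and raises each diagonal entry to $C_{ii}=(\beta-1)+(\alpha-\beta)=\alpha-1$. Consequently $C_{ij}=P'_{ij}\le\beta-1\le\alpha-1$ whenever $\{i,j\}$ is an edge of $\Gamma(G)$, and $C_{ij}=P'_{ij}\le-1$ whenever $\{i,j\}$ is a non-edge.

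Next I would blow $C$ up along the partition. Let $U\in\{0,1\}^{V\times[p]}$ be the assignment matrix with $U_{u,i}=1$ if and only if $u\in V_i$, and put $P:=UCU^T$, so that $P_{uv}=C_{i(u)\,i(v)}$ where $i(w)$ denotes the index of the block $V_{i(w)}$ containing $w$. Then $P\succeq 0$ since $C\succeq 0$, and in particular $P_{uu}=C_{i(u)\,i(u)}=\alpha-1=(M_G)_{uu}$. It remains to check $P\le M_G$ entrywise, which I would verify block by block. On a diagonal block $V_i$, which is a clique of $G$, every entry of $M_G$ equals $\alpha-1$, matching the constant block $(\alpha-1)J_{|V_i|}$ of $P$. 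On an off-diagonal block $\{i,j\}$ that is an edge of $\Gamma(G)$, the set $V_i\cup V_j$ is a clique, so $M_G$ is constant equal to $\alpha-1$ there while $P$ is constant equal to $C_{ij}\le\alpha-1$. On an off-diagonal block that is a non-edge of $\Gamma(G)$, every entry of $M_G$ is either $\alpha-1$ or $-1$, hence at least $-1\ge C_{ij}$. Thus $P$ is a $\kzeroc$ for $M_G$, and therefore $\rrank(G)=0$.

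The construction is essentially forced, so I do not anticipate a genuine obstacle; the one point deserving care is the positive semidefiniteness of the blown-up matrix. Here the computation $x^TPx=\sum_{i,j}C_{ij}\,y_iy_j=y^TCy$ with $y_i:=\sum_{u\in V_i}x_u$ (equivalently, the factorization $P=UCU^T$) is what reduces $P\succeq 0$ to $C\succeq 0$, and the mild shift $(\alpha-\beta)I_p$ is precisely what reconciles the differing diagonal values $\beta-1$ and $\alpha-1$ while leaving the sign constraints on the off-diagonal entries intact. Note that both hypotheses are used: $\rrank(\Gamma(G))=0$ supplies $P'$, and $\alpha(\Gamma(G))\le\alpha(G)$ guarantees $\alpha-\beta\ge 0$ so that $(\alpha-\beta)I_p\succeq 0$.
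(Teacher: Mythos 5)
Your proof is correct and follows essentially the same route as the paper: blow up a $\MK^{(0)}$-certificate of $M_{\Gamma(G)}$ along the clique partition $V=V_1\cup\cdots\cup V_p$ and verify the entrywise inequality against $M_G$ using that non-edges of $\Gamma(G)$ force entries $\le -1$. The only (harmless) difference is your diagonal shift $(\alpha-\beta)I_p$; the paper skips this step, since $P\succeq 0$ together with $P\le M_G$ entrywise already gives $M_G=P+(M_G-P)\in\MK^{(0)}$ without needing equality on the diagonal.
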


\begin{proof}
By assumption, $\rrank(\Gamma(G))=0$. Hence  there exists a matrix $P\succeq 0$ such that $M_{\Gamma(G)}\ge  P$ and $P_{ii}=\alpha_\Gamma:=\alpha(\Gamma(G))$ for each $i\in [p]$. Write    $P$ as
$$
P=\begin{pmatrix}
\alpha_\Gamma -1 & a_{1,2} & \cdots & a_{1,p} \\
a_{2,1}& \alpha_\Gamma -1& \cdots & a_{2,p}\\
\vdots  & \vdots  & \ddots & \vdots  \\
a_{p,1}& a_{p,2} & \cdots & \alpha_\Gamma-1 
\end{pmatrix}
$$
and consider the matrix indexed by $V(G)=V_1\cup\ldots\cup V_p$ with the following block-form
        \begin{equation*}
    P'=
\begin{pmatrix}
(\alpha_\Gamma -1)J_{|V_1|} & a_{12}J_{|V_1|\times |V_2|} & \cdots & a_{1p}J_{|V_1|\times|V_p| } \\
a_{21}J_{|V_2|\times |V_1|} & (\alpha_\Gamma -1)J_{|V_2|}& \cdots & a_{2p}J_{|V_2|\times |V_p|} \\
\vdots  & \vdots  & \ddots & \vdots  \\
a_{p1}J_{|V_p|\times |V_1|} & a_{p2}J_{|V_p|\times |V_2|} & \cdots & (\alpha_\Gamma-1)J_{|V_p|} 
\end{pmatrix}.
\end{equation*}
Then, $P'\succeq 0$. We claim that $P'\le M_G$ holds.
This is true for the diagonal entries and  for the positions corresponding to edges of $G$ (since we assume $\alpha_\Gamma\le \alpha(G)$). Consider now a pair $\{u,v\}\subseteq V$ of vertices that are not adjacent in $G$. Say $u\in V_i$, $v\in V_j$. Then, as 
$V_i\cup V_j$ is not a clique in $G$, the two vertices  $i\ne j\in [p]$ are not adjacent in $\Gamma(G)$ and thus $a_{ij}\le -1$ since $P\le M_{\Gamma(G)}$.
\end{proof}

So we have shown that if we apply the $\Gamma$-operator to a graph $G$ with $\rrank$ 0,  then we obtain a new graph $\Gamma(G)$ with $\rrank$ 0, with the same stability number  and  with $|V(\Gamma(G))|\le |V(G)|$, where the inequality is strict if $G$ has  critical edges.  We may iterate this construction  until obtaining a graph without critical edges.

\begin{definition}\label{def-residual}
Let $G$ be a graph with  $\rrank (G)=0$. We define the {\em residual graph} $R(G)$ of G as the graph $\Gamma^{k}(G)$, where $k$ is the smallest integer such that $\Gamma^{k}(G)$ has no critical edge, after setting $\Gamma^{i+1}(G)=\Gamma(\Gamma^{i}(G))$ for any  $i\ge 0$.
\end{definition}

As a direct application of Lemmas \ref{lem-alpha-gamma} and \ref{lem-gamma_cliques} we obtain the following result.

\begin{lemma}\label{lem-residual}
Let $G$ be a graph with $\rrank(G)=0$ and let $R(G)$ be its residual graph as defined in Definition \ref{def-residual}. Then $R(G)$ has no critical edges and we have
$\rrank(R(G))=0$, $\alpha(R(G))=\alpha(G)$, and $\overline{\chi}(R(G))\geq \overline{\chi}(G).$
\end{lemma}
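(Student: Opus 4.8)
The plan is to iterate Lemmas~\ref{lem-alpha-gamma} and~\ref{lem-gamma_cliques} along the sequence $G,\Gamma(G),\Gamma^2(G),\dots$ and to telescope the resulting identities and inequalities. Before chaining anything, I would first confirm that the residual graph $R(G)$ is genuinely well defined, since Definition~\ref{def-residual} presupposes both that $\Gamma$ can be applied repeatedly and that the iteration eventually stops.

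For well-definedness I would argue by induction that $\rrank(\Gamma^i(G))=0$ for every $i$ for which $\Gamma^i(G)$ is defined: the base case is the hypothesis, and the inductive step is precisely the first assertion of Lemma~\ref{lem-alpha-gamma}. Having $\rrank(\Gamma^i(G))=0$ then lets me invoke Proposition~\ref{lem-components_critical} to conclude that the connected components of the critical-edge subgraph of $\Gamma^i(G)$ are cliques, which is exactly the standing assumption in Definition~\ref{def-Gamma} needed for $\Gamma^{i+1}(G)$ to exist. For termination I would use that $|V(\Gamma(H))|\le|V(H)|$, with the inequality strict whenever $H$ has a critical edge: a critical edge lies in a component of $H_c$ with at least two vertices, and $\Gamma$ collapses each component to a single vertex. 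A strictly decreasing sequence of nonnegative integers must stabilize, so a smallest $k$ with $\Gamma^k(G)$ critical-edge-free exists and $R(G)=\Gamma^k(G)$ is well defined.

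Once the sequence is in place, the three conclusions follow by telescoping. By the very choice of $k$, the graph $R(G)$ has no critical edges. Applying the two equalities of Lemma~\ref{lem-alpha-gamma} successively to $G,\Gamma(G),\dots,\Gamma^{k-1}(G)$---all of which have $\rrank$ $0$ by the above---gives $\rrank(\Gamma^{i+1}(G))=0$ and $\alpha(\Gamma^{i+1}(G))=\alpha(\Gamma^i(G))$ for $0\le i<k$, hence $\rrank(R(G))=0$ and $\alpha(R(G))=\alpha(G)$. Similarly, applying Lemma~\ref{lem-gamma_cliques} at each step yields $\overline{\chi}(\Gamma^{i+1}(G))\ge\overline{\chi}(\Gamma^i(G))$ for $0\le i<k$, and chaining these inequalities gives $\overline{\chi}(R(G))\ge\overline{\chi}(G)$.

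I expect the only real obstacle to be the bookkeeping that guarantees well-definedness and termination of the iteration; the substantive content is carried entirely by Lemmas~\ref{lem-alpha-gamma}, \ref{lem-gamma_cliques} and Proposition~\ref{lem-components_critical}, so once the sequence is set up correctly the three claims reduce to a routine telescoping argument.
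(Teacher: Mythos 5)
Your proof is correct and follows essentially the same route as the paper: the paper derives Lemma~\ref{lem-residual} as a direct iteration of Lemmas~\ref{lem-alpha-gamma} and~\ref{lem-gamma_cliques}, with the well-definedness and termination of the $\Gamma$-iteration (via Proposition~\ref{lem-components_critical} and the strict decrease in vertex count) noted in the discussion preceding Definition~\ref{def-residual}, exactly as you spell out. Your explicit bookkeeping of these points is a faithful elaboration of what the paper leaves implicit, not a different argument.
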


Based on the above  results,  we now present an algorithmic procedure that permits to reduce the task of checking whether a graph has $\rrank$ 0 to the same task restricted to the class of graphs with no critical edges. 

\medskip\noindent
\textbf{Algorithm}: REDUCE-TO-ACRITICAL

\smallskip\noindent
\textbf{Input:} A graph $G=(V,E)$.

\smallskip\noindent
\textbf{Output:} 
Either: $\rrank (G)\ge 1$. 
Or: the graph $R(G)$, which is acritical with $\alpha (R(G))= \alpha(G)$ and such that 
$\rrank (G)=0$ $\Longleftrightarrow$ $\rrank (R(G))=0$.
\begin{enumerate}
    \item Compute the connected components $V_1, V_2, \dots, V_p$ of the graph $G_c=(V,E_c)$, where $E_c$ is the set of critical edges of $G$. 
    \item If $V_i$ is a clique in $G$ for all $i\in [p]$, go to Step 3. Otherwise \textbf{return}:  $\rrank (G)\ge 1$.
    \item Compute the graph $\Gamma(G),$ with  set of vertices $\{1,2,\dots,p\}$ and where $\{i,j\}$ is an edge if $V_i\cup V_j$ is a clique in $G$. If 
    $\alpha(\Gamma(G))= \alpha(G)$ then go to Step 4. Otherwise  \textbf{return}: $\rrank (G) \ge 1$.
    \item If $\Gamma(G)$ is acritical then  \textbf{return}: $\Gamma(G)$.
    Otherwise  set $G=\Gamma(G)$ and go to Step 1. 
\end{enumerate}

We  verify the correctness of the output of the above algorithm. For this let us  assume the algorithm does not output $\rrank (G)\ge 1$.
In view of Definition \ref{def-residual}  the returned graph at step 4 is the residual graph $R(G)$, which is acritical by construction. In addition, in view of Step 3, we have $\alpha(R(G))= \alpha (G)$. Remains to check that $\rrank(G)=0$ if and only if $\rrank(R(G))=0$.  Indeed, the `only if' part follows using iteratively  Lemma \ref{lem-alpha-gamma}, and the `if part' folllows using 
Lemma \ref{lem-necessary}.


Observe that, if we apply the above algorithm to a class of graphs with a fixed stability number, then the algorithm runs in polynomial time, so we have shown the following theorem.

\begin{theorem}
For any fixed integer $\alpha$, the problem of deciding whether a graph with stability number $\alpha$ has $\rrank$ 0 is reducible in polynomial time to the problem of deciding whether a graph with no critical edges and stability number $\alpha$ has $\rrank$ 0.
\end{theorem}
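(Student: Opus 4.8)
The plan is to note that correctness of REDUCE-TO-ACRITICAL has already been established just above, so the only thing left to prove is that the algorithm runs in time polynomial in $n=|V|$ whenever the input $G$ satisfies $\alpha(G)=\alpha$ for a fixed constant $\alpha$. The single enabling fact I would isolate first is that, for any fixed integer $k$, deciding whether a graph $H$ on $N$ vertices satisfies $\alpha(H)\ge k$ can be done in time $O(N^k)$ by brute-force inspection of all $k$-subsets of $V(H)$. Consequently, for fixed $\alpha$, one can both compute $\alpha(H)$ whenever its value is at most $\alpha+1$ (which is all we ever need) and test criticality of any single edge in polynomial time.

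Next I would bound the cost of one pass through Steps 1--4. In Step 1, identifying the critical edge set $E_c$ amounts to testing, for each of the $O(n^2)$ edges $e$, whether $\alpha(G\setminus e)\ge\alpha+1$; each such test is polynomial by the fact above, and the connected components $V_1,\dots,V_p$ of $G_c$ are then found by a standard linear-time search. Step 2 checks for each component whether it induces a clique, which is polynomial. In Step 3 one builds $\Gamma(G)$ by deciding, for each of the $O(p^2)$ pairs $\{i,j\}$, whether $V_i\cup V_j$ is a clique of $G$, and then decides whether $\alpha(\Gamma(G))=\alpha$; since the argument in the proof of Lemma~\ref{lem-alpha-gamma} shows $\alpha(\Gamma(G))\ge\alpha$ as soon as the components are cliques (which Step~2 has verified), it suffices to test whether $\alpha(\Gamma(G))\ge\alpha+1$, again polynomial. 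Step 4 reuses the criticality test of Step 1 on $\Gamma(G)$. Hence a single pass costs a polynomial in $n$.

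Finally I would control the number of passes. Each time the algorithm returns to Step 1 it replaces the current graph $G$ by $\Gamma(G)$, whose vertex set $[p]$ has strictly fewer elements than $V(G)$ precisely because $G$ still has a critical edge, so at least two vertices are merged into one component. By Lemma~\ref{lem-alpha-gamma} the stability number is preserved at the value $\alpha$ along the whole chain $G,\Gamma(G),\Gamma^2(G),\dots$, so the fixed-$\alpha$ subroutines of the previous paragraph remain valid and polynomial at every pass. Therefore there are at most $n$ passes, each polynomial, and the total running time is polynomial; together with the already-verified correctness (the output is the acritical residual graph $R(G)$ of Definition~\ref{def-residual}, with $\alpha(R(G))=\alpha(G)$ and $\rrank(G)=0 \Leftrightarrow \rrank(R(G))=0$), this yields the claimed polynomial reduction. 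I expect the only real subtlety to be the invariance of the stability number across iterations, which is exactly what guarantees that the $\alpha$-dependent brute-force steps stay polynomial throughout; everything else is routine bookkeeping.
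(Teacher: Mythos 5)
Your proposal is correct and takes essentially the same route as the paper, which establishes correctness of REDUCE-TO-ACRITICAL via the preceding lemmas and then simply observes that the algorithm runs in polynomial time when $\alpha$ is fixed---exactly the runtime analysis you spell out (brute-force tests for stable sets of size at most $\alpha+1$, polynomial cost per pass, and at most $n$ passes since each loop strictly shrinks the vertex set). One cosmetic slip: preservation of the stability number along the chain $G,\Gamma(G),\Gamma^2(G),\dots$ is guaranteed by the explicit test in Step 3 rather than by Lemma \ref{lem-alpha-gamma} (whose hypothesis $\rrank(G)=0$ is precisely what is unknown), but since the algorithm performs that test before every new pass---as you yourself note when describing Step 3---your argument stands unchanged.
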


\begin{example}
	We illustrate in Figure \ref{fig-GammaGGc} the construction of the residual graph $R(G)$ when $G$ is  the cycle $C_5$ with a pendant edge. We show the subgraph $G_c$ (consisting of the critical edges of $G$) and the graph $\Gamma(G)$, which is critical, so that $\Gamma(G)=\Gamma(G)_c$. Finally, as $\Gamma^2(G)=\overline{K_3}$ has no critical edge, we have $R(G)=\Gamma^2(G)=\overline{K_3}$. 
		Clearly, $\rrank(R(G))=0$, which shows again $\rrank(G)=0$.
	\begin{figure}[H]
	\centering
		\definecolor{ududff}{rgb}{0.30196078431372547,0.30196078431372547,1.}
		\definecolor{ffffff}{rgb}{0.,0.,0.}
		\begin{tikzpicture}[line cap=round,line join=round,>=triangle 45,x=.6cm,y=.6cm]
		\clip(-1.44,1.34) rectangle (3.42,6.28);
		\draw [line width=2.pt] (-0.02,3.18)-- (0.98,3.98);
		\draw [line width=2.pt] (0.98,3.98)-- (1.98,3.22);
		\draw [line width=2.pt] (1.74,1.96)-- (1.98,3.22);
		\draw [line width=2.pt] (0.28,1.94)-- (-0.02,3.18);
		\draw [line width=2.pt] (0.28,1.94)-- (1.74,1.96);
		\draw [line width=2.pt] (0.98,3.98)-- (0.98,5.28);
		\begin{scriptsize}
		\draw [fill=ffffff] (0.98,3.98) circle (2.5pt);
		\draw [fill=ffffff] (-0.02,3.18) circle (2.5pt);
		\draw [fill=ffffff] (0.28,1.94) circle (2.5pt);
		\draw [fill=ffffff] (1.74,1.96) circle (2.5pt);
		\draw [fill=ffffff] (1.98,3.22) circle (2.5pt);
		\draw [fill=ffffff] (0.98,5.28) circle (2.5pt);
		\end{scriptsize}
		\end{tikzpicture}
		\definecolor{ududff}{rgb}{0.30196078431372547,0.30196078431372547,1.}
		\begin{tikzpicture}[line cap=round,line join=round,>=triangle 45,x=.6cm,y=.6cm]
		\clip(-1.44,1.34) rectangle (3.42,6.28);
		\draw [line width=2.pt] (1.74,1.96)-- (1.98,3.22);
		\draw [line width=2.pt] (0.28,1.94)-- (-0.02,3.18);
		\begin{scriptsize}
		\draw [fill=ffffff] (0.98,3.98) circle (2.5pt);
		\draw [fill=ffffff] (-0.02,3.18) circle (2.5pt);
		\draw [fill=ffffff] (0.28,1.94) circle (2.5pt);
		\draw [fill=ffffff] (1.74,1.96) circle (2.5pt);
		\draw [fill=ffffff] (1.98,3.22) circle (2.5pt);
		\draw [fill=ffffff] (0.98,5.28) circle (2.5pt);
		\end{scriptsize}
		\end{tikzpicture}
		\definecolor{ududff}{rgb}{0.30196078431372547,0.30196078431372547,1.}
		\begin{tikzpicture}[line cap=round,line join=round,>=triangle 45,x=.6cm,y=.6cm]
		\clip(-1.44,1.34) rectangle (3.42,6.28);
		\draw [line width=2.pt] (0.98,3.98)-- (0.98,5.28);
		\begin{scriptsize}
		\draw [fill=ffffff] (0.98,3.98) circle (2.5pt);
		\draw [fill=ffffff] (-0.02,3.18) circle (2.5pt);
		\draw [fill=ffffff] (1.98,3.22) circle (2.5pt);
		\draw [fill=ffffff] (0.98,5.28) circle (2.5pt);
		\end{scriptsize}
		\end{tikzpicture}
		\definecolor{ududff}{rgb}{0.30196078431372547,0.30196078431372547,1.}
		\begin{tikzpicture}[line cap=round,line join=round,>=triangle 45,x=.6cm,y=.6cm]
		\clip(-1.44,1.34) rectangle (3.42,6.28);
		\begin{scriptsize}
		\draw [fill=ffffff] (-0.02,3.18) circle (2.5pt);
		\draw [fill=ffffff] (1.98,3.22) circle (2.5pt);
		\draw [fill=ffffff] (0.88,4.64) circle (2.5pt);
		\end{scriptsize}
		\end{tikzpicture}
		\caption{From right to left, the graphs  $G$,   $G_c$ (consisting of the critical edges of $G$),  $\Gamma(G)$, $R(G)=\Gamma^2(G)$} 
		\label{fig-GammaGGc}
	\end{figure}
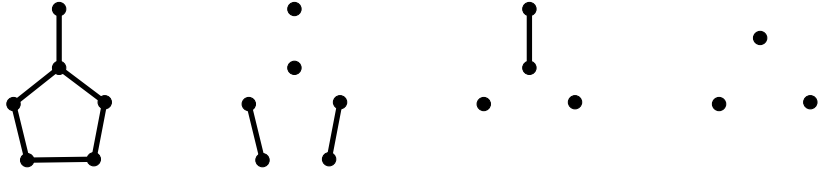
\end{example}
\begin{remark}The results from this section can be 
adapted to the Lov\'asz parameter $\vartheta(G)$ instead of $\vartheta^{(0)}(G)$.
Recall form \cite{Lo79} that $\vartheta(G)=\alpha(G)$ if and only if there exists a positive semidefinite matrix $P$ such that 
$P_{ii}=\alpha(G)-1$ for $i\in V$  and $P_{ij}=-1$ for $\{i,j\}\in E$;   call such a $P$ a Lov\'asz-exactness certificate  for $G$. Then one can restate all results from this section by replacing the notion `$\rrank(G)=0$' by `$\vartheta(G)=\alpha(G)$'  and the notion of `$\kzeroc$' by `Lov\'asz-exactness certificate'. As a consequence,  we obtain the following analogous result: For any fixed integer $\alpha$ and for graphs with $\alpha(G)=\alpha$, the problem of deciding whether $\vartheta(G)=\alpha$ is reducible in polynomial time to the same problem for graphs with no critical edges.  
\end{remark}
\subsection{Acritical graphs with large stability number and $\rrank$ 0}\label{sec-large-alpha}

Motivated by the reduction to acritical graphs from  the previous section, we now consider   acritical graphs with large stability number. We show that if $G=(V,E)$ is acritical with  $\alpha(G)\ge |V|-4$,  then $V$ can be covered by $\alpha(G)$ cliques and thus $G$ has $\rrank$ 0. 
\begin{proposition}\label{prop-alpha-large}
Let $G=(V,E)$ be a graph and assume $\alpha(G)\ge |V|-4$. \begin{description}
\item[(i)] If $\alpha(G)\ge |V|-2$ then $\overline \chi(G)=\alpha(G)$ and thus $\rrank (G)=0$.
\item[(ii)] If $\alpha(G)=|V|-3$ then $\overline \chi(G)=\alpha(G)$ and thus $\rrank(G)=0$, unless $G$ is the disjoint union of $C_5$ and isolated nodes in which case $\rrank (G)\ge 1$ and $G$ is critical.
\item[(iii)] If $\alpha(G)=|V|-4$ and $G$ is acritical then $\overline \chi(G)=\alpha(G)$ and thus $\rrank(G)=0$. 
\end{description}
\end{proposition}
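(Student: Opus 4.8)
The plan is to prove in all three parts the sharper statement $\overline\chi(G)=\alpha(G)$; by the sandwich inequality (\ref{eqsandwich}) this forces $\vartheta^{(0)}(G)=\alpha(G)$, i.e. $\rrank(G)=0$, and the clique partition realises the block certificate exhibited at the start of Section~\ref{sec-rank-0}. Write $\alpha=\alpha(G)$, $k=|V|-\alpha$, fix a maximum stable set $S$ and set $T=V\setminus S$, so $|T|=k\le 4$ and, by maximality, every $t\in T$ has a neighbour in $S$. Covering $V$ by $\alpha$ cliques is equivalent to finding a map $\phi\colon T\to S$ with $\phi(t)\in N_S(t)$ for all $t$ and with $\{s\}\cup\phi^{-1}(s)$ a clique of $G$ for each $s$ (the cliques being these sets together with the singletons $\{s\}$, $s\notin\phi(T)$); equivalently, $\phi$ is a proper colouring of the complement of $G[T]$ whose colour list at $t$ is $N_S(t)$. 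So the whole task is a list-colouring problem on the $k\le 4$ vertices of $T$.

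The backbone is a counting argument bounding the possible obstructions. Since $\overline\chi(G)=\chi(\overline G)$ and $\alpha(G)=\omega(\overline G)$, a failure $\overline\chi(G)>\alpha(G)$ means $\chi(\overline G)>\omega(\overline G)$, so $\overline G$ is imperfect and, by the strong perfect graph theorem \cite{SPGT}, has an induced odd hole $C_{2m+1}$ or odd antihole $\overline{C_{2m+1}}$ ($m\ge 2$) on a set $W$. As $S$ is a clique of $\overline G$, the set $W\cap S$ is a clique of $\overline G[W]$, whence $|W\cap S|\le\omega(C_{2m+1})=2$ for a hole and $|W\cap S|\le\omega(\overline{C_{2m+1}})=m$ for an antihole; combined with $|W\setminus S|\le k$ and $|W|=2m+1$ this gives $2m+1\le 2+k$ (hole) or $m+1\le k$ (antihole). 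For $k\le 2$ neither inequality has a solution with $m\ge 2$, so $\overline G$ is perfect and $\overline\chi(G)=\alpha(G)$, proving (i). For $k=3$ only $m=2$ survives, so $G$ contains an induced $C_5$; for $k=4$ the survivors are $m=2$ and the antihole $\overline{C_7}$ ($m=3$), so $G$ contains an induced $C_5$ or an induced $C_7$.

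For (ii) ($k=3$), suppose $\overline\chi(G)>\alpha(G)$. The induced $C_5$ on $W$ satisfies $|W\cap S|\le 2$ and $|W\cap T|\le 3$ with $|W|=5$, forcing $|W\cap S|=2$ and $W\cap T=T$; so $T$ lies inside the $C_5$ and $S\setminus W$ has $\alpha-2$ vertices. I would then observe that any edge between $T$ and $S\setminus W$ can be used to reassign the corresponding vertex of $T$ to that external neighbour and complete a valid $\phi$ (the same recolouring that gives uniqueness of the $\MK^{(1)}$-certificate of $M_{C_5}$), contradicting the failure. Hence no such edge exists, $S\setminus W$ consists of isolated nodes and $W$ is a connected component, so $G=C_5\oplus\overline{K_{|V|-5}}$. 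This graph is critical and has $\overline\chi=\alpha+1>\alpha$, so $\rrank(G)\ge 1$ by Corollary~\ref{cor-critical_rank_0}; this is precisely the stated exception.

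For (iii) ($k=4$, $G$ acritical) the crux is to rule out the induced $C_5$ and $C_7$ using acriticality together with maximality of $S$. The tool is that an edge $\{u,v\}$ is critical iff $\alpha\bigl(G[V\setminus(N[u]\cup N[v])]\bigr)\ge\alpha-1$. Applying this to the edges of $W$ and evaluating the residual subgraphs through the \emph{external} neighbourhoods $D_t:=N(t)\cap(S\setminus W)$, acriticality forces each relevant $D_t$ to be non-empty and forces lower bounds on the unions $|D_t\cup D_{t'}|$, while maximality of $S$ (which forbids enlarging $S$ by swapping in non-adjacent vertices of $T$) supplies the complementary upper bounds; feeding both back into the list-colouring reformulation of the first paragraph then builds a valid $\phi$, contradicting $\overline\chi(G)>\alpha(G)$. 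I have carried this out for the $C_5$ with $W\cap T=T$ (so $|W\cap S|=1$): there the two $C_5$-edges meeting $S$ are critical unless all four lists $N_S(t)$ meet $S\setminus W$, after which the four-vertex colouring goes through using only the single $S$-vertex plus these external neighbours. The main obstacle is to complete the same neighbourhood analysis for the $C_5$ with $|W\cap S|=2$ (one $T$-vertex now lies outside $W$) and for the induced $C_7$ (where $W\cap S$ is a maximum stable set $\{w_1,w_3,w_5\}$ of the cycle and $T$ occupies the remaining four vertices); both are finite but delicate, and organising them uniformly — rather than enumerating the graphs $G[T]$ on four vertices one by one — is where the real work lies.
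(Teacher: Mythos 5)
Your parts (i) and (ii) are correct and, modulo phrasing, are the paper's own argument: you run the strong perfect graph theorem count in $\overline G$ rather than in $G$, but the inequalities $2m+1\le 2+k$ (hole) and $m+1\le k$ (antihole) are exactly the paper's counting, and your observation in (ii) that an edge from $T$ to $S\setminus W$ lets one cover $\{u,w,u_0\}\cup T$ by three edges (hence $V$ by $\alpha(G)$ cliques) is precisely how the paper forces $G=C_5\oplus\overline{K_{|V|-5}}$; the conclusion $\rrank(G)\ge 1$ via Corollary \ref{cor-critical_rank_0} is also the intended one.

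The genuine gap is part (iii), and you name it yourself: of the three configurations your reduction leaves open (induced $C_5$ with $|W\cap S|=1$, induced $C_5$ with $|W\cap S|=2$, induced $C_7$), you have only carried out the first, and the remaining two are not corollaries of anything you wrote — they are where the bulk of the work sits (they correspond to the paper's Cases 3a and 3b, which occupy most of its proof). Declaring them ``finite but delicate'' does not discharge them; as your own analysis of the first subcase shows, one must track when distinct $T$-vertices are forced to share an external neighbour (the preimages $\phi^{-1}(s')$ must be cliques, and non-adjacent cycle vertices such as $t_2,t_4$ cannot share one), so each remaining subcase needs its own argument combining acriticality, maximality of $S$, and the cycle structure. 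Note also that the paper organizes (iii) differently and more economically: acriticality first gives $|N_S(t)|\ge 2$ for every $t\in T$; if the bipartite graph between $T$ and $S$ has a matching saturating $T$, the clique cover is immediate; otherwise Hall's theorem yields $W\subseteq T$ with $|N_S(W)|\le |W|-1$, which confines any odd hole to the small set $W\cup N_S(W)$ before the strong perfect graph theorem is invoked. Adopting that ordering (Hall first, SPGT only in the residual case) would shrink your two missing subcases to the paper's analysis of $|N_S(W)|\in\{2,3\}$ and would let you dispatch $C_7$ at once: there all $S$-neighbours of $T$ lie on the hole, so $G$ is $C_7$ plus isolated nodes, contradicting acriticality since $C_7$ is critical.
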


\begin{proof}
Throughout we set $\alpha=\alpha(G)$. We will use the fact that perfect graphs satisfy $\chi(\overline G)=\alpha(G)$ and their characterization via the strong perfect graph theorem. We distinguish several cases depending on the value of $n=|V|$. 

\noindent \textbf{ Case 1:} $\alpha(G)\geq |V|-2$.\\
We claim that $G$ is perfect. For, if not, then $G$ contains  an induced subgraph  $H=C_{2k+1}$ or $H=\overline{C_{2k+1}}$ ($k\ge 2$); as  every stable set of $G$ should exclude at least 3 vertices of $H$ this implies $\alpha(G)\leq |V|-3$, yielding a contradiction.

 \ignore{{\bf Case 1:} 
$\alpha(G)=|V|$: Then $G$ consists of isolated nodes and the result is clear. 

\smallskip\noindent
{\bf Case 2:}   $\alpha(G)=|V|-1$: Let $S$ be an $\alpha$-stable set and set $V\setminus S=\{x\}$. 
Then $x$ is adjacent to some node $y\in S$ and thus $V$ is covered by the  clique $\{x,y\}$ and the $\alpha-1$ singletons $\{v\}$ for $v\in S\setminus \{y\}$.

\smallskip\noindent
{\bf Case 3:}
$\alpha(G)=|V|-2$: Let $S$ be an $\alpha$-stable set and set $V\setminus{S}=\{x,y\}$. Then $x$ and $y$ are adjacent to some node in $S$.
We have two cases:\\
$\bullet$ There exist $u\ne w\in S$ such that $\{x,u\}\in E$ and  $\{y,w\}\in E$: Then $G$ is covered by the two edges $\{x,u\}$, $\{y,w\}$ and the $\alpha -2$ singletons $\{v\}$ for $v\in S\setminus \{w,z\}$.\\
$\bullet$  The two nodes $x,y$ are adjacent to a single node $u$  in $S$: $N_S(x)=N_S(y)=\{u\}$. As the set $V\setminus \{u\}\cup\{x,y\}$ is not stable since its size is $\alpha+1$, $x,y$ must be adjacent  and thus $\{x,y,z\}$ is a clique.  Hence $G$ is covered by  the clique $\{x,y,z\}$ and the $\alpha -1$ singletons $\{v\}$ for $v\in S\setminus \{z\}$.
}

\smallskip\noindent
\textbf{Case 2:} $\alpha(G)=|V|-3$.
\\
Let $S$ be an $\alpha$-stable set and set $V\setminus{S}=\{x,y,z\}$. Assume $G$ is not covered by $\alpha$ cliques, we show that $G$ is the disjoint union of $C_5$ and $n-5$ isolated vertices. As $\overline \chi(G)\ne \alpha(G)$ the graph $G$  is not perfect and thus it contains an induced subgraph $H$ which is an odd cycle $C_{2k+1}$ or its complement $\overline {C_{2k+1}}$ with $k\ge 2$. As $|V(H)\cap S|\ge 2k-2$ it follows that $\alpha(H)\ge 2k-2$. 
 If $H=C_{2k+1}$ then $\alpha(H)=k\ge 2k-2$ implies $k\le 2$ and, if $H=\overline {C_{2k+1}}$, then $\alpha(H)=2\ge 2k-2$ again implies $k\le 2$. Hence $k=2$, $H=C_5$, and $H$ contains two nodes of $S$ and the three nodes $x,y,z$. Say $H$ is the cycle $(x,u,y,w,z)$ with $u,w\in S$. If there exists a node $u_0\in S\setminus \{u,w\}$ that is adjacent to a node in $\{x,y,z\}$ then one can cover the nodes in $\{u,w,u_0,x,y,z\}$ with three edges and thus $V$ with $\alpha$ cliques, which we had excluded. Therefore, one must have $N_S(\{x,y,z\})=\{u,w\}$, which implies that  $G$ is $C_5$ together with $n-5$ isolated nodes. 
 
\smallskip\noindent
  {\bf Case 3}: $\alpha(G)=|V|-4$ and $G$ acritical.
 \\ Let $S$ be an $\alpha$-stable set and set $T=\{x,y,z,w\}=V\setminus S$. Note that every vertex of $T$ has at least two neighbors in $S$, otherwise the edge between that vertex and $S$ would be a critical edge of $G$. In addition, if there is a matching between $T$ and $S$ that covers all the nodes in $T$, then $V$ is covered by $\alpha$ cliques (the four edges of the matching and the remaining $\alpha-4$ vertices in $S$) and we are done. Hence we may now assume that there is no matching between $S$ and $T$ that covers $T$. By Hall's theorem (see \cite{Hall}), there exists $W\subseteq T$ such that $|N_S(W)|\leq |W|-1$. Then $|W|\ge 3$ since $|N_S(W)|\ge 2$. We distinguish two cases.
  
{\bf Case 3a:} First assume $|W|=3$, say $W=\{x,y,z\}$. Then $|N_S(W)|=2$, say $N_S(W)=\{u,v\}$.
     So   $N_S(x)=N_S(y)=N_S(z)=\{u,v\}$.
Since $(S\setminus \{u,v\})\cup \{x,y,z\}$ is not stable, there is an edge between the vertices $x,y,z$, say $\{x,y\}\in E$. If $w$ has a neighbor in $S$ different from $u$ and $v$, say $\{w,t\}\in E$ for $t\in S\setminus\{u,v\}$, then $V$ is covered by the cliques $\{x,y,u\}$, $\{z,v\}$, $\{w,t\}$ and the $\alpha -3$ singleton nodes in $S\setminus \{u,v,t\}$, showing $\overline\chi(G)=\alpha(G)$. So we now  assume that $N_S(w)=\{u,v\}$. Note that $\overline \chi(G)=\alpha(G)$ holds in each of the following two cases: (i) when $T$ contains a clique of size 3 (say, $\{x,y,z\}$) and (ii) when $T$ contains two disjoint edges (say, $\{x,y\},\{z,w\}\in E$) since then 
$G$ is covered by the cliques $\{x,y,z,u\}$, $\{v,w\}$ in case (i), or $\{x,y,u\},\{z,w,v\}$ in case (ii), and the $\alpha-2$ singletons in $S\setminus \{u,v\}$. So we may now assume that $T$ does not contain a triangle nor two disjoint edges. But then we reach a contradiction with the fact that each of the two sets $S\setminus\{u,v\}\cup\{x,z,w\}$ and $S\setminus\{u,v\}\cup\{y,z,w\}$ is not a stable set and thus contains an edge.

{\bf Case 3b:} Assume now $W=T=\{x,y,z,w\}$ and $|N_S(W)|=2,3$. If $|N_S(W)|=2$ then we are in the situation $N_S(x)=N_S(y)=N_S(z)=N_S(w)=\{u,v\}\subseteq S$, already considered in the previous case. So we now assume
        $|N_S(W)|=3$, say $N_S(W)=\{u,v,t\}\subseteq S$. 
        We may also assume that $G$ is not perfect (else we are done), so $G$ contains an induced subgraph $H$ which is $C_{2k+1}$ or $\overline {C_{2k+1}}$ with $k\ge 2$. As $V(H)\subseteq W\cup N_S(W)$ we have $2k+1\le 7$, so $H$ is $C_5$, $C_7$ or $\overline {C_7}$. Note $H$ cannot be $\overline {C_7}$ since $\alpha(\overline {C_7})=2$ while the set $\{u,v,t\}$ is stable. If $H=C_7$ then $G$ is $C_7$ together with $n-7$ isolated nodes, but then we contradict the assumption that $G$ is acritical. So assume now $H=C_5$. Then $|V(H)\cap S| =1$ or $2$. We distinguish these two cases:\\
$\bullet$   Assume $|V(H)\cap S|=1$, say $V(H)\cap S=\{u\}$ and $H$ is the 5-cycle
                $(x,y,z,w,u)$.
As $H$ is an induced subgraph of $G$ it follows that $\{y,u\},\{z,u\}\not\in E$. As   each of  the vertices $y$ and $z$ has at least two neighbors is $S$,  they are both  are adjacent to both $v$ and $t$ and thus $\{y,z,v\}$ and $\{y,z,t\}$ are cliques. Node $w$ is adjacent to at least two nodes in $S$ and thus $w$ is adjacent to $v$ or $t$. If $w$ is adjacent to $v$ (resp., to $t$), then $G$ is covered by the cliques $\{x,u\}$, $\{y,z,t\}$, $\{w,v\}$  (resp., $\{y,z,v\}$, $\{w,t\}$)  and the $\alpha-3$ singletons in $S\setminus \{u,v,t\}$.\\
$\bullet$ Assume $|V(H)\cap S|=2$, say $V(H)\cap S=\{u,v\}$ and $H$ is the 5-cycle $(x,y,v,z,u)$. As $x,y$ must have at least two neighbors in $S$ this implies $\{x,t\},\{y,t\}\in E$ and thus $\{x,y,t\}$ is a clique. As $w$ has at least two neighbors in $S$ it follows that $w$ is adjacent to $u$ or $v$. Say, $w$ is adjacent to $u$. Then $G$ is covered by the cliques $\{x,y,t\}$, $\{w,u\}$,
                $\{z,v\}$ and the $\alpha-3$ singletons in $S\setminus \{u,v,t\}$.               
This concludes the proof.\end{proof}

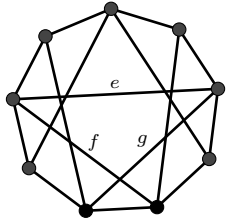
\begin{figure}[H]\label{graph_G_9}
	\centering
	\definecolor{uuuuuu}{rgb}{0.26666666666666666,0.26666666666666666,0.26666666666666666}
	\definecolor{uuuuuu}{rgb}{0.26666666666666666,0.26666666666666666,0.26666666666666666}
	\definecolor{ffffff}{rgb}{1.,1.,1.}
	\begin{tikzpicture}[line cap=round,line join=round,>=triangle 45,x=.4cm,y=.4cm]
	\fill[line width=2.pt,dash pattern=on 4pt off 4pt,color=ffffff,fill=ffffff,fill opacity=0.10000000149011612] (9.00298539751217,3.5542158406275184) -- (11.34298539751217,3.674215840627518) -- (13.058394881248194,5.270264180468297) -- (13.346554686627346,7.595552103836895) -- (12.072631638173213,9.562051548692482) -- (9.832708488335108,10.249615569580238) -- (7.674870172895163,9.336525319703865) -- (6.608793221349295,7.25002587484828) -- (7.133306887451376,4.966413514119744) -- cycle;
	\draw [line width=1.pt,dash pattern=on 4pt off 4pt,color=ffffff] (9.00298539751217,3.5542158406275184)-- (11.34298539751217,3.674215840627518);
	\draw [line width=1.pt,dash pattern=on 4pt off 4pt,color=ffffff] (11.34298539751217,3.674215840627518)-- (13.058394881248194,5.270264180468297);
	\draw [line width=1.pt,dash pattern=on 4pt off 4pt,color=ffffff] (13.058394881248194,5.270264180468297)-- (13.346554686627346,7.595552103836895);
	\draw [line width=1.pt,dash pattern=on 4pt off 4pt,color=ffffff] (13.346554686627346,7.595552103836895)-- (12.072631638173213,9.562051548692482);
	\draw [line width=1.pt,dash pattern=on 4pt off 4pt,color=ffffff] (12.072631638173213,9.562051548692482)-- (9.832708488335108,10.249615569580238);
	\draw [line width=1.pt,dash pattern=on 4pt off 4pt,color=ffffff] (9.832708488335108,10.249615569580238)-- (7.674870172895163,9.336525319703865);
	\draw [line width=1.pt,dash pattern=on 4pt off 4pt,color=ffffff] (7.674870172895163,9.336525319703865)-- (6.608793221349295,7.25002587484828);
	\draw [line width=1.pt,dash pattern=on 4pt off 4pt,color=ffffff] (6.608793221349295,7.25002587484828)-- (7.133306887451376,4.966413514119744);
	\draw [line width=1.pt,dash pattern=on 4pt off 4pt,color=ffffff] (7.133306887451376,4.966413514119744)-- (9.00298539751217,3.5542158406275184);
	\draw [line width=1.pt] (7.674870172895163,9.336525319703865)-- (9.832708488335108,10.249615569580238);
	\draw [line width=1.pt] (9.832708488335108,10.249615569580238)-- (12.072631638173213,9.562051548692482);
	\draw [line width=1.pt] (12.072631638173213,9.562051548692482)-- (13.346554686627346,7.595552103836895);
	\draw [line width=1.pt] (13.346554686627346,7.595552103836895)-- (13.058394881248194,5.270264180468297);
	\draw [line width=1.pt] (13.058394881248194,5.270264180468297)-- (11.34298539751217,3.674215840627518);
	\draw [line width=1.pt] (11.34298539751217,3.674215840627518)-- (9.00298539751217,3.5542158406275184);
	\draw [line width=1.pt] (9.00298539751217,3.5542158406275184)-- (7.133306887451376,4.966413514119744);
	\draw [line width=1.pt] (7.133306887451376,4.966413514119744)-- (6.608793221349295,7.25002587484828);
	\draw [line width=1.pt] (6.608793221349295,7.25002587484828)-- (7.674870172895163,9.336525319703865);
	\draw [line width=1.pt] (9.832708488335108,10.249615569580238)-- (7.133306887451376,4.966413514119744);
	\draw [line width=1.pt] (9.832708488335108,10.249615569580238)-- (13.058394881248194,5.270264180468297);
	\draw [line width=1.pt] (7.674870172895163,9.336525319703865)-- (9.00298539751217,3.5542158406275184);
	\draw [line width=1.pt] (11.34298539751217,3.674215840627518)-- (12.072631638173213,9.562051548692482);
	\draw [line width=1.pt,] (6.608793221349295,7.25002587484828)-- (11.34298539751217,3.674215840627518);
	\draw [line width=1.pt,] (6.608793221349295,7.25002587484828)-- (13.346554686627346,7.595552103836895);
	\draw [line width=1.pt] (9.00298539751217,3.5542158406275184)-- (13.346554686627346,7.595552103836895);
	\begin{scriptsize}
	\draw [fill=black] (9.00298539751217,3.5542158406275184) circle (2.5pt);
	\draw [fill=black] (11.34298539751217,3.674215840627518) circle (2.5pt);
	\draw[color=ffffff] (11.055409935273346,10.279677925127816) node {};
	\draw [fill=uuuuuu] (13.058394881248194,5.270264180468297) circle (2.5pt);
	\draw [fill=uuuuuu] (13.346554686627346,7.595552103836895) circle (2.5pt);
	\draw [fill=uuuuuu] (12.072631638173213,9.562051548692482) circle (2.5pt);
	\draw [fill=uuuuuu] (9.832708488335108,10.249615569580238) circle (2.5pt);
	\draw [fill=uuuuuu] (7.674870172895163,9.336525319703865) circle (2.5pt);
	\draw [fill=uuuuuu] (6.608793221349295,7.25002587484828) circle (2.5pt);
	\draw [fill=uuuuuu] (7.133306887451376,4.966413514119744) circle (2.5pt);
	\draw[color=black] (9.263581330571034,5.82398650778164) node {$f$};
	\draw[color=black] (9.952746178533463,7.768054923664748) node {$e$};
	\draw[color=black] (10.8563178680842,5.87713425177329) node {$g$};
	\end{scriptsize}
	\end{tikzpicture}
	\caption{Graph $G_9$ has  $\alpha(G_9)=4$, $\vartheta(G_9)=\vartheta^{(0)}(G_9)= 4.155$, $\overline\chi(G_9)=5$}\label{fig-G9}
\end{figure}
\ignore{
	\begin{figure}
		\centering
		\definecolor{ffqqqq}{rgb}{1.,0.,0.}
		\definecolor{zzttqq}{rgb}{0.6,0.2,0.}
		\begin{tikzpicture}[line cap=round,line join=round,>=triangle 45,x=.5cm,y=.5cm]
		\clip(-0.34,4.04) rectangle (6.84,9.58);
		\fill[line width=2.pt,color=zzttqq,fill=zzttqq,fill opacity=0.10000000149011612] (2.22,4.56) -- (3.78,4.54) -- (4.987885083459335,5.527427782248621) -- (5.27847239567999,7.060254913394326) -- (4.515792903755679,8.42125454329805) -- (3.0567128181961762,8.97359981930181) -- (1.583951926903647,8.458842248129486) -- (0.7866314188279584,7.117842618225763) -- (1.0378264209281254,5.578069559973381) -- cycle;
		\draw [line width=2.pt] (2.22,4.56)-- (3.78,4.54);
		\draw [line width=2.pt] (3.78,4.54)-- (4.987885083459335,5.527427782248621);
		\draw [line width=2.pt] (4.987885083459335,5.527427782248621)-- (5.27847239567999,7.060254913394326);
		\draw [line width=2.pt] (5.27847239567999,7.060254913394326)-- (4.515792903755679,8.42125454329805);
		\draw [line width=2.pt] (4.515792903755679,8.42125454329805)-- (3.0567128181961762,8.97359981930181);
		\draw [line width=2.pt] (3.0567128181961762,8.97359981930181)-- (1.583951926903647,8.458842248129486);
		\draw [line width=2.pt] (1.583951926903647,8.458842248129486)-- (0.7866314188279584,7.117842618225763);
		\draw [line width=2.pt] (0.7866314188279584,7.117842618225763)-- (1.0378264209281254,5.578069559973381);
		\draw [line width=2.pt] (1.0378264209281254,5.578069559973381)-- (2.22,4.56);
		\draw [line width=2.pt] (1.0378264209281254,5.578069559973381)-- (3.0567128181961762,8.97359981930181);
		\draw [line width=2.pt] (3.0567128181961762,8.97359981930181)-- (4.987885083459335,5.527427782248621);
		\draw [line width=2.pt,color=ffqqqq] (0.7866314188279584,7.117842618225763)-- (5.27847239567999,7.060254913394326);
		\draw [line width=2.pt,color=ffqqqq] (2.22,4.56)-- (5.27847239567999,7.060254913394326);
		\draw [line width=2.pt,color=ffqqqq] (3.78,4.54)-- (0.7866314188279584,7.117842618225763);
		\draw [line width=2.pt] (1.583951926903647,8.458842248129486)-- (2.22,4.56);
		\draw [line width=2.pt] (3.78,4.54)-- (4.515792903755679,8.42125454329805);
		\begin{scriptsize}
		\draw [fill=black] (3.78,4.54) circle (2.5pt);
		\draw [fill=black] (2.22,4.56) circle (2.5pt);
		\draw [fill=black] (4.987885083459335,5.527427782248621) circle (2.5pt);
		\draw [fill=black] (5.27847239567999,7.060254913394326) circle (2.5pt);
		\draw [fill=black] (4.515792903755679,8.42125454329805) circle (2.5pt);
		\draw [fill=black] (3.0567128181961762,8.97359981930181) circle (2.5pt);
		\draw [fill=black] (1.583951926903647,8.458842248129486) circle (2.5pt);
		\draw [fill=black] (0.7866314188279584,7.117842618225763) circle (2.5pt);
		\draw [fill=black] (1.0378264209281254,5.578069559973381) circle (2.5pt);
		\end{scriptsize}
		\end{tikzpicture}
		\caption{Graph $G_9$ has  $\alpha(G_9)=4$, $\vartheta(G_9)=\vartheta^{(0)}(G_9)= 4.155$, $\overline\chi(G_9)=5$}\label{fig-G9}
	\end{figure}
}

	\begin{remark}
	\begin{description}
	\item [(i)] As we just saw in Proposition \ref{prop-alpha-large} (ii),  the only graphs $G$ with  $\alpha(G)=|V|-3$  that do not have $\rrank$~0 are  of the form $G=C_5\oplus \overline{K_{n-5}}$, the disjoint union of  $C_5$ and $n-5$ isolated nodes.  In fact, we will show that $\rrank(C_5\oplus \overline {K_{n-5}})=1$  if and only if $n\le 13$ (see Corollary \ref{cor-cycle-isolated} in Section \ref{sec-isolated}).

\item [(ii)] Proposition \ref{prop-alpha-large}
shows that any acritical graph with $\alpha(G)\ge |V|-4$ satisfies $\overline \chi(G)=\alpha(G)$ and thus has $\rrank$~0. The same holds for graphs with $\alpha(G)=2$ (Lemma \ref{lem-alpha-2}). The next natural case to consider are  graphs with $\alpha(G)=3$ and $n\ge 8$ nodes.
	Polak \cite{Polak} verified (using computer) that if $G$ is an acritical graph on $8$ nodes with $\alpha(G)=3$  then $\overline \chi(G)=\alpha(G)$ holds (and thus $\rrank(G)=0$). In addition, if $G$ is acritical on 9 nodes with $\alpha(G)=3$ then $\rrank(G)=0$ holds as well (but sometimes with $\overline \chi(G)>\alpha(G)$). On the other hand there exist acritical graphs on $n=10$ nodes with $\alpha(G)=3$ that do not have $\rrank$ 0. 
	
	 \item [(iii)] There are acritical graphs $G$ with $4\le \alpha(G)\le |V|-5$ that cannot be covered by $\alpha(G)$ cliques. As a first example consider the graph $G_9$  in Figure \ref{fig-G9}, which is acritical,  with $|V|=9$, $\alpha(G_9)=4$, ${\overline{\chi}}(G_9)=5,$ and $ \vartheta(G_9)=\vartheta^{(0)}(G_9)=4.155$, and thus $\rrank(G_9)\ge 1$. Moreover,  with $e,f,g$ being the three labeled edges in $G_9$, each of the three graphs $G_9\setminus{e}, G_9\setminus{\{f,g\}}$ and $G_9\setminus{\{e,f\}}$ is acritical and satisfies $\vartheta^{(0)}(G)=\vartheta(G)>\alpha(G)$. This gives four non-isomorphic acritical graphs on 9 vertices that 
		have $\rrank$ at least 1  (and thus cannot be covered by $\alpha(G)$ cliques).  Polak \cite{Polak}  verified (using computer) that these  are  the only non-isomorphic acritical graphs on 9 vertices that do not have $\rrank$ 0.
	
\item[(iv)] Finally we use the graph $H_9$ from Example \ref{acritical-rk1} to construct a class of acritical graphs with $\chi(\overline G)>\alpha(G)$ and $\rrank(G)\ge 1$.	For any pair $(n,\alpha)$ with $4\le \alpha\le n-5$, we construct an acritical  graph $G$ on $n$ nodes with $\alpha(G)=\alpha$ and $\overline \chi(G)>\alpha(G)$. For this we let the vertex set of $G$ be partitioned as  $V=V_0\cup V_1\cup V_2$, where  $|V_0|=9$, $|V_1|= n-5-\alpha$ and $|V_2|=\alpha-4$,  and we select  the following edges: on $V_0$ we put a copy of $H_9$,  on $V_1$ we put a clique, we let  every node of $V_1$ be adjacent to every node of $V_0$, and we let $V_2$ consist of isolated nodes. Then it is easy to see that $\alpha(G)=\alpha$, $G$ is acritical and $\overline \chi(G)>\alpha(G)$. 
One can show that $\rrank(G)=\rrank(H_9\oplus \overline{K_{\alpha-4}})$. This follows from the following (easy-to-check) property: If $\{i,j\}$ is an edge and $N(i)\subseteq N(j)$ then $\rrank(G\setminus j)=\rrank(G)$. 
  Since $\rrank(H_9)=1$ one can now deduce that  $\rrank(G)\ge 1$. 
 \end{description}
\end{remark}

\ignore{
	\begin{remark}\label{prop-not-improved}
		Polak \cite{Polak} verified using computer that Proposition \ref{prop-alpha-large} (iii) also hold for acritical graphs with $\alpha=3$ and $n=8$. Moreover, for any pair $(n,\alpha)$ with $n\geq \alpha+5$ and $\alpha\geq 4$ we can construct acritical graphs not covered by $\alpha$ cliques. This shows that Proposition \ref{prop-alpha-large} cannot be improved.
	\end{remark}
	
	We now present some examples, which show that the result of Proposition \ref{prop-alpha-large} (iii) 
	cannot be improved. For this we give acritical graphs  with $\alpha(G)=|V|-5$ \textcolor{red}{that satisfy $\vartheta^{(0)}(G)>\alpha(G)$} (and thus  cannot  be covered by $\alpha(G)$ cliques).

	Consider the graph $G_9$ in Figure \ref{fig-G9}, which is acritical, 
	with $\alpha(G_9)=4$, ${\overline{\chi}}(G_9)=5,$ and $ \vartheta(G_9)=\vartheta^{(0)}(G_9)=4.155$. Moreover, any graph obtained by deleting one or two out of the three red edges is acritical and satisfies $\vartheta^{(0)}(G)>\alpha(G)$. This yields four non-isomorphic on 9 vertices that satisfy $\vartheta^{(0)}(G)>\alpha(G)$. Hence, we have $\rrank(G_9)\ge 1$. \footnote{\textcolor{red}{Together with $G_9$ this is five graphs, not?? what about the graphs on  8 nodes with $\alpha=3$? what about graph $G'_9$? PLease make a sentence which explains clearly what we want to say here}}. Polak \cite{Polak}  verified using computer search that these four graphs are  the only acritical non-isomorphic graphs on 9 vertices such that $\vartheta^{(0)}(G)> \alpha(G)$. Moreover, all these four graphs satisfy that $\vartheta(G)=\vartheta^{(0)}$.
	\\
	\\
	Now we show that for every pair $(n,\alpha)$ with $n\geq \alpha+5$ and $\alpha\geq 4$ we can construct an acritical graph $G$ with $\alpha(G)=\alpha$ which is not covered by $\alpha$ cliques. First, let us show the following two properties: 
	\\
	{\bf{i)}} If a graph $G$ is acritical, then the graph $G'$ obtained by adding to $G$ a node $v$ together with every edge $\{v,u\}$ for all $u\in G$  is acritical, $\alpha(G')=\alpha(G)$ and $\overline{\chi}(G)=\overline{\chi}(G)$. 
	\\
	{\bf{ii)}} If $G$ is acritical then the graph $G'$ obtained by adding one isolated node to $G$ is acritical and $\alpha(G')=\alpha(G)+1$, $\overline{\chi}(G')=\overline{\chi}(G)+1$, so if $G$ is not covered by $\alpha(G)$ cliques then $G'$ is not covered by $\alpha(G')$ cliques. 
	\\
	By applying these two properties several times we obtain that the graph  $H=(G_9 \odot K_{n-\alpha-5}) \oplus \overline{K_{\alpha-5}}$ is acritical, not covered by $\alpha(H)=\alpha$. Here the graph the graph $G_9 \odot K_{n-\alpha-5}$ is the disjoint uniont of the graphs plus every edge $\{u,v\}$ for $u\in V(G_9)$ and $v\in V(K_{\alpha-5})$.

\ignore{ 
	\textcolor{red}{Can we claim the same for $\vartheta^{(0)}$? i.e. say something about the rank?}
	
	Also the graphs in Figures \ref{fig-G10} and \ref{fig-G?} are  acritical  with rank at least 1. 
}
\ignore{
	\begin{figure}
		\centering
		\definecolor{zzttqq}{rgb}{0.6,0.2,0.}
		\begin{tikzpicture}[line cap=round,line join=round,>=triangle 45,x=.7cm,y=.7cm]
		\clip(-2.68,3.48) rectangle (8.92,12.82);
		\fill[line width=2.pt,,color=zzttqq,fill=zzttqq,fill opacity=0.10000000149011612] (1.58,4.24) -- (4.14,4.22) -- (6.222839210645713,5.708549905981231) -- (7.032943846571483,8.137074247809323) -- (6.260881471297521,10.577959269412414) -- (4.201553670743506,12.098869855168646) -- (1.641553670743507,12.118869855168647) -- (-0.44128553990220754,10.630319949187415) -- (-1.2513901758279768,8.201795607359323) -- (-0.4793278005540156,5.760910585756231) -- cycle;
		\draw [line width=2.pt,,color=zzttqq] (1.58,4.24)-- (4.14,4.22);
		\draw [line width=2.pt,,color=zzttqq] (4.14,4.22)-- (6.222839210645713,5.708549905981231);
		\draw [line width=2.pt,,color=zzttqq] (6.222839210645713,5.708549905981231)-- (7.032943846571483,8.137074247809323);
		\draw [line width=2.pt,,color=zzttqq] (7.032943846571483,8.137074247809323)-- (6.260881471297521,10.577959269412414);
		\draw [line width=2.pt,,color=zzttqq] (6.260881471297521,10.577959269412414)-- (4.201553670743506,12.098869855168646);
		\draw [line width=2.pt,,color=zzttqq] (4.201553670743506,12.098869855168646)-- (1.641553670743507,12.118869855168647);
		\draw [line width=2.pt,,color=zzttqq] (1.641553670743507,12.118869855168647)-- (-0.44128553990220754,10.630319949187415);
		\draw [line width=2.pt,,color=zzttqq] (-0.44128553990220754,10.630319949187415)-- (-1.2513901758279768,8.201795607359323);
		\draw [line width=2.pt,,color=zzttqq] (-1.2513901758279768,8.201795607359323)-- (-0.4793278005540156,5.760910585756231);
		\draw [line width=2.pt,,color=zzttqq] (-0.4793278005540156,5.760910585756231)-- (1.58,4.24);
		\draw [line width=2.pt] (-0.44128553990220754,10.630319949187415)-- (4.201553670743506,12.098869855168646);
		\draw [line width=2.pt] (6.260881471297521,10.577959269412414)-- (1.641553670743507,12.118869855168647);
		\draw [line width=2.pt] (-0.44128553990220754,10.630319949187415)-- (7.032943846571483,8.137074247809323);
		\draw [line width=2.pt] (-1.2513901758279768,8.201795607359323)-- (6.260881471297521,10.577959269412414);
		\draw [line width=2.pt] (1.641553670743507,12.118869855168647)-- (1.58,4.24);
		\draw [line width=2.pt] (-0.4793278005540156,5.760910585756231)-- (6.222839210645713,5.708549905981231);
		\draw [line width=2.pt] (-1.2513901758279768,8.201795607359323)-- (4.14,4.22);
		\begin{scriptsize}
		\draw [fill=black] (1.58,4.24) circle (2.5pt);
		\draw [fill=black] (4.14,4.22) circle (2.5pt);
		\draw [fill=black] (6.222839210645713,5.708549905981231) circle (2.5pt);
		\draw [fill=black] (7.032943846571483,8.137074247809323) circle (2.5pt);
		\draw [fill=black] (6.260881471297521,10.577959269412414) circle (2.5pt);
		\draw [fill=black] (4.201553670743506,12.098869855168646) circle (2.5pt);
		\draw [fill=black] (1.641553670743507,12.118869855168647) circle (2.5pt);
		\draw [fill=black] (-0.44128553990220754,10.630319949187415) circle (2.5pt);
		\draw [fill=black] (-1.2513901758279768,8.201795607359323) circle (2.5pt);
		\draw [fill=black] (-0.4793278005540156,5.760910585756231) circle (2.5pt);
		\end{scriptsize}
		\end{tikzpicture}
		\caption{Graph $G_{10}$, with $\alpha(G_{10})=4$, $\vartheta^{(0)}(G_{10})=4.15$, $\vartheta(G_{10})=4.19$, $\overline{\chi}(G_{10})=5$}\label{fig-G10}
	\end{figure}
}
\textcolor{blue}{
	\begin{remark}
		Regarding to the question "What is the complexity of deciding whether $\vartheta^{(0)}(G)=0$?" when imposing the constraint $\alpha=3$. The problem is equivalent to decide whether a graph with no critical edges with stability number 3 has rank zero. One open question we had was "$G$ acritical implies $rank(G)=0$? we already know that the answer is "No" since we have the examples above. For the particular case of $\alpha=3$ we obtained the following results  by computer experiments with Polak \cite{Polak}: {\bf{i)}} If $n=8$ then the graph is covered by 3 cliques (already mentioned in Remmark \ref{prop-not-improved}) {\bf{ii)}} If $n=9$ there exist graphs not covered by 3 cliques but all of them satisfy $\vartheta^{(0)}(G)=3$. {\bf{iii)}} For $n=10$ there are examples of graphs with $\vartheta^{(0)}(G)>3$. It implies that "$G$ acritical implies rank zero" is not true even for the case $\alpha=3$.
	\end{remark}
}
}

\section{On the impact of isolated nodes on the $\rrank$}\label{sec-k1}

As mentioned in Proposition \ref{adding-iso-rk}, if the $\rrank$ does not increase under the simple graph operation of adding an isolated node then Conjecture \ref{conj1}  holds. In \cite{GL2007} it was conjectured that adding isolated nodes indeed does not increase the $\rrank$. In this section we investigate this question and in fact disprove the latter conjecture, already for graphs with $\rrank$ 1.
For this we first  observe that critical edges provide a lot of structure on the matrices $P(i)$ ($i\in V$) appearing in $\konec$s, which can be exploited for verifying whether a graph has $\rrank$ 1. Then we investigate the impact of adding isolated nodes to certain classes of graphs $H$ with $\rrank$ 1.  First, when the subgraph of critical edges of $H$ is connected, we give an upper bound on the number of isolated nodes that can be added to $H$ while preserving the $\rrank$ 1 property (Theorem \ref{theo-ineq}). Second, we show that adding this number of isolated nodes indeed produces a graph with $\rrank$ 1 when $H$ satisfies the property $\rrank(H\setminus i^\perp)=0$ for all its nodes (Theorem \ref{adding_iso}).
 As an application 
 we are able to determine the exact number of isolated nodes that can can be added to an odd cycle $C_{2n+1}$ ($n\ge 2$) or its complement while preserving the $\rrank$ 1 property (see Corollary \ref{cor-cycle-isolated}). 
As a byproduct  we obtain that  adding an isolated node to a graph with $\rrank$ 1 can produce  a graph with  $\rrank \geq $ 2. For instance, $C_5\oplus \overline {K_8}$ has $\rrank 1$ but $C_5\oplus \overline{K_{9}}$ has $\rrank$ 2.

\subsection{Properties of the kernel of $\konec$s }
 The following results are based on the kernel property observed in Lemma \ref{kernel-k1}, which is applied to the matrices $M_G$ and permits to exploit the structure of the graph $G$.

\begin{lemma}\label{structure-P}
Let $G=(V=[n],E)$ be a graph with  $\rrank(G)=1$. Let $\{P(i): i\in V\}$ be a $\konec$ for $M_G$, let $i\in V$  and let $C_1, C_2, \dots, C_n$ denote the columns of the matrix $P(i)$. Then the following holds.
	\begin{description}
		\item[(i)]   If $S$ is a stable set of size $\alpha(G)$ and $i\in S$,  then we have $\sum_{j\in S}C_j=0$.
		\item[(ii)] If $\{i,j\}\in E$ is a critical edge of $G$ then we have $C_i=C_j$. 
		\item[(iii)] If $\alpha(G\setminus i^{\perp})=\alpha(G)-1$ and $\{l,m\}\in E$ is a critical edge of $G\setminus i^{\perp}$, then we have $C_l=C_m$.
	\end{description} 
In particular, if  $G$ is critical and $G\setminus i^{\perp}$ is critical and connected then the matrix $P(i)$ takes the form 
\begin{equation}\label{P-form}
P(i)=\left(
\begin{array}{c|c}
(\alpha-1)J_{|i^\perp|}& -1\\
\hline
-1 & \frac{1}{\alpha-1}J_{|V\setminus i^\perp|}
\end{array}
\right),
\end{equation}
where the blocks are indexed by $i^{\perp}$ and $V\setminus i^\perp$, respectively. 
\end{lemma}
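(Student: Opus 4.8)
The plan is to derive all three parts from a single mechanism: the kernel property of Lemma~\ref{kernel-k1}, which says that whenever $x\in\oR^n_+$ is a zero of $x^TM_Gx$ and $x_i>0$, the matrix $P(i)$ annihilates $x$. The zeros of $x^TM_Gx$ in $\oR^n_+$ are exactly the optimal solutions of the Motzkin--Straus program, whose supports are described combinatorially by Theorem~\ref{minimizers-LV}. Part (i) is then immediate: if $S$ is an $\alpha$-stable set containing $i$, then $\chi^S$ is a nonnegative zero of $x^TM_Gx$ with $(\chi^S)_i=1>0$, so $P(i)\chi^S=0$; since $P(i)\chi^S=\sum_{j\in S}C_j$, this gives $\sum_{j\in S}C_j=0$. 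I will use (i) as the engine for the remaining parts.

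For part (ii) the obstacle is that $i$ and $j$ are adjacent, so no \emph{single} $\alpha$-stable set contains both and (i) cannot be applied twice. The trick is to produce a \emph{fractional} optimal solution of Motzkin--Straus. Criticality of $\{i,j\}$ yields a set $I$ with $|I|=\alpha-1$ such that both $I\cup\{i\}$ and $I\cup\{j\}$ are $\alpha$-stable; then the induced graph on $I\cup\{i,j\}$ has connected components $\{i,j\}$ (a clique) together with the $\alpha-1$ singletons of $I$. By Theorem~\ref{minimizers-LV} the point $x^*$ putting weight $\tfrac1{2\alpha}$ on each of $i,j$ and $\tfrac1\alpha$ on each vertex of $I$ is optimal, hence a zero with $x^*_i>0$. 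Then $P(i)x^*=0$ reads $\tfrac1{2\alpha}(C_i+C_j)+\tfrac1\alpha\sum_{k\in I}C_k=0$, and combining this with $C_i+\sum_{k\in I}C_k=0$ from (i) applied to $I\cup\{i\}$ yields $C_i=C_j$. Part (iii) is easier and uses (i) directly: criticality of $\{l,m\}$ in $G\setminus i^\perp$ gives a set $J$ with $J\cup\{l\}$ and $J\cup\{m\}$ both $(\alpha-1)$-stable in $G\setminus i^\perp$; since $l,m$ and $J$ all avoid $i^\perp$ (so are non-adjacent to $i$), both $J\cup\{l,i\}$ and $J\cup\{m,i\}$ are $\alpha$-stable sets of $G$ through $i$, and applying (i) to each and subtracting gives $C_l=C_m$.

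For the ``in particular'' claim I first record that in a critical graph every vertex lies in a maximum stable set (for a non-isolated $i$, criticality of an incident edge $\{i,k\}$ produces an $(\alpha+1)$-stable set of $G\setminus\{i,k\}$ containing both endpoints, and deleting $k$ leaves an $\alpha$-stable set through $i$; an isolated $i$ lies in every maximum stable set). Hence $\alpha(G\setminus i^\perp)=\alpha-1$, which licenses part (iii). Now criticality of $G$ together with (ii) forces $C_k=C_i$ for every $k\in N(i)$, so all columns indexed by $i^\perp$ coincide; criticality and connectedness of $G\setminus i^\perp$ together with (iii) force all columns indexed by $V\setminus i^\perp$ to coincide. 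Using symmetry of $P(i)$, equal columns make each diagonal block and the off-diagonal block constant, so $P(i)=\left(\begin{smallmatrix}(\alpha-1)J & bJ\\ bJ & cJ\end{smallmatrix}\right)$ for scalars $b,c$, the top-left value being $P(i)_{ii}=M_{ii}=\alpha-1$ by Lemma~\ref{lemK1}(ii). Finally I pin down $b,c$ by applying (i) to an $\alpha$-stable set $\{i\}\cup S'$ with $S'\subseteq V\setminus i^\perp$: the resulting identity $C_i+(\alpha-1)D=0$, where $D$ is the common column on $V\setminus i^\perp$, read off separately on the $i^\perp$- and $(V\setminus i^\perp)$-coordinates, gives $b=-1$ and $c=\tfrac1{\alpha-1}$, which is exactly (\ref{P-form}).

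The main obstacle is the adjacency in part (ii): the fractional Motzkin--Straus minimizer supported on a clique-plus-singletons is precisely what unlocks the kernel property when the two vertices cannot sit in a common stable set. Everything else reduces to applications of (i) and routine bookkeeping with the block structure.
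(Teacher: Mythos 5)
Your proposal is correct and follows essentially the same route as the paper's own proof: part (i) via the kernel property of Lemma~\ref{kernel-k1} applied to $\chi^S$, part (ii) via the fractional Motzkin--Straus zero $\tfrac{1}{2\alpha}(\chi^{I\cup\{i\}}+\chi^{I\cup\{j\}})$ combined with (i), part (iii) by lifting critical-edge stable sets of $G\setminus i^\perp$ to $\alpha$-stable sets of $G$ through $i$, and the block form by the same column-identification and normalization argument. The only differences are presentational: you spell out details the paper leaves implicit (that every vertex of a critical graph lies in a maximum stable set, and the symmetry bookkeeping that turns equal columns into constant blocks), which strengthens rather than changes the argument.
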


\begin{proof} Set $\alpha:=\alpha(G)$ for short.
Part (i) follows directly from Lemma \ref{kernel-k1} (i), which claims $P(i)x=0$ as  $x^T M_Gx=0$ for $x=\chi^S$.

(ii)  Since the edge $\{i,j\}$ is critical  in $G$ there exists $I\subseteq V$ such that $I\cup \{i\}$ and $I\cup\{j\}$ are $\alpha$-stable sets in $G$; then,  using part (i), we get  $C_i=-\sum_{k\in I}C_k$ . Now,  observe that  the vector $y= \frac{1}{2\alpha}(\chi^{I\cup\{i\}} + \chi^{I\cup\{j\}})$ satisfies  $y^TMy=0$ (recall Eq. (\ref{eq-mini}) and Theorem \ref{minimizers-LV}). Using Lemma \ref{kernel-k1} (i), we obtain $P(i)y=0$ and thus $\frac{C_i}{2}+\frac{C_j}{2}+\sum_{k\in I}C_k=0$. Combining the two equations we get $C_i=C_j$.
\\
(iii) If $\alpha(G\setminus i^\perp)=\alpha-1$ and $\{l,m\}$ is critical in $G\setminus i^\perp$ then there exists $I\subseteq V$ with $i\in I$ such that $I\cup\{l\}$ and $I\cup\{m\}$ are stable of size $\alpha$ in $G$. Then, using again part (i), we get $C_l=-\sum_{k\in I}C_k = C_m$.   
\\
Finally, assume $G$ is critical and $G\setminus i^\perp$ is critical and connected. Since $G$ is critical, by part (ii), we have $C_i=C_j$ for all $j\in i^\perp$. Moreover,  as $G$ is critical, $i$ belongs to an $\alpha$-stable set and thus  $\alpha(G\setminus i^\perp)=\alpha-1$. Then, part (iii) can be applied and using the connectivity and criticality of $G\setminus i^\perp$ we obtain that $C_l=C_m$ for all $l,m\in V \setminus i^\perp$. Therefore, $P(i)$ takes a block structure indexed by $i^\perp$ and $V\setminus i^\perp$. Using an $\alpha$-stable set of the form $\{i\}\cup I$ (with $I\subseteq V\setminus i^\perp$) we have 
$C_i+\sum_{k\in I}C_k=0$ which, combined with the fact that $P(i)_{ii}=\alpha-1$, implies the desired structure for the matrix $P(i)$.
\end{proof}

Using Lemma \ref{structure-P} we can show that for some $\rrank$ 1 graphs the construction of the matrices $P(i)$ in a $\konec$ is in fact unique. 
We already saw that this is the case for the 5-cycle in Example \ref{example-5-cycle}, we now extend this to any critical graph with $\alpha(G)=2$ and to the graph $C_5\oplus i_0$. We show in Figure \ref{Fig4} an example of critical graph with stability number $\alpha(G)=2$; of course  $C_5$ is another such  example.

\begin{figure}[H]
\centering
\definecolor{uuuuuu}{rgb}{0.26666666666666666,0.26666666666666666,0.26666666666666666}
\definecolor{uququq}{rgb}{0.25098039215686274,0.25098039215686274,0.25098039215686274}
\begin{tikzpicture}[line cap=round,line join=round,>=triangle 45,x=.3cm,y=.3cm]
\draw [line width=1pt] (-12.75,9.853998519614898)-- (-9.63,9.853998519614898);
\draw [line width=1pt] (-9.63,9.853998519614898)-- (-8.07,7.151999259807448);
\draw [line width=1pt] (-8.07,7.151999259807448)-- (-9.63,4.45);
\draw [line width=1pt] (-9.63,4.45)-- (-12.75,4.45);
\draw [line width=1pt] (-12.75,4.45)-- (-14.31,7.15199925980745);
\draw [line width=1pt] (-14.31,7.15199925980745)-- (-12.75,9.853998519614898);
\draw [line width=1pt] (-14.31,7.15199925980745)-- (-9.63,9.853998519614898);
\draw [line width=1pt] (-12.75,9.853998519614898)-- (-8.07,7.151999259807448);
\begin{scriptsize}
\draw [fill=uququq] (-12.75,4.45) circle (2.5pt);
\draw [fill=uququq] (-9.63,4.45) circle (2.5pt);
\draw [fill=uuuuuu] (-8.07,7.151999259807448) circle (2.5pt);
\draw [fill=uuuuuu] (-9.63,9.853998519614898) circle (2.5pt);
\draw [fill=uuuuuu] (-12.75,9.853998519614898) circle (2.5pt);
\draw [fill=uuuuuu] (-14.31,7.15199925980745) circle (2.5pt);
\end{scriptsize}
\end{tikzpicture}
\caption{A critical graph with stability number $2$}\label{Fig4}

\ignore{
\begin{minipage}[t]{0.5\linewidth}
\centering

\definecolor{uuuuuu}{rgb}{0.26666666666666666,0.26666666666666666,0.26666666666666666}
\definecolor{uququq}{rgb}{0.25098039215686274,0.25098039215686274,0.25098039215686274}
\begin{tikzpicture}[line cap=round,line join=round,>=triangle 45,x=.5cm,y=.5cm]
\clip(-14.726363636362857,4.301074380165286) rectangle (-7.156115702477376,11.044876033057852);
\draw [line width=1pt] (-13.578509963461387,8.326106157908189)-- (-11.19999999999867,10.054194799648329);
\draw [line width=1pt] (-11.19999999999867,10.054194799648329)-- (-8.821490036535954,8.326106157908187);
\draw [line width=1pt] (-8.821490036535954,8.326106157908187)-- (-9.72999999999844,5.53);
\draw [line width=1pt] (-12.6699999999989,5.53)-- (-9.72999999999844,5.53);
\draw [line width=1pt] (-12.6699999999989,5.53)-- (-13.578509963461387,8.326106157908189);
\begin{scriptsize}
\draw [fill=uququq] (-12.6699999999989,5.53) circle (2.5pt);
\draw [fill=uququq] (-9.72999999999844,5.53) circle (2.5pt);
\draw [fill=uuuuuu] (-8.821490036535954,8.326106157908187) circle (2.5pt);
\draw [fill=uuuuuu] (-11.19999999999867,10.054194799648329) circle (2.5pt);
\draw [fill=uuuuuu] (-13.578509963461387,8.326106157908189) circle (2.5pt);
\end{scriptsize}
\end{tikzpicture}
\caption{Critical, $\alpha=2$}\label{FigC5}
\end{minipage}
}
\end{figure}

\begin{example}
Let $G=(V,E)$ be a critical graph with $\alpha(G)=2$.  
Then $M_G\in \MK^{(1)}$ (recall Theorem \ref{theoGLstrong}).
Let $\{P(i):  i\in V\}$ be a $\konec$ for $M_G$. We show that the matrices $P(i)$ are uniquely determined using Lemma~\ref{P-form}. Indeed,   as  $\alpha(G)=2$, for any $i\in V$ the graph $G\setminus i^\perp$ is a clique  and thus it is critical and connected with $\alpha(G\setminus i^\perp)=1=\alpha(G)-1$. Hence Lemma \ref{structure-P} can be applied  and we obtain that for every $i\in V$ the matrix $P(i)$ takes the form (\ref{P-form}).
\end{example}

\begin{example}
Let $G=C_5\oplus i_0 =([5]\cup\{i_0\},E)$,  so that $G\setminus i_0^\perp=C_5$. As $\alpha(G\setminus i_0^\perp)=\alpha(G)-1=2$ and $G\setminus i_0^\perp$ is critical and connected, by Lemma \ref{structure-P} we conclude that the matrix $P(i_0)$ takes the form (\ref{P-form}) (also displayed below). In particular we have 
$P(i_0)_{ij}=1/2$ and $P(i_0)_{i_0i}=-1$ for all $i,j\in [5]$. We now  show that for any $i\in [5]$ also the matrices $P(i)$ are uniquely determined; by symmetry it suffices to show this for  matrix $P(1)$.

Since $G$ is critical,  by Lemma \ref{structure-P} (ii) (applied to the edges $\{1,2\}$ and $\{1,5\}$),  the columns of $P(1)$ indexed by nodes 1, 2, and 5  are identical. As the edge $\{3,4\}$ is critical in the graph $G\setminus 1^\perp$, by Lemma \ref{structure-P} (iii), also  the two columns of $P(1)$  indexed by 3 and 4  are identical. This implies that the matrix $P(1)$ takes a block structure indexed by the partition of its index set into $\{1,2,5\}$, $\{3,4\}$ and $\{i_0\}$.  By Lemma \ref{lemK1} we have $P(1)_{11}=\alpha-1=2$, $2P(1)_{1,i_0}+P(i_0)_{1,1}=\alpha-3=0$ and $P(1)_{i_0,i_0}+2P(i_0)_{1,i_0}=\alpha-3=0$ .  Combining with  the fact that $P(i_0)_{11}=\frac{1}{2}$ and $P(i_0)_{1,i_0}=-1$ we obtain that  $P(1)_{1,i_0}=-\frac{1}{4}$ and $P(1)_{i_0,i_0}=2$. Finally, since $\{1,3,i_0\}$ is stable, using Lemma~\ref{structure-P}(i) we obtain that the columns indexed by 1,3 and $i_0$ sum up to 0, which enables to complete  the rest of the matrix $P(1)$, whose shape is shown below.
   $$
P(i_0)=\bordermatrix{& i_0 & [5] \cr
i_0 & 2 & -1 \cr
[5]& -1 & 1/2
},
\quad \quad P(1)
=\bordermatrix{
& i_0 & \{3,4\} & \{1,2,5\} \cr
i_0& 2 & -7/4 & -1/4 \cr
\{3,4\}& -7/4  & 7/2 & -7/4 \cr
\{1,2,5\} & -1/4 & -7/4& 2
}.
\ignore{=\left(
\begin{array}{c|c|c}
2 & -\frac{7}{4} & -\frac{1}{4} \\
-\frac{7}{4} &  \frac{7}{2} &
      -\frac{7}{4}\\
    -\frac{1}{4} & -\frac{7}{4} & 2
\end{array}\right) .
}
$$
\end{example}


\begin{lemma}\label{equality-support}
Let $G=(V, E)$ be a graph with $M_G\in \MK_n^{(1)}$ and let $P(1), P(2), \dots, P(n)$ be a $\konec$ for $M_G$. Assume that for $S\subseteq V$ the induced subgraph $G[S]$ is the disjoint union of $\alpha(G)$ cliques.. Then, for any $\{i,j,k\}\subseteq S$, we have 
$$P(i)_{jk}+P(j)_{ik} + P(k)_{ij}= (M_G)_{ij} + (M_G)_{jk} + (M_G)_{ik} = \alpha(G) \ |E(\{i,j,k\})|-3.$$
\end{lemma}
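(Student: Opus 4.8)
The plan is to derive the first equality as an immediate consequence of Lemma \ref{kernel-k1}(ii). That lemma tells us that whenever $x\in\oR^n_+$ is a zero of the quadratic form $x^TM_Gx$ with $x_i,x_j,x_k>0$, the corresponding coefficient $c_{ijk}$ in (\ref{eqpM}) must vanish, which forces $P(i)_{jk}+P(j)_{ik}+P(k)_{ij}=(M_G)_{ij}+(M_G)_{jk}+(M_G)_{ik}$. Hence it suffices to exhibit a single nonnegative zero of $x^TM_Gx$ whose support contains the triple $\{i,j,k\}$; in fact I would produce one whose support is all of $S$, so that all triples in $S$ are handled simultaneously.

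To build such a zero I would use the hypothesis on $G[S]$. Write $S=C_1\cup\cdots\cup C_\alpha$ with $\alpha=\alpha(G)$, where $C_1,\ldots,C_\alpha$ are the $\alpha$ cliques forming $G[S]$, and define $x\in\Delta_n$ by spreading weight uniformly within each clique: $x_v=\tfrac{1}{\alpha\,|C_\ell|}$ for $v\in C_\ell$, and $x_v=0$ for $v\in V\setminus S$. Then $\sum_{v\in C_\ell}x_v=\tfrac1\alpha$ for every $\ell$, so $x\in\Delta_n$, its support is exactly $S$, and all coordinates indexed by $S$ are strictly positive. Since $G[S]$ has precisely $\alpha$ connected components, each a clique, each carrying mass $\tfrac1\alpha$, Theorem \ref{minimizers-LV} shows that $x$ is an optimal solution of the Motzkin--Straus problem (\ref{motzkin-form}), whence $x^TM_Gx=0$ by (\ref{eq-mini}). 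Now for any $\{i,j,k\}\subseteq S$ we have $x_i,x_j,x_k>0$, so Lemma \ref{kernel-k1}(ii) applies and yields the first identity.

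For the second equality I would just substitute $M_G=\alpha(A_G+I)-J$: each off-diagonal entry $(M_G)_{pq}$ equals $\alpha-1$ when $\{p,q\}\in E$ and $-1$ otherwise. Among the three pairs inside $\{i,j,k\}$, exactly $|E(\{i,j,k\})|$ are edges and the remaining $3-|E(\{i,j,k\})|$ are non-edges, so the sum equals $|E(\{i,j,k\})|(\alpha-1)-\bigl(3-|E(\{i,j,k\})|\bigr)=\alpha\,|E(\{i,j,k\})|-3$. I do not expect any genuine obstacle; the only point deserving a little care is checking that the constructed $x$ really satisfies all three conditions of Theorem \ref{minimizers-LV} (the right number of components, each a clique, each of mass $\tfrac1\alpha$), and this is exactly what the assumption that $G[S]$ is a disjoint union of $\alpha(G)$ cliques provides.
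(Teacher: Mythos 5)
Your proposal is correct and follows essentially the same route as the paper: the paper's proof also invokes Theorem \ref{minimizers-LV} to produce a zero $x\in\Delta_n$ of $x^TM_Gx$ with $\supp(x)=S$ and then applies Lemma \ref{kernel-k1}(ii). You merely make explicit the uniform-weight construction of $x$ and the final arithmetic for the second equality, both of which the paper leaves implicit.
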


	\begin{proof}
	By Theorem \ref{minimizers-LV} there exists $x\in \Delta_n$ such that $x^TM_Gx=0$ and $\supp(x)=S$. Then Lemma \ref{kernel-k1} (ii) gives  the desired result.
	\end{proof}

\begin{example}
Consider the graph $G_8$ shown in Figure \ref{G_8}, which is critical with $\alpha(G_8)=3$.
We show that $\rrank(G_8)\geq 2$ (which was verified numerically in \cite{PVZ2007}).  Assume for contradiction  that $M_G\in \MK_8^{(1)}$ and let $P(1), \dots, P(8)$ be a $\konec$ for $M_G$. Notice that for $i=1,2,3,4$ the graph $G\setminus i^\perp =C_5$ is critical and connected. Hence, by Lemma \ref{structure-P}, the matrices $P(1), P(2), P(3)$ and $P(4)$ take the form (\ref{P-form}) and thus we have 
$P(1)_{23} + P(2)_{13} + P(3)_{12}=-1-1+{1\over 2}=-\frac{3}{2}.$
However, as the graph induced by $\{1,2,3, 6\}$ is the disjoint union of $\alpha(G)$ cliques, in view of Lemma \ref{equality-support} one should have 
$P(1)_{23} + P(2)_{13} + P(3)_{12}=3 \times 1-3=0,$
so we reach a contradiction.
\ignore{
\begin{figure}[H]
\begin{minipage}{0.5\linewidth}
\begin{center}
\definecolor{xdxdff}{rgb}{0,0,0}
\definecolor{ududff}{rgb}{0,0,0}
\begin{tikzpicture}[line cap=round,line join=round,>=triangle 45,x=0.5cm,y=0.5cm]
\clip(-11.74,0.89) rectangle (-2,7.79);
\draw [line width=1pt] (-10.44,5.45)-- (-10.42,3.25);
\draw [line width=1pt] (-10.42,3.25)-- (-5.98,3.19);
\draw [line width=1pt] (-5.98,3.19)-- (-5.98,5.43);
\draw [line width=1pt] (-10.44,5.45)-- (-5.98,5.43);
\draw [line width=1pt] (-8.780123064548562,5.442556605670621)-- (-7.719952528756618,3.2135128720102246);
\draw [line width=1pt] (-8.900277524192076,3.2294632097863794)-- (-7.640056304041826,5.437444198672833);
\begin{scriptsize}
\draw [fill=ududff] (-10.44,5.45) circle (2.5pt);
\draw[color=ududff] (-10.28,5.88) node {$1$};
\draw [fill=ududff] (-10.42,3.25) circle (2.5pt);
\draw[color=ududff] (-10.5,2.72) node {$2$};
\draw [fill=ududff] (-5.98,3.19) circle (2.5pt);
\draw[color=ududff] (-6.02,2.72) node {$3$};
\draw [fill=ududff] (-5.98,5.43) circle (2.5pt);
\draw[color=ududff] (-5.82,5.86) node {$4$};
\draw [fill=xdxdff] (-8.780123064548562,5.442556605670621) circle (2.5pt);
\draw[color=xdxdff] (-8.8,5.92) node {$5$};
\draw [fill=xdxdff] (-7.640056304041826,5.437444198672833) circle (2.5pt);
\draw[color=xdxdff] (-7.48,5.86) node {$6$};
\draw [fill=xdxdff] (-8.900277524192076,3.2294632097863794) circle (2.5pt);
\draw[color=xdxdff] (-9,2.72) node {$7$};
\draw [fill=xdxdff] (-7.719952528756618,3.2135128720102246) circle (2.5pt);
\draw[color=xdxdff] (-7.72,2.72) node {$8$};
\end{scriptsize}
\end{tikzpicture}
\end{center}
\caption{The graph $G_8$ (critical, $\alpha(G_8)=3$)}\label{G_8}
\end{minipage}
\begin{minipage}{0.5\linewidth}
\begin{center}
\definecolor{uququq}{rgb}{0.25098039215686274,0.25098039215686274,0.25098039215686274}
\begin{tikzpicture}[line cap=round,line join=round,>=triangle 45,x=.8cm,y=.8cm]
\clip(-3.855,-2.34) rectangle (2.877,1.906);
\draw [line width=1pt] (-2.32,0.02)-- (-1.48,0.66);
\draw [line width=1pt] (-1.48,0.66)-- (-0.68,0.06);
\draw [line width=1pt] (-0.68,0.06)-- (-0.68,-1.3);
\draw [line width=1pt] (-0.68,-1.3)-- (-2.26,-1.36);
\draw [line width=1pt] (-2.26,-1.36)-- (-2.32,0.02);
\draw [line width=1pt] (-0.68,0.06)-- (0.16,0.7);
\draw [line width=1pt] (0.16,0.7)-- (0.92,0.14);
\draw [line width=1pt] (0.92,0.14)-- (0.92,-1.26);
\draw [line width=1pt] (-0.68,-1.3)-- (0.92,-1.26);
\draw [line width=1pt] (-1.48,0.66)-- (0.16,0.7);
\begin{scriptsize}
\draw [fill=uququq] (-2.32,0.02) circle (2.5pt);
\draw[color=uququq] (-2.535,0.531) node {$1$};
\draw [fill=uququq] (-1.48,0.66) circle (2.5pt);
\draw[color=uququq] (-1.303,1.125) node {$2$};
\draw [fill=uququq] (-0.68,0.06) circle (2.5pt);
\draw[color=uququq] (-0.687,0.557) node {$8$};
\draw [fill=uququq] (-0.68,-1.3) circle (2.5pt);
\draw[color=uququq] (-0.731,-1.559) node {$6$};
\draw [fill=uququq] (-2.26,-1.36) circle (2.5pt);
\draw[color=uququq] (-2.579,-1.405) node {$7$};
\draw [fill=uququq] (0.16,0.7) circle (2.5pt);
\draw[color=uququq] (0.413,1.257) node {$3$};
\draw [fill=uququq] (0.92,0.14) circle (2.5pt);
\draw[color=uququq] (1.095,0.619) node {$4$};
\draw [fill=uququq] (0.92,-1.26) circle (2.5pt);
\draw[color=uququq] (1.029,-1.471) node {$5$};
\end{scriptsize}
\end{tikzpicture}
\caption{The graph $H_8$ (critical, $\alpha(H_8)=3$)}}

\begin{figure}[H]
\begin{minipage}{0.5\linewidth}
\begin{center}
\definecolor{uuuuuu}{rgb}{0.26666666666666666,0.26666666666666666,0.26666666666666666}
\definecolor{uququq}{rgb}{0.25098039215686274,0.25098039215686274,0.25098039215686274}
\begin{tikzpicture}[line cap=round,line join=round,>=triangle 45,x=1cm,y=1.1cm]
\draw [line width=1pt] (-4.646286937826003,4.055390483334499)-- (-3.175176360218357,4.055390483334498);
\draw [line width=1pt] (-3.175176360218357,4.055390483334498)-- (-1.7040657826107104,4.055390483334498);
\draw [line width=1pt] (-1.7040657826107104,4.055390483334498)-- (-0.23295520500306388,4.055390483334498);
\draw [line width=1pt] (-0.23295520500306388,4.055390483334498)-- (-0.23295520500306421,2.5842799057268513);
\draw [line width=1pt] (-0.23295520500306421,2.5842799057268513)-- (-1.7040657826107104,2.5842799057268513);
\draw [line width=1pt] (-1.7040657826107104,2.5842799057268513)-- (-3.175176360218357,2.584279905726851);
\draw [line width=1pt] (-3.175176360218357,2.584279905726851)-- (-4.646286937826004,2.5842799057268513);
\draw [line width=1pt] (-4.646286937826004,2.5842799057268513)-- (-4.646286937826003,4.055390483334499);
\draw [line width=1pt] (-3.175176360218357,2.584279905726851)-- (-1.7040657826107104,4.055390483334498);
\draw [line width=1pt] (-1.7040657826107104,2.5842799057268513)-- (-3.175176360218357,4.055390483334498);
\begin{scriptsize}
\draw [fill=uququq] (-3.175176360218357,2.584279905726851) circle (2.5pt);
\draw[color=uququq] (-3.2236459534925346,2.856830868682590387) node {$7$};
\draw [fill=uququq] (-3.175176360218357,4.055390483334498) circle (2.5pt);
\draw[color=uququq] (-3.139416152837559,4.30150458658616776) node {$5$};
\draw [fill=uuuuuu] (-4.646286937826003,4.055390483334499) circle (2.5pt);
\draw[color=uuuuuu] (-4.511221090598189,4.30150458658616776) node {$1$};
\draw [fill=uuuuuu] (-4.646286937826004,2.5842799057268513) circle (2.5pt);
\draw[color=uuuuuu] (-4.505890553535837215,2.856830868682590387) node {$2$};
\draw [fill=uuuuuu] (-1.7040657826107104,2.5842799057268513) circle (2.5pt);
\draw[color=uuuuuu] (-1.667611215076929,2.85678653720856145) node {$8$};
\draw [fill=uuuuuu] (-1.7040657826107104,4.055390483334498) circle (2.5pt);
\draw[color=uuuuuu] (-1.667611215076929,4.30150458658616776) node {$6$};
\draw [fill=uuuuuu] (-0.23295520500306421,2.5842799057268513) circle (2.5pt);
\draw[color=uuuuuu] (-0.0919580627731629918,2.85678653720856145) node {$3$};
\draw [fill=uuuuuu] (-0.23295520500306388,4.055390483334498) circle (2.5pt);
\draw[color=uuuuuu] (-0.0919580627731629918,4.30150458658616776) node {$4$};
\end{scriptsize}
\end{tikzpicture}
\caption{The graph $G_8$ (critical, $\alpha(G_8)=3$)}\label{G_8}
\end{center}
\end{minipage}
\begin{minipage}{0.5\linewidth}
\begin{center}
\definecolor{uuuuuu}{rgb}{0.26666666666666666,0.26666666666666666,0.26666666666666666}
\definecolor{uququq}{rgb}{0.25098039215686274,0.25098039215686274,0.25098039215686274}
\begin{tikzpicture}[line cap=round,line join=round,>=triangle 45,x=.65cm,y=.65cm]
\draw [line width=1pt] (-4.425993445928173,4.861386383616873)-- (-3.552562930533183,6.087775325088295);
\draw [line width=1pt] (-3.552562930533183,6.087775325088295)-- (-2.1162928635717506,5.636068566519027);
\draw [line width=1pt] (-2.1162928635717506,5.636068566519027)-- (-0.6888193998400561,6.114848486031521);
\draw [line width=1pt] (-0.6888193998400561,6.114848486031521)-- (0.20764092179593363,4.9051916782059095);
\draw [line width=1pt] (-0.665789593599056,3.678802736734488)-- (0.20764092179593363,4.9051916782059095);
\draw [line width=1pt] (-0.665789593599056,3.678802736734488)-- (-2.1020596605604878,4.130509495303755);
\draw [line width=1pt] (-3.529533124292182,3.651729575791261)-- (-2.1020596605604878,4.130509495303755);
\draw [line width=1pt] (-3.529533124292182,3.651729575791261)-- (-4.425993445928173,4.861386383616873);
\draw [line width=1pt] (-2.1020596605604878,4.130509495303755)-- (-2.1162928635717506,5.636068566519027);
\draw [line width=1pt] (-3.552562930533183,6.087775325088295)-- (-0.6888193998400561,6.114848486031521);
\begin{scriptsize}
\draw [fill=uququq] (-3.529533124292182,3.651729575791261) circle (2.5pt);
\draw[color=uququq] (-3.4940679450690344,4.11197042244953469) node {$7$};
\draw [fill=uququq] (-2.1020596605604878,4.130509495303755) circle (2.5pt);
\draw[color=uququq] (-1.86659448133734,4.4258221644659635) node {$6$};
\draw [fill=uuuuuu] (-2.1162928635717506,5.636068566519027) circle (2.5pt);
\draw[color=uuuuuu] (-2.0798939235460203,5.933092281449744) node {$8$};
\draw [fill=uuuuuu] (-3.552562930533183,6.087775325088295) circle (2.5pt);
\draw[color=uuuuuu] (-3.516233682083502,6.485273316544877) node {$2$};
\draw [fill=uuuuuu] (-4.425993445928173,4.861386383616873) circle (2.5pt);
\draw[color=uuuuuu] (-4.389563720453514,5.2572914859433865) node {$1$};
\draw [fill=uuuuuu] (-0.665789593599056,3.678802736734488) circle (2.5pt);
\draw[color=uuuuuu] (-0.6302547227998587,4.111764112937083) node {$5$};
\draw [fill=uuuuuu] (0.20764092179593363,4.9051916782059095) circle (2.5pt);
\draw[color=uuuuuu] (0.24307531557015308,5.3016229599723205) node {$4$};
\draw [fill=uuuuuu] (-0.6888193998400561,6.114848486031521) circle (2.5pt);
\draw[color=uuuuuu] (-0.652420459814326,6.511872200962238) node {$3$};
\end{scriptsize}
\end{tikzpicture}\caption{The graph $H_8$ (critical, $\alpha(H_8)=3$)}
\end{center}
\end{minipage}
\end{figure}
It can also be shown that $\rrank(H_8)\geq 2$, the arguments are similar but technical so we omit them.
So we have $\rrank(G_8)=\rrank(H_8)=2$.   In fact,  $G_8$ and $H_8$ are the {\em only} critical graphs on  8 nodes with $\rrank=2$. 
To see this one can use the list of critical graphs on 8 nodes from \cite{Small} and verify that all of them have $\rrank$ at most 1 except $G_8$ and $H_8$. Note also that,  as observed in \cite{PVZ2007}, any  graph with at most 7 nodes has $\rrank$ at most 1. 
\end{example}

\subsection{Adding isolated nodes to graphs with $\rrank$ 1}\label{sec-isolated}

As we saw in  Section \ref{pre-iso-critical}, it is crucial to understand the role of isolated nodes for  the $\rrank$ of a graph (recall Proposition \ref{adding-iso-rk}).
Here we investigate  how  many isolated nodes can be added to a  graph $H$  with $\rrank$ 1 (and satisfying certain properties) without increasing its $\rrank$. As an application we show that adding an isolated node to some $\rrank$ 1 graphs may produce a graph with $\rrank \ge 2$. 

Throughout this section we  consider  a graph of the form $G=H\oplus \overline{K_{\alpha-k}}$, where $H=(V,E)$ has  $\alpha(H)=:k$, so that $\alpha(G)=\alpha$.  Here $\alpha$ and $k$ are integers such that $\alpha\ge k\ge 2$. Note that, if $k=1$, then $H$ is a clique and thus $G$ has $\rrank$ 0 for any $\alpha$. We let $W$ denote the set of  isolated nodes that are added to $H$, so that $|W|=\alpha-k$ and $G=(V\cup W, E)$. We also consider  the subgraph $H_c=(V,E_c)$ of $H$, where $E_c$ is the set of critical edges of $H$. 

\subsubsection{Upper bound on the number of isolated nodes}
 First, we investigate some necessary conditions about the parameters $\alpha$ and $k$ that must hold if $\rrank(G)= 1$. 

\begin{theorem}\label{theo-ineq}
Given integers $\alpha>k\ge 2$, let $H=(V,E)$ be a graph  with $\alpha(H)=k$ and let $G=H\oplus \overline{K_{\alpha-k}}$. Assume the graph $H_c=(V,E_c)$ is connected and  $\rrank(G)=1$. Then 
we have
\begin{equation}\label{ineq}
    \alpha \leq \frac{k(k+3)}{k-1} = k + 4+\frac{4}{k-1}.
\end{equation}

\end{theorem}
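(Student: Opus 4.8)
The plan is to exploit $\rrank(G)=1$ by fixing a $\konec$ $\{P(i):i\in V\cup W\}$ for $M_G$ (where $W$ is the set of $\alpha-k$ isolated nodes) and extracting a single scalar inequality from it, using the rigidity that the isolated nodes and the connectivity of $H_c$ force on the certificate matrices. Since permuting the nodes of $W$ is an automorphism of $G$ and conditions (i)--(iv) of Lemma \ref{lemK1} are invariant and convex, I first average the certificate over $\mathrm{Sym}(W)$ and assume it is $\mathrm{Sym}(W)$-invariant; this makes the blocks indexed by $W$ constant along the diagonal and off the diagonal. The case $\alpha=k+1$ satisfies \eqref{ineq} trivially, so I assume $\alpha\ge k+2$.

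The first block of steps determines all relevant entries. Every $\alpha$-stable set of $G$ has the form $T\cup W$ with $T$ a $k$-stable set of $H$, so $\chi^{T\cup W}$ is a zero of $x^TM_Gx$ and Lemma \ref{kernel-k1}(i) gives $P(i)\chi^{T\cup W}=0$ for each $i\in T\cup W$. Moreover, for $w\in W$ the graph $G\setminus w^\perp=G\setminus w$ has the same critical edges as $H$, namely those of the connected spanning subgraph $H_c$ of $V$; hence Lemma \ref{structure-P}(iii) forces all columns of $P(w)$ indexed by $V$ to coincide, so $P(w)[V]=cJ_{|V|}$ for a common scalar $c\ge 0$. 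Writing the relations $P(w)\chi^{T\cup W}=0$ coordinate-wise (separately for coordinates in $W$ and in $V$), combined with the certificate identities of Lemma \ref{lemK1}(iii) and the equalities of Lemma \ref{equality-support} applied to triples inside $T\cup W$ (whose induced subgraph is a disjoint union of $\alpha$ singletons, hence of $\alpha$ cliques), gives a small linear system. Solving it pins down $c=\tfrac{\alpha-k}{k}$, the value $s:=P(u)_{uw}=\tfrac{(k-1)\alpha-2k}{2k}$ for $u\in V$, $w\in W$, and---crucially---the common row sum of the block $P(u)[W]$, which turns out to equal exactly $k$, independently of the remaining free parameters.

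The final step reads the inequality off positive semidefiniteness. Since $P(u)\succeq 0$, its principal submatrix $P(u)[\{u\}\cup W]$ is positive semidefinite, and compressing it to $\mathrm{span}(\chi^{\{u\}},\chi^W)$ yields, using $P(u)_{uu}=\alpha-1$, the row sum $k$ of $P(u)[W]$, and $s$,
\[
\begin{pmatrix} \alpha-1 & (\alpha-k)\,s\\ (\alpha-k)\,s & (\alpha-k)\,k\end{pmatrix}\succeq 0 .
\]
The determinant condition is $(\alpha-1)k\ge(\alpha-k)s^2$, i.e. $4k^3(\alpha-1)\ge(\alpha-k)\big((k-1)\alpha-2k\big)^2$ after substituting $s$. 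I then verify the algebraic identity
\[
4k^3(\alpha-1)-(\alpha-k)\big((k-1)\alpha-2k\big)^2=-(k-1)\,\alpha^2\big((k-1)\alpha-k(k+3)\big),
\]
so that the determinant inequality is equivalent to $(k-1)\alpha-k(k+3)\le 0$, which is precisely \eqref{ineq}.

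The main obstacle is the middle step: proving that the row sum of $P(u)[W]$ equals $k$ (and that $c=\tfrac{\alpha-k}{k}$) requires carefully chaining the coordinate-wise kernel relations for $P(w)$ with the identities of Lemma \ref{lemK1}(iii) and the equalities of Lemma \ref{equality-support}, while checking that the uncontrolled entries $P(u)_{vw}$ with $v\in V\setminus\{u\}$ never enter the final computation. Once these scalars are fixed, the remainder is a two-dimensional semidefiniteness test and the determinant identity above.
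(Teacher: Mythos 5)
Your proposal is correct and follows essentially the same route as the paper: symmetrize the certificate under $\mathrm{Sym}(W)$ (the paper invokes Dobre--Vera, Proposition \ref{k_1_sym}), use the kernel and criticality lemmas to pin down $P(w)[V]=\tfrac{\alpha-k}{k}J$, $P(u)_{uw}=\tfrac{\alpha}{2}-\tfrac{\alpha}{2k}-1$, and the row sums of $P(u)[W]$ equal to $k$ (the paper's identity $a+(\alpha-k-1)c=k$ in Lemmas \ref{struc-1}--\ref{matrix_P_v-psd}), then derive the same determinant inequality $4k^3(\alpha-1)\ge(\alpha-k)\bigl((k-1)\alpha-2k\bigr)^2$ and the same final algebra. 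The only cosmetic difference is that you compress $P(u)[\{u\}\cup W]$ onto $\mathrm{span}(\chi^{\{u\}},\chi^W)$ to get the needed $2\times 2$ condition, whereas the paper computes the full block and takes a Schur complement (a characterization it reuses for the converse construction in Theorem \ref{adding_iso}).
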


The rest of the section is devoted to the proof of Theorem \ref{theo-ineq}.
Throughout we assume that $G$ and $H$ are as defined in Theorem \ref{theo-ineq}, so $M_G=\alpha(A_G+I)-J\in \mathcal{K}_n^{(1)}$. 
We will use the following result of  Dobre and Vera \cite{DV}, which shows  the existence of a  $\konec$ for $M_G$, which inherits some  symmetry properties of $M_G$.
 
\begin{proposition}[\cite{DV}]\label{k_1_sym}
Assume that $M\in \mathcal{K}_n^{(1)}$.   Then $M$ has a $\konec$ $P(1),\dots,P(n)$ satisfying the following symmetry property: $\sigma(P(i))=P(\sigma(i))$ for all $\sigma\in \text{Sym}(n)$ such that  $\sigma(M)=M$.
\end{proposition}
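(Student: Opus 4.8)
The plan is to prove Proposition \ref{k_1_sym} by a group-averaging (Reynolds operator) argument applied to the convex set of $\konec$s. First I would let $G=\{\sigma\in\text{Sym}(n):\sigma(M)=M\}$ denote the (finite) automorphism group of $M$, where for a matrix $A$ the symbol $\sigma(A)$ stands for the matrix $\Pi_\sigma A\Pi_\sigma^T$ obtained by permuting rows and columns simultaneously, i.e.\ $(\sigma(A))_{ij}=A_{\sigma^{-1}(i),\sigma^{-1}(j)}$. The key structural observation is that the set $\mathcal{C}_M$ of all $\konec$s for $M$ --- that is, all tuples $(P(1),\dots,P(n))$ of positive semidefinite matrices satisfying conditions (i)--(iv) of Lemma \ref{lemK1} --- is a \emph{convex} set, since it is cut out by the linear conditions (ii)--(iv) together with the positive semidefiniteness conditions (i). It is nonempty precisely because $M\in\MK_n^{(1)}$.

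Next I would introduce an action of $G$ on tuples by $(\sigma\cdot P)(i):=\sigma\big(P(\sigma^{-1}(i))\big)$ and show that it maps $\mathcal{C}_M$ into itself. To verify this I would apply the variable substitution $\tau_\sigma\colon x_i\mapsto x_{\sigma(i)}$ to the defining identity (\ref{eqpM}),
\[
\Big(\sum_i x_i\Big)\,x^TMx=\sum_i x_i\,x^TP(i)x+\sum_{1\le i<j<k\le n}c_{ijk}\,x_ix_jx_k,\qquad c_{ijk}\ge 0 .
\]
Since $\sigma(M)=M$ one has $\tau_\sigma(x^TMx)=x^TMx$ and $\tau_\sigma(\sum_i x_i)=\sum_i x_i$, while $\tau_\sigma\big(x_i\,x^TP(i)x\big)=x_{\sigma(i)}\,x^T\sigma(P(i))x$; reindexing the right-hand side by $i'=\sigma(i)$ then recovers the \emph{same} identity with $P$ replaced by $\sigma\cdot P$ and with the coefficients $c_{ijk}$ merely permuted, hence still nonnegative. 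As conjugation by a permutation matrix preserves positive semidefiniteness, each $(\sigma\cdot P)(i)\succeq 0$, so $\sigma\cdot P$ is again a valid $\konec$; this shows the action preserves $\mathcal{C}_M$.

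Finally I would symmetrize: for any fixed $P\in\mathcal{C}_M$ set $\bar P:=\tfrac{1}{|G|}\sum_{\sigma\in G}\sigma\cdot P$. By convexity of $\mathcal{C}_M$ the averaged tuple $\bar P$ is again a $\konec$ for $M$, and since the action is linear in $P$ a one-line reindexing gives $\rho\cdot\bar P=\bar P$ for every $\rho\in G$. Unravelling the fixed-point identity $\rho\big(\bar P(\rho^{-1}(i))\big)=\bar P(i)$ and substituting $i=\rho(j)$ yields exactly the desired symmetry $\rho(\bar P(j))=\bar P(\rho(j))$ for all $\rho\in G$, completing the proof.

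The main obstacle is purely the bookkeeping in the middle step: one must keep the conventions for $\sigma(A)$, the action $\sigma\cdot P$, and the substitution $\tau_\sigma$ mutually consistent, so that the fixed-point condition produced by averaging matches the stated form $\sigma(P(i))=P(\sigma(i))$ rather than its inverse --- a left-versus-right action slip would replace $\sigma$ by $\sigma^{-1}$. Everything else (convexity and nonemptiness of $\mathcal{C}_M$, invariance of $x^TMx$ under $G$, and preservation of both the positive semidefiniteness of the $P(i)$ and the sign of the $c_{ijk}$) is routine.
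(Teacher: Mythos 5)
Your proof is correct: the paper itself states Proposition \ref{k_1_sym} without proof, citing \cite{DV}, and your convexity-plus-group-averaging (Reynolds operator) argument is precisely the standard symmetrization technique used in that reference. Your bookkeeping is consistent --- with the convention $(\sigma(A))_{ij}=A_{\sigma^{-1}(i),\sigma^{-1}(j)}$ the map $(\sigma\cdot P)(i)=\sigma\big(P(\sigma^{-1}(i))\big)$ is a genuine left action preserving the convex set of $\konec$s (conditions (ii)--(iv) are affine, positive semidefiniteness and the sign constraints on the $c_{ijk}$ are preserved), so the averaged tuple is a fixed point yielding exactly $\sigma(P(i))=P(\sigma(i))$.
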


So let $\{P(i): i\in V\}$ be a  $\konec$ for $M_G$ satisfying the symmetry property from Proposition \ref{k_1_sym}.   In particular, since any permutation $\sigma\in \text{Sym}(W)$ of the isolated nodes leaves the graph $G$ invariant it follows that
\begin{equation}\label{eqsym}
\sigma(P(i))=P(\sigma(i)), \text{ i.e., } \ P(i)_{\sigma(j)\sigma(k)}=P(\sigma(i))_{jk} \  \text{ for all } \sigma\in \text{Sym}(W) \text{ and } j,k \in V\cup W.
\end{equation}
We will use this symmetry property repeatedly in the proof.  We  mention a simple identity that follows as a direct application of Lemma \ref{equality-support}, which we will  also repeatedly use in the rest of the section:

 \begin{equation}\label{-3}
 P(i)_{jk} + P(j)_{ik} + P(k)_{ij} = -3 \hspace{.5cm} \text{if  $\{i,j,k\}$ is contained in a stable set of $G$ with size $\alpha(G).$}
 \end{equation}
 
 Now we prove some preliminary lemmas and we end with Lemma \ref{matrix_P_v-psd}, which will directly imply Theorem \ref{theo-ineq}. We start with a general property about the structure of the submatrices $P(i)[W]$ when  $i\in W$ is an isolated node.

\begin{lemma}\label{P[W]}
There exists a scalar $b\in \mathbb{R}$ such that  
 the following holds:
\begin{description}
    \item [(i)] $P(i)_{ij}=b$ \ for all distinct $i,j\in W$, 
    \item  [(ii)] $P(i)_{jj}=\alpha-2b-3$ \ for all distinct $i,j\in W$, 
    \item [(iii)] $P(i)_{jk}=-1$ \ for all distinct $i,j,k\in W$. 
\end{description}
\end{lemma}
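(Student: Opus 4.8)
The plan is to exploit the symmetry among the isolated nodes of $W$. Since $\rrank(G)=1$ we have $M_G\in\MK^{(1)}_n$, and by Proposition \ref{k_1_sym} I may fix a $\konec$ $\{P(i):i\in V\cup W\}$ that is invariant under $\text{Sym}(W)$ in the sense of (\ref{eqsym}), using that every permutation of the isolated nodes is an automorphism of $G$. All three statements will then follow from the transitivity of $\text{Sym}(W)$ on ordered tuples of distinct elements of $W$, combined with the two identities already at our disposal: the certificate condition (iii) of Lemma \ref{lemK1} and the stable-triple identity (\ref{-3}).

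For part (i), given two ordered pairs $(i,j)$ and $(i',j')$ of distinct elements of $W$, I would pick $\sigma\in\text{Sym}(W)$ with $\sigma(i)=i'$ and $\sigma(j)=j'$; applying (\ref{eqsym}) gives $P(i')_{i'j'}=P(i)_{ij}$. Hence $P(i)_{ij}$ takes one common value over all such pairs, which I define to be $b$. For part (iii), which is nonvacuous only when $|W|\ge 3$, I note that for distinct $i,j,k\in W$ the set $\{i,j,k\}$ is contained in the $\alpha$-stable set $S_H\cup W$, where $S_H$ is any maximum stable set of $H$ (of size $k$), since the nodes of $W$ are isolated. Identity (\ref{-3}) then yields $P(i)_{jk}+P(j)_{ik}+P(k)_{ij}=-3$, and the same transitivity argument applied to ordered triples shows the three summands are all equal, so each equals $-1$.

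For part (ii), I would feed part (i) into condition (iii) of Lemma \ref{lemK1}. For distinct $i,j\in W$ we have $(M_G)_{ij}=-1$ (a non-edge) and $(M_G)_{ii}=\alpha-1$, whence $2P(i)_{ij}+P(j)_{ii}=2(M_G)_{ij}+(M_G)_{ii}=\alpha-3$; substituting $P(i)_{ij}=b$ gives $P(j)_{ii}=\alpha-2b-3$, which is exactly (ii) after renaming $i\leftrightarrow j$.

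I expect no genuine obstacle here: the substance lies entirely in invoking the symmetrized certificate correctly, and the arguments are short. The only points requiring mild care are bookkeeping the $\text{Sym}(W)$-action convention in (\ref{eqsym}) consistently (since $\text{Sym}(W)$ is closed under inverses, the direction of the permutation is immaterial for the transitivity steps) and checking that the cases $|W|\le 2$ render the relevant clauses vacuously true, so that the scalar $b$ is well defined whenever $|W|\ge 2$ and arbitrary otherwise.
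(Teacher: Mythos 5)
Your proposal is correct and follows essentially the same route as the paper: both rely on the $\text{Sym}(W)$-invariant certificate from Proposition \ref{k_1_sym} together with (\ref{eqsym}) to establish (i) and the equality of the three terms in (iii), then use condition (iii) of Lemma \ref{lemK1} for (ii) and identity (\ref{-3}) (applied to the $\alpha$-stable set $S_H\cup W$) to pin down the value $-1$ in (iii). The only difference is cosmetic: you invoke transitivity of $\text{Sym}(W)$ on ordered pairs/triples, while the paper writes out the specific transpositions $(i,j)$, $(j,k)$, $(i,k)$, and your explicit remarks on the action convention and the vacuous cases $|W|\le 2$ are sound.
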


\begin{proof}
Let $i,j,k \in W$ be  distinct (isolated) nodes and set $b:=P(i)_{ij}$. 
First we show that $b$ does not depend on the choice of $i,j\in W$. For this we use the symmetry property from (\ref{eqsym}), which claims 
$P(i)_{\sigma(i)\sigma(j)}=P(\sigma(i))_{ij}$ for any $\sigma\in \text{Sym}(W)$. Using the permutation  $\sigma=(j,k)$ we get $P(i)_{ij}=P(i)_{ik}=b$, and using  $\sigma=(i,j)$ we get $P(i)_{ij}=P(j)_{ij}=b$,
thus showing (i). Now, by Lemma \ref{lemK1}, we have $P(i)_{jj}+2P(j)_{ij}=\alpha-3$, which implies  $P(i)_{jj}=\alpha-2b-3$ and thus  (ii) holds.
Using again (\ref{eqsym}) with $\sigma=(i,k)$ we obtain $P(i)_{\sigma(i),\sigma(j)} =P(\sigma(i))_{i,j}$, and thus $P(i)_{jk}=P(k)_{ij}$. Similarly, using $\sigma=(i,j)$ we get $P(i)_{\sigma(i)\sigma(k)}=P(\sigma(i))_{ik}$ and thus 
$P(i)_{jk}=P(j)_{ik}$. By using Eq. (\ref{-3}) for the nodes $i,j,k$ we obtain $P(i)_{jk}=P(j)_{ik}=P(k)_{ij}=-1$, thus showing (iii).
\end{proof}

So we know the structure of the submatrix $P(i)[W]$ when $i\in W$ is an isolated node. When the graph $H_c$ (consisting of the critical edges of $H$) is connected we can also derive the structure of the rest of the matrix $P(i)$.

\begin{lemma}\label{struc-1}
Assume the graph $H_c$ is connected. 
Then the matrix $P(i)$  takes the form
    $$P(i) =\bordermatrix{& i & W\setminus{i}& V \cr
i & & & d\dots d\cr
& & & \beta J\cr
W\setminus{i} & & & \cr
& d & & \cr
V&\vdots &\beta J& \gamma J\cr
& d& &  & } \quad \text{ for  all }  i\in W,
$$
where the blocks are indexed by $\{i\}, W\setminus \{i\}$ and $V$, respectively, and the scalars $d,\beta,\gamma$ are given by
$$d=\dfrac{b(k+1)+1-\alpha-b\alpha}{k},
\quad \beta= \dfrac{b+1-k}{k},
\quad \gamma=\dfrac{\alpha-k}{k}.
$$
\end{lemma}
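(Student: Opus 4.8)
The plan is to fix an isolated node $i\in W$ and to determine the matrix $P(i)$ column by column, writing $C_1,\dots,C_n$ for its columns. I will use throughout that every $\alpha$-stable set of $G=H\oplus\overline{K_{\alpha-k}}$ has the form $W\cup T$ with $T$ a maximum stable set of $H$ (of size $k$), and that the critical edges of $G$, as well as those of $G\setminus i$, are exactly the critical edges $E_c$ of $H$ (since deleting an edge of $H$ raises the stability number of the respective graph by one precisely when it does so in $H$).

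First I would prove that the columns indexed by $V$ coincide. Since $i$ is isolated, $i^\perp=\{i\}$ and $G\setminus i^\perp=H\oplus\overline{K_{\alpha-k-1}}$, so $\alpha(G\setminus i^\perp)=\alpha-1=\alpha(G)-1$ and the critical edges of $G\setminus i^\perp$ are precisely $E_c$. As $H_c=(V,E_c)$ is connected, applying Lemma \ref{structure-P}(iii) to each edge of a spanning tree of $H_c$ gives $C_l=C_m$ for all $l,m\in V$; write $C$ for this common column. Consequently $P(i)_{u,v}=(C)_u$ is independent of $v\in V$. Using the symmetry of $P(i)$ on pairs $u,v\in V$ forces the $V\times V$ block to equal $\gamma J$ for the scalar $\gamma:=(C)_u$ ($u\in V$), since $(C)_u=P(i)_{u,v}=P(i)_{v,u}=(C)_v$; setting $d:=(C)_i$ describes the $\{i\}\times V$ block; and applying the symmetry property (\ref{eqsym}) with transpositions of $\text{Sym}(W)$ fixing $i$ shows $(C)_u$ takes a common value $\beta$ for all $u\in W\setminus i$, describing the $(W\setminus i)\times V$ block. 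This establishes the claimed block shape.

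Finally I would compute $d,\beta,\gamma$. Applying Lemma \ref{structure-P}(i) to an $\alpha$-stable set $S=W\cup T$ (which contains $i$) gives $\sum_{j\in W}C_j+kC=0$, since the $k$ columns $C_j$ with $j\in T$ all equal $C$; hence $C=-\frac{1}{k}\sum_{j\in W}C_j$. Reading off the $i$-, $(W\setminus i)$-, and $V$-coordinates of this vector identity and substituting the entries of $P(i)[W]$ from Lemma \ref{P[W]} ($P(i)_{ii}=\alpha-1$, $P(i)_{ij}=b$, $P(i)_{jj}=\alpha-2b-3$, $P(i)_{jk}=-1$) yields $d$, $\beta$ and $\gamma$ after a short calculation; in particular the $(W\setminus i)$-coordinate gives $\beta=(b+1-k)/k$, and substituting $d$ and $\beta$ into the $V$-coordinate collapses to $\gamma=(\alpha-k)/k$.

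The conceptual heart of the argument is the first step: the connectivity of $H_c$ is exactly what makes all columns $C_v$ ($v\in V$) collapse to a single column via Lemma \ref{structure-P}(iii). Once this collapse is in place, the block structure and the values of $d,\beta,\gamma$ follow from the symmetry relation (\ref{eqsym}), the kernel identity of Lemma \ref{structure-P}(i), and the already-known entries of $P(i)[W]$, so the remaining work is routine linear bookkeeping rather than a genuine obstacle.
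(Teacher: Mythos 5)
Your proof is correct and follows essentially the same route as the paper's: collapse all columns indexed by $V$ to a single column via Lemma \ref{structure-P}(iii) and the connectivity of $H_c$, derive the block shape from matrix symmetry and the symmetry property (\ref{eqsym}), and then determine $d,\beta,\gamma$ by applying Lemma \ref{structure-P}(i) to an $\alpha$-stable set containing $i$ together with the entries of $P(i)[W]$ from Lemma \ref{P[W]}. The only difference is cosmetic: you verify the hypotheses of Lemma \ref{structure-P}(iii) (that $\alpha(G\setminus i^\perp)=\alpha(G)-1$ and that the critical edges of $G\setminus i^\perp$ are exactly $E_c$) explicitly, which the paper leaves implicit, and the value $P(i)_{ii}=\alpha-1$ you cite comes from condition (ii) of Lemma \ref{lemK1} rather than from Lemma \ref{P[W]}.
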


\begin{proof}
Fix an isolated node $i\in W$.
Let $\{l,m\}\in E_c$ be a critical edge of $H$. By Lemma \ref{structure-P}(iii) we get that the two columns of $P(i)$ indexed by $l$ and $m$ are identical. Since $H_c$ is connected it follows that the columns of $P(i)$ indexed by $V$ are all identical. From this follows that 
$P(i)[V]$ (the submatrix of $P(i)$ indexed by $V$) is of the form $\gamma_i J$ for some scalar $\gamma_i$ and there exists a vector $b_i\in \oR^{W}$ such that 
$P(i)_{j h}=(b_i)_j$ for all $j\in W, h\in V$.

Let $j\ne k\in W\setminus \{i\}$ and $v\in V$. By applying Eq. (\ref{eqsym}) to the permutation  $\sigma=(j,k)$, we obtain  
$P(i)_{\sigma(k)\sigma(v)}=P(\sigma(i))_{kv}$, and thus $P(i)_{jv}=P(i)_{kv}$. Therefore, the entries of $b_i$ indexed by $W\setminus \{i\}$ are all equal, say to a scalar $\beta_i$. We set $d_i:= (b_i)_i$. 
Finally we show that the scalars $\beta_i, \gamma_i, d_i$ in fact do not depend on the choice of $i\in W$ and take the values claimed in the lemma.

For this consider an $\alpha$-stable set $S$ of $G$. Then $i\in S$ and thus, 
 by Lemma \ref{structure-P}(i), the  columns of $P(i)$ indexed by $S$ sum up to zero. Using the identities of Lemma \ref{P[W]} combined with the above facts on the remaining entries of $P(i)$,  we obtain 
\begin{align*}
   (\alpha-1)+(\alpha-k-1)b+kd_i=0 &\ \Longrightarrow \ d_i=\dfrac{b(k+1)+1-\alpha-b\alpha}{k},\\
   b-(\alpha-k-2)+(\alpha-2b-3)+k\beta_i=0 &\ \Longrightarrow\  \beta_i=\dfrac{b+1-k}{k},\\
    d_i+(\alpha-k-1)\beta_i+k\gamma_i=0 &\ \Longrightarrow\  \gamma_i=\dfrac{\alpha-k}{k}.
\end{align*}
This concludes the proof. \end{proof}

We now are able to conclude some properties on the structure of the matrices $P(j)$ for $j\in V$.

\begin{lemma}\label{P_v}
Assume $H_c$ is connected. For any $v\in V$  the submatrix $P(v)[W\cup\{v\}]$ takes the form
\begin{equation}\label{matrix_P_v}
        P(v)[W\cup\{v\}]=
\left(
\begin{array}{c|c}
M_b & \frac{\alpha}{2}-\frac{\alpha}{2k}-1 \\
\hline
\frac{\alpha}{2}-\frac{\alpha}{2k}-1 & \alpha-1
\end{array}
\right),
\end{equation}
where the blocks are indexed by $W$ and $\{v\}$, respectively.
Here,  $b\in \mathbb{R}$ is the constant from  Lemma \ref{P[W]} and the matrix $M_b$ is indexed by $V$ and takes the form
\begin{equation}\label{eqMb}
M_b=\begin{pmatrix}
 a & c &\cdots & c \\
c & a& \cdots & c \\
\vdots  & \vdots  & \ddots & \vdots  \\
c & c & \cdots & a 
\end{pmatrix},
\quad \text{ with }  \quad a= \alpha -3 - {2\over k}\Big(b(k+1)+1-\alpha -b\alpha\Big),\quad  c=-1-\frac{2}{k}(b+1).
\end{equation}
\end{lemma}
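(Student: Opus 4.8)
The plan is to exploit the symmetry and kernel conditions on $P(v)$ exactly as in the proof of Lemma~\ref{struc-1}, but now for a node $v\in V$ rather than an isolated node. First I would use the symmetry property (\ref{eqsym}): since every permutation $\sigma\in\text{Sym}(W)$ fixes both $G$ and the node $v\in V$, it fixes $P(v)$ in the sense that $P(v)_{\sigma(j)\sigma(k)}=P(v)_{jk}$. Applying this with transpositions of isolated nodes immediately forces the submatrix $P(v)[W]$ to have constant diagonal (say equal to $a$) and constant off-diagonal (say equal to $c$), so $P(v)[W]=M_b$ has the claimed arrowhead-free form; similarly the entries $P(v)_{vj}$ for $j\in W$ are all equal to a single scalar, which is what the off-diagonal block of (\ref{matrix_P_v}) records. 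The diagonal entry $P(v)_{vv}=\alpha-1$ is just condition (ii) of Lemma~\ref{lemK1} applied to $M_G$, whose diagonal is $\alpha-1$.

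**Pinning down the constants $a$, $c$, and the off-diagonal block.**

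The substance is to compute the three scalars. For the off-diagonal block $P(v)_{vj}$ ($j\in W$), I would use condition (iii) of Lemma~\ref{lemK1} for the pair $\{v,j\}$, namely $2P(v)_{vj}+P(j)_{vv}=2(M_G)_{vj}+(M_G)_{vv}$. Since $v$ and the isolated node $j$ are non-adjacent, $(M_G)_{vj}=-1$ and $(M_G)_{vv}=\alpha-1$, while $P(j)_{vv}=\gamma=\tfrac{\alpha-k}{k}$ is read off from Lemma~\ref{struc-1} (the diagonal value of the $\gamma J$ block). Solving gives $P(v)_{vj}=\tfrac{\alpha}{2}-\tfrac{\alpha}{2k}-1$, as claimed. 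For the off-diagonal entry $c=P(v)_{jj'}$ of $M_b$ (with $j\ne j'\in W$), I would invoke relation (\ref{-3}): the triple $\{v,j,j'\}$ lies in an $\alpha$-stable set of $G$ (take $v$ together with all isolated nodes except one, or any $\alpha$-stable set containing $v,j,j'$—this needs the fact that isolated nodes extend any stable set), so $P(v)_{jj'}+P(j)_{vj'}+P(j')_{vj}=-3$. The two latter terms are known: $P(j)_{vj'}=\beta=\tfrac{b+1-k}{k}$ from Lemma~\ref{struc-1} (the entry between $V$ and $W\setminus\{j\}$). This yields $c=-3-2\beta=-1-\tfrac{2}{k}(b+1)$. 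For the diagonal $a=P(v)_{jj}$ ($j\in W$), I would again use Lemma~\ref{lemK1}(iii) for the pair $\{v,j\}$ but in the reversed role, i.e. $2P(j)_{vv}+P(v)_{jj}$-type comparison, or more directly use condition giving $P(v)_{jj}+2P(j)_{vj}=2(M_G)_{vj}+(M_G)_{jj}$; here $P(j)_{vj}=d=\tfrac{b(k+1)+1-\alpha-b\alpha}{k}$ from Lemma~\ref{struc-1}, and solving gives $a=\alpha-3-\tfrac{2}{k}(b(k+1)+1-\alpha-b\alpha)$.

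**Anticipated obstacle and verification.**

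I expect the main care-point to be the justification that each relevant triple $\{v,j\}$ or $\{v,j,j'\}$ sits inside an $\alpha$-stable set of $G$, so that the kernel identity (\ref{-3}) and Lemma~\ref{structure-P}(i) genuinely apply; this uses that the added nodes in $W$ are isolated, hence any stable set of $H$ of size $k$ together with all of $W$ forms an $\alpha$-stable set of $G=H\oplus\overline{K_{\alpha-k}}$, and that $v\in V$ lies in some $k$-stable set of $H$ when the edge structure permits. The connectivity of $H_c$ is what was used in Lemma~\ref{struc-1} to fix the values $d,\beta,\gamma$, and I would lean on those precomputed values rather than re-deriving them. Once $a$, $c$, and the off-diagonal block are established by these three coefficient comparisons, the form (\ref{matrix_P_v}) follows immediately, and the remaining routine step is simply to substitute the expressions for $d$, $\beta$ into the formulas for $a$, $c$ to obtain the stated closed forms. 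The positive semidefiniteness of this explicit matrix is then exactly what will be analysed next to derive the inequality (\ref{ineq}) of Theorem~\ref{theo-ineq}.
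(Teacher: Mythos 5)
Your proposal is correct and follows essentially the same route as the paper's proof: the diagonal entry $a$ and the block $P(v)_{jv}$ are obtained from condition (iii) of Lemma \ref{lemK1} applied to pairs $\{j,v\}$ with $j\in W$, using the values $d$ and $\gamma$ from Lemma \ref{struc-1}, the off-diagonal entry $c$ comes from the identity (\ref{-3}) for triples $\{v,j,j'\}$ using $\beta$, and $P(v)_{vv}=\alpha-1$ is condition (ii); your additional symmetry argument via (\ref{eqsym}) is valid but superfluous, since the computations already return the same value for every choice of indices.

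One point you flagged does need to be closed properly, and one of your two suggested closings is wrong. The triple $\{v,j,j'\}$ lies in an $\alpha$-stable set of $G$ because connectivity of $H_c$ forces $v$ to lie on a critical edge of $H$, and criticality of that edge puts $v$ inside a $k$-stable set of $H$; that $k$-stable set together with \emph{all} of $W$ is the required $\alpha$-stable set (this is exactly how the paper argues). Your alternative construction, ``take $v$ together with all isolated nodes except one,'' does not work: such a set has size $\alpha-k<\alpha$, and indeed every $\alpha$-stable set of $G$ must consist of a full $k$-stable set of $H$ plus all of $W$, so there is no freedom to omit isolated nodes.
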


\begin{proof}
Consider an isolated node $i\in W$. By Lemma \ref{lemK1} we have  $P(v)_{ii}+2P(i)_{iv}=\alpha-3$. This implies 
$P(v)_{ii}=\alpha-3-2d$ and thus $P(v)_{ii} =\alpha-3-\frac{2}{k}(b(k+1)+1-\alpha-b\alpha)$, which shows the claimed value of $a$. 

Consider $i\ne j\in W$. As $H_c$ is connected, $v$ belongs to a critical edge and thus  there exists an $\alpha$-stable set of $G$ that contains $i,j,v$. Then,  by  (\ref{-3}), we have 
 $P(i)_{vj}+P(j)_{iv}+P(v)_{ij}=-3$. This implies  $P(v)_{ij} =-3-2\beta$ and thus  $P(v)_{ij}=-1-\frac{2(b+1)}{k}$, which shows the claimed value of $c$. 

Let $i\in W$. Using again  Lemma \ref{lemK1} we get $2P(v)_{iv}+P(i)_{vv}=\alpha-3$. Hence $P(v)_{iv}=\frac{\alpha-3-\gamma}{2}$, which implies   $P(v)_{iv}=\frac{\alpha}{2}-\frac{\alpha}{2k}-1$. This completes the proof.
\end{proof}

The following lemma gives necessary and sufficient conditions for the matrix in Eq. (\ref{matrix_P_v}) to be positive semidefinite. In particular, the part (ii) of the lemma shows Theorem \ref{theo-ineq}.

\begin{lemma}\label{matrix_P_v-psd}
The matrix in Eq. (\ref{matrix_P_v}) is positive semidefinite if and only if the following two conditions hold:
\begin{description}
\item[(i)]$a\geq c$,
\item[(ii)]$\alpha\leq k+4+\frac{4}{k-1}$.
\end{description}
\end{lemma}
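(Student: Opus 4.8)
The plan is to exploit the highly structured form of the matrix in Eq.~(\ref{matrix_P_v}). Write $m:=\alpha-k=|W|$, so that the $W$-block is $M_b=(a-c)I_m+cJ_m$, and set $s:=\tfrac{\alpha}{2}-\tfrac{\alpha}{2k}-1$ for the coupling to the node $v$. The matrix then reads
\[
A=\begin{pmatrix} M_b & s\,e\\ s\,e^{T} & \alpha-1\end{pmatrix},
\]
and I would decompose $\oR^{m+1}$ into the two orthogonal, $A$-invariant subspaces $U_1=\{(x,0): e^{T}x=0\}$ (of dimension $m-1$) and $U_2=\mathrm{span}\{(e,0),(0,1)\}$ (of dimension $2$). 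A direct check shows $A$ maps each of $U_1,U_2$ into itself; since they are orthogonal and invariant, $A\succeq 0$ if and only if the quadratic form $w\mapsto w^{T}Aw$ is nonnegative on each of them separately.

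On $U_1$ the matrix $M_b$ acts as multiplication by its eigenvalue $a-c$ (the $J_m$ part annihilates vectors with $e^{T}x=0$, and the coupling term vanishes), so nonnegativity there is exactly condition (i), $a\ge c$. On $U_2$, writing a vector as $\lambda(e,0)+\mu(0,1)$, the restricted quadratic form has Gram matrix
\[
B=\begin{pmatrix} e^{T}M_b e & s\,m\\ s\,m & \alpha-1\end{pmatrix}
=\begin{pmatrix} m\,(a+(m-1)c) & s\,m\\ s\,m & \alpha-1\end{pmatrix}.
\]
Here comes the first key computation: substituting the values of $a$ and $c$ from Eq.~(\ref{eqMb}) and using $m-1=\alpha-k-1$, the free parameter $b$ cancels and one obtains the clean identity $a+(m-1)c=k$. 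Since both diagonal entries $mk$ and $\alpha-1$ of $B$ are positive (as $\alpha>k\ge 2$), we have $B\succeq 0$ if and only if $\det B\ge 0$, i.e. if and only if $g(\alpha):=k(\alpha-1)-s^{2}m\ge 0$.

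It then remains to show that $g(\alpha)\ge 0$ is equivalent to condition (ii); this is the heart of the argument. Clearing denominators via $4k^{2}s^{2}=(\alpha(k-1)-2k)^{2}$, I would expand and simplify
\[
4k^{2}\,g(\alpha)=4k^{3}(\alpha-1)-\big(\alpha(k-1)-2k\big)^{2}(\alpha-k),
\]
a cubic in $\alpha$. The pleasant surprise is that after cancellation the cubic factors as
\[
4k^{2}\,g(\alpha)=(k-1)\,\alpha^{2}\big(k(k+3)-(k-1)\alpha\big).
\]
Since the prefactor $(k-1)\alpha^{2}/(4k^{2})$ is strictly positive, this yields $g(\alpha)\ge 0\iff (k-1)\alpha\le k(k+3)\iff \alpha\le \tfrac{k(k+3)}{k-1}=k+4+\tfrac{4}{k-1}$, which is precisely condition (ii). Combining the analyses on $U_1$ and $U_2$ gives the claimed biconditional.

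The main obstacle is purely computational rather than conceptual: everything reduces to verifying the cancellation of $b$ in $a+(m-1)c$ and carrying out the cubic expansion carefully enough to recognize the common factor $\alpha^{2}$. Once these two identities are established, the rest is routine linear algebra of a $2\times 2$ matrix. (The degenerate case $m=1$, i.e. $\alpha=k+1$, where $U_1=\{0\}$ and condition (i) is vacuous, is handled by the same decomposition and causes no difficulty.)
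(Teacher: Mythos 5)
Your proof is correct and follows essentially the same route as the paper's: both reduce positive semidefiniteness to condition (i) together with the scalar inequality $k(\alpha-1)\ge (\alpha-k)\bigl(\tfrac{\alpha}{2}-\tfrac{\alpha}{2k}-1\bigr)^2$ by means of the same key identity $a+(\alpha-k-1)c=k$, and then the same polynomial manipulation yields condition (ii). The only cosmetic differences are that you split $\oR^{m+1}$ into the invariant subspaces $U_1,U_2$ where the paper takes a Schur complement with respect to the $(v,v)$-entry, and that you exhibit the factorization $(k-1)\alpha^2\bigl(k(k+3)-(k-1)\alpha\bigr)$ of the cubic where the paper expands and cancels terms.
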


\begin{proof}
By taking the Schur complement of the matrix $P(v)[W\cup\{v\}]$ in (\ref{matrix_P_v}) with respect to its  $(v,v)$-entry  we obtain that $P(v)[W\cup\{v\}]\succeq 0$ if and only if 
\begin{center}
    $(a-c)I_{\alpha-k}+(c-\frac{1}{\alpha-1}(\frac{\alpha}{2}-\frac{\alpha}{2k}-1)^2)J_{\alpha-k}\succeq 0$.
\end{center}
This  happens if and only $a\geq c$ and the following inequality holds:
    $$a-c+ (\alpha-k)\Big(c-\frac{1}{\alpha-1}\big(\frac{\alpha}{2}-\frac{\alpha}{2k}-1\big)^2\Big)\geq 0.$$
We show that this last inequality holds if and only if (ii) holds. First, notice that $a+(\alpha-k-1)c=k$. Indeed, if we see this expression as a polynomial in $b$ then 
 the coefficient of $b$ is 
$$ -\frac{2}{k}(k-\alpha+1)- \frac{2}{k}(\alpha-k-1)=0$$
and the constant coefficient is 
    $$\alpha-3-\frac{2(1-\alpha)}{k}+(\alpha-k-1)(-1-\frac{2}{k})=k.$$
Therefore, the inequality $a-c+(\alpha-k)(c-\frac{1}{\alpha-1}(\frac{\alpha}{2}-\frac{\alpha}{2k}-1)^2)\geq 0$ is equivalent to 
    $$k(\alpha-1)\geq (\alpha-k)\Big(\frac{\alpha}{2}-\frac{\alpha}{2k}-1\Big)^2.$$
Multiplying both sides by $4k^2$, this  is equivalent to
    $$4k^3(\alpha-1)\geq (\alpha-k)(\alpha(k-1)-2k)^2$$
    $$ \Longleftrightarrow \ \ 4 k^3\alpha-4k^3 \geq (\alpha-k)(\alpha^2(k-1)^2-4k(k-1)\alpha+4k^2)
       $$
    $$\Longleftrightarrow \ \ 4k^3\alpha -4k^3\geq \alpha^3(k-1)^2  -\alpha^2k(k-1)^2        -4\alpha^2k(k-1)       +4\alpha k^3  -4k^3$$
after cancelling terms in the right hand side. Cancelling terms at both sides and dividing by $\alpha^2(k-1)$ (as $k\geq2$) we obtain 
    $\alpha(k-1)-4 k- k(k-1)\leq 0$
and thus the desired inequality (ii).
\end{proof}

\subsubsection{Lower bound on the number of isolated nodes  }

In Theorem \ref{theo-ineq}  we saw that if the subgraph $H_c$ of critical edges of $H$ is connected and the graph $G=H\oplus \overline{K_{\alpha-k}}$, obtained by adding $\alpha-k$ isolated nodes to a graph $H$ with $\alpha(H)=k$,  has $\rrank$ 1, then the parameters $\alpha$ and $k$ must satisfy the inequality  (\ref{ineq}).  So this gives the upper bound $\alpha-k\le 4 +4/(k-1)$ on the number of isolated nodes that can be added while preserving the $\rrank$ 1 property.

Here we provide some classes of graphs $H$ for which it is indeed possible to add this  maximum number of isolated nodes and preserve the $\rrank$ 1 property.  Hence, for these graphs, we  characterize the exact number of isolated nodes that can be added while preserving the $\rrank$ 1 property.

We begin with a preliminary lemma which we will use for our main result below.

\begin{lemma}\label{matrix-psd}
Assume  $\alpha\geq k\geq 2$ satisfy the inequality (\ref{ineq}), and let $M:=\alpha I_{\alpha-k}-J_{\alpha-k}$. Then
    $$
\left(
\begin{array}{c|c}
M & \frac{\alpha}{2}-\frac{\alpha}{2k}-1 \\
\hline
\frac{\alpha}{2}-\frac{\alpha}{2k}-1 & \alpha-1
\end{array}
\right)\succeq 0. $$
\end{lemma}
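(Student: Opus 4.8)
The plan is to recognize that the matrix in the statement is exactly a special instance of the matrix already analyzed in Lemma \ref{matrix_P_v-psd}. Indeed, the displayed matrix has the same block shape as the matrix $P(v)[W\cup\{v\}]$ in Eq.~(\ref{matrix_P_v}): off-diagonal block $\frac{\alpha}{2}-\frac{\alpha}{2k}-1$ and bottom-right entry $\alpha-1$. The only thing to verify is that the top-left block $M=\alpha I_{\alpha-k}-J_{\alpha-k}$ coincides with $M_b$ for a suitable value of $b$. First I would substitute $b=-1$ into the formulas for $a$ and $c$ in Eq.~(\ref{eqMb}): this gives $c=-1-\frac{2}{k}(-1+1)=-1$ and $a=\alpha-3-\frac{2}{k}\big(-(k+1)+1-\alpha+\alpha\big)=\alpha-3+2=\alpha-1$, so that $M_{-1}$ has diagonal entries $\alpha-1$ and off-diagonal entries $-1$, i.e.\ $M_{-1}=\alpha I_{\alpha-k}-J_{\alpha-k}=M$, as wanted.

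Once this identification is made, I would simply invoke Lemma \ref{matrix_P_v-psd}, which states that a matrix of this form is positive semidefinite if and only if (i) $a\geq c$ and (ii) $\alpha\leq k+4+\frac{4}{k-1}$. Here condition (i) reads $\alpha-1\geq -1$, which holds trivially since $\alpha\geq k\geq 2$, while condition (ii) is exactly the hypothesis (\ref{ineq}). This immediately yields the claimed positive semidefiniteness.

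For robustness I would also record a direct, self-contained argument that avoids the identification step. Since $\alpha-1>0$, one can take the Schur complement with respect to the $(v,v)$-entry and reduce the claim to $\alpha I_{\alpha-k}-\lambda J_{\alpha-k}\succeq 0$, where $\lambda=1+\frac{1}{\alpha-1}\big(\frac{\alpha}{2}-\frac{\alpha}{2k}-1\big)^2$. As $J_{\alpha-k}$ has eigenvalues $\alpha-k$ (once) and $0$, this matrix has smallest eigenvalue $\alpha-(\alpha-k)\lambda$, so positive semidefiniteness is equivalent to $k(\alpha-1)\geq(\alpha-k)\big(\frac{\alpha}{2}-\frac{\alpha}{2k}-1\big)^2$; this is precisely the inequality shown in the proof of Lemma \ref{matrix_P_v-psd} to be equivalent to (\ref{ineq}).

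The argument is essentially immediate, so there is no real obstacle; the only points requiring a little care are the degenerate case $\alpha=k$, where the block $M$ is empty and the matrix reduces to the scalar $\alpha-1\geq 0$, and verifying the algebraic equivalence between the eigenvalue condition and (\ref{ineq}) --- but the latter has already been carried out in the proof of Lemma \ref{matrix_P_v-psd} and need not be repeated.
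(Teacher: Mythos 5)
Your proposal is correct and takes essentially the same route as the paper: the paper's proof likewise sets $b=-1$ in Eq.~(\ref{eqMb}), obtains $a=\alpha-1$ and $c=-1$ so that $M_b=\alpha I_{\alpha-k}-J_{\alpha-k}$, and concludes from Lemma~\ref{matrix_P_v-psd} using $a\ge c$ together with (\ref{ineq}). Your supplementary Schur-complement argument and the remark on the degenerate case $\alpha=k$ are correct but not needed.
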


\begin{proof}
The above matrix corresponds to the matrix   in Eq. (\ref{matrix_P_v}) with $b=-1$, which gives $a=\alpha-1$ and $c=-1$, so that  $M=M_b=M_{-1}$. As $a\ge c$, using   Lemma \ref{matrix_P_v-psd}, we get the desired result.
\end{proof}

\begin{theorem}\label{adding_iso}
Given integers $\alpha\ge k\ge 2$, let $H=(V,E)$ be a graph with $\alpha(H)=k$ and let $G=H\oplus \overline{K_{\alpha-k}}$.
Assume that  $\rrank(H\setminus i^\perp)=0$ for all $i\in V$ and $\rrank(H)=1$. 
In addition assume that $\alpha,k$ satisfy the inequality (\ref{ineq}). Then we have $\rrank(G)= 1$.
\end{theorem}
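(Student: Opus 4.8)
The statement packages two inequalities: the upper bound $M_G\in\MK^{(1)}_n$ (where $n=|V|+(\alpha-k)$) and the lower bound $M_G\notin\MK^{(0)}_n$. The lower bound is the easy half. Since $\vartheta^{(0)}=\vartheta'$ is additive under disjoint unions, I would write $\vartheta^{(0)}(G)=\vartheta^{(0)}(H)+\vartheta^{(0)}(\overline{K_{\alpha-k}})=\vartheta^{(0)}(H)+(\alpha-k)$. As $\rrank(H)=1$ forces $\vartheta^{(0)}(H)>k=\alpha(H)$, this gives $\vartheta^{(0)}(G)>\alpha=\alpha(G)$, i.e. $\rrank(G)\ge 1$. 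So the whole content is the construction of a $\MK^{(1)}$-certificate $\{P(i):i\in V\cup W\}$ for $M_G$, where $W$ denotes the added $\alpha-k$ isolated nodes.

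By Remark~\ref{remK0} it suffices to produce matrices $P(i)\in\MK^{(0)}$ satisfying conditions (ii)--(iv) of Lemma~\ref{lemK1}. For every $w\in W$ I would take the single matrix $P(w)=P_W:=\bigl(\begin{smallmatrix}\alpha I-J & -J\\ -J & \frac{\alpha-k}{k}J\end{smallmatrix}\bigr)$ (blocks indexed by $W$, then $V$); a short Cauchy--Schwarz (equivalently Schur-complement) computation shows $P_W\succeq 0$ unconditionally, and it has the right diagonal $\alpha-1$ on $W$. For $v\in V$, testing conditions (ii)--(iv) against the $P_W$'s already pins down the $\{v\}\cup W$ part of $P(v)$: one gets $P(v)[W]=\alpha I-J$, $P(v)_{ww}=\alpha-1$, and $P(v)_{vw}=\tau:=\tfrac{\alpha}{2}-\tfrac{\alpha}{2k}-1$, so that $P(v)[\{v\}\cup W]$ is exactly the matrix of Lemma~\ref{matrix-psd}. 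This is where the hypothesis enters first: that matrix is positive semidefinite precisely because $\alpha,k$ satisfy the inequality (\ref{ineq}) (Lemma~\ref{matrix_P_v-psd}).

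It remains to fix the $V$-block $P(v)[V]$ and the cross entries $P(v)_{uw}$ for $u\in V\setminus v$. I would set these cross entries to the constant $\mu:=-\tfrac32-\tfrac{\alpha-k}{2k}$, forced by the mixed triangle inequalities (iv) to hold with equality on non-edges, and take $P(v)[V]=Q(v)+(\alpha-k)(I+A_H)+(\text{a rank-one correction})$, where $\{Q(u):u\in V\}$ is a genuine ($\konec$) for $M_H$ supplied by $\rrank(H)=1$. The verification of (ii)--(iv) is then organized by the type of the triple/pair: those inside $W$, and those mixing one $V$-index with $W$, reduce to the explicit identities just listed, while triples inside $V$ collapse---after cancelling the $(\alpha-k)(I+A_H)$ and rank-one terms---to the inequalities (iii),(iv) for $M_H$, which hold because $\{Q(u)\}$ certifies $M_H$; throughout, the residual coefficients $c_{ijk}$ stay nonnegative.

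The one genuinely hard step, and the crux of the proof, is showing that each $P(v)$ ($v\in V$) actually lies in $\MK^{(0)}$. After removing the nonnegative matrix $(\alpha-k)A_H$, this reduces to a positive-semidefiniteness statement for a matrix $S_v$ coupling $V$ and $W$. Taking the Schur complement with respect to the invertible block $\alpha I-J$ (for which $\mathbf 1^{T}(\alpha I-J)^{-1}\mathbf 1=\tfrac{\alpha-k}{k}$), the condition becomes $S_v[V]\succeq \tfrac{\alpha-k}{k}\,\ell\ell^{T}$ for the explicit vector $\ell=\tau e_v+\mu\sum_{u\neq v}e_u\in\oR^{V}$. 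Establishing this inequality is exactly where all three hypotheses must be combined: the $\{v\}\cup W$ corner is governed by (\ref{ineq}), the directions in $V\setminus v$ are controlled by the certificate $Q(v)$ of $M_H$, and their compatibility---the behaviour of $Q(v)$ along $\ell$---should be forced by $\rrank(H\setminus v^\perp)=0$, which provides a $\kzeroc$ of $M_{H\setminus v^\perp}$ supported on $V\setminus v^\perp$. I expect the precise form of the rank-one correction and the bookkeeping of this Schur-complement positivity to be the main obstacle; everything else is the routine matching of coefficients already rehearsed in Lemmas~\ref{P[W]}--\ref{P_v}.
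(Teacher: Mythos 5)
Your outline agrees with the paper's up to the point where the real work starts: the matrices $P(w)$ for the isolated nodes $w\in W$ are exactly the paper's choice, the corner $P(v)[W\cup\{v\}]$ is pinned down as you say and its positive semidefiniteness is precisely Lemma \ref{matrix-psd}, i.e.\ inequality (\ref{ineq}); and your lower-bound argument via additivity of $\vartheta^{(0)}$ over disjoint unions is correct (the paper's proof in fact only constructs the certificate and leaves $\rrank(G)\ge 1$ implicit; note this is the \emph{only} place where the hypothesis $\rrank(H)=1$ is used). The genuine gap is that everything after this point --- the $V\times W$ cross entries, the $V$-block of $P(v)$, and the proof that $P(v)\in\MK^{(0)}$ --- is the entire content of the paper's proof, and you explicitly defer it (``the main obstacle''). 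Moreover, the ansatz you commit to cannot be completed as stated. Your claim that condition (iv) forces $P(v)_{uw}=\mu$ for \emph{all} $u\ne v$ is wrong: for an edge $\{u,v\}$ the triple inequality only bounds $P(u)_{vw}+P(v)_{uw}$ by $2\tau$, not $2\mu$, and the paper crucially uses the larger value $\tau=\frac{\alpha}{2}-\frac{\alpha}{2k}-1$ on all of $v^\perp$, reserving $\mu=-\frac{\alpha}{2k}-1$ only for $V\setminus v^\perp$. With your uniform choice, membership $P(v)\in\MK^{(0)}$ becomes impossible: writing $P(v)=P'+N$ with $P'\succeq 0$, $N\ge 0$, and testing the principal submatrix $P'[\{u\}\cup W]$ against vectors $(t,1,\dots,1)$, $t\ge 0$, forces $P(v)_{uu}\ge \frac{\alpha-k}{k}\mu^2$ for every $u\ne v$; on the other hand your own consistency computation (condition (iii) paired with the certificate $\{Q(u)\}$ of $M_H$) forces $P(v)_{uu}=Q(v)_{uu}+(\alpha-k)$. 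For $H=C_5$, $k=2$, $\alpha=10$ (an admissible case of (\ref{ineq}), namely $C_5\oplus\overline{K_8}$) this reads $9\ge 49$, a contradiction. So the unspecified ``rank-one correction'' would have to carry essentially the whole construction, while simultaneously respecting the equalities (iii) and the triple inequalities (iv) that couple $P(v)$ to every other $P(u)$; no argument is offered that this is possible.

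The second structural divergence is your use of a $\konec$ for $M_H$ supplied by $\rrank(H)=1$: the paper never uses such a certificate in the construction. Its $P(v)[V]$ is block-constant --- $(\alpha-1)J$ on $v^\perp$, $\bigl(\frac{\alpha}{2}-1-\frac{\alpha^2}{2k}\bigr)J$ between $v^\perp$ and $V\setminus v^\perp$, and on $V\setminus v^\perp$ the entries $\frac{\alpha^2}{k}-1$ (equal or adjacent) and $-1$ (otherwise) --- and it is in general \emph{not} positive semidefinite. Membership in $\MK^{(0)}$ is instead proved by splitting $P(v)=Q(v)+\frac{\alpha^2}{k(k-1)}R(v)$, where $Q(v)\succeq 0$ because its columns indexed by $V\setminus v^\perp$ are explicit linear combinations of the columns indexed by $W\cup\{v\}$ and the remaining corner is PSD by Lemma \ref{matrix-psd}, while $R(v)\in\MK^{(0)}$ because $R(v)$ dominates entrywise the zero-padded matrix $M_{H\setminus v^\perp}$, which lies in $\MK^{(0)}$ by the hypothesis $\rrank(H\setminus v^\perp)=0$. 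So that hypothesis enters as an entrywise-domination argument on the block indexed by $V\setminus v^\perp$, not --- as you guess --- as control of $Q(v)$ along the vector $\ell$ in a Schur-complement inequality. Until you either exhibit a correction that makes your ansatz work (the computation above shows it must also change the cross entries on $v^\perp$) or switch to a construction of the paper's type, the proof is incomplete at its crux.
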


\begin{proof}
We construct a $\mathcal K^{(1)}$-certificate for the matrix $M_G$. That is, we  construct matrices  $P(i)$ (for $i\in W\cup V$) that satisfy the properties of Lemma \ref{lemK1}. Recall  Remark \ref{remK0} where we observed that it will suffice to show that the matrices $P(i)$ belong to the cone $\mathcal K^{(0)}$.
For this consider the following construction (inspired from \cite{GL2007}), where we set $M:=\alpha I_{\alpha-k}-J_{\alpha-k}$.
\\
$\bullet$ 
For $i\in V$, we set
    $$
P(i)=
\left(
\begin{array}{c|c|c}
M & \frac{\alpha}{2}-\frac{\alpha}{2k}-1 & -\frac{\alpha}{2k}-1 \\
\hline
\frac{\alpha}{2}-\frac{\alpha}{2k}-1 &  \alpha-1 &
      \frac{\alpha}{2}-1-\frac{\alpha^2}{2k} \\
      \hline
    -\frac{\alpha}{2k}-1 & \frac{\alpha}{2}-1-\frac{\alpha^2}{2k} &
    \left\{ \begin{array}{ll}
      \frac{\alpha^2}{k}-1 & \text{ if }i\simeq j \\
      -1 & \text{ else}
      \end{array} \right.
\end{array}
\right),
$$
where the blocks are indexed by $W$, $i^\perp$ and  $V \setminus i^\perp$, respectively. Here the notation $i\simeq j$ means that the nodes $i$ and $j$ are equal or adjacent in $G$.
\\
$\bullet$ For $i\in W$, we set
$$
   P(i)=\left( \begin{array}{c|c}
M & -1\\
\hline
-1 &  \frac{\alpha-k}{k}J 
\end{array} \right),
$$
where the blocks are indexed by $W$ and $V$, respectively. 

First we show that the matrix $P(i)$ is positive semidefinite for all $i\in  W$.  Indeed, deleting repeated rows and columns and taking the Schur complement with respect to the lower right corner we obtain that $P(i)\succeq 0$ if and only if $0\preceq M -\frac{k}{\alpha-k}J_{\alpha-k}=\alpha I_{\alpha-k}-\frac{\alpha}{\alpha-k}J_{\alpha-k}$,  which is indeed true.

Next we show that 
$P(i)\in \mathcal K^{(0)}$ for all $i\in V$. For this, let $i\in V$ and observe that  we can decompose $P(i)$ as $P(i)=Q(i)+\frac{\alpha^2}{k(k-1)}R(i)$, where
$$
Q(i)=\left(
\begin{array}{c|c|c}
M & \frac{\alpha}{2}-\frac{\alpha}{2k}-1 & -\frac{\alpha}{2k}-1 \\
\hline
\frac{\alpha}{2}-\frac{\alpha}{2k}-1 &  \alpha-1 &
      \frac{\alpha}{2}-1-\frac{\alpha^2}{2k} \\
      \hline
    -\frac{\alpha}{2k}-1 & \frac{\alpha}{2}-1-\frac{\alpha^2}{2k} & \frac{\alpha^2}{k(k-1)}-1
\end{array} 
\right) \ \text{ and } \ R(i)=\left(
\begin{array}{c|c|c}
0 & 0 & 0 \\
\hline
0 &  0 &
      0 \\
      \hline
    0 & 0 & 
 \left\{   \begin{array}{ll}
      k-2 & \text{ if }i\simeq j \\
      -1 & \text{ else}
     \end{array} \right.
\end{array} 
\right),$$
whose blocks are indexed by $W$, $i^\perp$ and $V\setminus i^\perp$, respectively.
We prove that 
$Q(i)\succeq 0$ and $R(i)\in \mathcal K^{(0)}$.

 First, we show that $Q(i)$ is positive semidefinite. By Lemma \ref{matrix-psd} we know
that the submatrix $Q(i)[W\cup i^\perp]$ is positive semidefinite. 
We will now show that any column $C_v$ of $Q(i)$ indexed by a node $v\in V\setminus i^\perp$ (in the third block) can be expressed as a linear combination of the columns $C_u$ indexed by $u\in W\cup \{i\}$ (in the first two blocks), which directly implies that $Q(i)\succeq 0$. Namely, one can show  $C_v= \frac{1}{1-k}(\sum_{j\in W} C_j +C_i)=:C$ by direct inspection of the entries:\\
- for the entries  indexed by $u\in I$ we have:
$$
C_u= {1\over 1-k} \Big(\alpha-1-(\alpha-k-1)+{\alpha\over 2}-{\alpha\over 2k}-1\Big) = -1-{\alpha\over 2k}= (C_v)_u,
$$
- for the entries indexed by $u\in i^\perp$ we have:
$$C_u= {1\over 1-k}\Big( (\alpha-k)\Big({\alpha\over 2}-{\alpha\over 2k}-1\Big)+\alpha -1\Big) = -1+{\alpha\over 2}-{\alpha^2\over 2k},$$
- for the entries indexed by $u\in V\setminus i^\perp$ we have:
$$C_u= {1\over 1-k}\Big( (\alpha-k)\Big(-{\alpha\over 2k}-1\Big) +{\alpha\over 2}-1-{\alpha^2\over 2k}\Big) = {\alpha^2\over k(k-1)}-1.$$

Now we show that $R(i)\in \MK^{(0)}$. For this note that $\alpha(H\setminus i^\perp)\leq k-1$, which implies the entry-wise inequality 
$$\left(
\begin{array}{c|c}
0 & 0\\
\hline
0 & M_{H\setminus i^{\perp}}
\end{array}
\right) \leq R(i).$$
By hypothesis $M_{H\setminus i^\perp}\in\MK^{(0)}$. Since adding zero row/columns preserve membership  in $\MK^{(0)}$ we get that $R(i)\in \MK^{(0)}$.

To conclude the proof we now need  to check that the linear constraints (ii)-(iv) of Lemma \ref{lemK1} are satisfied by the matrices $P(i)$. This is direct case checking, but we give the details for clarity.
\begin{description}
\item [Identity (ii):] $P(v)_{vv}=\alpha-1=(M_G)_{vv}$ \ for all $v\in V\cup I$.
	\item [Identity (iii):] We check that $P(u)_{vv}+2P(v)_{uv} = (M_G)_{vv}+2(M_G)_{uv}$ for all $u \ne v\in I\cup V$:   
	\begin{description}
		\item [-] for $i,j\in I$, we have   $P(i)_{jj}+2P(j)_{ij}=\alpha-1-2=\alpha-3,$
		\item [-] for $i\in I$, $v\in V$, we have 
			\begin{itemize}
				\item $P(i)_{vv}+2P(v)_{iv}=\frac{\alpha-k}{k}+\alpha-\frac{\alpha}{k}-2=\alpha-3,$
				\item $P(v)_{ii}+2P(i)_{iv}=\alpha-1-2=\alpha-3,$
			\end{itemize}
			
		\item  [-] for $u,v\in V$, we have
			\begin{itemize}
				\item if $\{u,v\}\in E$ then $P(u)_{vv}+2P(v)_{uv}=3\alpha-3,$
				\item if  $\{u,v\} \notin E$ then  $P(u)_{vv}+2P(v)_{uv}=\frac{\alpha^2}{k}-1+2(\frac{\alpha}{2}-1-\frac{\alpha^2}{2k})=\alpha-3.$
			\end{itemize}	
		\end{description}

\item[Inequality (iv):] We check $P(u)_{vw}+P(v)_{uw}+P(w)_{uv}\le (M_G)_{uv}+(M_G)_{vw}+(M_G)_{vw}$ for distinct $u,v,w\in I\cup V$:
	\begin{description}
		\item[-] for  $i,j,k\in I$ we have   $P(i)_{jk}+P(j)_{ik}+P(k)_{ij}=-3,$
		 \item [-] for  $i,j\in I, v\in V$ we have $P(i)_{jv}+P(j)_{iv}+P(v)_{ij}=-3,$
		\item [-] for $i\in I, u,v\in V$ we have
			\begin{itemize}
        				\item if $\{u,v\}\notin E$ then   $P(i)_{uv}+P(u)_{iv}+P(v)_{iu}=\frac{\alpha-k}{k}-2(\frac{\alpha}{2k}+1)=-3,$
        				\item  if $\{u,v\}\in E$ then $P(i)_{uv}+P(u)_{iv}+P(v)_{iu}=\frac{\alpha-k}{k}+2(\frac{\alpha}{2}-\frac{\alpha}{2k}-1)=\alpha-3,$
			\end{itemize}
		\item[-] for $u,v,w\in V$ we have
			\begin{itemize}
        				\item if $\{u,v\},\{v,w\}, \{u,w\}\in E$ then  $P(u)_{vw}+P(v)_{uw}+P(w)_{uv}=3(\alpha-1),$
        				\item if $\{u,v\},\{u,w\}\in E$, $\{v,w\}\notin E$ then $P(u)_{vw}+P(v)_{uw}+P(w)_{uv}=\alpha-1+2(\frac{\alpha}{2}-1-\frac{\alpha^2}{2k})=2\alpha-3-\frac{\alpha^2}{2k}\leq 2\alpha-3,$
        				\item if $\{u,v\}\in E$, $\{u,w\},\{v,w\}\notin E$ then $P(u)_{vw}+P(v)_{uw}+P(w)_{uv}=2(\frac{\alpha}{2}-1-\frac{\alpha^2}{2k})+\frac{\alpha^2}{k}-1=\alpha-3,$
        				\item if $\{u,v\},\{u,w\}, \{v,w\} \notin E$ then $P(u)_{vw}+P(v)_{uw}+P(w)_{uv}=-3.$
    			\end{itemize}
	\end{description} 
\end{description}
This completes the proof.
\end{proof}

We now give  some examples of graphs for which the conditions of Theorem \ref{theo-ineq} and \ref{adding_iso} hold, so that we are able to compute the exact number of isolated nodes that can be added with  the resulting graph still having  $\rrank$ 1.

\begin{coro} \label{cor-cycle-isolated}
For any integer  $n\geq 2$ the following holds: 
\begin{description} 
	\item [(i)] $\rrank(C_{2n+1} \oplus \overline{K_m})=1$ if and only if $m\leq 4+\frac{4}{n-1}$. 
	\item [(ii)] $\rrank(\overline{C_{2n+1} }\oplus \overline{K_m})=1$ if and only if $m\leq 8$.
\end{description}
\end{coro}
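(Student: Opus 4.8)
The plan is to derive both statements as direct applications of the two main results of this subsection: Theorem \ref{theo-ineq} supplies the upper bound on the number of admissible isolated nodes, and Theorem \ref{adding_iso} supplies the matching construction. In both cases the scheme is identical: set $k := \alpha(H)$, check that the subgraph $H_c$ of critical edges is connected, that $\rrank(H) = 1$, and that $\rrank(H \setminus i^\perp) = 0$ for every node $i$, and then read off what the inequality (\ref{ineq}), namely $\alpha \le k + 4 + \tfrac{4}{k-1}$, says about $m = \alpha - k$. Throughout, the case $m = 0$ is trivial since then $G = H$ and $\rrank(G) = \rrank(H) = 1$, so I would assume $m \ge 1$ (equivalently $\alpha > k$).

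For (i) I would take $H = C_{2n+1}$, so that $k = \alpha(C_{2n+1}) = n$ and $G = C_{2n+1}\oplus \overline{K_m}$ has $\alpha(G) = n+m$. By Example \ref{rk-odd-compl} the graph $C_{2n+1}$ is critical and connected, so $H_c = C_{2n+1}$ is connected, and $\rrank(C_{2n+1}) = 1$. The only computation required is that $C_{2n+1}\setminus i^\perp$ is a path on $2n-2$ vertices, which is perfect and covered by $n-1 = \alpha(C_{2n+1}\setminus i^\perp)$ edges, whence $\rrank(C_{2n+1}\setminus i^\perp) = 0$. For the ``only if'' direction, Theorem \ref{theo-ineq} then gives $n+m \le n + 4 + \tfrac{4}{n-1}$, i.e.\ $m \le 4 + \tfrac{4}{n-1}$; for the ``if'' direction, when $m \le 4 + \tfrac{4}{n-1}$ the pair $(\alpha,k) = (n+m,\,n)$ satisfies (\ref{ineq}), and Theorem \ref{adding_iso} yields $\rrank(G) = 1$.

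For (ii) I would take $H = \overline{C_{2n+1}}$, so that $k = \alpha(\overline{C_{2n+1}}) = \omega(C_{2n+1}) = 2$ and $\alpha(G) = 2+m$. By Example \ref{rk-odd-compl} the critical edges of $\overline{C_{2n+1}}$ are the pairs $\{i,i+2\}$, which form a single spanning $(2n+1)$-cycle; hence $H_c$ is connected, and $\rrank(\overline{C_{2n+1}}) = 1$. Here $\overline{C_{2n+1}}\setminus i^\perp$ is the subgraph induced on the two cycle-neighbours of $i$, which are adjacent in the complement (being at cycle-distance $2$), so it is a single edge $K_2$ with $\rrank(K_2) = 0$. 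Since $k=2$ makes the threshold $k+4+\tfrac{4}{k-1}$ equal to $10$, inequality (\ref{ineq}) reads $\alpha \le 10$, i.e.\ $m \le 8$; applying Theorem \ref{theo-ineq} for ``only if'' and Theorem \ref{adding_iso} for ``if'' gives the stated equivalence.

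The arguments are short because the substantive work is carried by Theorems \ref{theo-ineq} and \ref{adding_iso}; the only genuine content is verifying their hypotheses, and the one point to handle with care is the identification of $H\setminus i^\perp$ (a path for the odd cycle, an edge for its complement) together with the check that these subgraphs have $\rrank$ equal to $0$. For completeness I would also note that $\rrank(G)\ge 1$ holds independently via Proposition \ref{lem-components_critical}: in both families the critical-edge subgraph $G_c$ has a connected component (the whole cycle, respectively the whole anticycle) that is not a clique of $G$, so $\rrank(G)$ cannot be $0$. This confirms the value is exactly $1$ across the admissible range, rather than merely at most $1$.
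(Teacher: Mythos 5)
Your proposal is correct and follows essentially the same route as the paper: the paper's proof likewise deduces the ``if'' parts from Theorem \ref{adding_iso} (citing Example \ref{odd-cycles} for $\rrank(H\setminus i^\perp)=0$, which you instead verify directly via the path, respectively the edge $K_2$) and the ``only if'' parts from Theorem \ref{theo-ineq} using that $C_{2n+1}$ is critical and that the critical edges of $\overline{C_{2n+1}}$ form a connected spanning subgraph. Your closing remark that $\rrank(G)\ge 1$ via Proposition \ref{lem-components_critical} is correct but redundant, since Theorem \ref{adding_iso} already asserts equality $\rrank(G)=1$.
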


\begin{proof}
Consider the graph $H=C_{2n+1}$ or $H=\overline{C_{2n+1}}$.
As pointed out in Example \ref{odd-cycles}, 
 $H$ satisfies  the property: $\rrank (H\setminus i^{\perp})=0$ for all $i\in V$, and thus   the assumption  of Theorem \ref{adding_iso} holds. For $H=C_{2n+1}$ the inequality (\ref{ineq}) reads $m\le 4 + {4\over n-1}$ and, for $H=\overline {C_{2n+1}}$, it reads $m\le 8$. So the `if part' in both (i), (ii) follows as a direct application of 
Theorem \ref{adding_iso}.

The `only if' part in both (i), (ii) follows as a direct application of  Theorem \ref{theo-ineq},  since the graph $C_{2n+1}$ is critical while the subgraph of critical edges of  $\overline{C_{2n+1}}$ is a connected graph.
\end{proof}

\begin{coro}\label{corH2}
Assume  $H$ is a graph with $\overline{\chi}(H)>\alpha(H)=2$.
 Then,  $\rrank(H\oplus \overline{K_m})=1$ if and only if $m\leq 8$.
\end{coro}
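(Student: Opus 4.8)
The plan is to deduce Corollary \ref{corH2} from the two main results of this subsection, Theorem \ref{theo-ineq} and Theorem \ref{adding_iso}, after first recording the basic properties of $H$ itself. Since $\alpha(H)=2$, every subgraph $H\setminus i^\perp$ has $\alpha(H\setminus i^\perp)\le 1$ and is therefore a clique, so $\rrank(H\setminus i^\perp)=0$ for all $i\in V$. Moreover $\rrank(H)\le 1$ by Example \ref{odd-cycles}, while $\rrank(H)\ge 1$ by Lemma \ref{lem-alpha-2} (as $\overline\chi(H)>\alpha(H)$); hence $\rrank(H)=1$. Writing $k:=\alpha(H)=2$ and $\alpha:=\alpha(H\oplus\overline{K_m})=2+m$, the inequality (\ref{ineq}) reads $\alpha\le k+4+\tfrac{4}{k-1}=10$, i.e.\ $m\le 8$.

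For the ``if'' direction this makes everything immediate: when $m\le 8$ all hypotheses of Theorem \ref{adding_iso} are in force (namely $\rrank(H)=1$, $\rrank(H\setminus i^\perp)=0$ for all $i$, and (\ref{ineq})), so $\rrank(H\oplus\overline{K_m})=1$.

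For the ``only if'' direction the key structural input is that $\alpha(H)=\omega(\overline H)=2$ makes $\overline H$ triangle-free, while $\chi(\overline H)=\overline\chi(H)>2$ makes $\overline H$ non-bipartite. Thus $\overline H$ contains a shortest odd cycle, which is induced and, being triangle-free, has length $2n+1\ge 5$ with $n\ge 2$. Complementing on that vertex set shows $H$ contains an induced $\overline{C_{2n+1}}$ on some $U\subseteq V(H)$. Setting $G:=H\oplus\overline{K_m}$ with added isolated set $W$, the induced subgraph $G[U\cup W]$ equals $\overline{C_{2n+1}}\oplus\overline{K_m}$ and has the same stability number $2+m=\alpha(G)$; so Lemma \ref{indu-same-alpha} gives $\rrank(\overline{C_{2n+1}}\oplus\overline{K_m})\le\rrank(G)=1$.

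Finally I would upgrade this to equality, showing the rank is exactly $1$ and not $0$. The critical edges of $\overline{C_{2n+1}}\oplus\overline{K_m}$ are precisely the critical edges of $\overline{C_{2n+1}}$ (adding isolated nodes shifts the stability number equally on both sides of the criticality test), and by Example \ref{rk-odd-compl}(ii) these span all of $V(\overline{C_{2n+1}})$ in a single connected subgraph, whereas $\overline{C_{2n+1}}$ is not complete for $n\ge 2$. If the rank were $0$, Proposition \ref{lem-components_critical} would force the connected component $V(\overline{C_{2n+1}})$ of the critical-edge graph to be a clique of $\overline{C_{2n+1}}\oplus\overline{K_m}$, a contradiction; hence $\rrank(\overline{C_{2n+1}}\oplus\overline{K_m})=1$ and Corollary \ref{cor-cycle-isolated}(ii) yields $m\le 8$. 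The one subtlety, and the step I would be most careful about, is that Theorem \ref{theo-ineq} cannot be applied to $H$ directly, since $H_c$ need not be connected (for instance $H=\overline{C_5\oplus C_5}$ has $H_c$ splitting into two disjoint $5$-cycles); passing to the induced $\overline{C_{2n+1}}$, whose critical-edge graph is connected, is exactly what repairs this.
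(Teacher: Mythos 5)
Your proof is correct, and its skeleton is the same as the paper's: the ``if'' part by checking the hypotheses of Theorem \ref{adding_iso}, and the ``only if'' part by locating an induced $\overline{C_{2n+1}}$ ($n\ge 2$) in $H$, passing to the induced subgraph $\overline{C_{2n+1}}\oplus\overline{K_m}$ of $H\oplus\overline{K_m}$ (which has the same stability number), and invoking Lemma \ref{indu-same-alpha} together with Corollary \ref{cor-cycle-isolated}. The differences are in two places, both to your credit. First, the paper finds the induced subgraph by observing that $\overline{\chi}(H)>\alpha(H)$ makes $H$ imperfect and then invoking the strong perfect graph theorem (using $\alpha(H)=2$ to exclude long odd holes); you instead note that $\overline H$ is triangle-free and non-bipartite, so a shortest odd cycle of $\overline H$ is induced of length at least $5$ --- an elementary argument that avoids SPGT entirely, at no cost in length. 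Second, you explicitly rule out $\rrank\bigl(\overline{C_{2n+1}}\oplus\overline{K_m}\bigr)=0$ via Proposition \ref{lem-components_critical} (after checking that the critical edges of the disjoint union are exactly those of $\overline{C_{2n+1}}$, which form a connected spanning non-clique subgraph by Example \ref{rk-odd-compl}); the paper's proof silently upgrades ``$\rrank\ne 1$ for $m\ge 9$'' (which is all Corollary \ref{cor-cycle-isolated} literally gives) to ``$\rrank\ge 2$'', so your argument patches a small step the paper leaves implicit. Likewise, your verification that $\rrank(H)=1$ (via Example \ref{odd-cycles} and Lemma \ref{lem-alpha-2}) and $\rrank(H\setminus i^\perp)=0$ makes the application of Theorem \ref{adding_iso} airtight, where the paper only says it ``follows directly''. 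Your closing observation that Theorem \ref{theo-ineq} cannot be applied to $H$ itself (illustrated by $H=\overline{C_5\oplus C_5}$, whose critical-edge graph is disconnected) correctly identifies why the reduction to the induced odd antihole is indispensable in both proofs.
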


\begin{proof}
The `if' part follows directly from Theorem \ref{adding_iso}. Now we prove that $\rrank(H\oplus \overline{K_m})\geq 2$ for $m\geq 9$. Since $H$ is not perfect it contains the graph $H_0=C_5$ or $H_0=\overline{C_{2n+1}}$ ($n\ge 2$) as an induced subgraph. Hence, $H_0\oplus \overline{K_m}$ is an induced subgraph of $H\oplus \overline{K_m}$ with the same stability number. Then, by Lemma \ref{indu-same-alpha}, $\rrank(H\oplus \overline{K_m})\geq \rrank(H_0\oplus \overline{K_m})\geq 2,$ where the last inequality follows from Corollary \ref{cor-cycle-isolated}.
\end{proof}

\begin{coro}
Consider a graph $H$ and a connected component $H_0$ of $H$. Assume $\alpha(H_0) \geq 2$ and the subgraph $(H_0)_c$ of critical edges of $H_0$ is connected. 
Then the following holds:
\begin{description}
\item[(i)]  If $\alpha(H)\geq \alpha(H_0)+9$ then $\rrank(H)\geq 2$.
\item[(ii)]  If $\alpha(H)\leq \alpha(H_0)+8$ then $\rrank(H\oplus \overline{K_s})\geq 2$ for $s\geq 9-\alpha(H)+\alpha(H_0)$.
\end{description}
\end{coro}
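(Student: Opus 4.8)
The plan is to reduce both parts to a single core fact and then invoke Theorem~\ref{theo-ineq}. Set $k:=\alpha(H_0)$. I will first prove the following unified claim: \emph{if $G$ is any graph having $H_0$ as a connected component, with $\alpha(H_0)=k\ge 2$, $(H_0)_c$ connected, and $\alpha(G)\ge k+9$, then $\rrank(G)\ge 2$.} Part (i) then follows by taking $G=H$, where $\alpha(H)\ge k+9$ is exactly the hypothesis. Part (ii) follows by taking $G=H\oplus\overline{K_s}$, since then $\alpha(G)=\alpha(H)+s\ge \alpha(H)+(9-\alpha(H)+k)=k+9$, while $H_0$ remains a connected component of $G$ (adding isolated nodes does not touch it).

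To prove the claim, I would write $G=H_0\oplus G'$, where $G'$ collects the remaining components, so that $\alpha(G')=\alpha(G)-k\ge 9$. Picking a maximum stable set $S'$ of $G'$, the induced subgraph $G[V(H_0)\cup S']$ equals $G_0:=H_0\oplus\overline{K_{\alpha(G)-k}}$ and has stability number $k+\alpha(G')=\alpha(G)$. Hence, by Lemma~\ref{indu-same-alpha}, it suffices to show $\rrank(G_0)\ge 2$, since then $\rrank(G)\ge\rrank(G_0)\ge 2$. I will obtain $\rrank(G_0)\ge 2$ by separately ruling out $\rrank(G_0)=1$ and $\rrank(G_0)=0$.

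For $\rrank(G_0)\ne 1$ I would apply Theorem~\ref{theo-ineq} with $H_0$ in the role of the graph $H$ and $\alpha(G_0)=\alpha(G)$ in the role of $\alpha$; the hypothesis that $(H_0)_c$ is connected is available, and $\alpha(G_0)>k$ since $\alpha(G_0)\ge k+9$. If $\rrank(G_0)$ were $1$, the theorem would force $\alpha(G_0)\le k+4+\tfrac{4}{k-1}\le k+8$ (using $k\ge 2$), contradicting $\alpha(G_0)=\alpha(G)\ge k+9$. For $\rrank(G_0)\ne 0$ I would use Proposition~\ref{lem-components_critical}. The key observation is that the critical edges of $G_0$ coincide with the critical edges of $H_0$: deleting an edge of $H_0$ changes $\alpha(G_0)$ by exactly the amount by which it changes $\alpha(H_0)$, because the $\alpha(G)-k$ added vertices are isolated and contribute their full count to both stability numbers. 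Consequently the connected components of $(G_0)_c$ are the single component $V(H_0)$ of $(H_0)_c$ together with the added isolated vertices. If $\rrank(G_0)=0$, Proposition~\ref{lem-components_critical} would force each of these components, in particular $V(H_0)$, to be a clique of $G_0$, hence of $H_0$, giving $\alpha(H_0)=1$ and contradicting $k\ge 2$. Combining the two exclusions yields $\rrank(G_0)\ge 2$, which proves the claim and hence both parts of the corollary.

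The arithmetic is all routine, the only point needing care being the identification of the critical edges of $G_0$ with those of $H_0$, which is precisely what lets Proposition~\ref{lem-components_critical} certify $M_{G_0}\notin\MK^{(0)}$. I expect no genuine obstacle here, since the difficult positive-semidefiniteness analysis is already packaged inside Theorem~\ref{theo-ineq}.
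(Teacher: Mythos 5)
Your proof is correct, and its skeleton is the same reduction the paper uses: pass to the induced subgraph $H_0\oplus\overline{K_{m}}$ (obtained from a maximum stable set in the other components plus the added isolated nodes), note it has the same stability number as the ambient graph, apply Lemma~\ref{indu-same-alpha}, and then show this graph has $\rrank\ge 2$ when $m\ge 9$. Where you genuinely differ is in the last step. The paper first records $\rrank(H_0)\ge 1$ (citing Corollary~\ref{cor-critical_rank_0}, though what is really used is Proposition~\ref{lem-components_critical}, since $H_0$ need not be critical) and then invokes Corollary~\ref{corH2} ``applied to $H_0$''; but Corollary~\ref{corH2} is stated only for graphs with stability number $2$ (its proof goes through the strong perfect graph theorem and Corollary~\ref{cor-cycle-isolated}), so that citation literally covers only the case $k=\alpha(H_0)=2$, where $\rrank(H_0)\ge 1$ is equivalent to the hypothesis $\overline{\chi}(H_0)>\alpha(H_0)=2$ by Lemma~\ref{lem-alpha-2}. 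You instead rule out $\rrank=1$ by applying Theorem~\ref{theo-ineq} directly to $H_0$, using $4+\tfrac{4}{k-1}\le 8$ for $k\ge 2$, and rule out $\rrank=0$ by the observation that the critical edges of $H_0\oplus\overline{K_m}$ are exactly those of $H_0$ (adding isolated vertices shifts both stability numbers equally), so Proposition~\ref{lem-components_critical} would force $V(H_0)$ to be a clique, contradicting $\alpha(H_0)\ge 2$. This unpacking is precisely what is needed to make the argument valid for all $k\ge 2$: your version is self-contained and repairs the citation mismatch in the paper's own proof, at the modest cost of redoing explicitly the rank-$0$ and rank-$1$ exclusions that the paper tried to outsource to Corollary~\ref{corH2}.
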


\begin{proof}
By Corollary \ref{cor-critical_rank_0} we know   $\rrank(H_0)\ge 1$.
Pick a stable set $W\subseteq V(H\setminus H_0)$ such that $\alpha(H_0\oplus W)=\alpha(H)$, i.e., $|W|=\alpha(H)-\alpha(H_0)$. Then $H_0\oplus W$ is an induced subgraph of $H$ with the same stability number as $H$. Then, by Lemma \ref{indu-same-alpha}, $\rrank(H_0\oplus W \oplus \overline{K_s})\leq \rrank(H\oplus \overline{K_s})$ for any $s\geq 0$. By applying  Corollary \ref{corH2} to the graph $H_0$,  we obtain that $\rrank(H_0 \oplus W\oplus \overline{K_s})\geq 2$ if $s+|W|\geq 9$. From these facts  (i) and (ii) now follow easily.
\end{proof}


\begin{thebibliography}{10}

\bibitem{Bomze-survey}
I.M. Bomze, M. Budinich, P.M. Pardalos and M. Pellilo.
The maximum clique problem.
 In: Du D.Z., Pardalos P.M. (eds), {\em Handbook of Combinatorial Optimization}. Springer, Boston, MA, 1999.



\bibitem{BY2020}
S.~Burer and Y.~Ye.
\newblock Exact semidefinite formulations for a class of (random and
  non-random) nonconvex quadratic programs.
\newblock {\em Mathematical Programming}, 181:1--17, 2020.


\bibitem{SPGT}
M. Chudnovsky, N. Robertson, P. Seymour, and R. Thomas, Robin.
The strong perfect graph theorem.
{\em Annals of Mathematics}, 164(1):51--229,
2006.

\bibitem{dKLP}
E.~de~Klerk, M.~Laurent, and P.~Parrilo.
\newblock {\em On the equivalence of algebraic approaches to the minimization
  of forms on the simplex}.
\newblock Number 312 in LNCIS, pages 121--133. Springer, 2005.

\bibitem{dKP2002}
E.~de~Klerk and D.~Pasechnik.
\newblock Approximation of the stability number of a graph via copositive
  programming.
\newblock {\em SIAM Journal on Optimization}, 12:875--892,  2002.

\bibitem{Diananda}
P.H. Diananda. On non-negative forms in real variables some or all of which are non-negative.
{\em Proceedings of the Cambridge Philosophical Society}, 58:17--25, 1962.

\bibitem{DDGH}
P.J.C. Dickinson, M. D\"ur, L. Gijben and R. Hildebrand.
\newblock Scaling relationship between the copositive cone and Parrilo`s first level approximation.
\newblock{\em Optimization Letters}, 7(8):1669--1679, 2013.

\bibitem{DV}
C. Dobre and J. Vera
\newblock Exploiting symmetry in copositive programs via semidefinite hierarchies.
\newblock{\em Mathematical Programming},151:659--680, 2015.


\bibitem{Erdos}
P. Erd\'os, Chao Ko and R. Rado
\newblock Intersection theorems for systems of finite sets.
\newblock{\em The Quarterly Journal of Mathematics}, 12(1):313--320, 1961.


\bibitem{GHPR}
L.E. Gibbons, D.W. Hearn, P.M. Pardalos and M.V. Ramana.
Continuous characterizations of the maximum clique problem.
{\em Mathematics of Operations Research}, 22(3):754--768, 1997.

\bibitem{GY2021}
Y.G. G\"okmen and E.A. Yildirim.
On standard quadratic programs with exact and inexact doubly nonnegative relaxations.
{\em Mathematical Programming}, 2021. https://doi.org/10.1007/s10107-020-01611-0

\bibitem{GL2007}
N.~Gvozdenovi{\'c} and M.~Laurent.
\newblock Semidefinite bounds for the stability number of a graph via sums of
  squares of polynomials.
\newblock {\em Mathematical Programming}, 110:145--173, 2007.



\bibitem{Hall}
P.~Hall.
\newblock On representatives of subsets.
\newblock {\em Journal of the London Mathematical Society}, s1-10(1):26--30,
  1935.
  
 \bibitem{Hildebrand}
  R. Hildebrand.
  The extreme rays of the $5\times 5$ copositive cone.
  {\em Linear Algebra and its Applications}, 437(7):1538--1547, 2012.
  
\bibitem{Hossain}
  A. Hossain, E. Lopez, S. Halper, D. Cetnar, A. Reis, D. Strickland, E.Klavins, and H. Salis.
Automated design of thousands of nonrepetitive parts for engineering stable genetic systems.
  \newblock{\em Nature Biotechnology}, 38:1466-1475, 2020.
  
  \bibitem{Karp}
R.~Karp.
\newblock {\em Reducibility among combinatorial problems}, pages 85--103.
\newblock Plenum Press, New York, 1972.  
  
\bibitem{Lasserre2001b}
J.B. Lasserre.
\newblock {\em An explicit exact {SDP} relaxation for nonlinear 0-1 programs},
  volume 2081 of {\em Lecture Notes in Computer Science}, pages 293--303.
\newblock 2001.
  
  \bibitem{Laurent2003}
M.~Laurent.
\newblock A comparison of the {S}herali-{A}dams, {L}ov\'asz-{S}chrijver and
  {L}asserre relaxations for 0-1 programming.
\newblock {\em Mathematics of Operations Research}, 28(3):470--496, 2003.
  
  \bibitem{LV2021}
M. Laurent and L.F. Vargas.
\newblock Finite convergence of sum-of-squares hierarchies for the stability number of a graph.
\newblock https://arxiv.org/abs/2103.01574.
  
\bibitem{LovszKnesersCC}
L.~Lov{\'a}sz.
\newblock Kneser's conjecture, chromatic number, and homotopy.
\newblock {\em J. Comb. Theory, Ser. A}, 25:319--324, 1978.  

\bibitem{Lo79}
L.~Lov\'asz.
\newblock On the {S}hannon capacity of a graph.
\newblock {\em IEEE Trans. Inform. Theory}, 25:1--7, 1979.

\bibitem{oddities}
L.~Mancinska and D.~Roberson.
\newblock Oddities of quantum colorings.
\newblock {\em Baltic Journal of Modern Computing}, 4:846--859, 2016.

\bibitem{motzkin}
T.~Motzkin and E.~Straus.
\newblock Maxima for graphs and a new proof of a theorem of {T}ur{\'a}n.
\newblock {\em Canadian Journal of Mathematics}, 17:533--540, 1965.


\bibitem{Parrilo-thesis-2000}
P.A. Parrilo.
\newblock {S}tructured {S}emidefinite {P}rograms and {S}emialgebraic {G}eometry
  {M}ethods in {R}obustness and {O}ptimization.
\newblock PhD thesis, California Institute of Technology, 2000.

\bibitem{PVZ2007}
 J.~Pe\~{n}a, J.~Vera, and L.F. Zuluaga.
\newblock Computing the stability number of a graph via linear and semidefinite
  programming.
\newblock {\em SIAM Journal on Optimization}, 18(1):87--105, 2007.

\bibitem{Polak}
S. Polak. Personal communication, 2021.

\bibitem{PorkolabKhachiyan}
L.~Porkolab and L.~Khachiyan.
\newblock On the complexity of semidefinite programs.
\newblock {\em Journal of Global Optimization}, 10: 351-365, 1997.

\bibitem{Ramana}
M.~Ramana.
\newblock An exact duality theory for semidefinite programming and it complexity implications.
\newblock {\em Mathematical Programming}, 77: 129-162, 1997.

\bibitem{Small}
B.J. Small.
\newblock On alpha-critical graphs and their construction.
\newblock PhD thesis, Washington State University, 2015.


\bibitem{Vera}
J. Vera. Personal communication, 2021.

\bibitem{PVZ2007}
J.~Vera, J.~Pena, and L.F. Zuluaga.
\newblock Computing the stability number of a graph via linear and semidefinite
  programming.
\newblock {\em SIAM Journal on Optimization}, 18(1):87--105, 2007.

\bibitem{WK2021}
A.L. Wang and F.~Kilin{\c{c}}-Karzan.
\newblock On the tightness of {SDP} relaxations of {QCQP}s.
\newblock {\em Mathematical Programming}, 2021. https://doi.org/10.1007/s10107-020-01589-9

\bibitem{WuHao}
Q. Wu and J.-K. Hao.
A review on algorithms for maximum clique problems.
{\em European Journal of Operations Research}, 242:693--709, 2015.



\end{thebibliography}
\end{document}